\documentclass[journal]{new-aiaa} 

\usepackage[utf8]{inputenc}
\usepackage{ulem}
\usepackage{amsthm} 
\newtheorem{theorem}{Theorem}[section] 
\newtheorem{lemma}{Lemma}
\newtheorem{Example}{Example}
\newtheorem{definition}{Definition}
\newtheorem{assumption}{Assumption}[section]
\newtheorem{remark}{Remark}[section]

\usepackage{amssymb}
\usepackage{xcolor}
\usepackage{hyperref}
\definecolor{citecolor}{HTML}{005A45}   
\definecolor{linkcolor}{HTML}{0A2F8C}   
\definecolor{urlcolor}{HTML}{7A0F4A}    
\hypersetup{
    colorlinks=true,
    citecolor=citecolor,
    linkcolor=linkcolor,
    urlcolor=urlcolor,
    linktocpage=true
}

\RequirePackage{titlesec}
\titlespacing*{\section}{0pt}{0.85\baselineskip}{0.5\baselineskip}
\titlespacing*{\subsection}{0pt}{0.65\baselineskip}{0.45\baselineskip}
\titlespacing*{\subsubsection}{0pt}{0.55\baselineskip}{0.4\baselineskip}

\usepackage{graphicx}
\usepackage{subcaption}

\usepackage{amsmath}
\usepackage[version=4]{mhchem}
\usepackage{siunitx}
\usepackage{longtable,tabularx}
\renewcommand{\u}{\mathbf{u}} 
\newcommand{\B}{\mathbf{B}} 
\renewcommand{\v}{\mathbf{v}} 
\newcommand{\C}{\mathbf{w}} 
\renewcommand{\L}{\mathbf{L}} 
\newcommand{\R}{\mathbb{R}}
 \definecolor{ForestGreen}{HTML}{228B22} 
\numberwithin{equation}{section}
\numberwithin{lemma}{section}
\numberwithin{definition}{section}

\title{Stability, Convergence, and Error Analysis of Finite Element Methods for 3D Magnetohydrodynamics with $p$-Laplacian Viscosity}

\author{Kim Ngan Le\footnote{School of Mathematics, Monash University, Clayton, Victoria 3800, Melbourne. Email: ngan.le@monash.edu}}
\affil{Monash University, Clayton, Victoria 3800, Australia}

\author{Akash Ashirbad Panda\footnote{Department of Mathematics, IIT Indore, Simrol, India-452020. Email: akashpanda@iiti.ac.in} \ and \ Evana Islam Sarkar\footnote{Department of Mathematics, IIT Bhubaneswar, Khorda-752050. Email: s24ma09008@iitbbs.ac.in}}
\affil{Indian Institute of Technology Bhubaneswar, Khorda, India-752050}

\usepackage{fancyhdr}

\begin{document}

\maketitle

\begin{abstract}
{\small{
This paper develops a fully discrete finite element method for three-dimensional incompressible magnetohydrodynamic (MHD) flows with nonlinear $p$--Laplace viscosity. The scheme combines spatial finite elements with a semi-implicit Euler time discretisation. 
Convergence to a weak solution is proved using a time-translation compactness argument together with Minty's monotonicity method. Under additional regularity assumptions, we derive unconditional 
error estimates for both the velocity and magnetic field, with no coupling restriction between the time-step and mesh-size. The framework extends finite element analysis of incompressible MHD systems to non-Newtonian shear-thickening fluids ($p>2$), while recovering the classical Newtonian case ($p=2$). Finally, 
numerical simulations are provided to validate the theoretical convergence rates and demonstrate the robustness of the proposed method.
}}
\end{abstract}

\vspace{0.3cm}

\noindent
{\bf Keywords and phrases:} \ 3D Magnetohydrodynamics, $p$-Laplace, Finite Element Method, Convergence and Error Analysis, Non-Newtonian Fluid.

\vspace{0.2cm}

\noindent
{\bf AMS subject classification (2020):} \ 65N30, 76W05, 65N12, 65N15, 35J92  



\section{Introduction}

\vspace{0.2cm}

Magnetohydrodynamics (MHD) describes the macroscopic interaction between electrically conducting fluids and electromagnetic fields. It concerns viscous, incompressible fluids capable of conducting electric current, which interact dynamically with induced electromagnetic fields. This theory plays a central role in diverse scientific and engineering applications, including astrophysics, geophysics, nuclear fusion, metallurgical processes, electromagnetic pumping, and the stirring of liquid metals (see, e.g., \cite{Davidson2001}, \cite{Moreau1990} and references therein).

In the classical MHD framework, viscosity is modeled through the standard Laplacian, corresponding to Newtonian fluids with a linear stress–strain relation. However, many realistic media exhibit non-Newtonian behavior, where the viscosity depends nonlinearly on the shear rate. Such phenomena occur, for example, in polymeric melts, plasmas with anomalous diffusion, and astrophysical or geophysical flows where turbulent transport is significant. To capture these effects, we adopt a nonlinear viscosity model based on the $p$-Laplacian operator $\operatorname{div}\!\left(|\nabla \u|^{p-2}\nabla \u\right),$ which generalizes the classical Laplacian. The choice $p=2$ recovers the Newtonian case, while $p>2$ corresponds to shear-thickening and $1<p<2$ to shear-thinning behavior. Incorporating this modification into the MHD equations allows for a more realistic description of complex flows under strong shear and magnetic interactions, extending the applicability of the theory beyond the Newtonian setting. We consider the following problem.

Let $\mathbb{D}\subset\mathbb{R}^3$ be a bounded Lipschitz polyhedral domain. For simplicity, we assume that $\mathbb{D}$ is simply connected and that its boundary $\partial\mathbb{D}$ is connected. We consider the nonstationary incompressible MHD flow with constant density, where the viscous stress tensor is modeled by the $p$-Laplacian. The governing system reads
\begin{align}
    \label{eq-MHD-1} \u_t-\frac{\nu}{\rho}\operatorname{div} (\left|\nabla \u\right|^{p-2}\nabla \u)+\operatorname{div}(\u \otimes \u)+\frac{\nabla P^{\ast}}{\rho}+\frac{1}{\mu\rho}\B\times(\nabla \times \B) &=\mathbf{g} \quad \mbox{in} \hspace{0.1 cm}\mathbb{D}_T,\\ \
    \label{eq-MHD-2}  \B_t+\frac{1}{\mu\sigma} \nabla \times(\nabla \times \B)-\operatorname{\nabla \times(\u\times \B)} & = \mathbf{f} \quad \mbox{in} \hspace{0.1 cm}  \mathbb{D}_T,\\ \
    \label{eq-MHD-3}  \operatorname{div}(\u)&= 0 \quad \mbox{in}\hspace{0.1 cm}  \mathbb{D}_T,\\
     \label{eq-MHD-4}    \operatorname{div}(\B)&= 0 \quad \mbox{in}\hspace{0.1 cm}  \mathbb{D}_T\,,
    \end{align}
where, $\mathbb{D}_T=\mathbb{D}\times(0,T)$ with a given finite final time $T>0$. Here, $\u$ denotes the velocity field, $P^{\ast}$ the pressure, $\B$ the magnetic induction, $\rho$ the (constant) fluid density, $\nu$ the kinematic viscosity, $\mu$ the magnetic permeability, $\sigma$ the electrical conductivity, $\mathbf{g}$ a prescribed body force per unit mass, and $\mathbf{f}$ a divergence-free applied current source. The system is supplemented with the initial and boundary conditions
 \begin{align}
 \u(x, 0)=\u^{0}, \quad \B(x, 0)= \B^{0} \quad \forall x \in \mathbb{D}, \\
 \u = 0, \quad \mathbf{n} \times \operatorname{curl} \B=0, \quad \mathbf{n} \cdot \B=0 \quad \mbox{on} \hspace{0.1 cm} \partial \mathbb{D},
 \end{align}
where, $\mathbf{n}$ denotes the outward unit normal on the boundary $\partial\mathbb{D}$. The initial magnetic field $\B^0$ is assumed to be divergence-free.

\subsection{Related Works}
\paragraph{Existence, Uniqueness and Regularity of MHD.}
 The question of global-in-time regularity for the 3D (incompressible) MHD equations remain a major open problem in mathematical fluid dynamics. For the 2D case, global classical solutions exist for arbitrary initial data $\u_0,\B_0 \in \mathbf{H}^m$ with $m>2$ \cite{CaoWu2011}. In the 3D setting, Sermange and Temam \cite{SermangeTemam1983} established local existence, but global regularity is still unresolved. Subsequent research has focused on identifying regularity criteria that ensure global existence, formulated in terms of velocity, magnetic field, or pressure derivatives (see \cite{ChenMiaoZhang2008, 
 Wu2004}). The pioneering works of He and Xin \cite{HeWang2008} revealed that the velocity field plays a dominant role, proving global regularity criteria based solely on it. Building on their ideas, later studies developed refined component-wise regularity conditions, with sufficient assumptions on either $\u, \B$ or the pressure independently \cite{Zhou2006}.

\paragraph{FEM for MHD.}

Ding et al. \cite{ding2022convergence} analyzed the convergence of a fully discrete variable-density MHD system. Earlier works on the density equation, discretized via the discontinuous Galerkin method, used the compactness framework of Lions and Magenes \cite{Lions1978} 
 as in Walkington \cite{Walkington2005}.
 Finite element formulations employing continuous elements for velocity and Nédélec edge elements for the magnetic field were developed by Dauge et al. 
 \cite{ CostabelDauge1998, CostabelDauge2000}, the latter being essential for nonsmooth magnetic solutions in Lipschitz domains.

The first convergence result for incompressible MHD FEM was established by Prohl \cite{Prohl2008}. For thermally coupled MHD systems, Meir \cite{Meir1995} 
considered stationary problems with constant coefficients, while temperature-dependent models were analyzed in 
\cite{TabataTagami2005}. Error estimates for such systems were derived by Ravindran \cite{Ravindran2019} using a Crank–Nicolson scheme and by Qiu \cite{Qiu2020} via a semi-implicit Euler method. Owing to nonsmooth magnetic fields, $\mathbf{H}(\mathrm{\mathbf{curl}})$-conforming Nédélec elements remain the natural choice \cite{CostabelDauge2000}. 

For variable-density Navier–Stokes and MHD systems, stabilized and convergent FEM schemes have been developed %
\cite{CaiLiLi2021, GuermondQuartapelle2000,  LiQiuYang2022,  BanasProhl2010}, with DG-based approaches providing compactness results \cite{LiuWalkington2007, Walkington2005}. Recent second-order analyses for variable-density MHD using continuous finite elements were given by Li et al. 
\cite{ LiAn2021}.

\paragraph{FEM for \texorpdfstring{$p$}{p}-Laplace.}

Finite element analysis of the parabolic $p$-Laplacian has been extensively developed. Barrett and Liu \cite{BarrettLiu1994} established error estimates for the backward Euler–finite element scheme, later extended by Diening et al. \cite{DieningEbmeyerRuzicka2007} and Berselli and Ruzicka \cite{BerselliRuzicka2022} to discontinuous-in-time methods with optimal quasi-norm bounds. Breit et al. \cite{BreitDieningStornWichmann2021} removed earlier discretization constraints using fractional differentiability in Nikolski spaces. For $p$-Navier–Stokes systems, Berselli et al. \cite{BerselliDieningRuzicka2015} and Eckstein and Ruzicka \cite{EcksteinRuzicka2018} derived similar optimal estimates. Higher-order and Runge–Kutta schemes have also been analyzed, yielding weak or suboptimal error results for nonlinear parabolic problems \cite{EmmrichWroblewska2013}.

\subsection*{Novelties and Difficulties of the Work}

This work develops a rigorous finite element framework for the incompressible MHD system with nonlinear $p$-Laplace viscosity, addressing several analytical challenges arising from the strong nonlinearity in the momentum equation. The nonlinear diffusion term $\operatorname{div} (\left|\nabla \u\right|^{p-2}\nabla \u)$ prevents the direct application of classical 
linear elliptic theory
and significantly complicates both existence and error analysis. To overcome these challenges, we design a semi-implicit Euler–finite element scheme and employ a fixed-point approach to establish the existence of discrete solutions.

A central analytical difficulty in this work arises from the nonlinear \(p\)-Laplace term in the momentum equation. In contrast to the Newtonian case \(p=2\), this nonlinearity prevents a direct control of the discrete time derivative of the velocity, which is essential for compactness and strong convergence arguments. To overcome this obstacle, we employ a time-translation technique, replacing unavailable time-derivative estimates with bounds on temporal shifts of the discrete velocity. This yields uniform stability estimates and allows the use of compactness tools such as the Aubin--Lions lemma. A second key challenge is the identification of the nonlinear limit in the passage from the discrete scheme to the continuous problem. Here, Minty's method is used to rigorously show that the numerical solution converges to the weak solution of the continuous \(p\)-MHD system. Together, these arguments form the core technical contributions of the analysis and distinguish the present study from existing work on standard Newtonian MHD models.


The three-dimensional setting further amplifies the mathematical difficulties, as several key Sobolev embeddings and interpolation inequalities become weaker or more delicate, requiring careful treatment of nonlinear terms. The coupling between velocity and magnetic fields, combined with the $p$-Laplace operator in the momentum equation, introduces additional analytical obstacles. Despite these complexities, we 
show the stability for the discrete problem. Finally, we derive unconditionally
error estimates for the velocity and magnetic fields, where the appearance of $L^p$-norms on the right-hand side requires refined manipulations to suitably apply the discrete Gronwall lemma. Our error analysis requires weaker regularity assumptions on solutions than those commonly imposed in the existing literature.

To the best of our knowledge, this is among the first finite element analyses for a fully discrete three-dimensional incompressible MHD system with $p$-Laplacian viscosity.
The study extends finite element analysis of incompressible MHD flows to the non-Newtonian shear-thickening regime ($p>2$), providing new insights into the numerical treatment of nonlinear 3D $p$-MHD systems.

To validate the theoretical findings, a series of numerical experiments was performed using the finite element software FEniCS implemented in Python. The discretization errors were evaluated in appropriate norms under successive refinements of both the spatial mesh and the time step. The computed convergence rates are in excellent agreement with the theoretical error estimates derived in the preceding sections, thereby confirming the accuracy and effectiveness of the proposed numerical scheme.

\subsection*{Organization Of The Paper}

   The remainder of the paper is organized as follows. Section 2 introduces the notation, recalls several preliminary results, and presents the weak formulation of the governing equations, forming the mathematical framework of our study. Section 3 develops a fully discrete, energy-preserving numerical scheme. The well-posedness and stability properties of this scheme are established in Section 4. Section 5 provides a rigorous convergence analysis under suitable assumptions, while Section 6 concludes with a detailed error analysis, yielding optimal-order error bounds for the proposed fully discrete approximation. Section 7 provides the numerical experiments.

\vspace{0.2cm}

\section{Mathematical Formulation and Functional Setting of the Problem}\label{sec-setting}

This section fixes the functional framework used throughout the paper and derives the weak formulation of the $p$-MHD system \eqref{eq-MHD-1}--\eqref{eq-MHD-4} on which the whole analysis rests. Section \ref{subsec-notation} collects the notation, Section \ref{subsec-spaces} introduces the function spaces, Section \ref{subsec-data} states the standing hypotheses on the data, and Section \ref{subsec-weakform} defines a weak solution and records the equivalent form \eqref{1st weak form}--\eqref{2nd weeak equation} that is used from Section \ref{sec-fem} onwards.

\subsection{Notation}\label{subsec-notation}

Throughout, $\mathbb{D}\subset\R^{3}$ is a bounded polyhedral domain with boundary $\partial\mathbb{D}$ and outward unit normal $\mathbf{n}$, $T>0$ is a fixed final time, and $\mathbb{D}_{T}=\mathbb{D}\times(0,T)$. The flow index is fixed once and for all:
\begin{equation}\label{eq-p-range}
p\ge2,
\end{equation}
which is the shear-thickening regime; the Newtonian case is recovered for $p=2$. We write $p'=\frac{p}{p-1}\in(1,2]$ for the conjugate exponent.\\
\noindent
For an integer $m\ge1$ and $1\le q\le\infty$, $W^{m,q}(\mathbb{D})$ denotes the usual Sobolev space with norm $\|\cdot\|_{m,q}$, and $H^{m}(\mathbb{D})=W^{m,2}(\mathbb{D})$. For a real Banach space $X$ and $1\le q\le\infty$, $L^{q}(0,T;X)$ is the Bochner space with
\[
\|v\|_{L^{q}(0,T;X)}=\left(\int_{0}^{T}\|v(t)\|_{X}^{q}\,dt\right)^{1/q},
\qquad
\|v\|_{L^{\infty}(0,T;X)}=\operatorname*{ess\,sup}_{0\le t\le T}\|v(t)\|_{X}.
\]
We also use the spaces $\mathcal{C}^{m}(\mathbb{D})$ of $m$-times continuously differentiable functions; for further properties of all these spaces we refer to \cite{GiraultRaviart1986}.\\
\noindent
Vector-valued quantities and spaces of vector-valued functions are written in boldface, so that $\u=(u_{1},u_{2},u_{3})$ and $\mathbf{L}^{2}(\mathbb{D}):=\left(L^{2}(\mathbb{D})\right)^{3}$, and similarly for $\mathbf{W}^{m,q}(\mathbb{D})$ and $\mathbf{H}^{m}(\mathbb{D})$. The $L^{2}(\mathbb{D})$ inner product of scalar fields is $(\phi,\psi)=\int_{\mathbb{D}}\phi\,\psi\,dx$, with norm $\|\cdot\|_{L^{2}}$; the same notation is used for the componentwise inner product of vector fields, with norm $\|\cdot\|_{\L^{2}}$. For $\mathbf{v}\in\mathbf{W}^{l,2}(\mathbb{D})$ we abbreviate $\|\mathbf{v}\|_{l,2}:=\|\mathbf{v}\|_{\mathbf{W}^{l,2}}$, and similarly $\|\mathbf{v}\|_{l,p}$ in the $\mathbf{W}^{l,p}$ scale. Finally, $\nabla\times$ and $\operatorname{div}$ denote the curl and divergence operators, and $C$ denotes a generic positive constant whose value may change from occurrence to occurrence.

\subsection{Function spaces}\label{subsec-spaces}

The spaces below fall into three groups: those carrying the velocity and pressure, those carrying the magnetic field, and a family of solenoidal spaces needed for the compactness arguments of Section \ref{sec-convergence}. We list them by role.

\medskip
\noindent\textbf{(a) Velocity and pressure.} The natural energy space for the $p$-Laplace viscosity is $\mathbf{W}^{1,p}_{0}$, and the pressure is normalised to have zero mean:
\begin{equation}\label{eq-spaces-velocity}
\mathcal{X}=\mathbf{W}^{1,p}_{0}(\mathbb{D}),
\qquad
\mathcal{X}_{0}=\left\{\mathbf{v}\in\mathcal{X}:\operatorname{div}\mathbf{v}=0\ \text{a.e. in }\mathbb{D}\right\},
\qquad
Q=L^{2}_{0}(\mathbb{D})=\left\{q\in L^{2}(\mathbb{D}):\textstyle\int_{\mathbb{D}}q\,dx=0\right\}.
\end{equation}

\medskip
\noindent\textbf{(b) Magnetic field.} Since the magnetic diffusion acts through the curl operator only, the magnetic field lives in
\begin{equation}\label{eq-spaces-magnetic}
\mathcal{Y}=\mathbf{H}(\operatorname{\mathbf{curl}};\mathbb{D})=\left\{\mathbf{w}\in\mathbf{L}^{2}(\mathbb{D}):\nabla\times\mathbf{w}\in\mathbf{L}^{2}(\mathbb{D})\right\},
\qquad
\mathcal{Y}_{0}=\left\{\mathbf{w}\in\mathcal{Y}:\mathbf{w}\times\mathbf{n}=0\ \text{on }\partial\mathbb{D}\right\},
\end{equation}
equipped with the graph norm
\[
\|\mathbf{w}\|_{\mathcal{Y}}=\left(\|\mathbf{w}\|^{2}_{\L^{2}}+\|\nabla\times\mathbf{w}\|^{2}_{\L^{2}}\right)^{1/2}.
\]

\medskip
\noindent\textbf{(c) Solenoidal and auxiliary spaces.} The remaining spaces encode the two divergence constraints \eqref{eq-MHD-3}--\eqref{eq-MHD-4} at different levels of regularity, and are used only in the convergence analysis of Section \ref{sec-convergence}; a reader may postpone them until then. We first recall the divergence spaces
\[
\mathbf{H}(\operatorname{div};\mathbb{D})=\left\{\mathbf{w}\in\mathbf{L}^{2}(\mathbb{D}):\operatorname{div}\mathbf{w}\in L^{2}(\mathbb{D})\right\},
\quad
\mathbf{H}_{0}(\operatorname{div};\mathbb{D})=\left\{\mathbf{w}\in\mathbf{H}(\operatorname{div};\mathbb{D}):\mathbf{w}\cdot\mathbf{n}=0\ \text{on }\partial\mathbb{D}\right\},
\]
in terms of which the magnetic field lies, in the limit, in the solenoidal space
\[
\mathcal{K}=\left\{\mathbf{b}\in\mathcal{Y}\cap\mathbf{H}_{0}(\operatorname{div};\mathbb{D}):\operatorname{div}\mathbf{b}=0\ \text{a.e. in }\mathbb{D}\right\},
\]
the range of the Hodge-type operator $\mathcal{L}$ introduced in Section \ref{sec-convergence}. The compactness arguments of that section use, in addition, the spaces of smooth solenoidal test fields
\[
\mathcal{J}=\left\{\mathbf{w}\in\mathcal{C}^{\infty}_{0}(\mathbb{D}):\operatorname{div}\mathbf{w}=0\right\},
\qquad
\mathcal{G}=\left\{\mathbf{w}\in\mathcal{C}^{\infty}(\overline{\mathbb{D}}):\operatorname{div}\mathbf{w}=0\ \text{in }\mathbb{D},\ \mathbf{w}\cdot\mathbf{n}=0\ \text{on }\partial\mathbb{D}\right\},
\]
which serve as test functions for the velocity and the magnetic limit respectively, together with the space
\[
\mathbb{H}=\left\{\mathbf{v}\in\mathbf{L}^{2}(\mathbb{D}):\operatorname{div}\mathbf{v}=0\ \text{weakly},\ \mathbf{v}\cdot\mathbf{n}=0\ \text{on }\partial\mathbb{D}\right\},
\]
which carries the $L^{\infty}$-in-time regularity of $(\u,\B)$ in Definition \ref{def-weak-solution}.

\subsection{Standing hypotheses on the data}\label{subsec-data}

Throughout the paper we assume that the initial data and the source terms satisfy
\begin{equation}\label{eq-data-assumption}
\u^{0},\B^{0}\in\mathbf{L}^{2}(\mathbb{D}),
\qquad
\mathbf{g},\mathbf{f}\in L^{2}\!\left(0,T;\mathbf{L}^{2}(\mathbb{D})\right),
\qquad
\operatorname{div}\B^{0}=0,
\qquad
\operatorname{div}\mathbf{f}=0\ \text{ in }\mathbb{D}_{T}.
\end{equation}
The solenoidality of $\mathbf{f}$ is not a modelling convenience but a consistency requirement. Taking the divergence of \eqref{eq-MHD-2} and using $\operatorname{div}\nabla\times=0$ gives
\[
\partial_{t}\left(\operatorname{div}\B\right)=\operatorname{div}\mathbf{f}\quad\text{in }\mathbb{D}_{T},
\]
so the constraint \eqref{eq-MHD-4} propagates from the divergence-free initial datum $\B^{0}$ if and only if $\operatorname{div}\mathbf{f}=0$. No analogous condition is needed for $\mathbf{g}$, since the incompressibility constraint \eqref{eq-MHD-3} is enforced by the pressure rather than transported by the equation.

\subsection{Weak formulation}\label{subsec-weakform}

We first introduce the two forms through which the nonlinearities are expressed. For $\u,\mathbf{w},\mathbf{v}\in\mathcal{X}$ we set
\begin{align}
\left(a_{0}(\u),\nabla\mathbf{v}\right)
&:=\frac{\nu}{\rho}\int_{\mathbb{D}}\left|\nabla\u\right|^{p-2}\nabla\u:\nabla\mathbf{v}\,dx,
\label{eq-a0-def}\\
a_{1}(\u,\mathbf{w},\mathbf{v})
&:=\frac12\left\{\int_{\mathbb{D}}\left((\u\cdot\nabla)\mathbf{w}\right)\cdot\mathbf{v}\,dx
-\int_{\mathbb{D}}\left((\u\cdot\nabla)\mathbf{v}\right)\cdot\mathbf{w}\,dx\right\}.
\label{eq-a1-def}
\end{align}
The operator $a_{0}$ is the $p$-Laplace viscosity; it is monotone for $p\ge2$, which is the source of all dissipation in the velocity equation. The form $a_{1}$ is the skew-symmetric, or Temam, form of the convective term. Its decisive property, immediate from \eqref{eq-a1-def} and used repeatedly in Sections \ref{sec-wellposed}--\ref{ lemma-error bounds}, is
\begin{equation}\label{eq-a1-skew}
a_{1}(\u,\mathbf{v},\mathbf{v})=0
\qquad\text{for all }\u,\mathbf{v}\in\mathcal{X},
\end{equation}
which holds without any divergence constraint on the first argument. This is why $a_{1}$, rather than the convective term in its divergence form, is used both in the weak formulation and in the discrete scheme.
\\ \noindent
We can now define the notion of solution used throughout.

\begin{definition}[Weak solution]\label{def-weak-solution}
Assume \eqref{eq-data-assumption}. A pair $(\u,\B)$ is a weak solution of \eqref{eq-MHD-1}--\eqref{eq-MHD-4} if
\begin{itemize}
\item[(i)] $\displaystyle \u\in L^{\infty}(0,T;\mathbb{H})\cap L^{2}(0,T;\mathcal{X}_{0})$ \ and \ $\displaystyle \B\in L^{\infty}(0,T;\mathbb{H})\cap L^{2}(0,T;\mathcal{K})$; and
\item[(ii)] for every $\mathbf{v}\in\left\{\mathcal{C}^{\infty}_{0}(\mathbb{D}\times[0,T)):\operatorname{div}\mathbf{v}=0\right\}$ and every $\mathbf{C}\in\mathcal{C}^{\infty}_{0}(\mathbb{D}\times[0,T))$ there holds
\begin{align}
&\int_{0}^{T}\!\!\int_{\mathbb{D}}\left(-\u\cdot\partial_{t}\mathbf{v}
+\frac{\nu}{\rho}\left|\nabla\u\right|^{p-2}\nabla\u:\nabla\mathbf{v}
-\left(\u\otimes\u\right):\nabla\mathbf{v}
+\frac{1}{\mu\rho}\left(\B\times(\nabla\times\B)\right)\cdot\mathbf{v}\right)dx\,dt
\nonumber\\
&\hspace{4.2cm}=\int_{0}^{T}\!\!\int_{\mathbb{D}}\mathbf{g}\cdot\mathbf{v}\,dx\,dt
+\int_{\mathbb{D}}\u^{0}\cdot\mathbf{v}(\mathbf{x},0)\,dx,
\label{eq-weak-distributional-1}
\end{align}
\begin{align}
&\int_{0}^{T}\!\!\int_{\mathbb{D}}\left(-\B\cdot\partial_{t}\mathbf{C}
+\frac{1}{\mu\sigma}\left(\nabla\times\B\right)\cdot\left(\nabla\times\mathbf{C}\right)
-\left(\u\times\B\right)\cdot\left(\nabla\times\mathbf{C}\right)\right)dx\,dt
\nonumber\\
&\hspace{4.2cm}=\int_{0}^{T}\!\!\int_{\mathbb{D}}\mathbf{f}\cdot\mathbf{C}\,dx\,dt
+\int_{\mathbb{D}}\B^{0}\cdot\mathbf{C}(\mathbf{x},0)\,dx.
\label{eq-weak-distributional-2}
\end{align}
\end{itemize}
\end{definition}
\noindent
Formulation \eqref{eq-weak-distributional-1}--\eqref{eq-weak-distributional-2} is stated in space--time distributional form, which is convenient for passing to the limit in Section \ref{sec-convergence} because it requires no time regularity of $(\u,\B)$ beyond \textup{(i)}. For the stability and error analysis, however, it is more convenient to work with a formulation holding at almost every time. The two are related as follows.

\begin{remark}[Equivalent pointwise-in-time formulation]\label{rem-equivalence}
Let $(\u,\B)$ be a weak solution in the sense of Definition \ref{def-weak-solution} which in addition satisfies $\u_{t}\in L^{p'}(0,T;\mathcal{X}_{0}')$ and $\B_{t}\in L^{2}(0,T;\mathcal{K}')$. Then \eqref{eq-weak-distributional-1}--\eqref{eq-weak-distributional-2} hold if and only if, for almost every $t\in(0,T)$,
\begin{align}
\left(\u_{t},\mathbf{v}\right)+\left(a_{0}(\u),\nabla\mathbf{v}\right)+a_{1}(\u,\u,\mathbf{v})
+\frac{1}{\mu\rho}\left(\B\times(\nabla\times\B),\mathbf{v}\right)&=\left(\mathbf{g},\mathbf{v}\right),
\label{1st weak form}\\
\left(\B_{t},\mathbf{C}\right)+\frac{1}{\mu\sigma}\left(\nabla\times\B,\nabla\times\mathbf{C}\right)
-\left(\u\times\B,\nabla\times\mathbf{C}\right)&=\left(\mathbf{f},\mathbf{C}\right),
\label{2nd weeak equation}
\end{align}
for all $\mathbf{v}\in\mathcal{X}_{0}$ and all $\mathbf{C}\in\mathcal{K}$, together with $\u(0)=\u^{0}$ and $\B(0)=\B^{0}$.
\\ \noindent
Only the convective term requires comment. Integrating by parts and using $\operatorname{div}\u=0$ together with $\mathbf{v}|_{\partial\mathbb{D}}=0$ gives
\[
\int_{\mathbb{D}}\left((\u\cdot\nabla)\mathbf{v}\right)\cdot\u\,dx
=\int_{\mathbb{D}}\left(\u\otimes\u\right):\nabla\mathbf{v}\,dx,
\qquad
\int_{\mathbb{D}}\left((\u\cdot\nabla)\u\right)\cdot\mathbf{v}\,dx
=-\int_{\mathbb{D}}\left(\u\otimes\u\right):\nabla\mathbf{v}\,dx .
\]
The two integrals in \eqref{eq-a1-def} therefore differ only by sign, and
\[
a_{1}(\u,\u,\mathbf{v})=-\int_{\mathbb{D}}\left(\u\otimes\u\right):\nabla\mathbf{v}\,dx,
\]
which is precisely the convective term appearing in \eqref{eq-weak-distributional-1}. The remaining terms transform by the divergence theorem and the boundary conditions, and the initial conditions are recovered in the usual way by integrating the time derivative by parts.
\end{remark}
\noindent
Equations \eqref{1st weak form}--\eqref{2nd weeak equation} are the form of the continuous problem that will be discretised in Section \ref{sec-fem} and used as the reference solution in the error analysis of Section \ref{ lemma-error bounds}.

\section{A Mixed Finite Element Method for the \texorpdfstring{$p$}{p}-MHD system \texorpdfstring{\eqref{1st weak form}--\eqref{2nd weeak equation}}{}}
\label{sec-fem}

\vspace{0.2cm}
\noindent
In this section, we introduce the mixed finite element framework proposed by Gerbeau \cite[Section 3.4.2]{GerbeauLeBrisLelievre2006} for the spatial discretization of the problem \eqref{1st weak form}–\eqref{2nd weeak equation} using a semi-implicit time-stepping strategy.\\
\noindent
Let $\mathbb{D}$ be a Lipschitz polyhedral domain and let $\tau_h$ denote a regular, quasi-uniform triangulation of $\mathbb{D}$ with mesh parameter $h$ (maximum diameter of the elements). For a nonnegative integer $k$, we denote by $P_k(K)$ the space of polynomials of degree at most $k$ on an element $K$, and by $\tilde{P}_k(K)$ the space of homogeneous polynomials of degree $k$.\\
\noindent
The velocity field is approximated using continuous finite elements and evolved in time via an Euler semi-implicit scheme. The chosen velocity–pressure pair satisfies the classical inf–sup stability condition. For the magnetic field, we employ Nédélec edge elements to properly capture the $\operatorname{curl}$-conformity required by the structure of Maxwell-type terms.
Let us denote
\begin{align*}
\mathcal{X}_{h} \subset\left\{\v_{h} \in \mathcal{X},\left.\v_{h}\right|_{K} \in P_{k}(K), \forall K \in \mathcal{T}_{h}\right\}, \quad \mbox{ and } \quad Q_{h} \subset\left\{q_{h} \in Q,\left.q_{h}\right|_{K} \in P_{k}(K), \forall K \in \mathcal{T}_{h}\right\}\,
\end{align*}
be a pair of spaces satisfying the Babuska-Brezzi condition \cite{LiuWalkington2007}.
\noindent
The discrete kernel space of the divergence operator defined by
$$\mathcal{X}_{0 h}=\left\{\v_{h} \in \mathcal{X}_{h},\left( \operatorname{div} \v_{h}, q_{h}\right)=0 \quad \forall q_{h} \in Q_{h}\right\}.$$
 For the magnetic induction, the discretization is based on Nédelec finite elements, and it is defined as
$$
\mathcal{Y}_{h}=\left\{\mathbf{A}_{h} \in \mathcal{Y},\left.\mathbf{A}_{h}\right|_{K} \in \mathcal{N}_{k}(K) \quad \forall K \in \mathcal{T}_{h}\right\},
$$
where, $\mathcal{N}_{k}(K)={P}_{k-1}(K)\oplus\mathcal{O}_{k}(K)$ and $\mathcal{O}_{k}(K)=\left\{\mathbf{q}\in \tilde{{P}}_{k}(K),\mathbf{q(x)}\cdot \mathbf{x}=0 \quad \mbox{on} \quad K\right\}$ (see \cite[Section 3.4.2]{GiraultRaviart1986} for details).
 Setting, $ S_{h}=\left\{C\in H^{1}(\mathbb{D}) \cap L_{0}^{2}(\mathbb{D}), C \in P_{k}(K), \forall K \in \mathcal{T}_{h}\right\}$, see \cite{Monk2003}, we introduce
the discretely solenoidal function space
    $$\mathcal{Y}_{0 h}=\left\{\mathbf{c} \in \mathcal{Y}_{h},(\mathbf{c}, \nabla S)=0 \quad \forall S \in S_{h}\right\}.$$ 
    Another link between the spaces $\mathcal{Y}_{0 h}$ and $\mathcal{K}$ is accomplished by
    the Hodge mapping (see e.g., \cite{Hiptmair2002})
    $$
    \mathcal{L}: \mathbf{H}_{0} (\operatorname{\mathbf{curl}} ; \mathbb{D}) \rightarrow \mathcal{K},
    $$
    such that
    $$
    \nabla \times \mathcal{L}(\mathbf{b})=\nabla \times \mathbf{b} \quad \forall \mathbf{b} \in \mathcal{Y}.
    $$
    The Hodge Mapping satisfies the following approximation property,
\begin{equation}\label{ine-zbh}
    \| \mathbf{b}_{h}-\mathcal{L}(\mathbf{b}_{h})\|_{\mathbf{L}^2} \leq c h^{\frac{1}{2}+s}\| \nabla \times\mathbf{b}_{h} \|_{\mathbf{L}^2} \quad \forall \mathbf{b}_{h} \in \mathcal{Y}_{0 h},
\end{equation} 
for some $s \equiv s(\mathbb{D})>0.$

\subsection{The Mixed Finite Element Scheme}

\vspace{0.1cm}
\noindent
Now, we are ready to introduce the fully discrete scheme, which combines spatial finite elements with a semi-implicit Euler time discretisation. Let $N>0$ be a fixed integer number and let $\Delta t=T / N$ be the time-step size. Thus, $t_{m}=m \Delta t$ denote the discrete time levels with $m=1,2, \ldots, N$. For the sake of convenience, $d_{t}\mathbf{u}^{m}=\frac{\mathbf{u}^{m}-\mathbf{u}^{m-1}}{\Delta t}$. 

\noindent
Let   $\mathcal{P}_{h}$ be the discrete Stokes projection onto the discrete divergence-free space $\mathcal{X}_{0h}$. The projection  $\mathcal{P}_{h}$ satisfies the approximation rate, for $0<s\leq k$
\begin{align*}
\| \mathbf{v}-\mathcal{P}_{h}(\mathbf{v}_{h})\|_{\mathbf{W}^{1,p}} \leq c h^{s}\| \mathbf{v} \|_{\mathbf{W}^{1+s,p}} \quad \forall \,  \mathbf{v} \in \mathbf{W}^{1+s,p},
\end{align*}
For the initial velocity $\mathbf{u}^{0}\in \mathbf{W}^{1+s,p}$, let $\mathbf{u}_{h}^{0}:=\mathcal{P}_{h} \mathbf{u}^{0}$.
For the initial magnetic field $\B^{0}\in \mathbf{W}^{s,2}$, let $\B_{h}^{0}:= \mathcal{F}_{h} \B^{0}$.
Let
\[
{\mathbf g^m = \mathcal{P}_{h}\,\mathbf g(t_m), \qquad
\mathbf f^m = \mathcal{F}_{h}\,\mathbf f(t_m),}
\]
be the finite element approximations of the source terms
$\mathbf f$ and $\mathbf g$ at time $t_m$. Then,
\[
\Delta t \sum_{m=1}^{N}
\left(
\|\mathbf g^m\|_{\L^2}^2
+\frac{1}{\mu\rho}\|\mathbf f^m\|_{\L^2}^2
\right)
\le C(\mathbf f,\mathbf g),
\]
where the constant $C(\mathbf f,\mathbf g)$ depends only on the norms of $\mathbf f$ and $\mathbf g$.
For any $0<m\leq N$, we aim to find that $\left(\mathbf{u}_{h}^{m},\mathbf{B}_{h}^{m}\right) \in \left(\mathcal{X}_{0h},\mathcal{Y}_{h} \right)$ such that for any $\left(\mathbf{v}_{h},\mathbf{w}_{h}\right) \in \left(\mathcal{X}_{0h},\mathcal{Y}_{h}\right)$


 %
 %
 \begin{align}
   \nonumber  \label{eq-FEM-1} &\left(d_{t} \u_{h}^{m}, \mathbf{v}_{h}\right)+a_{1}\left(\u_{h}^{m-1}, \u_{h}^{m}, \mathbf{v}_{h}\right)+{\frac{\nu}{\rho}\left(a_{0}(\u^m_h),\nabla\v_h\right) }\\  
\qquad &
+\frac{1}{\mu\rho}\left(\mathbf{B}_{h}^{m-1} \times (\nabla \times \mathbf{B}_{h}^{m}), \mathbf{v}_{h}\right)=\left(\mathbf{g}^{m}, \mathbf{v}_{h}\right),  \\ 
&\label{eq-FEM-2}  \left(d_{t} \B_{h}^{m}, \mathbf{w}_{h}\right)+\frac{1}{\mu\sigma}\left(\nabla \times \B_{h}^{m},\nabla \times \mathbf{w}_{h}\right)-\left(\u_{h}^{m} \times \B_{h}^{m-1}, \nabla \times\mathbf{w}_{h}\right)=\left(\mathbf{f}^{m}, \mathbf{w}_{h}\right).
     \end{align}
 %
 %
By choosing $\C_h=\nabla s_h$ in \eqref{eq-FEM-2}, where $s_h\in S_h$, it is easy deduce $(d_t\B^{m}_{h}, \nabla s_h)=0, m=1,2,3,\cdots, N$. Making use of $(\B^{0}_{h},\nabla s_h)=0$, we are able to obtain that $(\B^{m}_{h}, \nabla s_h)=0, m=1,2,3,\cdots, N$. Then, the fully discrete scheme \eqref{eq-FEM-2} satisfy the weakly divergence free property.


\section{Well-Posedness and Stability of the Fully Discrete Scheme
\texorpdfstring{\eqref{eq-FEM-1}--\eqref{eq-FEM-2}}{}}
\label{sec-wellposed}

We first establish the existence and uniqueness of the fully discrete solution.
Since the discrete problem contains the nonlinear $p$-Laplacian operator, its
solvability is obtained through a finite-dimensional fixed-point argument.
We then derive the stability estimates required for the subsequent convergence
analysis.

\subsection{Existence and uniqueness of the discrete solution}

\begin{theorem}[Well-posedness of the fully discrete scheme]
\label{thm-existence}
For every $m=1,\ldots,N$, the scheme
\eqref{eq-FEM-1}--\eqref{eq-FEM-2} admits a unique solution
\[
(\u_h^m,\B_h^m)\in\mathcal X_{0h}\times\mathcal Y_h.
\]
The result holds for arbitrary $\Delta t>0$ and $h>0$; in particular, no
coupling condition between the time-step and the mesh-size is required.
\end{theorem}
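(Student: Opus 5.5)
We argue by induction on $m$. Given $(\u_h^{m-1},\B_h^{m-1})\in\mathcal X_{0h}\times\mathcal Y_h$, the scheme \eqref{eq-FEM-1}--\eqref{eq-FEM-2} is a nonlinear system for the unknown pair $U=(\u_h^m,\B_h^m)$ in the finite-dimensional space $V_h:=\mathcal X_{0h}\times\mathcal Y_h$. The only nonlinearity in $U$ is the $p$-Laplace term; the convective and coupling terms are linear in $U$ because their first argument is lagged. We therefore write the system as $\mathcal A(U)=F$. Here $\mathcal A:V_h\to V_h'$ is given by
\begin{align*}
\langle \mathcal A(U),(\v_h,\C_h)\rangle
={}&\tfrac{1}{\Delta t}(\u,\v_h)+a_1(\u_h^{m-1},\u,\v_h)+\tfrac{\nu}{\rho}(a_0(\u),\nabla\v_h)
+\tfrac{1}{\mu\rho}\bigl(\B_h^{m-1}\times(\nabla\times\B),\v_h\bigr)\\
&+\tfrac{1}{\mu\rho}\Bigl[\tfrac1{\Delta t}(\B,\C_h)+\tfrac{1}{\mu\sigma}(\nabla\times\B,\nabla\times\C_h)-(\u\times\B_h^{m-1},\nabla\times\C_h)\Bigr].
\end{align*}
The magnetic equation is scaled by $1/(\mu\rho)$, which is harmless because it does not change the solution set. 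The right-hand side $F$ collects $\frac1{\Delta t}(\u_h^{m-1},\v_h)+(\mathbf g^m,\v_h)$ and $\frac{1}{\mu\rho}[\frac1{\Delta t}(\B_h^{m-1},\C_h)+(\mathbf f^m,\C_h)]$.

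\textbf{Key cancellation.} We first check that the two lagged coupling terms cancel on the diagonal. Using $(\mathbf a\times\mathbf b)\cdot\mathbf c=-(\mathbf c\times\mathbf b)\cdot\mathbf a$ with $\mathbf a=\B_h^{m-1}$, $\mathbf b=\nabla\times\B$, $\mathbf c=\u$, we get
\[
\bigl(\B_h^{m-1}\times(\nabla\times\B),\u\bigr)=-\bigl(\u\times\B_h^{m-1},\nabla\times\B\bigr).
\]
Hence for any $U_1,U_2$ the terms $\frac1{\mu\rho}(\B_h^{m-1}\times\nabla\times\B,\v)$ and $-\frac1{\mu\rho}(\v\times\B_h^{m-1},\nabla\times\B)$ cancel when tested with $U_1-U_2$ against the difference. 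This is exactly where the scaling $1/(\mu\rho)$ is needed. The convective term also drops out by \eqref{eq-a1-skew}.

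\textbf{Strict monotonicity and coercivity.} For $U_1,U_2\in V_h$ we then obtain
\[
\langle\mathcal A(U_1)-\mathcal A(U_2),U_1-U_2\rangle=\tfrac1{\Delta t}\|\u_1-\u_2\|_{\L^2}^2+\tfrac{\nu}{\rho}(a_0(\u_1)-a_0(\u_2),\nabla(\u_1-\u_2))+\tfrac{1}{\mu\rho}\Bigl[\tfrac1{\Delta t}\|\B_1-\B_2\|_{\L^2}^2+\tfrac1{\mu\sigma}\|\nabla\times(\B_1-\B_2)\|_{\L^2}^2\Bigr].
\]
The $p$-Laplace term is nonnegative by the standard monotonicity of $\xi\mapsto|\xi|^{p-2}\xi$ for $p\ge2$. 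Consequently the whole expression is bounded below by $\frac{1}{\Delta t}\min(1,\frac1{\mu\rho})\|U_1-U_2\|_{\L^2}^2$, so $\mathcal A$ is strictly monotone. Taking $U_2=0$ (note $\mathcal A(0)=0$) gives coercivity, $\langle\mathcal A(U),U\rangle\ge c\|U\|^2$, where any norm on $V_h$ may be used since all norms on $V_h$ are equivalent. In addition, $\mathcal A$ is continuous on $V_h$, because $a_0$ is continuous on finite-dimensional spaces.

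\textbf{Existence and uniqueness.} Existence follows from the finite-dimensional Browder--Minty theorem, or equivalently the Brouwer fixed point corollary: if $\langle\mathcal A(U)-F,U\rangle>0$ on a sphere $\|U\|=R$, then $\mathcal A(U)=F$ has a zero inside the ball. This holds for $R>\|F\|/c$. Uniqueness follows from strict monotonicity, since two solutions give $0\ge c\|U_1-U_2\|^2$. Neither argument involves any relation between $\Delta t$ and $h$. The main obstacle I foresee is the cancellation step: one must verify carefully that the vector identity produces exact cancellation under the correct weighting. The weakly divergence-free property is not needed here; it only follows a posteriori, as noted after \eqref{eq-FEM-2}.
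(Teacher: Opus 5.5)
Your proposal follows essentially the paper's route. Existence comes from coercivity of the step-$m$ operator on the finite-dimensional space together with the Brouwer-type surjectivity result (Lemma~\ref{lemma-fixed point}); you weight the induction equation by $(\mu\rho)^{-1}$, just as the paper does with $H_h^m=(\u_h^m,\frac{1}{\mu\rho}\B_h^m)$. Uniqueness comes from monotonicity of $a_0$, skew-symmetry of $a_1$ and cancellation of the lagged coupling terms, as in Lemma~\ref{lem-uniqueness}. Packaging both into one strictly monotone operator with $\mathcal A(0)=0$ is a tidy variant, and it makes explicit the cancellation that the paper's coercivity step uses only tacitly.

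One computation needs fixing: your displayed identity has the wrong sign. By cyclic invariance of the scalar triple product,
\[
\bigl(\B_h^{m-1}\times(\nabla\times\B),\u\bigr)=+\bigl(\u\times\B_h^{m-1},\nabla\times\B\bigr).
\]
Your starting identity $(\mathbf a\times\mathbf b)\cdot\mathbf c=-(\mathbf c\times\mathbf b)\cdot\mathbf a$ is correct. However, replacing $(\u\times(\nabla\times\B))\cdot\B_h^{m-1}$ by $(\u\times\B_h^{m-1})\cdot(\nabla\times\B)$ swaps two factors and loses a sign. Taken literally, your version would make the two coupling contributions add up to $-\frac{2}{\mu\rho}(\u\times\B_h^{m-1},\nabla\times\B)$ rather than cancel.

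With the correct sign they cancel exactly against the $-\frac{1}{\mu\rho}(\u\times\B_h^{m-1},\nabla\times\B)$ term of the scaled induction equation. The rest of your argument then goes through unchanged: strict monotonicity from the $\frac{1}{\Delta t}$ mass terms, coercivity, continuity, and existence and uniqueness for all $\Delta t,h>0$.
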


\begin{proof}
Recall the equations \eqref{eq-FEM-1} and \eqref{eq-FEM-2}
    \begin{align}
   &\left(d_{t} \u_{h}^{m}, \mathbf{v}_{h}\right)+a_{1}\left(\u_{h}^{m-1}, \u_{h}^{m}, \mathbf{v}_{h}\right)+{\frac{\nu}{\rho}\left(a_{0}(\u^m_h),\nabla\v_h\right) }
  +\frac{1}{\mu\rho}\left(\mathbf{B}_{h}^{m-1} \times (\nabla \times \mathbf{B}_{h}^{m}), \mathbf{v}_{h}\right)=\left(\mathbf{g}^{m}, \mathbf{v}_{h}\right), \\ 
&\left(d_{t} \B_{h}^{m}, \mathbf{w}_{h}\right)+\frac{1}{\mu\sigma}\left(\nabla \times \B_{h}^{m},\nabla \times \mathbf{w}_{h}\right)-\left(\u_{h}^{m} \times \B_{h}^{m-1}, \nabla \times\mathbf{w}_{h}\right)=\left(\mathbf{f}^{m}, \mathbf{w}_{h}\right).
 \end{align}
 Let us denote 
 $$
 H^{m}_{h} :=\left(\u^{m}_{h},\frac{1}{\mu\rho}\B^{m}_{h}\right) \quad \mbox{and} \quad \phi_{h} :=\left(\v_{h},\C_{h}\right)
 $$
 Putting, $d_{t}\mathbf{\u}^{m}=\frac{\mathbf{\u}^{m}-\mathbf{\u}^{m-1}}{\Delta t}$ and 
 $d_{t}\mathbf{B}^{m}=\frac{\mathbf{B}^{m}-\mathbf{B}^{m-1}}{\Delta t}$ we obtain,
 \begin{align}
    & \underbrace{\frac{1}{\Delta t}(\u^{m}_{h},\v_{h})+a_{1}\left(\u^{m-1}_{h},\u^{m}_{h},\v_{h}\right)+{\frac{\nu}{\rho}\left(a_{0}(\u^m_h),\nabla\v_h\right)} +\frac{1}{\mu\rho}\left(\B^{m-1}_{h}\times(\nabla \times\B^{m}_{h}), \v_{h}\right)}_{{\mbox{LHS}_{1}} (H^{m}_h,\v_{h})} \nonumber
    \\ &\qquad = \underbrace{\frac{1}{\Delta t}(\u^{m-1}_{h},\v_{h})+(\mathbf{g}^{m},\v_{h}),}_{{\mbox{RHS}_{1}} (H^{m-1}_h,\v_{h})} \label{eq-existence diff-1}
\end{align}
\begin{align}\label{eq-existence diff-2} 
     & \underbrace{\frac{1}{\Delta t}\left(\B^{m}_{h}, \C_{h}\right)+\frac{1}{\mu\sigma}\left(\nabla \times\B^{m}_{h},\nabla \times\C_{h}\right)-\left(\u_{h}^{m} \times \B_{h}^{m-1}, \nabla \times\mathbf{w}_{h}\right)}_{{\mbox{LHS}_{2}} (H^{m}_h,\C_{h})} = \underbrace{\frac{1}{\Delta t}\left(\B^{m-1}_{h}, \C_{h}\right)+\left(\mathbf{f}^{m}, \mathbf{w}_{h}\right)}_{{\mbox{RHS}_{2}} (H^{m-1}_h,\C_{h})}.
\end{align}
Adding the above two equations yields
{\small{
    \begin{align}
    \nonumber   & \frac{1}{\Delta t}\left(\u^{m}_{h},\v_{h}\right)+a_{1}\left(\u^{m-1}_{h},\u^{m}_{h},\v_{h}\right)+{\frac{\nu}{\rho}\left(a_{0}(\u^m_h),\nabla\v_h\right)}+\frac{1}{\mu\rho}\left(\B^{m-1}_{h}\times(\nabla \times\B^{m}_{h}), \v_{h}\right)+\frac{1}{\Delta t}(\B^{m}_{h}, \C_{h})\\&+\frac{1}{\mu\sigma}\left(\nabla \times\B^{m}_{h},\nabla \times\C_{h}\right)-\left(\u_{h}^{m} \times \B_{h}^{m-1}, \nabla \times\mathbf{w}_{h}\right)=\frac{1}{\Delta t}\left(\u^{m-1}_{h},\v_{h}\right)+(\mathbf{g}^{m},\v_{h})+\frac{1}{\Delta t}\left(\B^{m-1}_{h}, \C_{h}\right)+\left(\mathbf{f}^{m}, \mathbf{w}_{h}\right).
    \end{align}
    }}
   We define the following:
     \begin{align*}
     \left(F(H^{m}_{h}),\phi_{h}\right) &:=(\mbox{LHS}_{1}(H^{m}_{h}),\v_{h})+(\mbox{LHS}_{2}(H^{m}_{h}),\C_{h})\\ &=\frac{1}{\Delta t}(\u^{m}_{h},\v_{h})+a_{1}\left(\u^{m-1}_{h},\u^{m}_{h},\v_{h}\right)+{\frac{\nu}{\rho}\left(a_{0}(\u^m_h),\nabla\v_h\right)}+\frac{1}{\mu\rho}\left(\B^{m-1}\times(\nabla \times\B^{m}_{h}), \v_{h}\right)\\ &\qquad+\frac{1}{\Delta t}(\B^{m}_{h}, \C_{h})+\frac{1}{\mu\sigma}\left(\nabla \times\B^{m}_{h},\nabla \times\C_{h}\right)-\left(\u_{h}^{m} \times \B_{h}^{m-1}, \nabla \times\mathbf{w}_{h}\right)\,,\\
     (L(H^{m-1}_{h}),\phi_{h}) &:=(\mbox{RHS}_{1}(H^{m-1}_{h}),\v_{h})+(\mbox{RHS}_{2}(H^{m-1}_{h}),\C_{h})
    \\ &=\frac{1}{\Delta t}(\u^{m-1}_{h},\v_{h})+(\mathbf{g}^{m},\v_{h})+\frac{1}{\Delta t}(\B^{m-1}_{h}, \C_{h})+\left(\mathbf{f}^{m}, \mathbf{w}_{h}\right).
 \end{align*}

\noindent
 One can observe $\frac{(F(H^{m}_{h}),\phi_{h})}{\|H^{m}_{h}\|_{\L^2}} =\frac{(L(H^{m-1}_{h}),\phi_{h})}{\|H^{m}_{h}\|_{\L^2}}$. Thus,
 \begin{align}
       (\mbox{LHS}_{1}(H^{m}_{h}),\v_{h})+(\mbox{LHS}_{2}(H^{m}_{h}),\C_{h})&=(\mbox{RHS}_{1}(H^{m-1}_{h}),\v_{h})+(\mbox{RHS}_{2}(H^{m-1}_{h}),\C_{h}).\label{eq-existence 1}
       \end{align}
       Now, take $\phi_{h}=H^{m}_{h}$ and observe
 $$
 \frac{(F(H^{m}_{h}),H^{m}_{h})}{\|H^{m}_{h}\|_{\L^2}}\ge\frac{\|\u^{m}_{h}\|^{2}_{\L^2}+\Delta t\|\nabla \u^{m}_{h}\|^{p}_{\L^p}+\frac{1}{\mu\rho}\|\B^{m}_{h}\|^{2}_{\L^2}+k\Delta t\|\nabla \times\B^{m}_{h}\|^{2}_{\L^2}}{\|H^{m}_{h}\|_{\L^2}}\ge\frac{\|H^{m}_{h}\|^{2}_{\L^2}}{\|H^{m}_{h}\|_{\L^2}}\rightarrow \infty\,,
 $$
 as $\|H^{m}_{h}\|_{\L^2}\rightarrow \infty$,
 where, $\|H^{m}_{h}\|^{2}_{\L^2}=\|\u^{m}_{h}\|^{2}_{\L^2}+\frac{1}{\mu\rho}\|\B^{m}_{h}\|^{2}_{\L^2}$ and $k=\frac{1}{\mu^2\rho\sigma}$.\\
By taking $\phi_{h}=(\v_{h},0)$ in \eqref{eq-existence 1}, we infer
$$(\mbox{LHS}_{1}(H^{m}_{h}),\v_{h})=(\mbox{RHS}_{1}(H^{m-1}_{h}),\v_{h}).$$
 If  $\phi_{h}=(0,\C_{h})$ in \eqref{eq-existence 1}, then we have
 $$(\mbox{LHS}_{1}(H^{m}_{h}),\C_{h})=(\mbox{RHS}_{1}(H^{m-1}_{h}),\C_{h}).$$
Finally, by the help of Lemma \ref{lemma-fixed point}, we can show that there exists a solution for the scheme \eqref{eq-FEM-1}--\eqref{eq-FEM-2}.
\end{proof}

\vspace{0.2cm}
\noindent
The semi-implicit structure of
\eqref{eq-FEM-1}--\eqref{eq-FEM-2} also yields unconditional uniqueness.
Indeed, the convective term is evaluated at the previous velocity
$\mathbf{u}_h^{m-1}$, while both electromagnetic coupling terms involve the
previous magnetic field $\mathbf{B}_h^{m-1}$. Consequently, after subtracting
two discrete solutions, the convective term vanishes by skew-symmetry, the
electromagnetic terms cancel, and the remaining nonlinear term is controlled
by the monotonicity of the $p$-Laplace operator.

\begin{lemma}[Uniqueness of the discrete solution]
\label{lem-uniqueness}
Let $m\in\{1,\ldots,N\}$ and $p\ge2$. For given
$(\mathbf{u}_h^{m-1},\mathbf{B}_h^{m-1})$ and data
$\mathbf{g}^m,\mathbf{f}^m$, the scheme
\eqref{eq-FEM-1}--\eqref{eq-FEM-2} has at most one solution in
$\mathcal X_{0h}\times\mathcal Y_h$, for arbitrary $\Delta t>0$ and $h>0$.
\end{lemma}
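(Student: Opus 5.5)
Suppose $(\u_1,\B_1)$ and $(\u_2,\B_2)$ in $\mathcal X_{0h}\times\mathcal Y_h$ both solve \eqref{eq-FEM-1}--\eqref{eq-FEM-2} with the same previous step $(\u_h^{m-1},\B_h^{m-1})$ and the same data. Set $\mathbf{e}=\u_1-\u_2\in\mathcal X_{0h}$ and $\mathbf{E}=\B_1-\B_2\in\mathcal Y_h$. Subtracting the two momentum equations, the terms $\frac{1}{\Delta t}(\u_h^{m-1},\v_h)$ and $(\mathbf g^m,\v_h)$ cancel. The convective and Lorentz terms are linear in the unknown, because the first argument is frozen at step $m-1$. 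This gives
\[
\frac{1}{\Delta t}(\mathbf{e},\v_h)+a_1(\u_h^{m-1},\mathbf{e},\v_h)+\frac{\nu}{\rho}\big(a_0(\u_1)-a_0(\u_2),\nabla\v_h\big)+\frac{1}{\mu\rho}\big(\B_h^{m-1}\times(\nabla\times\mathbf{E}),\v_h\big)=0 .
\]
Subtracting the induction equations in the same way gives
\[
\frac{1}{\Delta t}(\mathbf{E},\C_h)+\frac{1}{\mu\sigma}(\nabla\times\mathbf{E},\nabla\times\C_h)-(\mathbf{e}\times\B_h^{m-1},\nabla\times\C_h)=0 .
\]

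Next I test the first equation with $\v_h=\mathbf{e}$ and the second with $\C_h=\frac{1}{\mu\rho}\mathbf{E}$, and add. The convective term vanishes by the skew-symmetry \eqref{eq-a1-skew}, which needs no divergence constraint, so it applies on $\mathcal X_{0h}$. The coupling terms cancel by the vector identity $(\mathbf a\times\mathbf b)\cdot\mathbf c=-(\mathbf c\times\mathbf b)\cdot\mathbf a$. With $\mathbf a=\B_h^{m-1}$, $\mathbf b=\nabla\times\mathbf{E}$, $\mathbf c=\mathbf{e}$, it gives
\[
(\B_h^{m-1}\times(\nabla\times\mathbf{E}),\mathbf{e})=(\mathbf{e}\times\B_h^{m-1},\nabla\times\mathbf{E}),
\]
which, after the factor $\frac{1}{\mu\rho}$, is exactly the negative of the induction coupling term. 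What remains is
\[
\frac{1}{\Delta t}\|\mathbf{e}\|_{\L^2}^2+\frac{1}{\mu\rho\Delta t}\|\mathbf{E}\|_{\L^2}^2+\frac{\nu}{\rho}\big(a_0(\u_1)-a_0(\u_2),\nabla\mathbf{e}\big)+\frac{1}{\mu^2\rho\sigma}\|\nabla\times\mathbf{E}\|_{\L^2}^2=0 .
\]

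The one genuinely nonlinear point is the sign of the $p$-Laplace term. Here I use the standard strong monotonicity inequality, valid for $p\ge2$:
\[
\big(|\xi|^{p-2}\xi-|\eta|^{p-2}\eta\big):(\xi-\eta)\ge c_p|\xi-\eta|^p\ge 0 .
\]
It can be derived by writing the difference as $\int_0^1\frac{d}{ds}\big(|\eta+s(\xi-\eta)|^{p-2}(\eta+s(\xi-\eta))\big)\,ds$ and bounding the Hessian of $\frac1p|\cdot|^p$ from below. Integrating this inequality over $\mathbb D$ shows that the $a_0$ term is nonnegative. Every term in the identity is therefore nonnegative, and since $\Delta t>0$ this forces $\mathbf{e}=0$ and $\mathbf{E}=0$.

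No inverse inequality or relation between $\Delta t$ and $h$ enters, so uniqueness holds for arbitrary $\Delta t$ and $h$. The main care point is checking that the coupling terms cancel exactly with the weights $1$ and $\frac{1}{\mu\rho}$, including the sign conventions in \eqref{eq-FEM-1}--\eqref{eq-FEM-2}. If the weights did not match, I would rescale the test function $\C_h$ accordingly, which is harmless because each difference equation is linear in its test function.
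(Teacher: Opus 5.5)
Your proposal is correct and follows the paper's proof essentially step for step: subtract the two solutions, test with $\mathbf{e}$ and $(\mu\rho)^{-1}\mathbf{E}$, remove the convective term by skew-symmetry, cancel the coupling terms via the scalar triple product, and conclude from the monotonicity of the $p$-Laplacian. Two small remarks: the displayed cancellation is most directly the cyclic identity $(\mathbf{a}\times\mathbf{b})\cdot\mathbf{c}=(\mathbf{c}\times\mathbf{a})\cdot\mathbf{b}$, rather than the identity you cite (your display is nonetheless correct), and, as you observe, nonnegativity of the $a_0$ term already suffices, whereas the paper records the strong-monotonicity constant $2^{2-p}$.
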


\begin{proof}
Let $(\mathbf{u}_1,\mathbf{B}_1)$ and
$(\mathbf{u}_2,\mathbf{B}_2)$ be two solutions corresponding to the same
data at the time level $m$, and define
\[
\mathbf{e}_{u}:=\mathbf{u}_1-\mathbf{u}_2,
\qquad
\mathbf{e}_{B}:=\mathbf{B}_1-\mathbf{B}_2.
\]
Then $(\mathbf{e}_{u},\mathbf{e}_{B})
\in\mathcal X_{0h}\times\mathcal Y_h.$ Subtracting the two momentum equations and testing the resulting identity
with $\mathbf{e}_{u}$ gives
\begin{align*}
&\frac{1}{\Delta t}\|\mathbf{e}_{u}\|_{\L^2}^{2}
+a_1\bigl(\mathbf{u}_h^{m-1},\mathbf{e}_{u},\mathbf{e}_{u}\bigr)
+\frac{\nu}{\rho}
 \bigl(a_0(\mathbf{u}_1)-a_0(\mathbf{u}_2),
       \nabla\mathbf{e}_{u}\bigr) + \frac{1}{\mu\rho}
 \bigl(
 \mathbf{B}_h^{m-1}\times(\nabla\times\mathbf{e}_{B}),
 \mathbf{e}_{u}
 \bigr)
=0.
\end{align*}
Similarly, subtracting the two induction equations and testing with
$(\mu\rho)^{-1}\mathbf{e}_{B}$ yields
\begin{align*}
\frac{1}{\mu\rho\,\Delta t}\|\mathbf{e}_{B}\|_{\L^2}^{2}
+\frac{1}{\mu^2\rho\sigma}
 \|\nabla\times\mathbf{e}_{B}\|_{\L^2}^{2}
-\frac{1}{\mu\rho}
 \bigl(
 \mathbf{e}_{u}\times\mathbf{B}_h^{m-1},
 \nabla\times\mathbf{e}_{B}
 \bigr)
=0.
\end{align*}
\noindent
By the skew-symmetry of $a_1$ in its last two arguments,
\[
a_1\bigl(\mathbf{u}_h^{m-1},
         \mathbf{e}_{u},\mathbf{e}_{u}\bigr)=0.
\]
Moreover, the scalar triple-product identity gives
\[
\bigl(
\mathbf{B}_h^{m-1}\times(\nabla\times\mathbf{e}_{B}),
\mathbf{e}_{u}
\bigr)
=
\bigl(
\mathbf{e}_{u}\times\mathbf{B}_h^{m-1},
\nabla\times\mathbf{e}_{B}
\bigr).
\]
Hence the electromagnetic coupling terms cancel when the two tested
equations are added. We therefore obtain
\begin{align}
\label{eq-discrete-uniqueness-energy}
&\frac{1}{\Delta t}\|\mathbf{e}_{u}\|_{\L^2}^{2}
+\frac{1}{\mu\rho\,\Delta t}\|\mathbf{e}_{B}\|_{\L^2}^{2}
+\frac{\nu}{\rho}
 \bigl(
 a_0(\mathbf{u}_1)-a_0(\mathbf{u}_2),
 \nabla\mathbf{e}_{u}
 \bigr) +\frac{1}{\mu^2\rho\sigma}
 \|\nabla\times\mathbf{e}_{B}\|_{\L^2}^{2}
=0.
\end{align}
\noindent
For $p\ge2$, the mapping $\boldsymbol{\xi}\longmapsto
|\boldsymbol{\xi}|^{p-2}\boldsymbol{\xi}$ is strongly monotone. In particular,
\[
\bigl(
a_0(\mathbf{u}_1)-a_0(\mathbf{u}_2),
\nabla\mathbf{e}_{u}
\bigr)
\ge
2^{2-p}\|\nabla\mathbf{e}_{u}\|_{\L^p}^{p}.
\]
It follows from \eqref{eq-discrete-uniqueness-energy} that
\begin{align*}
&\frac{1}{\Delta t}\|\mathbf{e}_{u}\|_{\L^2}^{2}
+\frac{1}{\mu\rho\,\Delta t}\|\mathbf{e}_{B}\|_{\L^2}^{2}
+\frac{\nu\,2^{2-p}}{\rho}
 \|\nabla\mathbf{e}_{u}\|_{\L^p}^{p} +\frac{1}{\mu^2\rho\sigma}
 \|\nabla\times\mathbf{e}_{B}\|_{\L^2}^{2}
\le0.
\end{align*}
All terms on the left-hand side are non-negative. Therefore,
\[
\|\mathbf{e}_{u}\|_{\L^2}
=
\|\mathbf{e}_{B}\|_{\L^2}
=0,
\]
and hence
\[
\mathbf{u}_1=\mathbf{u}_2,
\qquad
\mathbf{B}_1=\mathbf{B}_2.
\]
This proves uniqueness for arbitrary $\Delta t>0$ and $h>0$.
\end{proof}

Combining Theorem~\ref{thm-existence} with
Lemma~\ref{lem-uniqueness}, the fully discrete scheme
\eqref{eq-FEM-1}--\eqref{eq-FEM-2} is well posed at every time level, without
any restriction coupling $\Delta t$ and $h$.

\vspace{0.2cm}

\subsection{Stability Analysis of the Finite Element Scheme \texorpdfstring{\eqref{eq-FEM-1}--\eqref{eq-FEM-2}}{}}

\vspace{0.2cm}

\noindent
We now show that the solution of the scheme \eqref{eq-FEM-1}--\eqref{eq-FEM-2} enjoys a stability property, which holds regardless of the sizes of $h$ and $\Delta t$, will play an important role in proving the convergence of the fully discrete solution.
\begin{lemma}\label{bounds}
    Let $\left(\u_{h}^{m}, \mathbf{B}_{h}^{m}\right)$ be the solutions of fully discrete scheme \eqref{eq-FEM-1}--\eqref{eq-FEM-2}. Then, there exists a constant $C\equiv C\left(\mathbf{g}^m,\mathbf{f}^m,\|\u^{0}_{h}\|_{\L^2}^2,\|\B^{0}_{h}\|^2_{\L^2}, T, \nu,\sigma,\mu \right)$, which is independent of $h,\Delta t$ 
    such that 
\begin{align}
\max \limits_{0 \leq m \leq N}\left[\left\|\u_{h}^{m}\right\|_{\mathbf{L}^2}^{2}+\left\|\B_{h}^{m}\right\|_{\mathbf{L}^2}^{2}\right] \leq C, \label{eq-bounds-1}\\
\Delta t \sum_{m=1}^{N}\left({\frac{\nu}{\rho}\left\|\nabla \u_{h}^{m} \right\|_{\mathbf{L}^p}^{p}}+\frac{1}{\mu^{2}\rho\sigma}\left\| \nabla \times \B_{h}^{m}\right\|_{\mathbf{L}^2}^{2}\right) \leq C, \label{eq-bounds-2}\\
\sum_{m=1}^{N}\left(\left\|\u_{h}^{m}-\u_{h}^{m-1}\right\|_{\mathbf{L}^2}^{2}+\frac{1}{\mu\rho}\left\|\B_{h}^{m}-\B_{h}^{m-1}\right\|_{\mathbf{L}^2}^{2}\right) \leq C.\label{eq-bounds-3}
\end{align}
\end{lemma}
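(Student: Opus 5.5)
The plan is a discrete energy argument: test the scheme at level $m$ with the discrete solution itself, use the algebraic identity for the backward difference, cancel the coupling terms, and then sum over $m$ and apply a discrete Gronwall argument.

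\textbf{Step 1 (testing).} In \eqref{eq-FEM-1} we take $\mathbf{v}_h=2\Delta t\,\u_h^m\in\mathcal X_{0h}$, and in \eqref{eq-FEM-2} we take $\C_h=\frac{2\Delta t}{\mu\rho}\B_h^m\in\mathcal Y_h$. The time-difference terms are handled by the identity
\[
2(\mathbf{a}-\mathbf{b},\mathbf{a})=\|\mathbf{a}\|_{\L^2}^2-\|\mathbf{b}\|_{\L^2}^2+\|\mathbf{a}-\mathbf{b}\|_{\L^2}^2 .
\]
This produces the increments $\|\u_h^m\|_{\L^2}^2-\|\u_h^{m-1}\|_{\L^2}^2$ together with the numerical dissipation $\|\u_h^m-\u_h^{m-1}\|_{\L^2}^2$, and the analogous quantities for $\B_h$ weighted by $(\mu\rho)^{-1}$.

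\textbf{Step 2 (cancellations and dissipation).} The convective term vanishes, since $a_1(\u_h^{m-1},\u_h^m,\u_h^m)=0$ by \eqref{eq-a1-skew}. The two electromagnetic terms cancel exactly as in the proof of Lemma \ref{lem-uniqueness}, by the scalar triple product identity
\[
\bigl(\B_h^{m-1}\times(\nabla\times\B_h^m),\u_h^m\bigr)=\bigl(\u_h^m\times\B_h^{m-1},\nabla\times\B_h^m\bigr).
\]
The viscous term gives $2\Delta t\frac{\nu}{\rho}\|\nabla\u_h^m\|_{\L^p}^p$. The constant in front of it depends on how one reads the double factor $\nu/\rho$ in \eqref{eq-FEM-1}, but it is harmless. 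The magnetic diffusion gives $\frac{2\Delta t}{\mu^2\rho\sigma}\|\nabla\times\B_h^m\|_{\L^2}^2$.

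\textbf{Step 3 (source terms).} We bound the forcing by Cauchy--Schwarz and Young:
\[
2\Delta t(\mathbf{g}^m,\u_h^m)\le \Delta t\|\mathbf{g}^m\|_{\L^2}^2+\Delta t\|\u_h^m\|_{\L^2}^2,
\]
and similarly for $\frac{2\Delta t}{\mu\rho}(\mathbf{f}^m,\B_h^m)$. This deliberately avoids Poincaré or Sobolev inequalities, so the dissipation terms are kept intact.

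\textbf{Step 4 (summation and Gronwall).} Set
\[
E^m=\|\u_h^m\|_{\L^2}^2+\tfrac{1}{\mu\rho}\|\B_h^m\|_{\L^2}^2 .
\]
Summing over $m=1,\dots,n$ gives
\[
E^n+\sum_{m=1}^{n}\bigl(\text{increments}\bigr)+\Delta t\sum_{m=1}^{n}\bigl(\text{dissipation}\bigr)\le E^0+C(\mathbf f,\mathbf g)+\Delta t\sum_{m=1}^n E^m ,
\]
where the bound on the source terms uses the assumed estimate on $\Delta t\sum_m(\|\mathbf g^m\|_{\L^2}^2+\frac{1}{\mu\rho}\|\mathbf f^m\|_{\L^2}^2)$.

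\textbf{Main obstacle.} The only delicate point is the term $E^m$ on the right-hand side, which is implicit. For $\Delta t\le 1/2$ we absorb $\Delta t E^n$ into the left-hand side and apply the discrete Gronwall lemma, obtaining $E^n\le C e^{CT}$. This yields \eqref{eq-bounds-1}, and reinserting it into the summed inequality gives \eqref{eq-bounds-2} and \eqref{eq-bounds-3}.

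To avoid any restriction on $\Delta t$, one could instead use the Poincaré inequality $\|\u_h^m\|_{\L^2}\le C\|\nabla\u_h^m\|_{\L^p}$ and absorb the velocity part of the source term into the $p$-dissipation via Young's inequality with exponents $p,p'$. The magnetic part, however, has only curl control. Absorbing it would need a discrete Friedrichs inequality on $\mathcal Y_{0h}$ (the discrete solutions are discretely divergence free by the remark after the scheme), which is the step I expect to require care.

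The simplest fallback is as follows. For $\Delta t\le\Delta t_0$ we use the Gronwall argument above. For larger $\Delta t$ there are at most $T/\Delta t_0$ steps, and a single-step estimate of the form $(1-\Delta t)E^m\le E^{m-1}+\dots$ is not available, so in that regime we would rely on the Friedrichs route instead.
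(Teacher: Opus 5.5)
Your proposal is correct and follows the paper's proof step for step. It uses the same test functions $2\Delta t\,\u_h^m$ and $\frac{2\Delta t}{\mu\rho}\B_h^m$, the same telescoping identity, skew-symmetry and triple-product cancellations, Young's inequality on the forcing, and summation followed by the discrete Gronwall lemma; the smallness condition on $\Delta t$ you state for the implicit Gronwall step is also needed by the paper's argument, which leaves it unstated. To get the bound for every $\Delta t>0$ you do not need the Friedrichs route: bound the forcing by $2\Delta t\,\phi^m\sqrt{E^m}$ with $\phi^m:=\bigl(\|\mathbf g^m\|_{\L^2}^2+\tfrac{1}{\mu\rho}\|\mathbf f^m\|_{\L^2}^2\bigr)^{1/2}$, sum, and take the maximum over $n$ using $2\sqrt{T}\,\Phi\,M^{1/2}\le\tfrac12 M+2T\Phi^2$ with $M=\max_n E^n$ and $\Phi^2=\Delta t\sum_m(\phi^m)^2$, which gives $M\le 2E^0+4T\Phi^2$ with no step-size restriction.
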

\begin{proof}
    Placing  $\v_{h}=2\Delta t \u_{h}^{m}\in \mathcal{X}_{0h}$, 
    and $\mathbf{w}_{h}=2\Delta t(\mu\rho)^{-1}\B_{h}^{m} \in \mathcal{Y}_{0h}$ in (\ref{eq-FEM-1}) and (\ref{eq-FEM-2}) respectively, we get
\begin{align}
  \label{eq-FEM-stability-u}  &\left(d_{t} \u_{h}^{m}, 2\Delta t \u_{h}^{m}\right)+a_{1}\left(\u_{h}^{m-1}, \u_{h}^{m}, 2\Delta t \u_{h}^{m}\right)+{\frac{\nu}{\rho}\left(a_{0}(\u^m_h),\nabla\left(2\Delta t \u_{h}^{m}\right)\right)} \nonumber
\\ &\qquad 
+\frac{1}{\mu\rho}\left(\mathbf{B}_{h}^{m-1} \times (\nabla \times \mathbf{B}_{h}^{m}), 2\Delta t \u_{h}^{m}\right) = \left(\mathbf{g}^{m}, 2\Delta t\u^{m}_{h}\right), \\
\label{eq-FEM-stability-B}&\left(d_{t} \B_{h}^{m}, 2\Delta t(\mu\rho)^{-1}\B_{h}^{m}\right)+\frac{1}{\mu\sigma}\left(\nabla \times \B_{h}^{m},\nabla \times( 2\Delta t(\mu\rho)^{-1}\B_{h}^{m})\right) \nonumber
 \\ &\qquad -\left(\u_{h}^{m} \times \B_{h}^{m-1}, \nabla \times(2\Delta t(\mu\rho)^{-1}\B_{h}^{m})\right)=\left(\mathbf{f}^{m}, 2\Delta t(\mu\rho)^{-1}\B_{h}^{m}\right).
 \end{align}
Dealing with the first term on the left-hand side of \eqref{eq-FEM-stability-u} we get
 {\small{
\begin{align}
    2\Delta t \left(d_{t} \u_{h}^{m},  \u_{h}^{m}\right)=2\Delta t \left(\frac{ \u_{h}^{m}-\u_{h}^{m-1}}{\Delta t},  \u_{h}^{m}\right) =2\left(( \u_{h}^{m}-\u_{h}^{m-1}),  \u_{h}^{m}\right)=\left\|\u_{h}^{m}\right\|_{\mathbf{L}^2}^{2}+\left\|\u_{h}^{m}-\u_{h}^{m-1}\right\|_{\mathbf{L}^2}^{2}-\left\|\u_{h}^{m-1}\right\|_{\mathbf{L}^2}^{2}\,.
\end{align}
}}
The second term on the left-hand side of \eqref{eq-FEM-stability-u} becomes
\begin{align}
 &a_{1}\left(\u_{h}^{m-1}, \u_{h}^{m}, \u_{h}^{m}\right)=\frac{1}{2}\left\{\int_{\mathbb{D}}((\u^{m-1}_{h}\cdot\nabla)\u^{m}_{h})\cdot\u^{m}_{h}-\int_{\mathbb{D}}((\u^{m-1}_{h}\cdot\nabla)\u^{m}_{h})\cdot\u^{m}_{h}\right\}=0. 
 \end{align}
Using the above two estimates in \eqref{eq-FEM-stability-u} we get
 \begin{align}
     \label{eq-FEM-stability-u-2}
     &\left\|\u_{h}^{m}\right\|_{\mathbf{L}^2}^{2}+\left\|\u_{h}^{m}-\u_{h}^{m-1}\right\|_{\mathbf{L}^2}^{2}-\left\|\u_{h}^{m-1}\right\|_{\mathbf{L}^2}^{2}+\frac{2\Delta t \nu}{\rho}\|\nabla \u^{m}_{h}\|^{p}_{\L^p}\nonumber\\
     &\qquad+\frac{2\Delta t}{\mu\rho}\left(\mathbf{B}_{h}^{m-1} \times (\nabla \times \mathbf{B}_{h}^{m}), \u_{h}^{m}\right) =2\Delta t \left(\mathbf{g}^{m}, \mathbf{u}^{m}_{h}\right).
 \end{align}
The magnetic equation \eqref{eq-FEM-stability-B} gives 
 \begin{align}\label{eq-B-in-stability-FEm}
  &2\Delta t(\mu\rho)^{-1}\left(d_{t} \B_{h}^{m},\B_{h}^{m}\right)+ 2\Delta t(\mu\rho)^{-1}\frac{1}{\mu\sigma}\left(\nabla \times \B_{h}^{m}, \nabla \times\B_{h}^{m}\right) \nonumber
  \\ &\qquad- 2\Delta t(\mu\rho)^{-1}\left(\u_{h}^{m} \times \B_{h}^{m-1}, \nabla \times\B_{h}^{m}\right)= 2\Delta t(\mu\rho)^{-1}\left(\mathbf{f}^{m},\B_{h}^{m}\right)\,.
\end{align}
By the definition of $d_{t} \B_{h}^{m}$, the first term on the left-hand side of \eqref{eq-B-in-stability-FEm} is written as
\begin{align}
    \frac{2\Delta t}{\mu\rho}\left(d_{t} \B_{h}^{m},\B_{h}^{m}\right)=
\frac{1}{\mu\rho}\left\{\left\|\B_{h}^{m}\right\|_{\mathbf{L}^2}^{2}+\left\|\B_{h}^{m}-\B_{h}^{m-1}\right\|_{\mathbf{L}^2}^{2}-\left\|\B_{h}^{m-1}\right\|_{\mathbf{L}^2}^{2}\right\}.
\end{align}
Placing this term in \eqref{eq-B-in-stability-FEm}, we get
\begin{align}
    \label{eq-FEM-stability-B-2}
 \nonumber  & \frac{1}{\mu\rho}\left\{\left\|\B_{h}^{m}\right\|_{\mathbf{L}^2}^{2}+\left\|\B_{h}^{m}-\B_{h}^{m-1}\right\|_{\mathbf{L}^2}^{2}-\left\|\B_{h}^{m-1}\right\|_{\mathbf{L}^2}^{2}\right\}+\frac{2\Delta t}{\mu^2\sigma\rho}\|\nabla\times\B^{m}_{h}\|_{\L^2}^2\\&\qquad- \frac{2\Delta t}{\mu\rho}\left(\u_{h}^{m} \times \B_{h}^{m-1}, \nabla \times\B_{h}^{m}\right)=\frac{2\Delta t}{\mu\rho} \left(\mathbf{f}^{m},\B_{h}^{m}\right)\,.
\end{align}
Adding \eqref{eq-FEM-stability-u-2} and \eqref{eq-FEM-stability-B-2} yields
\begin{align}
    &\left\|\u_{h}^{m}\right\|_{\mathbf{L}^2}^{2}+\left\|\u_{h}^{m}-\u_{h}^{m-1}\right\|_{\mathbf{L}^2}^{2}-\left\|\u_{h}^{m-1}\right\|_{\mathbf{L}^2}^{2}+\frac{1}{\mu\rho}\left(\left\|\B_{h}^{m}\right\|_{\mathbf{L}^2}^{2}+\left\|\B_{h}^{m}-\B_{h}^{m-1}\right\|_{\mathbf{L}^2}^{2}-\left\|\B_{h}^{m-1}\right\|_{\mathbf{L}^2}^{2}\right) \nonumber
    \\ & +{2\Delta t\frac{\nu}{\rho} \left(a_{0}(\u^m_h),\nabla\u^m_h\right)}  + 2\Delta t\frac{1}{\mu^2\sigma\rho}\left(\nabla \times \B_{h}^{m},\nabla \times\B_{h}^{m}\right)=2\Delta t \left(\left(\mathbf{g}^{m}, \u_{h}^{m}\right)+\frac{1}{\mu\rho}\left(\mathbf{f}^{m},\B_{h}^{m}\right)\right).
\end{align}
Now, summing it up from $m=1$ to $N$ we get 
\begin{align}
&\left\| \u_{h}^{N}\right\|_{\mathbf{L}^2}^{2}+\frac{1}{\mu\rho}\left\|\B^N_h\right\|_{\mathbf{L}^2}^{2}+\sum_{m=1}^{N}\left(\left\|\u_{h}^{m}-\u_{h}^{m-1}\right\|_{\mathbf{L}^2}^{2}+\frac{1}{\mu\rho}\left\|\B_{h}^{m}-\B_{h}^{m-1}\right\|_{\mathbf{L}^2}^{2}\right) \nonumber
\\ &\quad+2 \Delta t \sum_{m=1}^{N}\left({\frac{\nu}{\rho} \left\| \nabla\u_{h}^{m} \right\|_{\mathbf{L}^p}^{p}}+\frac{1}{\mu^{2}\rho\sigma}\left\|\nabla \times \B_{h}^{m}\right\|_{\mathbf{L}^2}^{2}\right) \nonumber \\
&=\left\|\u_{h}^{0}\right\|_{\mathbf{L}^2}^{2}+\frac{1}{\mu\rho}\left\|\B_{h}^{0}\right\|_{\mathbf{L}^2}^{2}+2 \Delta t \sum_{m=1}^{N}\left(\left(\mathbf{g}^{m}, \u_{h}^{m}\right)+\frac{1}{\mu\rho}\left(\mathbf{f}^{m}, \B_{h}^{m}\right)\right).
\end{align}
Then, we use Young's inequality to get 
\begin{align}{\label{eq-for bounds}}
&\left\| \u_{h}^{N}\right\|_{\mathbf{L}^2}^{2}+\frac{1}{\mu\rho}\left\|\B^N_h\right\|_{\mathbf{L}^2}^{2}+\sum_{m=1}^{N}\left(\left\|\u_{h}^{m}-\u_{h}^{m-1}\right\|_{\mathbf{L}^2}^{2}+\frac{1}{\mu\rho}\left\|\B_{h}^{m}-\B_{h}^{m-1}\right\|_{\mathbf{L}^2}^{2}\right) \nonumber
\\ &\quad+2 \Delta t \sum_{m=1}^{N}\left({\frac{\nu}{\rho} \left\| \nabla\u_{h}^{m} \right\|_{\mathbf{L}^{p}}^{p}}+\frac{1}{\mu^{2}\rho\sigma}\left\|\nabla \times \B_{h}^{m}\right\|_{\mathbf{L}^2}^{2}\right) \nonumber \\
&\le\left\|\u_{h}^{0}\right\|_{\mathbf{L}^2}^{2}+\frac{1}{\mu\rho}\left\|\B_{h}^{0}\right\|_{\mathbf{L}^2}^{2}+\Delta t \sum_{m=1}^{N}\left(\|\mathbf{g}^{m}\|^2_{\L^2}+\frac{1}{\mu\rho}\|\mathbf{f}^{m}\|_{\L^2}^2 \right)+\Delta t \sum_{m=1}^{N}\|\u^{m}_{h}\|_{\L^2}^2+\Delta t \sum_{m=1}^{N}\frac{1}{\mu\rho}\|\B^{m}_{h}\|_{\L^2}^2.
\end{align}
   By the help of the \eqref{eq-for bounds}  and using the discrete Grownwall Lemma we have the following inequality 
   \begin{align}
    \nonumber &\left\| \u_{h}^{N}\right\|_{\mathbf{L}^2}^{2}+\frac{1}{\mu\rho}\left\|\B^N_h\right\|_{\mathbf{L}^2}^{2}
    +2 \Delta t \sum_{m=1}^{N}\left({\frac{\nu}{\rho} \left\| \nabla\u_{h}^{m} \right\|_{\mathbf{L}^p}^{p}}+\frac{1}{\mu^{2}\rho\sigma}\left\|\nabla \times \B_{h}^{m}\right\|_{\mathbf{L}^2}^{2}\right)\\ &\le C \left\{\left\|\u_{h}^{0}\right\|_{\mathbf{L}^2}^{2}+\frac{1}{\mu\rho}\left\|\B_{h}^{0}\right\|_{\mathbf{L}^2}^{2}+\Delta t \sum_{m=1}^{N}\left(\|\mathbf{g}^{m}\|^2_{\L^2}+\frac{1}{\mu\rho}\|\mathbf{f}^{m}\|_{\L^2}^2 \right)\right\}.
   \end{align}
%
This yields the required stability bounds.
\end{proof}

\vspace{0.3cm}

\section{Convergence Analysis of the Fully Discrete Scheme
\texorpdfstring{\eqref{eq-FEM-1}--\eqref{eq-FEM-2}}{}}
\label{sec-convergence}

In this section, we prove that, as $h,\Delta t\to0$, the fully discrete
solutions of \eqref{eq-FEM-1}--\eqref{eq-FEM-2} converge, up to a subsequence,
to a weak solution of
\eqref{1st weak form}--\eqref{2nd weeak equation} in the sense of
Definition~\ref{def-weak-solution}.

The stability estimates of Lemma~\ref{bounds} provide the required weak
compactness. To pass to the nonlinear convective term, however, strong
compactness of the velocity is needed. This is obtained from the time-translate
estimate of Lemma~\ref{time translate}, together with the Aubin--Lions
compactness theorem. For the magnetic field, Lemma~\ref{time derivative}
provides a bound on the discrete time derivative in a negative-order space.
Finally, the nonlinear $p$-Laplace flux is identified by Minty's monotonicity
argument.

\subsection{Interpolants and the main convergence result}
\label{subsec-conv-interp}
For $t\in(t_{m-1},t_m]$, $m=1,\ldots,N$, define the piecewise linear
interpolants
\begin{align*}
\mathbf{u}_{h,\Delta t}(t) =
\frac{t-t_{m-1}}{\Delta t}\mathbf{u}_h^m
+\frac{t_m-t}{\Delta t}\mathbf{u}_h^{m-1}, \qquad \mathbf{B}_{h,\Delta t}(t) =
\frac{t-t_{m-1}}{\Delta t}\mathbf{B}_h^m
+\frac{t_m-t}{\Delta t}\mathbf{B}_h^{m-1}.
\end{align*}
On the same interval, the piecewise constant extension will be as follows
\[
\mathring{\mathbf{u}}_{h,\Delta t}
=\mathbf{u}_h^m,
\qquad
\mathring{\mathbf{B}}_{h,\Delta t}
=\mathbf{B}_h^m,
\qquad
\mathring{\mathbf{f}}_{\Delta t}
=\mathbf{f}^m,
\qquad
\mathring{\mathbf{g}}_{\Delta t}
=\mathbf{g}^m, \quad \mbox{and} \quad \breve{\mathbf{u}}_{h,\Delta t}
=\mathbf{u}_h^{m-1},
\qquad
\breve{\mathbf{B}}_{h,\Delta t}
=\mathbf{B}_h^{m-1}.
\]
Thus,
\[
\mathring{\mathbf{u}}_{h,\Delta t}
-\breve{\mathbf{u}}_{h,\Delta t}
=
\mathbf{u}_h^m-\mathbf{u}_h^{m-1},
\]
and analogously for the magnetic field.

\begin{theorem}[Convergence of the fully discrete scheme]
\label{Convergence of the Fully Discrete Scheme}
Assume that
\[
\mathring{\mathbf{f}}_{\Delta t}\to\mathbf{f},
\qquad
\mathring{\mathbf{g}}_{\Delta t}\to\mathbf{g}
\quad\text{in}\quad
L^2(0,T;\mathbf{L}^2(\mathbb{D})).
\]
Then, as $h,\Delta t\to0$, the sequence $\bigl\{
(\mathring{\mathbf{u}}_{h,\Delta t},
 \mathring{\mathbf{B}}_{h,\Delta t})
\bigr\}$ admits a subsequence converging to a pair $(\mathbf{u},\mathbf{B})$.
Moreover, $(\mathbf{u},\mathbf{B})$ is a weak solution of the continuous
$p$-MHD system in the sense of
\eqref{1st weak form}--\eqref{2nd weeak equation}.
\end{theorem}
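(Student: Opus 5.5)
The plan is to pass to the limit in the discrete system rewritten in terms of the interpolants of Section~\ref{subsec-conv-interp}. For a.e.\ $t\in(0,T)$ and all $(\mathbf{v}_h,\mathbf{w}_h)\in\mathcal X_{0h}\times\mathcal Y_h$, the scheme \eqref{eq-FEM-1}--\eqref{eq-FEM-2} reads
\[
(\partial_t\mathbf{u}_{h,\Delta t},\mathbf{v}_h)+a_1(\breve{\mathbf{u}}_{h,\Delta t},\mathring{\mathbf{u}}_{h,\Delta t},\mathbf{v}_h)+(a_0(\mathring{\mathbf{u}}_{h,\Delta t}),\nabla\mathbf{v}_h)+\tfrac{1}{\mu\rho}(\breve{\mathbf{B}}_{h,\Delta t}\times(\nabla\times\mathring{\mathbf{B}}_{h,\Delta t}),\mathbf{v}_h)=(\mathring{\mathbf{g}}_{\Delta t},\mathbf{v}_h),
\]
and similarly for the induction equation.

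\textbf{Step 1: weak compactness.} From Lemma~\ref{bounds}, we obtain uniform bounds for $\mathring{\mathbf{u}}_{h,\Delta t}$ in $L^\infty(0,T;\mathbf{L}^2)\cap L^p(0,T;\mathbf{W}^{1,p}_0)$. We also obtain uniform bounds for $\mathring{\mathbf{B}}_{h,\Delta t}$ in $L^\infty(0,T;\mathbf{L}^2)$, with $\nabla\times\mathring{\mathbf{B}}_{h,\Delta t}$ bounded in $L^2(0,T;\mathbf{L}^2)$. In addition, $a_0(\mathring{\mathbf{u}}_{h,\Delta t})$ is bounded in $L^{p'}(\mathbb{D}_T)$. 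Estimate \eqref{eq-bounds-3} gives $\|\mathring{\mathbf{u}}_{h,\Delta t}-\breve{\mathbf{u}}_{h,\Delta t}\|_{L^2(\mathbf{L}^2)}^2\le C\Delta t\to0$, and likewise for $\mathbf{u}_{h,\Delta t}$ and for the magnetic field. Hence all three interpolants share weak(-$*$) limits $\mathbf{u}$, $\mathbf{B}$, and $a_0(\mathring{\mathbf{u}}_{h,\Delta t})\rightharpoonup\boldsymbol{\chi}$ in $L^{p'}$. The limit $\mathbf{u}$ is divergence free, because $\mathcal X_{0h}$ is discretely solenoidal and $Q_h$ is dense in $Q$. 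The limit $\mathbf{B}$ lies in $\mathcal K$: the identity $(\mathbf{B}_h^m,\nabla s_h)=0$ passes to the limit, and the Hodge estimate \eqref{ine-zbh} shows that $\mathcal L(\mathring{\mathbf{B}}_{h,\Delta t})$ has the same limit, with $\mathcal L(\mathring{\mathbf{B}}_{h,\Delta t})$ bounded in $L^2(0,T;\mathcal K)$.

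\textbf{Step 2: strong compactness.} Here I would invoke Lemma~\ref{time translate}, i.e.\ an estimate $\int_0^{T-\delta}\|\mathring{\mathbf{u}}_{h,\Delta t}(t+\delta)-\mathring{\mathbf{u}}_{h,\Delta t}(t)\|_{\mathbf{L}^2}^2\,dt\le C\delta^{\gamma}$. This is proved by summing the scheme over the steps between $t$ and $t+\delta$, testing with $\mathbf{u}_h^{m+\ell}-\mathbf{u}_h^m$, and bounding each term with the bounds of Lemma~\ref{bounds}. Combined with the $L^p(\mathbf{W}^{1,p})$ bound and the compact embedding $\mathbf{W}^{1,p}\hookrightarrow\mathbf{L}^2$, Simon's version of Aubin--Lions yields $\mathring{\mathbf{u}}_{h,\Delta t}\to\mathbf{u}$ strongly in $L^2(0,T;\mathbf{L}^2)$. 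Interpolating with the $L^p(\mathbf{L}^{q})$ bound, with $q$ large, makes the convergence strong in $L^r(\mathbf{L}^{r})$ for some $r>2$. The same then holds for $\breve{\mathbf{u}}_{h,\Delta t}$.

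For $\mathbf{B}$, Lemma~\ref{time derivative} bounds $d_t\mathbf{B}_h^m$ in a dual norm. Applied to $\mathcal L(\mathring{\mathbf{B}}_{h,\Delta t})$, which is bounded in $\mathcal K\hookrightarrow\hookrightarrow\mathbf{L}^2$ (compact since $\mathbb{D}$ is Lipschitz), it gives strong $L^2(\mathbf{L}^2)$ convergence. By \eqref{ine-zbh}, this transfers to $\mathring{\mathbf{B}}_{h,\Delta t}$ and to $\breve{\mathbf{B}}_{h,\Delta t}$.

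\textbf{Step 3: limit in the linear and coupling terms.} I take smooth test functions $\mathbf{v}\in\mathcal J$ and $\mathbf{C}\in\mathcal G$ multiplied by $\psi\in\mathcal C^\infty_0([0,T))$. I insert $\mathbf{v}_h=\mathcal P_h\mathbf{v}$ and a Nédélec interpolant $\mathbf{w}_h$ of $\mathbf{C}$, integrate by parts in time, and let $h,\Delta t\to0$.

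The trilinear terms converge as products of a strongly and a weakly convergent factor. In $a_1$, the factor $\breve{\mathbf{u}}_{h,\Delta t}$ converges strongly and $\nabla\mathring{\mathbf{u}}_{h,\Delta t}$ weakly, with smooth $\mathbf{v}$. The Lorentz term $(\breve{\mathbf{B}}\times\nabla\times\mathring{\mathbf{B}},\mathbf{v}_h)$ has $\breve{\mathbf{B}}$ strongly in $L^2$ and the curl weakly in $L^2$, with $\mathbf{v}_h\to\mathbf{v}$ uniformly. The induction coupling $(\mathring{\mathbf{u}}\times\breve{\mathbf{B}},\nabla\times\mathbf{w}_h)$ has both factors strong. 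By Remark~\ref{rem-equivalence}, this yields \eqref{eq-weak-distributional-1}--\eqref{eq-weak-distributional-2} with $|\nabla\mathbf{u}|^{p-2}\nabla\mathbf{u}$ replaced by $\boldsymbol{\chi}$.

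\textbf{Step 4: Minty's argument (main obstacle).} It remains to show $\boldsymbol{\chi}=a_0(\mathbf{u})$. The discrete energy identity from the proof of Lemma~\ref{bounds} gives
\[
\limsup\int_0^T(a_0(\mathring{\mathbf{u}}_{h,\Delta t}),\nabla\mathring{\mathbf{u}}_{h,\Delta t})\,dt\le\tfrac12\|\mathbf{u}^0\|^2-\tfrac12\|\mathbf{u}(T)\|^2+\text{coupling and forcing terms}.
\]
This uses weak lower semicontinuity of $\|\mathbf{u}_h^N\|$ and of $\|\mathbf{B}_h^N\|$ and of the $\nabla\times\mathbf{B}$ dissipation, together with convergence of the coupling terms, which cancel anyway in the combined identity.

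The delicate point is to compare this bound with the limit energy equality. That equality requires testing the limit equation with $\mathbf{u}$ itself, which needs $\mathbf{u}_t\in L^{p'}(\mathcal X_0')$ and $\mathbf{u}\otimes\mathbf{u}:\nabla\mathbf{u}\in L^1$. For $p\ge 2$ in 3D I would first try $p\ge11/5$ type integrability, or otherwise retain the magnetic dissipation and work with the coupled energy directly. One then gets
\[
\limsup\int_0^T(a_0(\mathring{\mathbf{u}}_{h,\Delta t}),\nabla\mathring{\mathbf{u}}_{h,\Delta t})\le\int_0^T(\boldsymbol{\chi},\nabla\mathbf{u}).
\]
Monotonicity of $a_0$ gives $\int_0^T(\boldsymbol{\chi}-a_0(\mathbf{w}),\nabla(\mathbf{u}-\mathbf{w}))\ge0$ for all $\mathbf{w}$. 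Choosing $\mathbf{w}=\mathbf{u}-\lambda\mathbf{v}$, letting $\lambda\downarrow0$, and using hemicontinuity yields $\boldsymbol{\chi}=a_0(\mathbf{u})$, which completes the proof.
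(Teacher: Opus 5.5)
Your Steps 1--3 follow the same route as the paper: stability bounds, then the time-translate estimate for $\mathbf{u}$ and the dual time-derivative bound for $\mathbf{B}$ fed into Aubin--Lions/Simon compactness, then term-by-term passage to the limit with discrete test functions. You also correctly identify the identification of the $p$-Laplace limit as the real difficulty.

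\textbf{The gap is in Step 4, which is not closed.} The inequality $\limsup\int_0^T(a_0(\mathring{\mathbf{u}}_{h,\Delta t}),\nabla\mathring{\mathbf{u}}_{h,\Delta t})\,dt\le\int_0^T(\boldsymbol{\chi},\nabla\mathbf{u})\,dt$ needs an \emph{energy equality} for the limit. That is, $\mathbf{u}$ and $\mathbf{B}$ must be admissible test functions in the limit momentum and induction equations, and the a priori class does not give this for all $p\ge2$.

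\emph{Convective term.} The bound $L^\infty(0,T;\mathbf{L}^2)\cap L^p(0,T;\mathbf{W}^{1,p}_0)\hookrightarrow L^{5p/3}(\mathbb{D}_T)$ yields $\mathbf{u}\otimes\mathbf{u}\in L^{p'}(\mathbb{D}_T)$, and hence $\mathbf{u}_t\in L^{p'}(0,T;\mathcal X_0')$, only when $p\ge 11/5$.

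\emph{Coupling terms.} These are worse. In the discrete energy identity you remove the Lorentz term by using the discrete induction equation. You therefore need the magnetic energy \emph{equality} for the limit, because the version obtained from lower semicontinuity points the wrong way. That equality requires $(\mathbf{u}\times\mathbf{B})\cdot(\nabla\times\mathbf{B})\in L^1(\mathbb{D}_T)$, essentially $\mathbf{u}\times\mathbf{B}\in L^2(0,T;\mathbf{L}^2)$. With only $\mathbf{B}\in L^\infty(0,T;\mathbf{L}^2)\cap L^2(0,T;\mathbf{L}^{3+\delta_1})$, this holds for instance when $p>3$ (then $\mathbf{W}^{1,p}_0\hookrightarrow\mathbf{L}^\infty$). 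It fails for $p$ near $2$, for the same reason Leray--Hopf solutions of Newtonian 3D MHD only satisfy an energy inequality.

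Retaining the magnetic dissipation does not remove this integrability requirement. So for $2<p<11/5$, and because of the coupling on some range above that, your argument does not identify $\boldsymbol{\chi}$. You would need either to restrict $p$ or to use truncation methods (the $L^\infty$/Lipschitz truncation of Wolf and of Diening--Ruzicka--Wolf). At $p=2$ nothing needs identifying, since $a_0$ is linear.

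\textbf{Comparison with the paper.} The paper's own Minty step does not fill this hole either. Its $\limsup$ computation uses a fixed test function $\mathbf{v}_h$, which is automatic from weak convergence. It then passes to the limit in $\int(A_{h,\Delta t}-A(\mathbf{G}))\cdot(\nabla\mathring{\mathbf{u}}_{h,\Delta t}-\mathbf{G})\ge0$ ``by weak convergence''. That expression contains exactly the weak--weak product $\int A_{h,\Delta t}\cdot\nabla\mathring{\mathbf{u}}_{h,\Delta t}$ you flagged. Your proposal diagnoses the obstacle more accurately than the paper does. As written, however, it proves the theorem only in a restricted range of $p$ where the limit energy equality can be justified (e.g.\ $p>3$, or trivially $p=2$), not for all $p\ge2$.
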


\subsection{Compactness in time}
\label{subsec-conv-compactness}

The stability estimates of Lemma~\ref{bounds} provide uniform spatial bounds
but do not directly control temporal oscillations. For the velocity, the
nonlinear $p$-Laplace term makes a direct estimate of the discrete time
derivative inconvenient; we therefore establish a time-translate estimate.
For the magnetic field, a direct bound on the discrete time derivative is
available in a suitable negative-order space.

\begin{lemma}[Time-translate estimate for the velocity]
\label{time translate}
Let $t_l=l\Delta t$, where $1\le l\le N-1$. Then there exists a constant
$C=C(T)>0$, independent of $h$, $\Delta t$, and $l$, such that
\[
\Delta t\sum_{n=1}^{N-l}
\|\mathbf{u}_h^{n+l}-\mathbf{u}_h^n\|_{\mathbf{L}^{2}}^{2}
\le C(T)t_l^\alpha,
\]
where
\[
\alpha
=
\min\left\{
\frac14,\,
s,\,
\frac{2\delta_1}{3(1+\delta_1)},\,
\frac1p,\,
\left(\frac12+s\right)
\frac{2\delta_1}{3(1+\delta_1)}
\right\},
\]
and $\delta_1=\delta_1(\mathbb{D})>0$ is given by
Lemma~\ref{Embedding lemma}.
\end{lemma}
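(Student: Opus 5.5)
Following the standard Walkington-type time-translate argument, I will sum the momentum equation \eqref{eq-FEM-1} over the levels $m=n+1,\dots,n+l$. This gives, for every $\mathbf{v}_h\in\mathcal X_{0h}$,
\[
(\mathbf{u}_h^{n+l}-\mathbf{u}_h^n,\mathbf{v}_h)=\Delta t\sum_{m=n+1}^{n+l}\Bigl[(\mathbf{g}^m,\mathbf{v}_h)-a_1(\mathbf{u}_h^{m-1},\mathbf{u}_h^m,\mathbf{v}_h)-\tfrac{\nu}{\rho}(a_0(\mathbf{u}_h^m),\nabla\mathbf{v}_h)-\tfrac{1}{\mu\rho}(\mathbf{B}_h^{m-1}\times(\nabla\times\mathbf{B}_h^m),\mathbf{v}_h)\Bigr].
\]
I then choose $\mathbf{v}_h=\mathbf{u}_h^{n+l}-\mathbf{u}_h^n$, which is admissible because $\mathcal X_{0h}$ is linear. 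Multiplying by $\Delta t$ and summing over $n=1,\dots,N-l$ puts the target quantity on the left. Each right-hand term becomes a double sum of the form $\Delta t\sum_n\Delta t\sum_{m=n+1}^{n+l}(\cdot)$. In every such double sum, each index $m$ is counted for at most $l$ values of $n$. This is the source of the powers of $t_l=l\Delta t$.

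The terms are then estimated one at a time using Lemma~\ref{bounds}.

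\textbf{Forcing term.} By Cauchy--Schwarz, the $\ell^\infty L^2$ bound \eqref{eq-bounds-1}, and $\Delta t\sum\|\mathbf{g}^m\|^2\le C$, this term is $\le C t_l^{1/2}$.

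\textbf{$p$-Laplace term.} I bound $|(a_0(\mathbf{u}),\nabla\mathbf{v})|\le C\|\nabla\mathbf{u}\|_{L^p}^{p-1}\|\nabla\mathbf{v}\|_{L^p}$. Hölder's inequality in the discrete sums, with exponents $p'$ and $p$, together with \eqref{eq-bounds-2} then gives a bound of the form $C t_l^{1/p}$. The exponent $1/p$ arises because the $l$-fold counting is only exploited through the $\ell^p$ factor of the inner sum.

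\textbf{Convective term.} Here $3$D embeddings enter. I write
\[
|a_1(\mathbf{w},\mathbf{u},\mathbf{v})|\le C\|\mathbf{w}\|_{L^{r}}\bigl(\|\nabla\mathbf{u}\|_{L^p}\|\mathbf{v}\|_{L^{p'r'}}+\dots\bigr).
\]
The $L^r$ norms are interpolated between $L^2$, which is bounded uniformly, and $W^{1,p}\hookrightarrow L^{6}$ or better. This yields a power $t_l^{1/4}$ after using the $L^2$ bound on one factor.

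\textbf{Lorentz term.} This term is the delicate one, because $\mathbf{B}_h$ is only controlled in $L^2$ with $\nabla\times\mathbf{B}_h\in L^2$, which gives no $H^1$ bound. I split $\mathbf{B}_h^{m-1}=\mathcal{L}(\mathbf{B}_h^{m-1})+(\mathbf{B}_h^{m-1}-\mathcal{L}(\mathbf{B}_h^{m-1}))$, using $\mathbf{B}_h\in\mathcal Y_{0h}$ from the weak divergence-free property.

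For the Hodge part, Lemma~\ref{Embedding lemma} gives $\mathcal K\hookrightarrow \mathbf{H}^{1/2+\delta_1}\subset L^{3+\delta}$. Interpolating between $L^2$ and $L^{3(1+\delta_1)/(\dots)}$ produces the exponent $\frac{2\delta_1}{3(1+\delta_1)}$.

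For the remainder, \eqref{ine-zbh} gives a factor $h^{1/2+s}\|\nabla\times\mathbf{B}_h\|$. I combine it with an inverse estimate to absorb $h$ against the $L^\infty$ or $L^3$ norm of $\mathbf{v}_h$. The interpolation is performed in a way that leaves the exponents $s$ and $(\frac12+s)\frac{2\delta_1}{3(1+\delta_1)}$ of $t_l$. Where a power of $h$ remains positive it is simply bounded by a constant. The goal is to avoid any coupling between $h$ and $\Delta t$.

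Finally, using $t_l\le T$, every power $t_l^{\beta}$ with $\beta\ge\alpha$ is bounded by $C(T)t_l^{\alpha}$. Taking $\alpha$ to be the minimum of the exponents obtained gives the stated result.

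I expect the main obstacle to be the Lorentz term. The difficulty is to obtain a positive power of $t_l$ without a uniform $L^\infty$ or $H^1$ bound on $\mathbf{B}_h$ and without invoking inverse inequalities that would create $h$–$\Delta t$ coupling.
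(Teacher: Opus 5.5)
Your overall architecture matches the paper's. You sum \eqref{eq-FEM-1} over $m=n+1,\dots,n+l$, test with $\delta_l\u_h^n:=\u_h^{n+l}-\u_h^n$, sum over $n$, and estimate term by term with Lemma~\ref{bounds}, splitting the Lorentz term with $\mathcal L$. Your forcing bound $Ct_l^{1/2}$ is fine (the paper absorbs and gets $Ct_l$), and so is your $p$-Laplace bound $Ct_l^{1/p}$.

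The gap is in the Lorentz remainder. You flag it as the crux but do not resolve it, and the one concrete rule you give there — bound any leftover positive power of $h$ by a constant — is exactly what makes it fail. After \eqref{ine-zbh} and $\|\delta_l\u_h^n\|_{\L^\infty}\le Ch^{-1/2}\|\nabla\delta_l\u_h^n\|_{\L^2}$ you are left with
\[
C(\Delta t)^2\sum_{n=1}^{N-l}\sum_{i=1}^{l}h^{s}\,\|\nabla\times\B_h^{n-1+i}\|_{\L^2}\,\|\nabla\times\B_h^{n+i}\|_{\L^2}\,\|\nabla\delta_l\u_h^n\|_{\L^2}.
\]
If $h^s\le C$, the inner sum $\Delta t\sum_i\|\nabla\times\B_h^{n-1+i}\|_{\L^2}\|\nabla\times\B_h^{n+i}\|_{\L^2}$ is merely $O(1)$ by \eqref{eq-bounds-2}. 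Two curl factors exhaust the $\ell^2$-in-time control, so no power of $t_l$ survives.

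The exponents $s$ and $\lambda=(\frac12+s)\theta$, with $\theta=\frac{2\delta_1}{3(1+\delta_1)}$, are exactly the leftover powers of $h$. The missing idea is to spend them through the inverse inequality $h\|\nabla\times\mathbf w_h\|_{\L^2}\le C\|\mathbf w_h\|_{\L^2}$ on $\mathcal Y_h$, combined with \eqref{eq-bounds-1}. This gives $h^s\|\nabla\times\B_h^{n-1+i}\|_{\L^2}\le C\|\nabla\times\B_h^{n-1+i}\|_{\L^2}^{1-s}$. It uses only $h$ and the uniform $\L^2$ bound, so it creates no $h$--$\Delta t$ coupling; your worry about inverse inequalities is misplaced here. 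It also pushes the total curl power below $2$, which is what releases a power of $t_l$.

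In the paper, $\lambda$ arises the same way inside the Hodge part. In $\|\mathcal L\B_h\|_{\L^3}\le C\|\mathcal L\B_h\|_{\L^2}^{\theta}\|\mathcal L\B_h\|_{0,3+\delta_1}^{1-\theta}$, the last factor is handled by Lemma~\ref{Z=curl}. One then writes $\|\mathcal L\B_h\|_{\L^2}\le\|\mathcal L\B_h-\B_h\|_{\L^2}+\|\B_h\|_{\L^2}$. The first piece produces $h^{\lambda}$, which is traded by the same inverse inequality.

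To reach the stated exponents, rather than roughly half of them, you also need a Young splitting before Hölder in $i$; your Hölder-only description does not supply it. Set $a_i=\|\nabla\times\B_h^{n-1+i}\|_{\L^2}$, $b_i=\|\nabla\times\B_h^{n+i}\|_{\L^2}$ and $c_n=\|\nabla\delta_l\u_h^n\|_{\L^2}$. Hölder on $\Delta t\sum_i a_i^{1-s}b_i$ gives only $t_l^{s/2}$. Instead use $a_i^{1-s}b_ic_n\le\frac12 b_i^2+\frac12 a_i^{2(1-s)}c_n^2$:
\begin{itemize}
\item $(\Delta t)^2\sum_n\sum_i b_i^2\le Ct_l$, since each level is counted at most $l$ times;
\item $(\Delta t)^2\sum_n c_n^2\sum_i a_i^{2(1-s)}\le Ct_l^{s}$, by Hölder in $i$ and $\Delta t\sum_n c_n^2\le C$.
\end{itemize}
It matters that $b_i$ is the factor split off alone. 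A product $a_i^{2(1-s)}b_i^2$ of curl norms at neighbouring levels is not controlled by \eqref{eq-bounds-2}.

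The same device with $1-\theta$ and $1-\lambda$ gives $t_l^\theta$ and $t_l^\lambda$. In the convective term, split off $\|\nabla\u_h^{n+i}\|_{\L^2}^2$ and apply Ladyzhenskaya to $\|\u_h^{n-1+i}\|_{\L^4}^2\|\delta_l\u_h^n\|_{\L^4}^2$; this gives $t_l^{1/4}$, whereas plain Hölder gives only $t_l^{1/8}$ when $p=2$.
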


\begin{proof}
Multiplying \eqref{eq-FEM-1} by $\Delta t$, we obtain
\begin{align*}
&\left(\mathbf{u}_h^m-\mathbf{u}_h^{m-1},\mathbf{v}_h\right)
+\Delta t\,a_1
 \left(\mathbf{u}_h^{m-1},\mathbf{u}_h^m,\mathbf{v}_h\right)
+\frac{\nu}{\rho}\Delta t
 \left(a_0(\mathbf{u}_h^m),\nabla\mathbf{v}_h\right)
\\
&\qquad
+\frac{\Delta t}{\mu\rho}
 \left(
 \mathbf{B}_h^{m-1}\times(\nabla\times\mathbf{B}_h^m),
 \mathbf{v}_h
 \right)
=
\Delta t\,(\mathbf{g}^m,\mathbf{v}_h).
\end{align*}
Summing this identity from $m=n+1$ to $m=n+l$ gives
\begin{align}
\label{eq-u^n+l}
&\left(\mathbf{u}_h^{n+l}-\mathbf{u}_h^n,\mathbf{v}_h\right)
+\Delta t\sum_{i=1}^l
a_1\left(
\mathbf{u}_h^{n-1+i},
\mathbf{u}_h^{n+i},
\mathbf{v}_h
\right) +\frac{\nu}{\rho}\Delta t\sum_{i=1}^l
\left(
|\nabla\mathbf{u}_h^{n+i}|^{p-2}
\nabla\mathbf{u}_h^{n+i},
\nabla\mathbf{v}_h
\right)
\nonumber\\
&\quad
+\frac{\Delta t}{\mu\rho}\sum_{i=1}^l
\left(
\mathbf{B}_h^{n-1+i}
\times(\nabla\times\mathbf{B}_h^{n+i}),
\mathbf{v}_h
\right)
=
\Delta t\sum_{i=1}^l
(\mathbf{g}^{n+i},\mathbf{v}_h).
\end{align}
\noindent
For convenience, set
\[
\delta_l\mathbf{u}_h^n
:=
\mathbf{u}_h^{n+l}-\mathbf{u}_h^n.
\]
Taking $\mathbf{v}_h=\Delta t\,\delta_l\mathbf{u}_h^n$ in
\eqref{eq-u^n+l} and summing over $n=1,\ldots,N-l$, we obtain
\begin{equation}
\label{eq-u^n+2}
\Delta t\sum_{n=1}^{N-l}
\|\delta_l\mathbf{u}_h^n\|_{\mathbf{L}^2}^{2}
+I_{\mathrm{c}}+I_p+I_{\mathrm{M}}
=I_{\mathrm{g}},
\end{equation}
where $I_{\mathrm{c}}$, $I_p$, $I_{\mathrm{M}}$, and $I_{\mathrm{g}}$
denote, respectively, the convective, $p$-Laplace, magnetic, and forcing
terms. We estimate these contributions separately.

\smallskip
\noindent
{Estimate of the convective term.}
By the continuity of $a_1$, followed by H\"older's and Young's inequalities,
\begin{align}
\label{eq-Lem8}
|I_{\mathrm{c}}|
&\le
C(\Delta t)^2
\sum_{n=1}^{N-l}\sum_{i=1}^l
\|\mathbf{u}_h^{n-1+i}\|_{\mathbf{L}^4}
\|\nabla\mathbf{u}_h^{n+i}\|_{\mathbf{L}^2}
\|\delta_l\mathbf{u}_h^n\|_{\mathbf{L}^4}
\nonumber\\
&\le
C(\Delta t)^2
\sum_{n=1}^{N-l}\sum_{i=1}^l
\|\nabla\mathbf{u}_h^{n+i}\|_{\mathbf{L}^2}^{2} +C(\Delta t)^2
\sum_{n=1}^{N-l}\sum_{i=1}^l
\|\mathbf{u}_h^{n-1+i}\|_{\mathbf{L}^4}^{2}
\|\delta_l\mathbf{u}_h^n\|_{\mathbf{L}^4}^{2}
\nonumber\\
&\le
Ct_l+
C(\Delta t)^2
\sum_{n=1}^{N-l}\sum_{i=1}^l
\|\mathbf{u}_h^{n-1+i}\|_{\mathbf{L}^4}^{2}
\|\delta_l\mathbf{u}_h^n\|_{\mathbf{L}^4}^{2},
\end{align}
where \eqref{eq-bounds-2} and $t_l=l\Delta t$ were used in the last step.\\
\noindent
The three-dimensional Ladyzhenskaya inequality
\[
\|\mathbf{v}\|_{\mathbf{L}^4}
\le
C\|\mathbf{v}\|_{\mathbf{L}^2}^{1/4}
 \|\nabla\mathbf{v}\|_{\mathbf{L}^2}^{3/4}
\]
and \eqref{eq-bounds-1} imply
\begin{align*}
&(\Delta t)^2
\sum_{n=1}^{N-l}\sum_{i=1}^l
\|\mathbf{u}_h^{n-1+i}\|_{\mathbf{L}^4}^{2}
\|\delta_l\mathbf{u}_h^n\|_{\mathbf{L}^4}^{2} \le
C(\Delta t)^2
\sum_{n=1}^{N-l}\sum_{i=1}^l
\|\nabla\mathbf{u}_h^{n-1+i}\|_{\mathbf{L}^2}^{3/2}
\|\nabla\delta_l\mathbf{u}_h^n\|_{\mathbf{L}^2}^{3/2}.
\end{align*}
Applying H\"older's inequality first in $i$ and then in $n$, and using
\eqref{eq-bounds-2}, we obtain
\begin{align*}
&(\Delta t)^2
\sum_{n=1}^{N-l}\sum_{i=1}^l
\|\nabla\mathbf{u}_h^{n-1+i}\|_{\mathbf{L}^2}^{3/2}
\|\nabla\delta_l\mathbf{u}_h^n\|_{\mathbf{L}^2}^{3/2} \le
Ct_l^{1/4}\Delta t
\sum_{n=1}^{N-l}
\|\nabla\delta_l\mathbf{u}_h^n\|_{\mathbf{L}^2}^{3/2}
\\
&\qquad\le
Ct_l^{1/4}
\left(
\Delta t\sum_{n=1}^{N-l}
\|\nabla\delta_l\mathbf{u}_h^n\|_{\mathbf{L}^2}^{2}
\right)^{3/4}
(T-t_l)^{1/4} \le C(T)t_l^{1/4}.
\end{align*}
Consequently,
\begin{equation}
\label{eq-convection-translate}
|I_{\mathrm{c}}|
\le C(T)\left(t_l+t_l^{1/4}\right).
\end{equation}

\smallskip
\noindent
{Estimate of the magnetic term.}
Adding and subtracting the Hodge mapping 
$\mathcal{L}(\mathbf{B}_h^{n-1+i})$, we have
\[
|I_{\mathrm{M}}|
\le I_{\mathrm{M},1}+I_{\mathrm{M},2},
\]
where
\begin{align}
I_{\mathrm{M},1}
&:=
C(\Delta t)^2
\sum_{n=1}^{N-l}\sum_{i=1}^l
\|\mathbf{B}_h^{n-1+i}
-\mathcal{L}(\mathbf{B}_h^{n-1+i})\|_{\mathbf{L}^2}\times
\|\nabla\times\mathbf{B}_h^{n+i}\|_{\mathbf{L}^2}
\|\delta_l\mathbf{u}_h^n\|_{\mathbf{L}^{\infty}},
\label{eq-magnetic-I1}
\\
I_{\mathrm{M},2}
&:=
C(\Delta t)^2
\sum_{n=1}^{N-l}\sum_{i=1}^l
\|\mathcal{L}(\mathbf{B}_h^{n-1+i})\|_{\mathbf{L}^3}
\|\nabla\times\mathbf{B}_h^{n+i}\|_{\mathbf{L}^2}
\|\delta_l\mathbf{u}_h^n\|_{\mathbf{L}^6}.
\label{eq-magnetic-I2}
\end{align}
\noindent
For $I_{\mathrm{M},1}$, the lifting estimate \eqref{ine-zbh} and the inverse
estimate \eqref{eq-inverse estimate} give
\begin{align*}
I_{\mathrm{M},1}
&\le
C(\Delta t)^2
\sum_{n=1}^{N-l}\sum_{i=1}^l
h^{s+\frac12}
\|\nabla\times\mathbf{B}_h^{n-1+i}\|_{\mathbf{L}^2}
\|\nabla\times\mathbf{B}_h^{n+i}\|_{\mathbf{L}^2}
\|\delta_l\mathbf{u}_h^n\|_{\mathbf{L}^{\infty}}
\\
&\le
C(\Delta t)^2
\sum_{n=1}^{N-l}\sum_{i=1}^l
h^s
\|\nabla\times\mathbf{B}_h^{n-1+i}\|_{\mathbf{L}^2}
\|\nabla\times\mathbf{B}_h^{n+i}\|_{\mathbf{L}^2}
\|\nabla\delta_l\mathbf{u}_h^n\|_{\mathbf{L}^2}.
\end{align*}
Using the Sobolev interpolation estimate, the uniform
$\mathbf{L}^2$-bound for $\mathbf{B}_h^m$, and Young's inequality, this gives
\begin{align*}
I_{\mathrm{M},1}
&\le
C(\Delta t)^2
\sum_{n=1}^{N-l}\sum_{i=1}^l
\|\nabla\times\mathbf{B}_h^{n-1+i}\|_{\mathbf{L}^2}^{2(1-s)}
\|\nabla\times\mathbf{B}_h^{n+i}\|_{\mathbf{L}^2}^{2} +C(\Delta t)^2
\sum_{n=1}^{N-l}\sum_{i=1}^l
\|\nabla\delta_l\mathbf{u}_h^n\|_{\mathbf{L}^2}^{2}.
\end{align*}
The second term is bounded by $Ct_l$. For the first one, H\"older's
inequality in the index $i$, followed by \eqref{eq-bounds-2}, yields
\[
C(\Delta t)^2
\sum_{n=1}^{N-l}\sum_{i=1}^l
\|\nabla\times\mathbf{B}_h^{n-1+i}\|_{\mathbf{L}^2}^{2(1-s)}
\|\nabla\times\mathbf{B}_h^{n+i}\|_{\mathbf{L}^2}^{2}
\le C(T)t_l^s.
\]
Hence,
\begin{equation}
\label{eq-magnetic-I1-estimate}
I_{\mathrm{M},1}
\le C(T)t_l^s+Ct_l.
\end{equation}
\noindent
We next estimate $I_{\mathrm{M},2}$. Set $\theta:=\frac{2\delta_1}{3(1+\delta_1)}.$ The interpolation inequality
\[
\|\mathcal{L}(\mathbf{B}_h)\|_{\mathbf{L}^3}
\le
C
\|\mathcal{L}(\mathbf{B}_h)\|_{\mathbf{L}^2}^{\theta}
\|\mathcal{L}(\mathbf{B}_h)\|_{0,3+\delta_1}^{1-\theta}
\]
and the decomposition
\[
\mathcal{L}(\mathbf{B}_h)
=
\bigl(\mathcal{L}(\mathbf{B}_h)-\mathbf{B}_h\bigr)
+\mathbf{B}_h
\]
lead to
\[
I_{\mathrm{M},2}
\le I_{\mathrm{M},2,1}+I_{\mathrm{M},2,2}.
\]
\noindent
Let $\lambda:=\left(s+\frac12\right)\theta.$ Using \eqref{ine-zbh}, \eqref{eq-inverse estimate}, the Sobolev embedding,
and the bounds of Lemma~\ref{bounds}, we obtain
\begin{align*}
I_{\mathrm{M},2,1}
&\le
C(\Delta t)^2
\sum_{n=1}^{N-l}\sum_{i=1}^l
h^\lambda
\|\nabla\times\mathbf{B}_h^{n-1+i}\|_{\mathbf{L}^2}
\|\nabla\times\mathbf{B}_h^{n+i}\|_{\mathbf{L}^2}
\|\nabla\delta_l\mathbf{u}_h^n\|_{\mathbf{L}^2} \le C(T)\left(t_l^\lambda+t_l\right).
\end{align*}
For the remaining term, the uniform $\mathbf{L}^2$-bound of
$\mathbf{B}_h^{n-1+i}$ and the lifting estimate give
\begin{align*}
I_{\mathrm{M},2,2}
&\le
C(\Delta t)^2
\sum_{n=1}^{N-l}\sum_{i=1}^l
\|\nabla\times\mathbf{B}_h^{n-1+i}\|_{\mathbf{L}^2}^{1-\theta}
\|\nabla\times\mathbf{B}_h^{n+i}\|_{\mathbf{L}^2}
\|\nabla\delta_l\mathbf{u}_h^n\|_{\mathbf{L}^2} \le C(T)\left(t_l^\theta+t_l\right),
\end{align*}
where the last step follows from Young's inequality, H\"older's inequality
in $i$, and \eqref{eq-bounds-2}. Therefore,
\begin{equation}
\label{eq-magnetic-I2-estimate}
I_{\mathrm{M},2}
\le
C(T)\left(t_l^\lambda+t_l^\theta+t_l\right).
\end{equation}
Combining \eqref{eq-magnetic-I1-estimate} and
\eqref{eq-magnetic-I2-estimate}, we conclude that
\begin{equation}
\label{eq-magnetic-translate}
|I_{\mathrm{M}}|
\le
C(T)\left(
t_l^s+t_l^\lambda+t_l^\theta+t_l
\right).
\end{equation}

\smallskip
\noindent
{Estimate of the \(p\)-Laplace term.}
The nonlinear diffusion contribution is
\[
I_p
=
\frac{\nu}{\rho}(\Delta t)^2
\sum_{n=1}^{N-l}\sum_{i=1}^l
\left(
|\nabla\mathbf{u}_h^{n+i}|^{p-2}
\nabla\mathbf{u}_h^{n+i},
\nabla\delta_l\mathbf{u}_h^n
\right).
\]
By H\"older's inequality in space and then in the index $i$,
\begin{align*}
|I_p|
&\le
C(\Delta t)^2
\sum_{n=1}^{N-l}
\|\nabla\delta_l\mathbf{u}_h^n\|_{\mathbf{L}^p}
\sum_{i=1}^l
\|\nabla\mathbf{u}_h^{n+i}\|_{\mathbf{L}^p}^{p-1}
\\
&\le
Ct_l^{1/p}\Delta t
\sum_{n=1}^{N-l}
\|\nabla\delta_l\mathbf{u}_h^n\|_{\mathbf{L}^p}
\left(
\Delta t\sum_{i=1}^l
\|\nabla\mathbf{u}_h^{n+i}\|_{\mathbf{L}^p}^{p}
\right)^{\frac{p-1}{p}}.
\end{align*}
The last factor is uniformly bounded by \eqref{eq-bounds-3}. Hence, writing
$p'=p/(p-1)$ and applying H\"older's inequality in the index $n$, we find
\begin{align*}
|I_p|
&\le
Ct_l^{1/p}
\left(
\Delta t\sum_{n=1}^{N-l}
\|\nabla\delta_l\mathbf{u}_h^n\|_{\mathbf{L}^p}^{p}
\right)^{1/p}
(T-t_l)^{1/p'} \le C(T)t_l^{1/p}.
\end{align*}
Here we used
\[
\|\nabla\delta_l\mathbf{u}_h^n\|_{\mathbf{L}^p}^{p}
\le
2^{p-1}
\left(
\|\nabla\mathbf{u}_h^{n+l}\|_{\mathbf{L}^p}^{p}
+
\|\nabla\mathbf{u}_h^n\|_{\mathbf{L}^p}^{p}
\right)
\]
together with \eqref{eq-bounds-3}. Thus,
\begin{equation}
\label{eq-p-laplace-translate}
|I_p|\le C(T)t_l^{1/p}.
\end{equation}

\smallskip
\noindent
{Estimate of the forcing term.}
By the Cauchy--Schwarz and Young inequalities, followed by the assumed
uniform bound on the discrete forcing,
\begin{align*}
|I_{\mathrm{g}}|
&\le
(\Delta t)^2
\sum_{n=1}^{N-l}\sum_{i=1}^l
\|\mathbf{g}^{n+i}\|_{\mathbf{L}^2}
\|\delta_l\mathbf{u}_h^n\|_{\mathbf{L}^2}
\\
&\le
\frac12\Delta t
\sum_{n=1}^{N-l}
\|\delta_l\mathbf{u}_h^n\|_{\mathbf{L}^2}^{2}
+C(T)t_l.
\end{align*}
\noindent
Finally, inserting
\eqref{eq-convection-translate},
\eqref{eq-magnetic-translate}, and
\eqref{eq-p-laplace-translate} into \eqref{eq-u^n+2}, and absorbing the
first term in the forcing estimate into the left-hand side, we obtain
\begin{align*}
\Delta t\sum_{n=1}^{N-l}
\|\mathbf{u}_h^{n+l}-\mathbf{u}_h^n\|_{\mathbf{L}^2}^{2}
\le
C(T)\left(
t_l^{1/4}
+t_l^s
+t_l^\theta
+t_l^\lambda
+t_l^{1/p}
+t_l
\right).
\end{align*}
Since
\[
\theta=\frac{2\delta_1}{3(1+\delta_1)},
\qquad
\lambda=
\left(\frac12+s\right)
\frac{2\delta_1}{3(1+\delta_1)},
\]
the definition of $\alpha$ gives
\[
\Delta t\sum_{n=1}^{N-l}
\|\mathbf{u}_h^{n+l}-\mathbf{u}_h^n\|_{\mathbf{L}^2}^{2}
\le C(T)t_l^\alpha.
\]
\end{proof}
\noindent
As a consequence of Lemma~\ref{time translate} and
\cite[Remark~3.4]{MR4410739}, the family
$\{\mathbf{u}_{h,\Delta t}\}$ is uniformly bounded in
\[
H^\beta(0,T;\mathbf{L}^2(\mathbb{D}))
\qquad\text{for every}\qquad
0<\beta<\frac{\alpha}{2}.
\]

\begin{lemma}[Time derivative of the magnetic field]
\label{time derivative}
There exists a constant $C=C(T)>0$, independent of $h$ and $\Delta t$, such
that
\[
\|\partial_t\mathbf{B}_{h,\Delta t}\|_{
L^r\left(
0,T;
\left(\mathcal{Y}\cap\mathbf{W}^{2,2}(\mathbb{D})\right)'
\right)}
\le C,
\]
where
\[
r=\frac{2}{2-\zeta} \quad \mbox{and} \quad \zeta
=
\min\left\{
s,\,
\left(\frac12+s\right)\frac{2\delta_1}{3(1+\delta_1)},\,
\frac{2\delta_1}{3(1+\delta_1)},\,
\frac14,\,
\frac1p
\right\}.
\]
\end{lemma}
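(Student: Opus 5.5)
Since $\mathbf{B}_{h,\Delta t}$ is piecewise linear in time, on each interval $(t_{m-1},t_m]$ we have $\partial_t\mathbf{B}_{h,\Delta t}=d_t\mathbf{B}_h^m$. The bound therefore reduces to controlling
\[
\Delta t\sum_{m=1}^N\|d_t\mathbf{B}_h^m\|_{(\mathcal{Y}\cap\mathbf{W}^{2,2})'}^{r}
\]
uniformly in $h$ and $\Delta t$. We estimate the dual norm by duality from the discrete induction equation \eqref{eq-FEM-2}. Fix $\mathbf{C}\in\mathcal{Y}\cap\mathbf{W}^{2,2}(\mathbb{D})$ with unit norm. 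We cannot test with $\mathbf{C}$ directly, so we split
\[
(d_t\mathbf{B}_h^m,\mathbf{C})=(d_t\mathbf{B}_h^m,\mathbf{C}-\Pi_h\mathbf{C})+(d_t\mathbf{B}_h^m,\Pi_h\mathbf{C}),
\]
where $\Pi_h$ is the Nédélec interpolant (or an $L^2$-stable curl-conforming quasi-interpolant). For the first term we use the approximation estimate $\|\mathbf{C}-\Pi_h\mathbf{C}\|_{\mathbf{L}^2}\le Ch\|\mathbf{C}\|_{2,2}$. Together with an inverse estimate and \eqref{eq-bounds-3} this shows that $h\,\Delta t\sum_m\|d_t\mathbf{B}_h^m\|_{\mathbf{L}^2}^{r}$ is harmless. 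If an $h$–$\Delta t$ coupling threatens here, we use instead that the commuting-diagram property lets us replace $\Pi_h\mathbf{C}$ by an $L^2$ projection onto $\mathcal{Y}_h$ that is stable in $\mathbf{H}(\mathbf{curl})$. In that case the first term vanishes, since $d_t\mathbf{B}_h^m\in\mathcal{Y}_h$, and the second term is the main one. This second route is the one we follow.

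With $\mathbf{w}_h=\Pi_h\mathbf{C}$ in \eqref{eq-FEM-2} we obtain
\[
(d_t\mathbf{B}_h^m,\mathbf{w}_h)=(\mathbf{f}^m,\mathbf{w}_h)-\tfrac{1}{\mu\sigma}(\nabla\times\mathbf{B}_h^m,\nabla\times\mathbf{w}_h)+(\mathbf{u}_h^m\times\mathbf{B}_h^{m-1},\nabla\times\mathbf{w}_h).
\]
The forcing and curl–curl terms are bounded by $C(\|\mathbf{f}^m\|_{\mathbf{L}^2}+\|\nabla\times\mathbf{B}_h^m\|_{\mathbf{L}^2})$, which lies in $\ell^2$ in time by \eqref{eq-bounds-2}. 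For the coupling term we bound $\|\nabla\times\mathbf{w}_h\|_{\mathbf{L}^\infty}$, or at least $\|\nabla\times\mathbf{w}_h\|_{\mathbf{L}^3}$, by $\|\mathbf{C}\|_{2,2}$; this uses $\mathbf{W}^{2,2}\hookrightarrow \mathbf{W}^{1,6}$ and the stability of the interpolant. We then split $\mathbf{B}_h^{m-1}=(\mathbf{B}_h^{m-1}-\mathcal{L}(\mathbf{B}_h^{m-1}))+\mathcal{L}(\mathbf{B}_h^{m-1})$ exactly as for $I_{\mathrm{M}}$ in the proof of Lemma~\ref{time translate}. The first piece is handled by \eqref{ine-zbh} and the inverse estimate. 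The second uses the $\mathbf{L}^3$ interpolation with exponent $\theta=\frac{2\delta_1}{3(1+\delta_1)}$ and Lemma~\ref{Embedding lemma}. In both pieces we pair with $\|\mathbf{u}_h^m\|_{\mathbf{L}^6}\le C\|\nabla\mathbf{u}_h^m\|_{\mathbf{L}^2}\le C\|\nabla\mathbf{u}_h^m\|_{\mathbf{L}^p}$, using $p\ge2$.

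This produces, for each $m$, a bound of the form
\[
\|d_t\mathbf{B}_h^m\|_{(\mathcal{Y}\cap\mathbf{W}^{2,2})'}\le C\bigl(a_m+\|\nabla\mathbf{u}_h^m\|_{\mathbf{L}^2}\,\|\nabla\times\mathbf{B}_h^{m-1}\|_{\mathbf{L}^2}^{1-\zeta'}\bigr),
\]
with $a_m$ square-summable and $\zeta'$ among the exponents $s,\lambda,\theta$ appearing in the proof of Lemma~\ref{time translate}. It remains to show that the product term has a summable $r$-th power with $r=2/(2-\zeta)$. Hölder's inequality in $m$ with exponents $2/r$ and its conjugate, together with \eqref{eq-bounds-2} for both factors (each is in $\ell^2$ weighted by $\Delta t$), gives exactly the constraint $r(1+(1-\zeta))\le2$ up to the choice of $\zeta$. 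This is how $r=2/(2-\zeta)$ arises.

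The main obstacle is this exponent bookkeeping in the coupling term. Two features make it delicate: the velocity contributes a full power of $\|\nabla\mathbf{u}_h^m\|$, and the magnetic gradient $\nabla\times\mathbf{B}_h^{m-1}$ appears with a power close to one. We must ensure the total temporal integrability stays strictly above $1$. If the naive Hölder count fails, we first try to extract the saving factor $h^{\lambda}$ or $h^{s}$ from the Hodge splitting via an inverse estimate, trading it against $\|\mathbf{B}_h\|_{\mathbf{L}^2}$. The terms of order $1/4$ and $1/p$ in $\zeta$ can then be obtained by the Ladyzhenskaya interpolation $\|\mathbf{u}\|_{\mathbf{L}^4}\le C\|\mathbf{u}\|^{1/4}_{\mathbf{L}^2}\|\nabla\mathbf{u}\|^{3/4}_{\mathbf{L}^2}$ and by interpolating with $L^p$-in-time control of $\nabla\mathbf{u}_h$ from \eqref{eq-bounds-2}. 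Taking $\zeta$ as the minimum of all exponents then makes every term compatible.
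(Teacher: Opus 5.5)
Your skeleton matches the paper's. The paper's $\mathcal{R}_h$ is the $L^2$-orthogonal projection onto $\mathcal{Y}_h$, chosen so that $(\partial_t\mathbf{B}_{h,\Delta t},\mathbf{C}-\mathcal{R}_h\mathbf{C})=0$. One tests \eqref{eq-FEM-2} with $\mathcal{R}_h\mathbf{C}$, bounds the forcing and curl--curl terms in $\ell^2$ in time, and obtains $r=2/(2-\zeta)$ from H\"older in time on the coupling term. The paper takes that coupling estimate from Ding--Li; you try to rebuild it through the Hodge split used for $I_{\mathrm{M}}$. No commuting diagram is involved, since the $L^2$ projection does not commute with the curl. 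What is used is only orthogonality plus $\|\nabla\times\mathcal{R}_h\mathbf{C}\|_{\mathbf{L}^2}\le C\|\mathbf{C}\|_{2,2}$, which on quasi-uniform meshes follows from an inverse estimate and comparison with the N\'ed\'elec interpolant.

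The coupling step, however, does not hold together as written. The bound $\|\nabla\times\mathbf{w}_h\|_{\mathbf{L}^\infty}\le C\|\mathbf{C}\|_{2,2}$ is false: $\mathbf{W}^{2,2}$ only places $\nabla\times\mathbf{C}$ in $\mathbf{L}^6$, and a uniform $\mathbf{L}^\infty$ bound on the discrete curls would pass to the limit. The $\mathbf{L}^3$ bound does not follow from $\mathbf{W}^{2,2}\hookrightarrow\mathbf{W}^{1,6}$ and $\mathbf{H}(\mathbf{curl})$-stability, because the inverse estimate from $\mathbf{L}^2$ to $\mathbf{L}^3$ costs $h^{-1/2}$. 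It needs an extra argument, e.g.\ $L^p$-stability of the $L^2$ projection on quasi-uniform meshes, which you do not give. If you did have it, the Hodge split would be superfluous. H\"older with exponents $(6,2,3)$ gives $|(\mathbf{u}_h^m\times\mathbf{B}_h^{m-1},\nabla\times\mathbf{w}_h)|\le C\|\nabla\mathbf{u}_h^m\|_{\mathbf{L}^2}\|\mathbf{B}_h^{m-1}\|_{\mathbf{L}^2}\|\mathbf{C}\|_{2,2}$, which is square-summable by \eqref{eq-bounds-1}--\eqref{eq-bounds-2}, i.e.\ $r=2$. You nevertheless end with $\|\nabla\mathbf{u}_h^m\|_{\mathbf{L}^2}\|\nabla\times\mathbf{B}_h^{m-1}\|_{\mathbf{L}^2}^{1-\zeta'}$, which shows your bookkeeping really uses only an $\mathbf{L}^2$ bound on $\nabla\times\mathbf{w}_h$.

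With only that $\mathbf{L}^2$ bound, which is what the paper uses, you must control $\|\mathbf{u}_h^m\times\mathbf{B}_h^{m-1}\|_{\mathbf{L}^2}$. Pairing both Hodge pieces with $\|\mathbf{u}_h^m\|_{\mathbf{L}^6}$ then fails for the defect piece. It would require $\|\mathbf{B}_h^{m-1}-\mathcal{L}(\mathbf{B}_h^{m-1})\|_{\mathbf{L}^3}$, and no inverse estimate applies to this non-discrete field. Put the inverse estimate on the discrete velocity instead, as in $I_{\mathrm{M},1}$:
\begin{itemize}
\item $\|\mathbf{u}_h^m\|_{\mathbf{L}^\infty}\le Ch^{-1/2}\|\nabla\mathbf{u}_h^m\|_{\mathbf{L}^2}$ together with \eqref{ine-zbh} gives $Ch^{s}\|\nabla\mathbf{u}_h^m\|_{\mathbf{L}^2}\|\nabla\times\mathbf{B}_h^{m-1}\|_{\mathbf{L}^2}$.
\item For $s\le1$, the inverse estimate $h\|\nabla\times\mathbf{B}_h^{m-1}\|_{\mathbf{L}^2}\le C\|\mathbf{B}_h^{m-1}\|_{\mathbf{L}^2}$ turns this into $C\|\nabla\mathbf{u}_h^m\|_{\mathbf{L}^2}\|\nabla\times\mathbf{B}_h^{m-1}\|_{\mathbf{L}^2}^{1-s}$.
\item The other piece is $\|\mathbf{u}_h^m\|_{\mathbf{L}^6}\|\mathcal{L}(\mathbf{B}_h^{m-1})\|_{\mathbf{L}^3}\le C\|\nabla\mathbf{u}_h^m\|_{\mathbf{L}^2}\|\nabla\times\mathbf{B}_h^{m-1}\|_{\mathbf{L}^2}^{1-\theta}$, with $\theta=\frac{2\delta_1}{3(1+\delta_1)}$, by interpolation and Lemma~\ref{Z=curl}.
\end{itemize}
Your H\"older count then gives the bound for every $r\le 2/(2-\min\{s,\theta\})$. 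Since $\zeta\le\min\{s,\theta\}$, this covers the stated $r$. The remaining entries $\lambda,\frac14,\frac1p$ of the minimum need no Ladyzhenskaya or $L^p$-in-time argument; the induction equation contains neither convection nor the $p$-Laplacian. Those entries only lower $r$, and the smaller exponent follows by H\"older on $(0,T)$ with a $T$-dependent constant.
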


               \begin{proof}
            
               We define the projection to $\mathcal{Y}_{h}$ via $\mathcal{R}_{h}$ such that $\left(\mathbf{w}-\mathcal{R}_{h} \mathbf{w}, \partial_{t} \B_{h, \Delta t}\right)=0$, for any $\mathbf{w}_{h} \in \mathcal{Y}_{h}$ by \eqref{eq-FEM-2}, we know that $\left(\u_{h,\Delta t}, \B_{h,\Delta t}\right)$ satisfy
\begin{align}
    &\int_{0}^{T}\left(\partial_{t} \B_{h,\Delta t}, \mathcal{R}_{h} \mathbf{w}\right) dt+\frac{1}{\mu \sigma }\int_{0}^{T}\left( \nabla \times \mathring{\B}_{h, \Delta t}, \nabla \times \mathcal{R}_{h} \mathbf{w}\right) dt \nonumber
    \\ &\qquad +\int_{0}^{T}\left((\nabla \times \mathcal{R}_{h} \mathbf{w} )\times \breve{\B}_{h, \Delta t}, \mathring{\u}_{h, \Delta t}\right) dt=\int_{0}^{T}\left(\mathring{\mathbf{f}}_{\Delta t}, \mathcal{R}_{h} \mathbf{w}\right) dt\,.
\end{align}
By applying Cauchy-Schwarz's inequality and the interpolation inequality for any $\C\in L^{r'}(0,T;(\mathcal{Y}\cap\mathbf{W}^{2, 2}(\mathbb{D})))$ we see
with the help of \cite[Lemma 12]{DING2025116470} that
\begin{align*}
   &\int_{0}^{T}\left | (\mathring{\u}_{h,\Delta t} \times \breve{\B}_{h, \Delta t}, \nabla \times\mathcal{R}_{h}\C)\right| dt 
\le C\|\nabla \times\mathcal{R}_{h}\C\|_{L^{r'}(0,T;\mathbf{L}^{2}(\mathbb{D}))}.
\end{align*} 
Since $1<r<2$, we derive 
$$
\int_{0}^{T}|\nabla \times\mathring{\B}_{h,\Delta t},\nabla \times\mathcal{R}_{h}\C|dt \le C \|\nabla \times\mathcal{R}_{h}\C\|_{L^{r'}(0,T;\mathbf{L}^{2}(\mathbb{D}))}.
$$
Using the properties of $\mathcal{R}_{h}$ we get,
$$
\|\nabla \times\mathcal{R}_{h}\C\|_{\mathbf{L}^{2}(\mathbb{D})}\le c \|\C\|_{W^{2,2}(\mathbb{D})}+c\|\nabla \times\C\|_{\L^{2}(\mathbb{D})}.
$$
$$\int_{0}^{T}\left|\partial_{t}\B_{h,\Delta t}, \C\right|dt\le C\|\C\|_{L^{r'}(0,T,\mathcal{Y}\cap\mathbf{W}^{2, 2}(\mathbb{D}))}.$$
This implies that
$$
\|\partial_{t}\B_{h,\Delta t}\|_{L^{r}\left(0,T;\left(\mathcal{Y}\cap \mathbf{W}^{2, 2}(\mathbb{D})\right)'\right)}\le C.
$$
\end{proof}

\subsection{Proof of Theorem \ref{Convergence of the Fully Discrete Scheme}}
     \begin{proof}
      We divide the proof into three steps: Step 1 extracts a strongly convergent subsequence, Step 2 passes to the limit in the velocity equation, and Step 3 in the magnetic equation.\\
  \noindent      
{\bf Step 1:}\, For $h, \Delta t\rightarrow 0$, there exists a convergent subsequence of $\left(\mathring{\u}_{h,\Delta t},\mathring{\B}_{h,\Delta t}\right)$ and the functions
        $$\u \in L^{\infty}\left(0, T ; \mathbf{L}^{2}\right) \cap L^{2}\left(0, T ; \mathbf{W}^{1, p}_{0}\right)\,, \quad \mbox{and} \quad \B \in L^{\infty}(0, T ; \mathbb{H}) \cap L^{2}(0, T; \mathcal{K}),$$
such that
\begin{align*}
    &\mathring{\u}_{h, \Delta t}, \breve{\u}_{h, \Delta t}, \u_{h, \Delta t} \xrightarrow{\text {weakly }} \u \quad \mbox{in } \ L^{2}\left(0, T ; \mathcal{X}\right),\\
&\mathring{\u}_{h, \Delta t}, \check{\u}_{h, \Delta t}, \u_{h, \Delta t} \xrightarrow{\text {weakly* }} \u \quad \mbox{in } \ L^{\infty}\left(0, T ;\mathbf {L}^{2}(\mathbb{D})\right),\\
&\mathring{\B}_{h, \Delta t, } \breve{\mathbf{B}}_{h, \Delta t}, \B_{h, \Delta t} \xrightarrow{\text { weakly }} \mathbf{B} \quad \mbox{in } \ L^{2}(0, T ; \mathcal{Y}),\\
&\mathring{\B}_{h, \Delta t, } \breve{\B}_{h, \Delta t}, \B_{h, \Delta t} \xrightarrow{\text { weakly* }} \mathbf{B} \quad \mbox{in } \ L^{\infty}(0, T ; \mathbf{L}^{2}(\mathbb{D})),\\
&\partial_{t} \B_{h,\Delta t} \xrightarrow{\text { weakly }} \B_{t} \quad \mbox{in } \ L^{r}\left(0, T ;\left(\mathcal{Y} \cap \mathbf{W}^{2, 2}(\mathbb{D})\right)^{\prime}\right),\\
&\partial_{t} \u_{h, \Delta t} \xrightarrow{\text { weakly } }\u_{t} \quad \mbox{in } {H}^{\beta}\left(0,T;\mathbf{L}^{2}(\mathbb{D})\right)\,.
\end{align*}
Using the definition of $\u_{h,\Delta t}$ and $\mathring{\u}_{h,\Delta t}$, we have
\begin{align*}
   \left\|\u_{h,\Delta t}-\mathring{\u}_{h,\Delta t}\right\|_{L^{2}(0,T;\mathbf{L}^{p}(\mathbb{D}))}^{2} &=\left\|\frac{t-t_{m-1}}{\Delta t}\u_{h}^{m}
    +\frac{t_{m}-t}{\Delta t}\u_{h}^{m-1}-{\u}_{h}^{m}\right\|_{L^{2}(0,T;\mathbf{L}^{p}(\mathbb{D}))}^{2}\\
    &=\left\|\frac{t-t_{m}}{\Delta t}\u_{h}^{m}
    -\frac{t-t_{m}}{\Delta t}\u_{h}^{m-1}\right\|_{L^{2}(0,T;\mathbf{L}^{p}(\mathbb{D}))}^{2}\\
    &=\sum_{m=1}^{N}\left\|\u_{h}^{m}-\u_{h}^{m-1}\right\|_{\mathbf{L}^{p}}^{2}\int_{t_{m-1}}^{t_{m}}\left(\frac{t-t_{m}}{\Delta t}\right)^2 dt =\frac{\Delta t}{3}\sum_{m=1}^{N}\left\|\u_{h}^{m}-\u_{h}^{m-1}\right\|_{\mathbf{L}^{p}}^{2}.
\end{align*}
Using the Poincar\'e inequality, the H\"older inequality and \eqref{eq-bounds-2} of the Lemma \ref{bounds}, and as $h,\Delta t\rightarrow 0$, we infer
\begin{align}
       \nonumber \frac{\Delta t}{3}\sum_{m=1}^{N}\left\|\u_{h}^{m}-\u_{h}^{m-1}\right\|_{\mathbf{L}^{p}}^{2} &\nonumber \le\frac{\Delta t}{3}\sum_{m=1}^{N}\|\nabla(\u_{h}^{m}-\u_{h}^{m-1})\|_{\mathbf{L}^{p}}^{2}\\
       &\le \frac{\Delta t}{3} \left(\sum_{m=1}^{N}\|\nabla(\u_{h}^{m}-\u_{h}^{m-1})\|_{\mathbf{L}^{p}}^{p}\right)^{\frac{2}{p}}\left(\sum_{m=1}^{N} 1 \right)^{1-\frac{2}{p}}\nonumber\\
       &\le \frac{(\Delta t)^{1-\frac{2}{p}}}{3}\left(\sum_{m=1}^{N} 1 \right)^{1-\frac{2}{p}} \longrightarrow 0\,.
\end{align}
Similarly, we can obtain that 
\begin{equation}
    \|\u_{h,\Delta t}-\breve{\u}_{h,\Delta t}\|_{L^{2}(0,T;\mathbf{L}^{p}(\mathbb{D}))}\longrightarrow 0\qquad  \mbox{as}\qquad h, \Delta t\rightarrow 0.
\end{equation}
So, we get by the help of Lemma \ref{Aubin Lions} and Lemma \ref{time translate} that $(\u_{h,\Delta t}),(\mathring{\u}_{h,\Delta t}),(\breve{\u}_{h,\Delta t})$ converges strongly to the same limit $\u$, as $h,\Delta t\rightarrow 0$, in the space $L^{2}(0,T;\mathbf{L}^{p}(\mathbb{D}))$ for $p\ge 2$.\\
By the help of \eqref{eq-bounds-3} of the Lemma \ref{bounds}, we have the following
\begin{align}
 \nonumber \|\B_{h,\Delta t}-\mathring{\B}_{h,\Delta t}\|^{2}_{L^{2}(0,T;\mathbf{L}^{2}(\mathbb{D}))}
 &=\left\|\frac{t-t_{m-1}}{\Delta t}\B^{m}_{h}+\frac{t_{m}-t}{\Delta t}\B^{m-1}_{h}-\B^{m}_{h} \right\|^{2}_{L^{2}(0,T;\mathbf{L}^{2}(\mathbb{D}))}\\
\nonumber =&\sum_{m=1}^{N}\|\B^{m}_{h}-\B^{m-1}_{h}\|^{2}_{\mathbf{L}^{2}}\int_{t_{m-1}}^{t_{m}}\left(\frac{t-t_{m-1}}{\Delta t}\right)^{2} dt\\
 =&\frac{\Delta t}{3}\sum_{m=1}^{N}\|\B^{m}_{h}-\B^{m-1}_{h}\|^{2}_{\mathbf{L}^{2}} \longrightarrow 0 \quad \mbox{as} \quad h, \Delta t\rightarrow 0.
  \end{align}  

\noindent
Similarly, we have 
\begin{equation}
    \|\B_{h,\Delta t}-\breve{\B}_{h,\Delta t}\|^{2}_{L^{2}(0,T;\mathbf{L}^{2}(\mathbb{D}))}\longrightarrow 0\quad \mbox{as} \quad h, \Delta t\rightarrow 0.
\end{equation}
Then by the help of Lemma \ref{Aubin Lions} and \ref{time derivative} we have that $(\B_{h,\Delta t}),(\breve{\B}_{h,\Delta t}), (\mathring{\B}_{h,\Delta t})$ converges strongly to $\B$ as $h, \Delta t\rightarrow 0$ in the space $L^{2}(0,T;\mathbf{L}^{2}(\mathbb{D}))$.
\\

{\bf{Step 2}:} We consider the equation
\begin{align}\label{eq-convergence}
   \nonumber &\int_{0}^{T}\int_{\mathbb{D}}\partial_{t}\u_{h, \Delta t}\cdot\mathbf{v}_{h} dx dt-\int_{0}^{T}\int_{\mathbb{D}}(\breve{\u}_{h, \Delta t}\otimes\mathring{\u}_{h, \Delta t})\cdot\nabla \mathbf{v}_{h} dx dt+\frac{\nu}{\rho} \int_{0}^{T}\int_{\mathbb{D}}|\nabla \mathring{\u}_{h, \Delta t}|^{p-2}\nabla\mathring{\u}_{h, \Delta t}\cdot \nabla \mathbf{v}_{h} dx dt\\
    &+\frac{1}{\mu\rho}\int_{0}^{T}\int_{\mathbb{D}}(\breve{\B}_{h, \Delta t}\times(\nabla \times\mathring{\B}_{h, \Delta t}))\cdot \mathbf{v}_{h} dx dt=\int_{0}^{T}\int_{\mathbb{D}}\mathring{\mathbf{g}}_{\Delta t}\cdot\mathbf{v}_{h} dx dt.
\end{align}
We pass to the limit term by term. Fix $\mathbf{v}\in\mathcal{J}$ and choose $\v_{h}\in\mathcal{X}_{0h}$ with $\v_{h}\rightarrow\v$ in $\mathbf{W}^{1,p}(\mathbb{D})$ for $2\le p<6$, and fix $\xi\in C^{\infty}([0,T])$. Using the strong convergence of the initial data as $h,\Delta t\rightarrow0$, the first term on the left-hand side of \eqref{eq-convergence} is estimated by
\begin{align}
   \nonumber &\left|\int_{0}^{T}\int_{\mathbb{D}}(\partial_{t}\u_{h, \Delta t}\cdot\mathbf{v}_{h}-\partial_{t}\u\cdot\mathbf{v})\xi(t) dx dt\right|\\
  \nonumber  &=\left|\int_{0}^{T}\left(\u_{h, \Delta t},\partial_{t}(\mathbf{v}_{h}\xi(t)\right)+\left(\u,\partial_{t}(\mathbf{v}\xi(t)\right)-\left(\u^{0}_{h},\v_{h}\xi(0)\right)+\left(\u^{0},\v\xi(0)\right)dt\right|\\
     &\le C \|\u_{h, \Delta t}-\u\|_{L^2(0,T;\mathbf{L}^2(\mathbb{D}))}\|\partial_{t}(\mathbf{v}_{h}\xi(t))\|_{L^2(0,T;\mathbf{L}^2(\mathbb{D}))}\nonumber\\
     &\qquad+\|\u\|_{L^2(0,T;\mathbf{L}^2(\mathbb{D}))}\|\partial_{t}[(\v_{h}-\v)\xi(t)]\|_{L^2(0,T;\mathbf{L}^2(\mathbb{D}))}\longrightarrow 0.
\end{align}
Now, we estimate the second term on the left-hand side of \eqref{eq-convergence} with the help of  \cite[Theorem 3]{DING2025116470}
\begin{align}
   \nonumber& \left|\int_{0}^{T}\int_{\mathbb{D}}[(\breve{\u}_{h, \Delta t}\otimes\mathring{\u}_{h, \Delta t})\cdot\nabla \mathbf{v}_{h}-(\u\otimes \u)\cdot \nabla\v]\xi(t)  dx dt\right|
\longrightarrow 0 \quad \mbox{as} \quad h,\Delta t\rightarrow 0.
\end{align}
Next, we estimate the last term of the left hand side of \eqref{eq-convergence} with the help of  \cite[Theorem 3]{DING2025116470}
\begin{align}\label{eq-F}
    \nonumber&\frac{1}{\mu\rho}\left|\int_{0}^{T}\int_{\mathbb{D}}\left[(\breve{\B}_{h, \Delta t}\times(\nabla \times\mathring{\B}_{h, \Delta t}))\cdot \mathbf{v}_{h}-(\B\times(\nabla \times\B))\cdot\v\right]\xi(t) dx dt\right|\\
   \nonumber& =\frac{1}{\mu\rho}\bigg|
   \int_{0}^{T}\int_{\mathbb{D}}\Big[(\breve{\B}_{h, \Delta t}-\mathcal{L}(\breve{\B}_{h,\Delta t}))\times(\nabla \times\mathring{\B}_{h, \Delta t}),\mathbf{v}_{h}) 
   \\ &\qquad \nonumber +(\mathcal{L}(\breve{\B}_{h,\Delta t})\times(\nabla \times\mathring{\B}_{h,\Delta t}),\mathbf{v}_{h}) -(\B\times(\nabla \times\B),\v)\xi(t)\Big] dx dt\bigg|\\ 
   \nonumber &\le C h^{l+\frac{1}{2}}\|\nabla \times\breve{\B}_{h,\Delta t}\|_{L^{2}(0,T;\mathbf{L}^2(\mathbb{D}))}\|\nabla \times\mathring{\B}_{h,\Delta t}\|_{L^{2}(0,T;\mathbf{L}^3(\mathbb{D}))}\|\v_{h}\xi(t)\|_{L^{\infty}(0,T;\mathbf{L}^{6}(\mathbb{D}))}\\
\qquad&+\left|\int_{0}^{T}\int_{\mathbb{D}}\left[(\mathcal{L}(\breve{\B}_{h,\Delta t})\times(\nabla \times\mathring{\B}_{h,\Delta t}),\mathbf{v}_{h})-(\B\times(\nabla \times\B),\v)\right]\xi(t) dx dt\right| =: J_{h, 1}+|J_{h, 2}|.
\end{align}

\noindent
Recall that $\mathcal{L}:H_{0}(\operatorname{\mathbf{curl}};\mathbb{D})\rightarrow \mathcal{K}$ and since $\operatorname{div}(\mathcal{L}(\B))=0$, we infer
\begin{align*}
    \|\mathcal{L}(\breve{\B}_{h,\Delta t})\|_{L^{2}(0,T,\mathcal{D}(\mathbb{D}))}=\|\nabla \times\mathcal{L}(\breve{\B}_{h,\Delta t})\|_{L^{2}(0,T;\mathbf{L}^{2}(\mathbb{D}))}=\|\nabla \times\breve{\B}_{h,\Delta t}\|_{L^{2}(0,T;\mathbf{L}^{2}(\mathbb{D}))}\le C\,.
\end{align*}
Similarly, it can be shown that $\|\nabla \times\mathring{\B}_{h,\Delta t}\|_{L^{2}(0,T;\mathbf{L}^{2}(\mathbb{D}))}$ is bounded. Thus, $J_{h,1}\rightarrow0$ as $h,\Delta t\rightarrow 0$. 
\noindent
 Now we choose $\delta_{2}> 0,$ such that
\begin{align}
   & \frac{1}{3+\delta_{1}}+\frac{1}{6-\delta_{2}}=\frac{1}{2}\,.
\end{align}
Thus, we get the embedding $H^{1}(\mathbb{D})\hookrightarrow L^{6-\delta_2}(\mathbb{D}).$
Now, we calculate the second part of the \eqref{eq-F}
\begin{align}
\nonumber|J_{h,2}| &=\left|\int_{0}^{T}\int_{\mathbb{D}}\left[(\mathcal{L}(\breve{\B}_{h,\Delta t})\times\nabla \times\mathring{\B}_{h,\Delta t},\mathbf{v}_{h})-(\B\times\nabla \times\B,\v)\right]\xi(t) dx dt\right|\\
   \nonumber &\le C\|\mathcal{L}(\breve{\B}_{h,\Delta t})-\B\|_{L^2(0,T;\mathbf{L}^{3}(\mathbb{D}))}\|\nabla \times\breve{\B}_{h,\Delta t}\|_{L^2(0,T;\mathbf{L}^{2}(\mathbb{D}))}\|\v_{h}\xi(t)\|_{L^\infty(0,T;\mathbf{L}^{3}(\mathbb{D}))}\\ \nonumber&\quad+C\|\B_{h,\Delta t}\|_{L^2(0,T;\mathbf{L}^{2}(\mathbb{D}))}\|\nabla \times\B\|_{L^2(0,T;\mathbf{L}^{2}(\mathbb{D}))}\|(\v_{h}-\v)\xi(t)\|_{L^\infty(0,T;\mathbf{L}^{6-\delta_{2}}(\mathbb{D}))}
   \\  &\quad +C\left|\int_{0}^{T}\int_{\mathbb{D}}\left[(\mathcal{L}(\breve{\B}_{h,\Delta t})\times(\nabla \times\mathring{\B}_{h,\Delta t}-\nabla \times\B),\mathbf{v}_{h})\right]\xi(t) dx dt\right|\longrightarrow 0\qquad \mbox{as}\qquad h,\Delta t\rightarrow 0.
\end{align} 
By the Sobolev inequality, we infer
\begin{align}
    \nonumber\|\mathcal{L}(\breve{\B}_{h,\Delta t})-\B\|_{L^2(0,T;\mathbf{L}^{3}(\mathbb{D}))}&\le\|\mathcal{L}(\breve{\B}_{h,\Delta t})-\B\|_{L^2(0,T;\mathbf{L}^{2}(\mathbb{D}))}^{\frac{2\delta_{1}}{3(1+\delta_{1})}}\|\mathcal{L}(\breve{\B}_{h,\Delta t})-\B\|_{L^2(0,T;\mathbf{L}^{3+\delta_{1}}(\mathbb{D}))}^{\frac{3+\delta_{1}}{3(1+\delta_{1})}}\,.
\end{align}
\begin{align}
    \|\mathcal{L}(\breve{\B}_{h,\Delta t})-\B\|_{L^2(0,T;\mathbf{L}^{2}(\mathbb{D}))}^{\frac{2\delta_{1}}{3(1+\delta_{1})}}\nonumber&\le\left( \|\mathcal{L}(\breve{\B}_{h,\Delta t})-\breve{\B}\|_{L^2(0,T;\mathbf{L}^{2}(\mathbb{D}))}+\|\breve{\B}-\B\|_{L^2(0,T;\mathbf{L}^{2}(\mathbb{D}))}\right)^{\frac{2\delta_{1}}{3(1+\delta_{1})}}\\
    &\le \left(Ch^{\frac{1}{2}+l}\|\nabla \times\breve{\B}_{h,\Delta t}\|_{L^2(0,T;\mathbf{L}^{2}(\mathbb{D}))}+\|\breve{\B}-\B\|_{L^2(0,T;\mathbf{L}^{2}(\mathbb{D}))}\right)^{\frac{2\delta_{1}}{3(1+\delta_{1})}}.
\end{align}
It remains to treat the $p$-Laplace term of \eqref{eq-convergence}. Writing
\[
A_{h,\Delta t}(x,t)=|\nabla \mathring{\u}_{h, \Delta t}|^{p-2} \nabla\mathring{\u}_{h, \Delta t},
\]
the bound \eqref{eq-bounds-2} of Lemma \ref{bounds} shows that $\{A_{h,\Delta t}\}$ is bounded in $L^{p'}(0,T;\L^{p'}(\mathbb{D}))$, so that, along a subsequence, $A_{h,\Delta t}\rightharpoonup\bar{A}$ weakly in that space. Weak convergence of $\nabla\mathring{\u}_{h,\Delta t}$ alone does not identify $\bar{A}$ with $|\nabla\u|^{p-2}\nabla\u$, since the map is nonlinear; the identification is supplied by Minty's monotonicity argument, which applies because $A$ is monotone, hemicontinuous, coercive and bounded on the reflexive space $L^{p}(0,T;\mathcal{X}_{0})$. Passing to the limit in \eqref{eq-convergence} gives
\begin{align}\label{eq-p-laplace convergence}
   \nonumber &\int_{0}^{T}\int_{\mathbb{D}}\partial_{t}\u\cdot\mathbf{v} dx dt-\int_{0}^{T}\int_{\mathbb{D}}({\u}\otimes{\u})\cdot\nabla \mathbf{v} dx dt+\frac{\nu}{\rho} \int_{0}^{T}\int_{\mathbb{D}}\bar{A}(x,t)\nabla \v dx dt\\
    &+\frac{1}{\mu\rho}\int_{0}^{T}\int_{\mathbb{D}}({\B}\times(\nabla \times{\B}))\cdot \mathbf{v} dx dt=\int_{0}^{T}\int_{\mathbb{D}}{\mathbf{g}}\cdot\mathbf{v}dx dt.
\end{align}
Now, we have to show that 
$$
\bar{A}(x,t)=|\nabla \u|^{p-2}\nabla\u.
$$
We take the superior limit in \eqref{eq-convergence} and we already know that $\mathring{\u}_{h,\Delta t}\rightarrow \u$ converges strongly. By taking all other terms to the right hand side, we infer
\begin{align}
   \nonumber &\limsup_{t\rightarrow 0}\frac{\nu}{\rho} \int_{0}^{T}\int_{\mathbb{D}}A_{h,\Delta t}(x,t)\cdot \nabla \mathbf{v}_{h} dx dt
    \\\nonumber &=-\limsup_{t\rightarrow 0}\bigg(\int_{0}^{T}\int_{\mathbb{D}}\partial_{t}\u_{h, \Delta t}\cdot\mathbf{v}_{h} dx dt-\int_{0}^{T}\int_{\mathbb{D}}(\breve{\u}_{h, \Delta t}\otimes\mathring{\u}_{h, \Delta t})\cdot\nabla \mathbf{v}_{h} dx dt
    \\ & \quad+\frac{1}{\mu\rho}\int_{0}^{T}\int_{\mathbb{D}}(\breve{\B}_{h, \Delta t}\times(\nabla \times\mathring{\B}_{h, \Delta t}))\cdot \mathbf{v}_{h} dx dt -\int_{0}^{T}\int_{\mathbb{D}}\mathring{\mathbf{g}}_{\Delta t}\cdot\mathbf{v}_{h} dx dt\bigg).
    \end{align}
    Now, with the help of \eqref{eq-p-laplace convergence} we deduce
    \begin{align}
     &\limsup_{t\rightarrow 0}\frac{\nu}{\rho} \int_{0}^{T}\int_{\mathbb{D}}A_{h,\Delta t}(x,t)\cdot \nabla \mathbf{v}_{h} dx dt=\frac{\nu}{\rho} \int_{0}^{T}\int_{\mathbb{D}}\bar{A}(x,t)\nabla \v dx dt.
    \end{align}
We now invoke Minty's monotonicity argument. For $\mathbf{G}\in L^{p}(0,T;L^{p}(\mathbb{D}))^{d}$,
\begin{align}
    &\int_{0}^{T}\int_{\mathbb{D}}(A_{h,\Delta t}-A(\mathbf{G}))\cdot(\nabla\mathring{\u}_{h,\Delta t}-\mathbf{G}(x,t)) dx dt\ge 0.
\end{align}
This uses the monotonicity of the $p$-Laplace operator together with the strong convergence $\mathring{\u}_{h,\Delta t}\rightarrow\u$ in $\mathbf{L}^{p}$. Now, we pass the limit by the help of weak convergence to deduce
\begin{align}
    \int_{0}^{T}\int_{\mathbb{D}}[\bar{A}(x,t)-A(\mathbf{G}(x,t))][\nabla \u-\mathbf{G}(x,t)] dx dt \ge 0.
\end{align}
We take $\mathbf{G}=\nabla \u+\alpha\phi$, 
then we infer
\begin{align*}
  & \int_{0}^{T}\int_{\mathbb{D}}[\bar{A}(x,t)-A(\nabla \u+\alpha\phi)][\nabla \u-\nabla \u+\alpha\phi] dx dt \ge 0. \\
  &\alpha\int_{0}^{T}\int_{\mathbb{D}}[\bar{A}(x,t)-A(\nabla \u+\alpha\phi)]\phi\ge 0.
\end{align*}
 Dividing by $\alpha$ and by taking $\alpha\rightarrow 0$ we conclude
\begin{align}
  \int_{0}^{T}\int_{\mathbb{D}}[\bar{A}(x,t)-A(\nabla \u)]\phi=0\,,
\end{align}
which is true for all $\phi\in L^{p}(\mathbb{D})$.
This shows that $\bar{A}(x,t)=A(\nabla\u)$, which implies the convergence result for \eqref{eq-convergence}.\\

\noindent
{\bf{Step 3}:} In this step, we show that $\B$ is the weak solution of\eqref{2nd weeak equation}. By \eqref{eq-FEM-2} we know that $(\u_{h,\Delta t},\B_{h,\Delta t})$ satisfies
\begin{align}\label{eq-2-convergence}
  \nonumber  &\int_{0}^{T}\int_{\mathbb{D}}\partial_{t} \B_{h,\Delta t}\cdot\mathbf{w}_{h} dxdt+\frac{1}{\mu \sigma }\int_{0}^{T}\int_{\mathbb{D}}\nabla \times \mathring{\B}_{h, \Delta t}\cdot \nabla \times\mathbf{w}_{h}dx dt\\&\qquad-\int_{0}^{T}\int_{\mathbb{D}}\mathring{\u}_{h, \Delta t} \times \breve{\B}_{h, \Delta t}\cdot(\nabla \times\mathbf{w}_{h})dxdt=\int_{0}^{T}\int_{\mathbb{D}}\mathring{\mathbf{f}}_{\Delta t}\cdot\mathbf{w}_{h} dxdt.
\end{align}
For any $\C\in\mathcal{G}$ we may choose $\C_{h}\in \mathcal{Y}_{0h}$ with $\mathbf{w}_{h}\rightarrow \mathbf{w}$ as $h\rightarrow 0$ in the space
\[
\left\{\mathbf{w}\in \mathcal{Y}:\ \nabla \times\mathbf{w}\in \L^{p}(\mathbb{D})\ \ \forall\, 2\le p<6\right\}.
\]
We will show the convergence term by term.
For the first term of \eqref{eq-2-convergence}, the strong convergence of the initial data gives
\begin{align}
\nonumber&\left|\int_{0}^{T}\int_{\mathbb{D}}\left(\partial_{t} \B_{h,\Delta t}\cdot\mathbf{w}_{h}-\partial_{t}\B\cdot\mathbf{w}\right)\xi(t) dx dt\right|\\
  \nonumber &=\left|\int_{0}^{T}\left[(-\B_{h,\Delta t},\partial_{t}(\mathbf{w}_{h}\xi(t)))+(\B,\partial_{t}(\mathbf{w}\xi(t)))-(\B_{h}^{0},\mathbf{w}_{h}\xi(0))+(\B^{0},\mathbf{w}\xi(0))\right]dt\right|\\
 \nonumber &\le C\|\B_{h,\Delta t}-\B\|_{L^2(0,T;\mathbf{L}^{2}(\mathbb{D}))}\|\partial_{t}(\mathbf{w}_{h}\xi(t))\|_{L^2(0,T;\mathbf{L}^{2}(\mathbb{D}))}\\&\qquad+C\|\B\|_{L^2(0,T;\mathbf{L}^{2}(\mathbb{D}))}\|(\partial_{t}\mathbf{w}_{h}-\partial_t\mathbf{w})\xi(t)\|_{L^2(0,T;\mathbf{L}^{2}(\mathbb{D}))}\longrightarrow 0 \qquad \mbox{as}\qquad h,\Delta t\longrightarrow 0.
    \end{align}
    Now, we estimate the second term of the left-hand side of the \eqref{eq-2-convergence}
    \begin{align}
       \nonumber &\left|\frac{1}{\mu \sigma }\int_{0}^{T}\int_{\mathbb{D}}\left[(\nabla \times \mathring{\B}_{h, \Delta t}\cdot \nabla \times\mathbf{w}_{h})-(\nabla \times\B\cdot\nabla \times\mathbf{w})\right]\xi(t)dx dt\right|\\
       \nonumber& \le C \left|\int_{0}^{T}\!\!\int_{\mathbb{D}}\left[(\nabla \times \mathring{\B}_{h, \Delta t}\cdot \nabla \times\mathbf{w}_{h})-(\nabla \times\B\cdot \nabla \times\mathbf{w}_{h})\right.\right.\\
       \nonumber&\hspace{2.6cm}\left.\left.+(\nabla \times\B\cdot \nabla \times\mathbf{w}_{h})-(\nabla \times\B\cdot\nabla \times\mathbf{w})\right]\xi(t)dx dt\right|\\
       \nonumber&\le C \|\nabla \times \mathring{\B}_{h, \Delta t}-\nabla \times\B\|_{L^2(0,T;\mathbf{L}^{2}(\mathbb{D}))}\|(\nabla \times\mathbf{w}_{h})\xi(t)\|_{L^2(0,T;\mathbf{L}^{2}(\mathbb{D}))}\\
       \nonumber&\qquad+ C\|\nabla \times\B\|_{L^2(0,T;\mathbf{L}^{2}(\mathbb{D}))}\|(\nabla \times\mathbf{w}_{h}-\nabla \times\mathbf{w})\xi(t)\|_{L^2(0,T;\mathbf{L}^{2}(\mathbb{D}))}\\
       &\longrightarrow 0 \qquad \mbox{as}\qquad h, \Delta t\rightarrow 0.
    \end{align}
   In order to estimate the last term of the left-hand side, we use \eqref{eq-inverse estimate} and \eqref{ine-zbh} to deduce
    \begin{align}
&\left|\int_{0}^{T}\int_{\mathbb{D}}\left[(\mathring{\u}_{h, \Delta t} \times \breve{\B}_{h, \Delta t}\cdot\nabla \times\mathbf{w}_{h})-(\u\times\B\cdot\nabla \times\mathbf{w})\right] \xi(t)dxdt\right| \nonumber\\
&\le\left|\int_{0}^{T}\!\!\int_{\mathbb{D}}\left[(\mathring{\u}_{h, \Delta t} \times (\breve{\B}_{h, \Delta t}-\mathcal{L}(\breve{\B}_{h, \Delta t}))\cdot\nabla \times\mathbf{w}_{h})\right.\right.\nonumber\\
&\hspace{2.2cm}\left.\left.+(\mathring{\u}_{h, \Delta t} \times\mathcal{L}(\breve{\B}_{h, \Delta t}))\cdot\nabla \times\mathbf{w}_{h})-(\u\times\B\cdot\nabla \times\mathbf{w})\right]\xi(t) dxdt\right| \nonumber\\
\nonumber& \le Ch^{l+\frac{1}{2}}\|\nabla \times\breve{\B}_{h, \Delta t}\|_{L^2(0,T;\mathbf{L}^{3}(\mathbb{D}))}\|\mathring{\u}_{h, \Delta t}\|_{L^\infty(0,T;\mathbf{L}^{2}(\mathbb{D}))}\|(\nabla \times\mathbf{w}_{h})\xi(t)\|_{L^2(0,T;\mathbf{L}^{6}(\mathbb{D}))} \\
\nonumber&\qquad +\left|\int_{0}^{T}\int_{\mathbb{D}}\left[(\mathring{\u}_{h, \Delta t} \times\mathcal{L}(\breve{\B}_{h, \Delta t})\cdot\nabla \times\mathbf{w}_{h})-(\u\times\B\cdot\nabla \times\mathbf{w})\right]\xi(t) dxdt\right|\\
\nonumber& \le Ch^{l}\|\nabla \times\breve{\B}_{h, \Delta t}\|_{L^2(0,T;\mathbf{L}^{2}(\mathbb{D}))}\|\mathring{\u}_{h, \Delta t}\|_{L^\infty(0,T;\mathbf{L}^{2}(\mathbb{D}))}\|(\nabla \times\mathbf{w}_{h})\xi(t)\|_{L^2(0,T;\mathbf{L}^{6}(\mathbb{D}))}\\
&\qquad +\left|\int_{0}^{T}\int_{\mathbb{D}}\left[(\mathring{\u}_{h, \Delta t} \times\mathcal{L}(\breve{\B}_{h, \Delta t})\cdot\nabla \times\mathbf{w}_{h})-(\u\times\B\cdot\nabla \times\mathbf{w})\right]\xi(t) dxdt\right| =:E_{1}+|E_{2}|\rightarrow 0
\end{align}
as $h,\Delta t\rightarrow 0$  (see \cite[Theorem 3]{DING2025116470}).
Thus, the convergence result for \eqref{eq-2-convergence} is proved.
    \end{proof}

{
\section{Error Analysis for the System \texorpdfstring{\eqref{1st weak form}--\eqref{2nd weeak equation}}{}}
\label{ lemma-error bounds}

In this section we derive a priori error estimates for the fully discrete scheme \eqref{eq-FEM-1}--\eqref{eq-FEM-2}, whose well-posedness and convergence were established in Sections \ref{sec-wellposed} and \ref{sec-convergence}. The main result, Theorem \ref{Main theorem of error}, shows that the scheme is first-order accurate in time, with a spatial rate governed by the conjugate exponent $p'=\frac{p}{p-1}$ rather than by the exponent $2$ of the Newtonian theory.

This exponent has a single structural origin. For $p>2$ the $p$-Laplace dissipation controls only the $p$-th power of the velocity-error gradient, so a term can be absorbed into it only through Young's inequality with the pair $\left(p,p'\right)$, at the cost of the accompanying power of $\Delta t$ or $h$; the magnetic dissipation, by contrast, is quadratic. We therefore use the $\left(p,p'\right)$--Young inequality exactly once, for the one term that requires it, and route every other term through the magnetic dissipation or the discrete Gronwall lemma. The details are given in Section \ref{subsec-strategy}, after the notation, error splitting and projections are fixed in Section \ref{subsec-err-setting}, the two elementary tools in Section \ref{subsec-err-tools}, and the error equations in Section \ref{subsec-err-equations}; the individual estimates occupy Sections \ref{subsec-consistency}--\ref{subsec-coupling}, and the main theorem and a discussion of the sharpness of the exponents follow in Sections \ref{subsec-main-error} and \ref{subsec-err-discussion}.

\subsection{Standing hypotheses, projections and notation}\label{subsec-err-setting}

Throughout this section $C$ denotes a generic positive constant, independent of $\Delta t$ and $h$, whose value may change from line to line. For any arbitrary function $\varphi$, we write
\[
p'=\frac{p}{p-1}\in(1,2] \quad (p\ge 2),\qquad
I_{n}=(t_{n-1},t_{n}),\qquad
d_{t}\varphi^{n}=\frac{\varphi^{n}-\varphi^{n-1}}{\Delta t}.
\]
Since $\mathbb{D}$ is bounded and $p\ge2$, H\"older's inequality gives
\begin{equation}\label{eq-Lp-L2}
\|\mathbf{v}\|_{\L^{2}}\le|\mathbb{D}|^{\frac12-\frac1p}\|\mathbf{v}\|_{\L^{p}}\qquad\text{for all }\mathbf{v}\in\L^{p}(\mathbb{D}),
\end{equation}
which we use freely to pass between the $\mathbf{L}^{p}$ scale of the velocity and the $\mathbf{L}^{2}$ scale of the magnetic field.\\
\noindent
The analysis compares three objects at each time level $t_{n}$: the exact solution $(\u^{n},\B^{n})$, its projection onto the discrete spaces, and the computed solution $(\mathbf{U}^{n},\mathfrak{B}^{n})$. Denoting by $\mathcal{P}_{h}$ and $\mathcal{F}_{h}$ the velocity and magnetic projections defined in \eqref{eq-Ph-def}--\eqref{eq-Fh-def} below, we split the total error in the standard way as
\begin{equation}\label{eq-error-splitting}
\underbrace{\u^{n}-\mathbf{U}^{n}}_{\text{total velocity error}}
=\underbrace{\left(\u^{n}-\mathcal{P}_{h}\u^{n}\right)}_{=:\ \eta^{n}_{\u}\ \text{(projection part)}}
-\underbrace{\left(\mathbf{U}^{n}-\mathcal{P}_{h}\u^{n}\right)}_{=:\ E^{n}_{\u}\ \text{(discrete part)}},
\qquad
\B^{n}-\mathfrak{B}^{n}=\eta^{n}_{\B}-E^{n}_{\B},
\end{equation}
with $\eta^{n}_{\B}=\B^{n}-\mathcal{F}_{h}\B^{n}$ and $E^{n}_{\B}=\mathfrak{B}^{n}-\mathcal{F}_{h}\B^{n}$. The projection parts $\eta$ are estimated directly by approximation theory (see \eqref{eq-error approximation}--\eqref{eq-error approximation-B} below), while the discrete parts $E$ are the unknowns of the analysis; when the time level is immaterial we drop the superscript and write $E_{\u},E_{\B}$. Every further symbol is defined where it first appears.\\
\noindent
We next record the regularity required of the exact solution.

\begin{assumption}[Regularity]\label{Assumption}
Let $(\u,\B)$ be the solution of \eqref{1st weak form}--\eqref{2nd weeak equation} and suppose that
\begin{center}
$\u\in L^\infty(0,T;\mathbf{W}^{s+1,p}(\mathbb{D}))$,\qquad $\B\in L^\infty(0,T;\mathbf{W}^{s,2}(\mathbb{D}))$, \\
$\nabla\times\B\in L^\infty(0,T;\mathbf{W}^{s,2}(\mathbb{D}))$,\qquad $\u_{t}\in L^2(0,T;\mathbf{W}^{s+1,p}(\mathbb{D}))$,\qquad $\B_{t}\in L^2(0,T;\mathbf{W}^{s,2}(\mathbb{D}))$,\\
$\u_{tt}\in L^2(0,T;\L^{2}(\mathbb{D}))$,\qquad $\B_{tt}\in L^2(0,T;\L^{2}(\mathbb{D}))$,
\end{center}
where the exponent $s\ge\frac12$ depends on $\mathbb{D}$, and where $sp>3$, so that the Sobolev embedding $\mathbf{W}^{s+1,p}(\mathbb{D})\hookrightarrow\mathbf{W}^{1,\infty}(\mathbb{D})$ holds in three dimensions.
\end{assumption}
\noindent
The time derivatives are required in the same spatial scale as $\u,\B$ so that the consistency terms of Section \ref{subsec-consistency} can be estimated through the integral form of the projection-error difference quotient; this is what makes the estimates unconditional. The condition $sp>3$ gives $\nabla\u\in\L^{\infty}$, used in Lemma \ref{lem-trilinear-L2}, and holds already for $s\ge1$ in the regime $p\ge5$ of the experiments.\\
\noindent
The second hypothesis concerns the velocity element.

\begin{assumption}[Pointwise divergence-free velocity space]\label{Assumption-divfree}
The discrete velocity space satisfies $\operatorname{div}\v_{h}=0$ pointwise in $\mathbb{D}$ for every $\v_{h}\in\mathcal{X}_{0h}$.
\end{assumption}

\begin{remark}[On Assumption \ref{Assumption-divfree}]\label{rem-divfree}
Assumption \ref{Assumption-divfree} holds for pressure-robust, exactly divergence-free pairs (Scott--Vogelius on suitable meshes, $\mathbf{H}(\operatorname{div})$-conforming DG), but not for Taylor--Hood, where $\operatorname{div}\v_{h}$ is only $L^{2}$-orthogonal to the discrete pressure. It enters only through Lemma \ref{lem-trilinear-L2}, and it is genuinely needed: without it the identity \eqref{eq-trilinear-identity} retains the defect $\tfrac12\int_{\mathbb{D}}(\operatorname{div}\mathbf{w})(\mathbf{v}\cdot\boldsymbol\phi)$, contributing a term $\sim\|\nabla E^{n-1}_{\u}\|_{\L^{2}}\|E^{n}_{\u}\|_{\L^{2}}$ to Lemma \ref{lem-convective}. For $p>2$ this quadratic quantity in $\nabla E$ cannot be absorbed into the $p$-th power dissipation, and routing it through $\left(p,p'\right)$--Young leaves $\|E^{n}_{\u}\|^{p'}_{\L^{2}}$ with $p'<2$, on which the discrete Gronwall lemma gives only a bound with a non-vanishing floor. Removing the assumption thus seems to require the quasi-norm framework of Remark \ref{rem-quasinorm}, and we leave it as an open problem.
\end{remark}
\noindent
We next fix the two projections. Let $\mathcal{P}_{h}:=\mathcal{P}_{p,h}$ denote the discrete Stokes projection of Section \ref{sec-fem}, that is, given $\u\in\mathcal{X}_{0}$, $\mathcal{P}_{h}\u\in\mathcal{X}_{0h}$ is determined by
\begin{equation}\label{eq-Ph-def}
\left(\nabla\mathcal{P}_{h}\u,\nabla\v_{h}\right)=\left(\nabla\u,\nabla\v_{h}\right)\qquad\forall\,\v_{h}\in\mathcal{X}_{0h},
\end{equation}
and let $\mathcal{F}_{h}:\mathcal{Y}_{0}\to\mathcal{Y}_{h}$ be determined by
\begin{equation}\label{eq-Fh-def}
\frac{1}{\mu\sigma}\left(\nabla\times\mathcal{F}_{h}\B,\nabla\times\C_{h}\right)=\frac{1}{\mu\sigma}\left(\nabla\times\B,\nabla\times\C_{h}\right),\qquad
\left(\mathcal{F}_{h}\B,\nabla\psi_{h}\right)=\left(\B,\nabla\psi_{h}\right),
\end{equation}
for all $\C_{h}\in\mathcal{Y}_{h}$ and $\psi_{h}\in S_{h}$. We use \eqref{eq-Fh-def} through the orthogonality $\left(\nabla\times(\B-\mathcal{F}_{h}\B),\nabla\times\C_{h}\right)=0$, which cancels the magnetic diffusion defect. The nonlinear operator $a_{0}$ admits no such orthogonality; its defect is treated in Section \ref{subsec-plaplace-defect}.\\
\noindent
These projections satisfy the approximation properties
\begin{align}\label{eq-error approximation}
\|\u-\mathcal{P}_{h}\u\|_{\L^{p}}+h\|\nabla(\u-\mathcal{P}_{h}\u)\|_{\L^{p}}&\le C_{e}h^{1+l}\|\u\|_{1+l,p},\\
\label{eq-error approximation-B}
\|\B-\mathcal{F}_{h}\B\|_{\L^{2}}+\|\nabla\times(\B-\mathcal{F}_{h}\B)\|_{\L^{2}}&\le C_{e}h^{l}\left(\|\B\|_{l,2}+\|\nabla\times\B\|_{l,2}\right),
\end{align}
with $l=\min\{k,s\}$ and $k\ge1$ the velocity polynomial degree; the velocity estimate is in the $\mathbf{W}^{1,p}$ scale and the magnetic one in the $\mathbf{L}^{2}$ scale, reconciled by \eqref{eq-Lp-L2}. We also use the uniform bounds
\begin{equation}\label{eq-Cr-bounds}
\|\u-\mathcal{P}_{h}\u\|_{\L^{\infty}}+\|\nabla\mathcal{P}_{h}\u\|_{\L^{\infty}}
+\|\nabla\times(\B-\mathcal{F}_{h}\B)\|_{\L^{3}}+\|\mathcal{F}_{h}\B\|_{\L^{\infty}}\le C_{r},
\end{equation}
valid on quasi-uniform meshes under Assumption \ref{Assumption}; see Lemma \ref{eq-error-estimate bounds}.\\
\noindent
The projection parts $\eta^{n}_{\u},\eta^{n}_{\B}$ in the splitting \eqref{eq-error-splitting} are now controlled directly by \eqref{eq-error approximation}--\eqref{eq-error approximation-B}; the analysis below is devoted to the discrete parts $E^{n}_{\u},E^{n}_{\B}$.

\subsection{Two elementary tools}\label{subsec-err-tools}

The first lemma is what preserves the optimal temporal rate. In its skew-symmetric form $a_{1}(\mathbf{w},\mathbf{v},\boldsymbol\phi)$ carries a derivative of the test function $\boldsymbol\phi$; integrating by parts once moves it onto the middle argument (in our applications the exact solution or its projection), leaving only $\|\boldsymbol\phi\|_{\L^{2}}$.

\begin{lemma}[$\mathbf{L}^{2}$ bound for the convective form]\label{lem-trilinear-L2}
Let $\mathbf{w}\in\mathbf{W}^{1,p}_{0}(\mathbb{D})$, $\mathbf{v}\in\mathbf{W}^{1,\infty}(\mathbb{D})$ and $\boldsymbol\phi\in\mathbf{W}^{1,p}_{0}(\mathbb{D})$. Then
\begin{equation}\label{eq-trilinear-identity}
a_{1}(\mathbf{w},\mathbf{v},\boldsymbol\phi)
=\int_{\mathbb{D}}\left(\mathbf{w}\cdot\nabla\right)\mathbf{v}\cdot\boldsymbol\phi\,dx
+\frac12\int_{\mathbb{D}}\left(\operatorname{div}\mathbf{w}\right)\left(\mathbf{v}\cdot\boldsymbol\phi\right)dx .
\end{equation}
If moreover $\operatorname{div}\mathbf{w}=0$ pointwise, then
\begin{equation}\label{eq-trilinear-bound}
\left|a_{1}(\mathbf{w},\mathbf{v},\boldsymbol\phi)\right|
\le\|\mathbf{w}\|_{\L^{2}}\,\|\nabla\mathbf{v}\|_{\L^{\infty}}\,\|\boldsymbol\phi\|_{\L^{2}} .
\end{equation}
\end{lemma}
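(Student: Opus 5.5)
The plan is to start from the definition \eqref{eq-a1-def}:
\[
a_{1}(\mathbf{w},\mathbf{v},\boldsymbol\phi)=\tfrac12\int_{\mathbb{D}}(\mathbf{w}\cdot\nabla)\mathbf{v}\cdot\boldsymbol\phi\,dx-\tfrac12\int_{\mathbb{D}}(\mathbf{w}\cdot\nabla)\boldsymbol\phi\cdot\mathbf{v}\,dx .
\]
We integrate the second integral by parts so that the derivative moves from $\boldsymbol\phi$ onto $\mathbf{w}$ and $\mathbf{v}$. Componentwise, $\int w_j\,\partial_j\phi_i\,v_i=-\int \partial_j(w_jv_i)\,\phi_i$, and the boundary term vanishes because $\boldsymbol\phi\in\mathbf{W}^{1,p}_{0}$. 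Expanding $\partial_j(w_jv_i)=(\operatorname{div}\mathbf{w})v_i+w_j\partial_jv_i$ gives
\[
-\tfrac12\int(\mathbf{w}\cdot\nabla)\boldsymbol\phi\cdot\mathbf{v}=\tfrac12\int(\operatorname{div}\mathbf{w})(\mathbf{v}\cdot\boldsymbol\phi)+\tfrac12\int(\mathbf{w}\cdot\nabla)\mathbf{v}\cdot\boldsymbol\phi .
\]
Adding this to the first half-integral yields \eqref{eq-trilinear-identity} directly.

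The integration by parts must be justified at the given regularity. Each product has to be integrable, and we check this with H\"older's inequality. Since $p\ge2$ in three dimensions, Sobolev gives $\mathbf{w},\boldsymbol\phi\in\mathbf{L}^{6}$ (at least when $p<3$, and in $\mathbf{L}^{\infty}$ or every $\mathbf{L}^{q}$ when $p\ge3$), while $\mathbf{v},\nabla\mathbf{v}\in\mathbf{L}^{\infty}$. So $w_jv_i\phi_i\in W^{1,1}$ with trace zero on $\partial\mathbb{D}$. The cleanest route is a density argument: first prove the identity for $\boldsymbol\phi\in\mathcal{C}^{\infty}_{0}(\mathbb{D})$, where it is just the distributional derivative of $w_jv_i\in W^{1,p}$. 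Then extend by continuity in $\boldsymbol\phi$, since both sides are continuous in the $\mathbf{W}^{1,p}$ norm of $\boldsymbol\phi$ because $\mathbf{w}\mathbf{v}\in\mathbf{L}^{p}$ and $\nabla(\mathbf{w}\mathbf{v})\in\mathbf{L}^{p}$. This technical step is the only real obstacle, and it is mild.

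For \eqref{eq-trilinear-bound}, assume $\operatorname{div}\mathbf{w}=0$ pointwise, so the second term of \eqref{eq-trilinear-identity} drops out. The pointwise bound $|(\mathbf{w}\cdot\nabla)\mathbf{v}\cdot\boldsymbol\phi|\le|\mathbf{w}|\,|\nabla\mathbf{v}|\,|\boldsymbol\phi|$ holds with the Frobenius norm on $\nabla\mathbf{v}$, matching the convention in $\|\nabla\mathbf{v}\|_{\L^{\infty}}$. Pulling $\|\nabla\mathbf{v}\|_{\L^{\infty}}$ out of the integral and applying Cauchy--Schwarz to $\int|\mathbf{w}||\boldsymbol\phi|$ then gives \eqref{eq-trilinear-bound} with constant one.
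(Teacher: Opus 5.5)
Your proof is correct and follows the paper's argument: you integrate the second half of $a_{1}$ by parts componentwise, expand $\partial_{j}(w_{j}v_{i})=(\operatorname{div}\mathbf{w})v_{i}+w_{j}\partial_{j}v_{i}$ with the boundary term removed by $\boldsymbol\phi\in\mathbf{W}^{1,p}_{0}$, and then drop the divergence term and apply H\"older. Your density argument is a welcome addition that the paper leaves implicit: you first prove the identity for $\boldsymbol\phi\in\mathcal{C}^{\infty}_{0}$ via the weak derivative of $w_{j}v_{i}\in W^{1,p}$, then extend by continuity, which is valid because $p'\le p$ on the bounded domain.
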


\begin{proof}
By definition,
$a_{1}(\mathbf{w},\mathbf{v},\boldsymbol\phi)=\tfrac12\int_{\mathbb{D}}(\mathbf{w}\cdot\nabla)\mathbf{v}\cdot\boldsymbol\phi\,dx-\tfrac12\int_{\mathbb{D}}(\mathbf{w}\cdot\nabla)\boldsymbol\phi\cdot\mathbf{v}\,dx$.
Writing the second integral in components and integrating by parts, the boundary term vanishing because $\boldsymbol\phi\in\mathbf{W}^{1,p}_{0}(\mathbb{D})$, we obtain
\[
\int_{\mathbb{D}}w_{j}\,\partial_{j}\phi_{i}\,v_{i}\,dx
=-\int_{\mathbb{D}}\partial_{j}\!\left(w_{j}v_{i}\right)\phi_{i}\,dx
=-\int_{\mathbb{D}}\left(\operatorname{div}\mathbf{w}\right)\left(\mathbf{v}\cdot\boldsymbol\phi\right)dx-\int_{\mathbb{D}}\left(\mathbf{w}\cdot\nabla\right)\mathbf{v}\cdot\boldsymbol\phi\,dx .
\]
Substituting this expression into the definition and collecting the two identical contributions yields \eqref{eq-trilinear-identity}. If $\operatorname{div}\mathbf{w}=0$, the second term of \eqref{eq-trilinear-identity} disappears and H\"older's inequality gives \eqref{eq-trilinear-bound}.
\end{proof}
\noindent
Since $\boldsymbol\phi$ enters \eqref{eq-trilinear-bound} only through $\|\boldsymbol\phi\|_{\L^{2}}$, the choice $\boldsymbol\phi=E^{n}_{\u}$ with $(2,2)$--Young yields $\|E^{n}_{\u}\|^{2}_{\L^{2}}$ (absorbed by Gronwall) and, after summation, the temporal factor $(\Delta t)^{2}$.\\
\noindent
The second lemma records the two standard properties of $a_{0}$.

\begin{lemma}[Monotonicity and continuity of $a_{0}$]\label{lem-a0}
Let $p\ge2$ and let $a_{0}(\mathbf{v})=\frac{\nu}{\rho}|\nabla\mathbf{v}|^{p-2}\nabla\mathbf{v}$. Then for all $\boldsymbol\xi,\boldsymbol\eta\in\R^{3\times3}$,
\begin{align}
\left(|\boldsymbol\xi|^{p-2}\boldsymbol\xi-|\boldsymbol\eta|^{p-2}\boldsymbol\eta\right):\left(\boldsymbol\xi-\boldsymbol\eta\right)&\ge2^{2-p}\left|\boldsymbol\xi-\boldsymbol\eta\right|^{p},\label{eq-monotone}\\
\left||\boldsymbol\xi|^{p-2}\boldsymbol\xi-|\boldsymbol\eta|^{p-2}\boldsymbol\eta\right|&\le C_{1}\left|\boldsymbol\xi-\boldsymbol\eta\right|\left(|\boldsymbol\xi|+|\boldsymbol\eta|\right)^{p-2}.\label{eq-lipschitz}
\end{align}
\end{lemma}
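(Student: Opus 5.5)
Both inequalities are pointwise statements about the vector field $A(\boldsymbol\xi)=|\boldsymbol\xi|^{p-2}\boldsymbol\xi$ on $\R^{3\times3}$ with the Frobenius inner product, so the plan is to prove them as finite-dimensional inequalities, with the factor $\frac{\nu}{\rho}$ in $a_{0}$ playing no role. For the Lipschitz-type bound \eqref{eq-lipschitz} I would use the fundamental theorem of calculus along the segment $\boldsymbol\eta_{\tau}=\boldsymbol\eta+\tau(\boldsymbol\xi-\boldsymbol\eta)$, $\tau\in[0,1]$. Where $\boldsymbol\eta_{\tau}\neq0$, the derivative of $A$ is
\[
DA(\boldsymbol\zeta)\mathbf{h}=|\boldsymbol\zeta|^{p-2}\mathbf{h}+(p-2)|\boldsymbol\zeta|^{p-4}(\boldsymbol\zeta:\mathbf{h})\boldsymbol\zeta ,
\]
so $|DA(\boldsymbol\zeta)\mathbf{h}|\le(p-1)|\boldsymbol\zeta|^{p-2}|\mathbf{h}|$. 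Since $|\boldsymbol\eta_{\tau}|\le|\boldsymbol\xi|+|\boldsymbol\eta|$, integrating gives \eqref{eq-lipschitz} with $C_{1}=p-1$. The segment can pass through the origin only at a single $\tau$ (unless $\boldsymbol\xi=\boldsymbol\eta=0$, which is trivial). Because $p\ge2$, the map $A$ is $C^{1}$ on $\R^{3\times3}$, with $DA(0)=0$ when $p>2$, so this point causes no difficulty.

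For the monotonicity estimate \eqref{eq-monotone} I would use the same segment. This gives
\[
\left(A(\boldsymbol\xi)-A(\boldsymbol\eta)\right):(\boldsymbol\xi-\boldsymbol\eta)=\int_{0}^{1}DA(\boldsymbol\eta_{\tau})(\boldsymbol\xi-\boldsymbol\eta):(\boldsymbol\xi-\boldsymbol\eta)\,d\tau\ge\int_{0}^{1}|\boldsymbol\eta_{\tau}|^{p-2}\,d\tau\,|\boldsymbol\xi-\boldsymbol\eta|^{2},
\]
since the $(p-2)$-term contributes $(p-2)|\boldsymbol\zeta|^{p-4}(\boldsymbol\zeta:\mathbf{h})^{2}\ge0$. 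The remaining task is the lower bound $\int_{0}^{1}|\boldsymbol\eta+\tau\mathbf{h}|^{p-2}d\tau\ge c_{p}|\mathbf{h}|^{p-2}$ with $\mathbf{h}=\boldsymbol\xi-\boldsymbol\eta$. I would reduce it to a scalar inequality by noting that $|\boldsymbol\eta+\tau\mathbf{h}|\ge|\boldsymbol\eta\cdot\hat{\mathbf{h}}+\tau|\mathbf{h}||$ with $\hat{\mathbf{h}}=\mathbf{h}/|\mathbf{h}|$. For the real affine function $\tau\mapsto a+\tau b$ with $b=|\mathbf{h}|$, at least half of $[0,1]$ satisfies $|a+\tau b|\ge b/4$: the set where it is smaller is an interval of length at most $1/2$. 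This yields a constant of the form $c_{p}=\tfrac12 4^{2-p}$.

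The main obstacle is matching the precise constant $2^{2-p}$ rather than some $c_{p}$. To get it I would use the standard algebraic route instead. Expand
\[
\left(A(\boldsymbol\xi)-A(\boldsymbol\eta)\right):(\boldsymbol\xi-\boldsymbol\eta)=\tfrac12\left(|\boldsymbol\xi|^{p-2}+|\boldsymbol\eta|^{p-2}\right)|\boldsymbol\xi-\boldsymbol\eta|^{2}+\tfrac12\left(|\boldsymbol\xi|^{p-2}-|\boldsymbol\eta|^{p-2}\right)\left(|\boldsymbol\xi|^{2}-|\boldsymbol\eta|^{2}\right).
\]
The second term is nonnegative because $t\mapsto t^{p-2}$ is nondecreasing. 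For the first term, convexity of $t\mapsto t^{p-2}$ when $p\ge3$, together with $|\boldsymbol\xi-\boldsymbol\eta|\le|\boldsymbol\xi|+|\boldsymbol\eta|$, gives
\[
\tfrac12\left(|\boldsymbol\xi|^{p-2}+|\boldsymbol\eta|^{p-2}\right)\ge\left(\tfrac{|\boldsymbol\xi|+|\boldsymbol\eta|}{2}\right)^{p-2}\ge2^{2-p}|\boldsymbol\xi-\boldsymbol\eta|^{p-2}.
\]
This yields \eqref{eq-monotone} exactly. For $2\le p<3$ the convexity step fails, so I would fall back on the integral argument of the previous paragraph. Either the constant in \eqref{eq-monotone} is then read as a generic $c_{p}>0$, which is all that the later absorption arguments use, or one checks directly that $\max(|\boldsymbol\xi|,|\boldsymbol\eta|)^{p-2}\ge2^{2-p}|\boldsymbol\xi-\boldsymbol\eta|^{p-2}$ and $\tfrac12(|\boldsymbol\xi|^{p-2}+|\boldsymbol\eta|^{p-2})\ge\tfrac12\max(|\boldsymbol\xi|,|\boldsymbol\eta|)^{p-2}$, and combines the two halves of the expansion to recover the sharper constant.
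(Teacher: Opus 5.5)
Your proof of \eqref{eq-lipschitz} (with $C_{1}=p-1$, via the $C^{1}$ regularity of $A$ and the fundamental theorem of calculus) is correct. So is your algebraic proof of \eqref{eq-monotone} for $p\ge3$ and for $p=2$. For comparison, the paper gives no proof at all and only cites \cite{BarrettLiu1994,DieningEbmeyerRuzicka2007}.

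\textbf{The gap.} It is \eqref{eq-monotone} for $2<p<3$, where the lemma asserts the constant $2^{2-p}$ and your proposal does not deliver it.

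Fallback (a) proves a weaker statement, since $c_{p}=\tfrac12 4^{2-p}<2^{2-p}$. In fact the integral route cannot reach $2^{2-p}$ for any $p>2$. Take $\boldsymbol\eta=-\boldsymbol\xi$, where both sides of \eqref{eq-monotone} equal $4|\boldsymbol\xi|^{p}$. After you discard the $(p-2)$-term of $DA$, the route yields only $\tfrac{4}{p-1}|\boldsymbol\xi|^{p}$ there.

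Fallback (b) is asserted but not carried out. The two facts you list, inserted into the first half of the expansion, give only $2^{1-p}$. The natural way to use the second half is to show that the whole expression dominates $\max(|\boldsymbol\xi|,|\boldsymbol\eta|)^{p-2}|\boldsymbol\xi-\boldsymbol\eta|^{2}$, and that claim is false. For $|\mathbf{e}|=1$, $\boldsymbol\xi=\mathbf{e}$, $\boldsymbol\eta=-s\mathbf{e}$ with $0<s<1$ and $p>2$, the left-hand side of \eqref{eq-monotone} equals $(1+s^{p-1})(1+s)$, which is strictly less than $(1+s)^{2}$.

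\textbf{A fix.} The following short argument works for every $p\ge2$ and removes the case distinction. Set $r=|\boldsymbol\xi|$, $t=|\boldsymbol\eta|$ and $\boldsymbol\xi:\boldsymbol\eta=rt\,c$ with $c\in[-1,1]$; the case $rt=0$ is immediate. The left-hand side minus the right-hand side of \eqref{eq-monotone} is
\[
f(c)=r^{p}+t^{p}-\left(r^{p-2}+t^{p-2}\right)rt\,c-2^{2-p}\left(r^{2}+t^{2}-2rt\,c\right)^{p/2}.
\]
This is an affine function of $c$ minus a convex one (since $p/2\ge1$), hence concave, so $f\ge\min\{f(-1),f(1)\}$ on $[-1,1]$.

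At the endpoints:
\begin{itemize}
\item $f(-1)\ge0$ is equivalent to $\tfrac12\left(r^{p-1}+t^{p-1}\right)\ge\left(\tfrac{r+t}{2}\right)^{p-1}$, which is convexity of $s\mapsto s^{p-1}$.
\item For $r\ge t$, $f(1)\ge0$ reads $(r^{p-1}-t^{p-1})(r-t)\ge2^{2-p}(r-t)^{p}$. This follows from the superadditivity $r^{p-1}\ge(r-t)^{p-1}+t^{p-1}$ together with $2^{2-p}\le1$.
\end{itemize}

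As you note, the later absorption step only needs some positive constant. But the lemma as stated claims $2^{2-p}$ for all $p\ge2$, so this step is required.
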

\noindent
Here \eqref{eq-monotone} supplies the dissipation on the left of the error equation and \eqref{eq-lipschitz} bounds the projection defect on the right; both are classical \cite{BarrettLiu1994,DieningEbmeyerRuzicka2007}.

\subsection{The error equations}\label{subsec-err-equations}

Evaluating \eqref{1st weak form}--\eqref{2nd weeak equation} at $t=t_{n}$ with the discrete test functions $\v_{h}\in\mathcal{X}_{0h}$ and $\C_{h}\in\mathcal{Y}_{h}$, subtracting \eqref{eq-FEM-1}--\eqref{eq-FEM-2}, and inserting the projections, we obtain the error equations
\begin{align}
\left(d_{t}E^{n}_{\u},\v_{h}\right)+\frac{\nu}{\rho}\left(a_{0}(\mathbf{U}^{n})-a_{0}(\mathcal{P}_{h}\u^{n}),\nabla\v_{h}\right)
&=\left(M^{n}_{h,1},\v_{h}\right)+\mathcal{G}^{n}(\v_{h})+\mathcal{N}^{n}(\v_{h})+\mathcal{K}^{n}(\v_{h}),\label{eq-error-system-1}\\
\left(d_{t}E^{n}_{\B},\C_{h}\right)+\frac{1}{\mu\sigma}\left(\nabla\times E^{n}_{\B},\nabla\times\C_{h}\right)
&=\left(M^{n}_{h,2},\C_{h}\right)+\mathcal{I}^{n}(\C_{h}),\label{eq-error-system-2}
\end{align}
where the consistency terms are
\[
M^{n}_{h,1}:=\left(\partial_{t}\u^{n}-d_{t}\u^{n}\right)+d_{t}\eta^{n}_{\u},
\qquad
M^{n}_{h,2}:=\left(\partial_{t}\B^{n}-d_{t}\B^{n}\right)+d_{t}\eta^{n}_{\B},
\]
the $p$-Laplace projection defect is
\[
\mathcal{G}^{n}(\v_{h}):=\frac{\nu}{\rho}\left(a_{0}(\u^{n})-a_{0}(\mathcal{P}_{h}\u^{n}),\nabla\v_{h}\right),
\]
the convective defect is
\[
\mathcal{N}^{n}(\v_{h}):=a_{1}\!\left(\u^{n},\u^{n},\v_{h}\right)-a_{1}\!\left(\mathbf{U}^{n-1},\mathbf{U}^{n},\v_{h}\right),
\]
and the electromagnetic defects are
\begin{align*}
\mathcal{K}^{n}(\v_{h})&:=\frac{1}{\mu\rho}\left[\left(\B^{n}\times(\nabla\times\B^{n}),\v_{h}\right)-\left(\mathfrak{B}^{n-1}\times(\nabla\times\mathfrak{B}^{n}),\v_{h}\right)\right],\\
\mathcal{I}^{n}(\C_{h})&:=\left(\mathbf{U}^{n}\times\mathfrak{B}^{n-1},\nabla\times\C_{h}\right)-\left(\u^{n}\times\B^{n},\nabla\times\C_{h}\right).
\end{align*}
The magnetic diffusion defect $\frac{1}{\mu\sigma}\left(\nabla\times\eta^{n}_{\B},\nabla\times\C_{h}\right)$ is absent from \eqref{eq-error-system-2} because it vanishes identically by the defining property \eqref{eq-Fh-def} of $\mathcal{F}_{h}$.\\
\noindent
We now choose $\v_{h}=E^{n}_{\u}$ in \eqref{eq-error-system-1} and $\C_{h}=\frac{1}{\mu\rho}E^{n}_{\B}$ in \eqref{eq-error-system-2}. Using the elementary identity $2\,a\,(a-b)=a^{2}-b^{2}+(a-b)^{2}$ for the discrete time derivatives and Lemma \ref{lem-a0}\,\eqref{eq-monotone} for the dissipation, the sum of the two resulting relations gives
\begin{align}\label{eq-error-master}
\nonumber
&\frac{1}{2}d_{t}\|E^{n}_{\u}\|^{2}_{\L^{2}}+\frac{1}{2\mu\rho}d_{t}\|E^{n}_{\B}\|^{2}_{\L^{2}}
+\frac{\Delta t}{2}\|d_{t}E^{n}_{\u}\|^{2}_{\L^{2}}+\frac{\Delta t}{2\mu\rho}\|d_{t}E^{n}_{\B}\|^{2}_{\L^{2}}\\
&\qquad+\frac{\nu}{\rho}2^{2-p}\|\nabla E^{n}_{\u}\|^{p}_{\L^{p}}
+\frac{1}{\mu^{2}\sigma\rho}\|\nabla\times E^{n}_{\B}\|^{2}_{\L^{2}}
\le\ \underbrace{\left(M^{n}_{h,1},E^{n}_{\u}\right)+\frac{1}{\mu\rho}\left(M^{n}_{h,2},E^{n}_{\B}\right)}_{\text{Section \ref{subsec-consistency}}}\\
\nonumber
&\qquad\qquad+\underbrace{\mathcal{G}^{n}(E^{n}_{\u})}_{\text{Section \ref{subsec-plaplace-defect}}}
+\underbrace{\mathcal{N}^{n}(E^{n}_{\u})}_{\text{Section \ref{subsec-convective}}}
+\underbrace{\mathcal{K}^{n}(E^{n}_{\u})+\frac{1}{\mu\rho}\mathcal{I}^{n}(E^{n}_{\B})}_{\text{Section \ref{subsec-coupling}}} .
\end{align}
\subsection{Strategy of the proof}\label{subsec-strategy}

The left of \eqref{eq-error-master} provides the quantities we may spend against: the two discrete time derivatives and the viscous and magnetic dissipations. Each term on the right is bounded by a fraction of these plus a Gronwall term ($\|E^{n}_{\u}\|^{2}_{\L^{2}}$ or $\|E^{n-1}_{\B}\|^{2}_{\L^{2}}$) or data (powers of $\Delta t,h$). How the absorption is done is governed by two facts.

{(i) The viscous dissipation is of $p$-th power type:} A term of the form $\|\nabla E^{n}_{\u}\|_{\L^{p}}$ times a gradient-free prefactor can be absorbed into $\|\nabla E^{n}_{\u}\|^{p}_{\L^{p}}$ only by $\left(p,p'\right)$--Young, which leaves the prefactor to the power $p'\le2$; a factor $(\Delta t)^{1/2}$ then becomes $(\Delta t)^{p'/2}$ rather than $\Delta t$. We use this step once only, for the $p$-Laplace defect $\mathcal{G}^{n}$ in Lemma \ref{lem-plaplace-defect}. As $\mathcal{G}^{n}$ carries no power of $\Delta t$, the loss falls entirely on the spatial exponent and gives the $h^{lp/(p-1)}$ of Theorem \ref{Main theorem of error}.

{(ii) Everything else is routed through $\mathbf{L}^{2}$:} The consistency defects $M^{n}_{h,1},M^{n}_{h,2}$ and the convective defect $\mathcal{N}^{n}$ are estimated so that the error enters only through $\|E^{n}_{\u}\|_{\L^{2}}$, not its gradient; for $\mathcal{N}^{n}$ this is what Lemma \ref{lem-trilinear-L2} provides. The pair $(2,2)$ then squares each $(\Delta t)^{1/2}$, and summation supplies a second factor, giving $(\Delta t)^{2}$. The coupling defects $\mathcal{K}^{n},\mathcal{I}^{n}$ pair partly with $\nabla\times E^{n}_{\B}$, but the magnetic dissipation is quadratic, so $(2,2)$ again suffices without loss.\\
\noindent
Concretely, the four estimates run as follows. The consistency defects $M^{n}_{h,1},M^{n}_{h,2}$ (Lemma \ref{lem-consistency}) and the convective defect $\mathcal{N}^{n}$ (Lemma \ref{lem-convective}) are absorbed into $\|E_{\u}\|^{2}_{\L^{2}}$ at cost $(\Delta t)^{2}$ and $h^{2l}$ (with an additional $h^{2(l+1)}$ from consistency); the $p$-Laplace defect $\mathcal{G}^{n}$ (Lemma \ref{lem-plaplace-defect}) is absorbed into $\|\nabla E_{\u}\|^{p}_{\L^{p}}$ at cost $h^{lp/(p-1)}$; and the coupling defects $\mathcal{K}^{n},\mathcal{I}^{n}$ (Lemma \ref{lem-coupling}) are absorbed into $\|\nabla\times E_{\B}\|^{2}_{\L^{2}}$ and $\|E_{\B}\|^{2}_{\L^{2}}$ at cost $(\Delta t)^{2}$ and $h^{2l}$. Since $p'\le2$ gives $\frac{lp}{p-1}\le2l\le2(l+1)$, the $p$-Laplace defect sets the binding spatial exponent while the others set the binding temporal one. We now carry out the four estimates.

\subsection{The consistency terms}\label{subsec-consistency}

\begin{lemma}[Consistency]\label{lem-consistency}
Under Assumption \ref{Assumption} there holds, for $n=1,\dots,N$,
\begin{align*}
\left|\left(M^{n}_{h,1},E^{n}_{\u}\right)\right|+\frac{1}{\mu\rho}\left|\left(M^{n}_{h,2},E^{n}_{\B}\right)\right|
&\le\frac18\|E^{n}_{\u}\|^{2}_{\L^{2}}+\frac{1}{8\mu\rho}\|E^{n}_{\B}\|^{2}_{\L^{2}}\\
&\quad+C\Delta t\left\{\|\partial_{tt}\u\|^{2}_{L^{2}(I_{n};\L^{2})}+\|\partial_{tt}\B\|^{2}_{L^{2}(I_{n};\L^{2})}\right\}\\
&\quad+\frac{C}{\Delta t}\left\{h^{2(l+1)}\|\partial_{t}\u\|^{2}_{L^{2}(I_{n};\mathbf{W}^{l+1,p})}+h^{2l}\|\partial_{t}\B\|^{2}_{L^{2}(I_{n};\mathbf{W}^{l,2})}\right\}.
\end{align*}
\end{lemma}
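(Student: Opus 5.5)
The plan is to split each consistency term into its Taylor (time-truncation) part and its projection-difference-quotient part, estimate each by Cauchy--Schwarz in space, and then apply $(2,2)$--Young with weight $\frac1{32}$ on the error norms, so that the four pieces together produce exactly the $\frac18$ coefficients in the statement. Concretely,
\[
\left(M^{n}_{h,1},E^{n}_{\u}\right)=\left(\partial_{t}\u^{n}-d_{t}\u^{n},E^{n}_{\u}\right)+\left(d_{t}\eta^{n}_{\u},E^{n}_{\u}\right),
\]
and similarly for the magnetic term with $\eta_{\B}$, $E_{\B}$, where the factor $\frac1{\mu\rho}$ is carried along throughout.

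For the truncation part, I will use the integral remainder
\[
\partial_{t}\u(t_{n})-d_{t}\u^{n}=\frac{1}{\Delta t}\int_{t_{n-1}}^{t_{n}}(t-t_{n-1})\,\partial_{tt}\u(t)\,dt .
\]
Cauchy--Schwarz in time then gives $\|\partial_{t}\u^{n}-d_{t}\u^{n}\|_{\L^{2}}^{2}\le\frac{\Delta t}{3}\|\partial_{tt}\u\|^{2}_{L^{2}(I_{n};\L^{2})}$. Young's inequality $ab\le\frac1{32}a^2+8b^2$ produces the term $C\Delta t\|\partial_{tt}\u\|^{2}_{L^2(I_n;\L^2)}$. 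The magnetic analogue is identical, using $\B_{tt}\in L^2(0,T;\L^2)$ from Assumption \ref{Assumption}.

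For the projection part, linearity of $\mathcal{P}_{h}$ and $\mathcal{F}_{h}$ (both are defined by linear variational problems \eqref{eq-Ph-def}--\eqref{eq-Fh-def}) lets us commute them with time integration. This gives
\[
d_{t}\eta^{n}_{\u}=\frac{1}{\Delta t}\int_{I_{n}}(I-\mathcal{P}_{h})\partial_{t}\u\,dt .
\]
Hence
\[
\|d_{t}\eta^{n}_{\u}\|_{\L^{2}}^{2}\le\frac{1}{\Delta t}\int_{I_{n}}\|(I-\mathcal{P}_{h})\partial_{t}\u\|^{2}_{\L^{2}}dt .
\]
I then pass from $\L^2$ to $\L^p$ with \eqref{eq-Lp-L2} and apply \eqref{eq-error approximation} with $l$ in place of $s$, which gives the $h^{2(l+1)}\|\partial_t\u\|^{2}_{L^2(I_n;\mathbf{W}^{l+1,p})}/\Delta t$ contribution. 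For the magnetic field I use \eqref{eq-error approximation-B} to get $h^{2l}\|\partial_t\B\|^2_{L^2(I_n;\mathbf{W}^{l,2})}/\Delta t$. Strictly, \eqref{eq-error approximation-B} also involves $\|\nabla\times\partial_t\B\|_{l,2}$. I would either absorb this into the $\mathbf{W}^{l,2}$-type norm written in the statement, reading it as the graph norm, or note that only the $\L^2$ part of \eqref{eq-error approximation-B} is needed; I flag this as a bookkeeping point. Young's inequality with weight $\frac1{32}$ again finishes the estimate.

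The main obstacle is not the estimates themselves but justifying the commutation of $\mathcal{P}_{h}$ and $\mathcal{F}_{h}$ with $\partial_t$ and the integral. This requires $\partial_t\u\in L^2(0,T;\mathbf{W}^{l+1,p})$ and $\partial_t\B\in L^2(0,T;\mathbf{W}^{l,2})$, which are assumed in Assumption \ref{Assumption}, together with boundedness of the linear projections on those spaces. With that in hand, the rest is routine summation of the four Young inequalities.
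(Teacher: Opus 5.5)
Your proposal is correct and follows the paper's proof essentially step for step. Both use the integral Taylor remainder for $\partial_t\u^n-d_t\u^n$, the integral representation $d_t\eta^n_{\u}=\frac{1}{\Delta t}\int_{I_n}\partial_t\eta_{\u}\,dt$ with Cauchy--Schwarz in time, the passage through \eqref{eq-Lp-L2} and \eqref{eq-error approximation}, and $(2,2)$--Young.

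The bookkeeping point you flag is real, and the paper also passes over it (it only says the magnetic case is ``identical'' using \eqref{eq-error approximation-B}). Your first fix is the honest one: read $\|\partial_t\B\|_{\mathbf{W}^{l,2}}$ as including $\|\nabla\times\partial_t\B\|_{l,2}$, noting that Assumption~\ref{Assumption} does not literally supply this. Your second option does not help, because the $\L^2$ part of \eqref{eq-error approximation-B} has the same curl norm on its right-hand side.
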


\begin{proof}
We treat the two contributions to $M^{n}_{h,1}$ separately. For the temporal truncation error, Taylor's theorem with integral remainder gives
\[
\partial_{t}\u^{n}-d_{t}\u^{n}=\frac{1}{\Delta t}\int_{t_{n-1}}^{t_{n}}(s-t_{n-1})\,\partial_{tt}\u(s)\,ds ,
\]
whence, by the triangle and Cauchy--Schwarz inequalities,
\[
\left\|\partial_{t}\u^{n}-d_{t}\u^{n}\right\|_{\L^{2}}
\le\int_{t_{n-1}}^{t_{n}}\left\|\partial_{tt}\u(s)\right\|_{\L^{2}}ds
\le(\Delta t)^{1/2}\left\|\partial_{tt}\u\right\|_{L^{2}(I_{n};\L^{2})} .
\]
For the projection contribution we use the integral representation of the difference quotient,
\[
d_{t}\eta^{n}_{\u}=\frac{1}{\Delta t}\int_{t_{n-1}}^{t_{n}}\partial_{t}\left(\u-\mathcal{P}_{h}\u\right)(s)\,ds ,
\]
which is legitimate because $\mathcal{P}_{h}$ is independent of $t$ and $\u_{t}\in L^{2}(0,T;\mathbf{W}^{s+1,p})$ by Assumption \ref{Assumption}. Cauchy--Schwarz in time together with \eqref{eq-error approximation} and \eqref{eq-Lp-L2} then yields
\[
\left\|d_{t}\eta^{n}_{\u}\right\|_{\L^{2}}
\le(\Delta t)^{-1/2}\left\|\partial_{t}\eta_{\u}\right\|_{L^{2}(I_{n};\L^{2})}
\le C_{e}h^{l+1}(\Delta t)^{-1/2}\left\|\partial_{t}\u\right\|_{L^{2}(I_{n};\mathbf{W}^{l+1,p})} .
\]
This integral form is what removes the negative power of $\Delta t$ that a pointwise estimate of $d_{t}\eta^{n}_{\u}$ would incur, making the bound unconditional. Combining the two displays and applying Young's inequality with the conjugate pair $(2,2)$,
\[
\left|\left(M^{n}_{h,1},E^{n}_{\u}\right)\right|
\le\frac18\|E^{n}_{\u}\|^{2}_{\L^{2}}
+C\Delta t\|\partial_{tt}\u\|^{2}_{L^{2}(I_{n};\L^{2})}
+\frac{C}{\Delta t}h^{2(l+1)}\|\partial_{t}\u\|^{2}_{L^{2}(I_{n};\mathbf{W}^{l+1,p})} .
\]
The estimate for $M^{n}_{h,2}$ is identical, with \eqref{eq-error approximation-B} in place of \eqref{eq-error approximation}, and produces $h^{2l}$ instead of $h^{2(l+1)}$.
\end{proof}

\subsection{The convective defect}\label{subsec-convective}

\begin{lemma}[Convective defect]\label{lem-convective}
Under Assumptions \ref{Assumption} and \ref{Assumption-divfree} there holds
\[
\left|\mathcal{N}^{n}(E^{n}_{\u})\right|
\le\frac38\|E^{n}_{\u}\|^{2}_{\L^{2}}+C\|E^{n-1}_{\u}\|^{2}_{\L^{2}}
+C\left\{\Delta t\|\partial_{t}\u\|^{2}_{L^{2}(I_{n};\L^{2})}+h^{2l}\right\},
\]
where $C$ depends on $C_{e},C_{r}$ and on $\|\u\|_{L^{\infty}(0,T;\mathbf{W}^{s+1,p})}$, but not on $\Delta t$ or $h$.
\end{lemma}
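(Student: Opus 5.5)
The plan is to split $\mathcal{N}^{n}(E^{n}_{\u})$ into pieces in which the test function $E^{n}_{\u}$ never carries a derivative that cannot be moved. Each piece will then be handled by Lemma \ref{lem-trilinear-L2} or by the skew-symmetry \eqref{eq-a1-skew}. Since $\mathbf{U}^{n}=\mathcal{P}_{h}\u^{n}+E^{n}_{\u}$ and $\u^{n}-\mathcal{P}_{h}\u^{n}=\eta^{n}_{\u}$, we write, using trilinearity,
\begin{align*}
\mathcal{N}^{n}(E^{n}_{\u})
&=a_{1}(\u^{n}-\u^{n-1},\u^{n},E^{n}_{\u})
+a_{1}(\eta^{n-1}_{\u},\u^{n},E^{n}_{\u})
-a_{1}(E^{n-1}_{\u},\u^{n},E^{n}_{\u})\\
&\quad+a_{1}(\mathbf{U}^{n-1},\eta^{n}_{\u},E^{n}_{\u})
-a_{1}(\mathbf{U}^{n-1},E^{n}_{\u},E^{n}_{\u}).
\end{align*}
This follows from $\u^{n-1}-\mathbf{U}^{n-1}=\eta^{n-1}_{\u}-E^{n-1}_{\u}$ and $\u^{n}-\mathbf{U}^{n}=\eta^{n}_{\u}-E^{n}_{\u}$. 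The last term vanishes by \eqref{eq-a1-skew}, with no divergence condition required.

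The first three terms have $\u^{n}\in\mathbf{W}^{1,\infty}$ (by $sp>3$ in Assumption \ref{Assumption}) in the middle slot, so we apply \eqref{eq-trilinear-bound}. This needs the first argument to be pointwise divergence free.
\begin{itemize}
\item For $\u^{n}-\u^{n-1}$ this holds exactly. We obtain the bound $\|\nabla\u\|_{L^\infty(\L^\infty)}(\Delta t)^{1/2}\|\partial_t\u\|_{L^2(I_n;\L^2)}\|E^{n}_{\u}\|_{\L^2}$.
\item For $E^{n-1}_{\u}\in\mathcal{X}_{0h}$ it holds by Assumption \ref{Assumption-divfree}. We obtain the bound $C\|E^{n-1}_{\u}\|_{\L^2}\|E^{n}_{\u}\|_{\L^2}$.
\item For $\eta^{n-1}_{\u}=\u^{n-1}-\mathcal{P}_{h}\u^{n-1}$ it again holds by Assumption \ref{Assumption-divfree}. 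We obtain the bound $Ch^{l+1}\|E^{n}_{\u}\|_{\L^2}$ via \eqref{eq-error approximation} and \eqref{eq-Lp-L2}.
\end{itemize}
Young's inequality with the pair $(2,2)$ then yields three copies of $\frac18\|E^{n}_{\u}\|^2_{\L^2}$, plus $C\|E^{n-1}_{\u}\|^2_{\L^2}$, $C\Delta t\|\partial_t\u\|^2_{L^2(I_n;\L^2)}$ and $Ch^{2l}$ (using $h^{2(l+1)}\le Ch^{2l}$).

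The fourth term $a_{1}(\mathbf{U}^{n-1},\eta^{n}_{\u},E^{n}_{\u})$ is the main obstacle. Its middle argument $\eta^{n}_{\u}$ is not uniformly in $\mathbf{W}^{1,\infty}$ with a small norm, and its first argument is the discrete solution, which is only controlled in $\L^2$ by Lemma \ref{bounds}. We again use \eqref{eq-trilinear-identity}, since $\operatorname{div}\mathbf{U}^{n-1}=0$ pointwise. Then we split $\mathbf{U}^{n-1}=\mathcal{P}_{h}\u^{n-1}+E^{n-1}_{\u}$.
\begin{itemize}
\item For the part with $\mathcal{P}_{h}\u^{n-1}$, we bound it by $\|\mathcal{P}_{h}\u^{n-1}\|_{\L^\infty}\|\nabla\eta^{n}_{\u}\|_{\L^2}\|E^{n}_{\u}\|_{\L^2}\le Ch^{l}\|E^{n}_{\u}\|_{\L^2}$. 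This uses \eqref{eq-Cr-bounds}, \eqref{eq-error approximation} and \eqref{eq-Lp-L2}.
\item For the part with $E^{n-1}_{\u}$, we use the original skew form instead. The identity \eqref{eq-trilinear-identity} with the roles of $\eta$ and $E$ interchanged gives $a_{1}(E^{n-1}_{\u},\eta^{n}_{\u},E^{n}_{\u})=-a_{1}(E^{n-1}_{\u},E^{n}_{\u},\eta^{n}_{\u})$. We then move the derivative onto whichever factor is bounded. The cleanest route is to bound it by $\|E^{n-1}_{\u}\|_{\L^2}\|\nabla\eta^{n}_{\u}\|_{\L^\infty}\|E^{n}_{\u}\|_{\L^2}$. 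Here $\|\nabla\eta^{n}_{\u}\|_{\L^\infty}\le\|\nabla\u^{n}\|_{\L^\infty}+\|\nabla\mathcal{P}_{h}\u^{n}\|_{\L^\infty}\le C$ by \eqref{eq-Cr-bounds} and the embedding $\mathbf{W}^{s+1,p}\hookrightarrow\mathbf{W}^{1,\infty}$.
\end{itemize}
Young's inequality then contributes at most another $\frac18\|E^{n}_{\u}\|^2_{\L^2}$ in total, together with $C\|E^{n-1}_{\u}\|^2_{\L^2}+Ch^{2l}$. This accounts for the constant $\frac38$ once the $\frac18$'s are tallied; the weights in the Young steps are to be rebalanced if needed.

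Collecting the bounds gives the stated estimate. The constant depends only on $C_e$, $C_r$ and $\|\u\|_{L^\infty(0,T;\mathbf{W}^{s+1,p})}$. No gradient of $E_{\u}$ appears anywhere, so the $(p,p')$ loss is avoided.
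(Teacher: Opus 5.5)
Your proof is correct and is essentially the paper's argument. The paper splits into $\Theta_1=a_1(\u^n-\mathcal{P}_h\u^{n-1},\u^n,E^n_{\u})$, $\Theta_2=a_1(\mathcal{P}_h\u^{n-1},\eta^n_{\u},E^n_{\u})$ and $\Theta_3=a_1(E^{n-1}_{\u},\mathcal{P}_h\u^n,E^n_{\u})$, which is your decomposition with your first two terms merged and your third term merged with the $E^{n-1}_{\u}$-part of the fourth; it then bounds each piece with Lemma \ref{lem-trilinear-L2}, Assumption \ref{Assumption-divfree}, \eqref{eq-Cr-bounds} and $(2,2)$--Young exactly as you do, and the merging is what makes the count come out to exactly three copies of $\frac18$. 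With your five nonvanishing pieces you just use weight $\frac{3}{40}$ in each Young step, which changes only $C$.
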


\begin{proof}
{Step 1: decomposition.} Since $\mathbf{U}^{n}=\mathcal{P}_{h}\u^{n}+E^{n}_{\u}$ and $a_{1}(\mathbf{w},\boldsymbol\phi,\boldsymbol\phi)=0$ for every $\mathbf{w}$, we have
\[
a_{1}\!\left(\mathbf{U}^{n-1},\mathbf{U}^{n},E^{n}_{\u}\right)
=a_{1}\!\left(\mathbf{U}^{n-1},\mathcal{P}_{h}\u^{n},E^{n}_{\u}\right)
+\underbrace{a_{1}\!\left(\mathbf{U}^{n-1},E^{n}_{\u},E^{n}_{\u}\right)}_{=0}.
\]
Writing $\mathbf{U}^{n-1}=\mathcal{P}_{h}\u^{n-1}+E^{n-1}_{\u}$ and using linearity in the first two arguments,
\begin{equation}\label{eq-N-decomp}
\mathcal{N}^{n}(E^{n}_{\u})=\Theta_{1}+\Theta_{2}-\Theta_{3},
\end{equation}
where
\[
\Theta_{1}:=a_{1}\!\left(\u^{n}-\mathcal{P}_{h}\u^{n-1},\u^{n},E^{n}_{\u}\right),\qquad
\Theta_{2}:=a_{1}\!\left(\mathcal{P}_{h}\u^{n-1},\eta^{n}_{\u},E^{n}_{\u}\right),\qquad
\Theta_{3}:=a_{1}\!\left(E^{n-1}_{\u},\mathcal{P}_{h}\u^{n},E^{n}_{\u}\right).
\]
Indeed, $a_{1}(\u^{n},\u^{n},\cdot)-a_{1}(\mathcal{P}_{h}\u^{n-1},\mathcal{P}_{h}\u^{n},\cdot)=\Theta_{1}+\Theta_{2}$ by adding and subtracting $a_{1}(\mathcal{P}_{h}\u^{n-1},\u^{n},\cdot)$. Compared with the more common seven-term splitting, the decomposition \eqref{eq-N-decomp} has the advantage that the two contributions which vanish by skew-symmetry have already been removed, so that no term is retained whose first argument is identically zero.\\
\noindent
{Step 2: the term $\Theta_{1}$.} By Assumption \ref{Assumption-divfree} and because $\operatorname{div}\u^{n}=0$, the first argument of $\Theta_{1}$ is divergence free, so Lemma \ref{lem-trilinear-L2} applies with $\mathbf{v}=\u^{n}\in\mathbf{W}^{1,\infty}$:
\[
\left|\Theta_{1}\right|\le\left\|\u^{n}-\mathcal{P}_{h}\u^{n-1}\right\|_{\L^{2}}\left\|\nabla\u^{n}\right\|_{\L^{\infty}}\left\|E^{n}_{\u}\right\|_{\L^{2}} .
\]
Adding and subtracting $\u^{n-1}$ and using \eqref{eq-error approximation} together with $\|\u^{n}-\u^{n-1}\|_{\L^{2}}\le(\Delta t)^{1/2}\|\partial_{t}\u\|_{L^{2}(I_{n};\L^{2})}$,
\[
\left\|\u^{n}-\mathcal{P}_{h}\u^{n-1}\right\|_{\L^{2}}
\le(\Delta t)^{1/2}\left\|\partial_{t}\u\right\|_{L^{2}(I_{n};\L^{2})}+C_{e}h^{l+1}\left\|\u^{n-1}\right\|_{l+1,p} .
\]
Young's inequality with the pair $(2,2)$ gives
\[
\left|\Theta_{1}\right|\le\frac18\|E^{n}_{\u}\|^{2}_{\L^{2}}+C\Delta t\|\partial_{t}\u\|^{2}_{L^{2}(I_{n};\L^{2})}+Ch^{2(l+1)} .
\]
\noindent
{Step 3: the term $\Theta_{2}$.} Here the first argument $\mathcal{P}_{h}\u^{n-1}$ is divergence free by Assumption \ref{Assumption-divfree}, and the middle argument is the small quantity $\eta^{n}_{\u}$. Applying \eqref{eq-trilinear-identity} and estimating the single surviving integral by H\"older's inequality with exponents $(\infty,2,2)$,
\[
\left|\Theta_{2}\right|
\le\left\|\mathcal{P}_{h}\u^{n-1}\right\|_{\L^{\infty}}\left\|\nabla\eta^{n}_{\u}\right\|_{\L^{2}}\left\|E^{n}_{\u}\right\|_{\L^{2}}
\le C_{r}\,C_{e}h^{l}\left\|\u^{n}\right\|_{l+1,p}\left\|E^{n}_{\u}\right\|_{\L^{2}},
\]
where we used \eqref{eq-Lp-L2} and the gradient part of \eqref{eq-error approximation}, which carries $h^{l}$ and not $h^{l+1}$. Young's inequality with the pair $(2,2)$ yields
\[
\left|\Theta_{2}\right|\le\frac18\|E^{n}_{\u}\|^{2}_{\L^{2}}+Ch^{2l} .
\]
\noindent
{Step 4: the term $\Theta_{3}$.} The first argument $E^{n-1}_{\u}$ is a difference of two elements of $\mathcal{X}_{0h}$ and is therefore divergence free by Assumption \ref{Assumption-divfree}. Lemma \ref{lem-trilinear-L2} with $\mathbf{v}=\mathcal{P}_{h}\u^{n}$, whose gradient is uniformly bounded by \eqref{eq-Cr-bounds}, gives
\[
\left|\Theta_{3}\right|
\le\left\|E^{n-1}_{\u}\right\|_{\L^{2}}\left\|\nabla\mathcal{P}_{h}\u^{n}\right\|_{\L^{\infty}}\left\|E^{n}_{\u}\right\|_{\L^{2}}
\le C_{r}\left\|E^{n-1}_{\u}\right\|_{\L^{2}}\left\|E^{n}_{\u}\right\|_{\L^{2}},
\]
and Young's inequality with the pair $(2,2)$ produces
\[
\left|\Theta_{3}\right|\le\frac18\|E^{n}_{\u}\|^{2}_{\L^{2}}+C\left\|E^{n-1}_{\u}\right\|^{2}_{\L^{2}} .
\]
Both terms on the right are of Gronwall type. This is the step that uses Assumption \ref{Assumption-divfree}: without it, \eqref{eq-trilinear-identity} would leave the extra term $\tfrac12\int_{\mathbb{D}}(\operatorname{div}E^{n-1}_{\u})(\mathcal{P}_{h}\u^{n}\cdot E^{n}_{\u})$, quadratic in $\nabla E$ and hence not absorbable into the $p$-th power dissipation (Remark \ref{rem-divfree}).\\
\noindent         
{Step 5.} Collecting Steps 2--4 in \eqref{eq-N-decomp} and using $h^{2(l+1)}\le h^{2l}$ for $h\le1$ completes the proof.
\end{proof}

\subsection{The \texorpdfstring{$p$}{p}-Laplace projection defect}\label{subsec-plaplace-defect}

The following lemma isolates the one place in the entire analysis where Young's inequality with the conjugate pair $\left(p,p'\right)$ is unavoidable. The reason is structural: unlike the magnetic diffusion, whose defect vanishes identically by \eqref{eq-Fh-def}, the nonlinear operator $a_{0}$ admits no exact discrete orthogonality, and the resulting defect is paired with $\nabla E^{n}_{\u}$, which can be controlled only through the $p$-th power dissipation.

\begin{lemma}[$p$-Laplace projection defect]\label{lem-plaplace-defect}
Let $\alpha>0$. Under Assumption \ref{Assumption} there holds
\[
\left|\mathcal{G}^{n}(E^{n}_{\u})\right|
\le\frac{\nu}{\rho}\frac{\alpha^{p}}{p}\left\|\nabla E^{n}_{\u}\right\|^{p}_{\L^{p}}
+C_{\alpha}\,h^{\frac{lp}{p-1}}\left\|\u^{n}\right\|^{\frac{p}{p-1}}_{l+1,p}
\left\|\,|\nabla\u^{n}|+|\nabla\mathcal{P}_{h}\u^{n}|\,\right\|^{\frac{(p-2)p}{p-1}}_{\L^{p}} .
\]
\end{lemma}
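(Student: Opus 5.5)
The plan is a pointwise Lipschitz bound, then H\"older, then a single $(p,p')$--Young inequality. Write $\boldsymbol\xi=\nabla\u^{n}$ and $\boldsymbol\eta=\nabla\mathcal{P}_{h}\u^{n}$, so that
\[
\mathcal{G}^{n}(E^{n}_{\u})=\frac{\nu}{\rho}\int_{\mathbb{D}}\left(|\boldsymbol\xi|^{p-2}\boldsymbol\xi-|\boldsymbol\eta|^{p-2}\boldsymbol\eta\right):\nabla E^{n}_{\u}\,dx .
\]
By the continuity estimate \eqref{eq-lipschitz} of Lemma \ref{lem-a0}, the integrand is bounded pointwise by
\[
C_{1}\,|\nabla\eta^{n}_{\u}|\,\left(|\nabla\u^{n}|+|\nabla\mathcal{P}_{h}\u^{n}|\right)^{p-2}\,|\nabla E^{n}_{\u}| .
\]

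Next I apply the generalized H\"older inequality with exponents $p$, $p/(p-2)$ and $p$. These satisfy $\frac1p+\frac{p-2}{p}+\frac1p=1$. This gives
\[
|\mathcal{G}^{n}(E^{n}_{\u})|\le \frac{\nu}{\rho}C_{1}\,\|\nabla\eta^{n}_{\u}\|_{\L^{p}}\,\big\|\,|\nabla\u^{n}|+|\nabla\mathcal{P}_{h}\u^{n}|\,\big\|^{p-2}_{\L^{p}}\,\|\nabla E^{n}_{\u}\|_{\L^{p}} .
\]
In the case $p=2$ the middle factor is simply $1$ and the exponent $p/(p-2)$ is replaced by $\infty$, so that case needs only a one-line comment. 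The gradient part of the approximation property \eqref{eq-error approximation} then gives $\|\nabla\eta^{n}_{\u}\|_{\L^{p}}\le C_{e}h^{l}\|\u^{n}\|_{l+1,p}$. This is finite uniformly in $n$ by Assumption \ref{Assumption}, and the middle factor is finite by the $\mathbf{W}^{1,\infty}$ embedding, or alternatively by \eqref{eq-Cr-bounds}.

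The final step is Young's inequality $ab\le \frac{\alpha^{p}}{p}a^{p}+\frac{1}{p'}\alpha^{-p'}b^{p'}$, applied with $a=\|\nabla E^{n}_{\u}\|_{\L^{p}}$ and
\[
b=C_{1}C_{e}h^{l}\|\u^{n}\|_{l+1,p}\big\|\,|\nabla\u^{n}|+|\nabla\mathcal{P}_{h}\u^{n}|\,\big\|^{p-2}_{\L^{p}},
\]
keeping the prefactor $\nu/\rho$ on both terms. Since $p'=p/(p-1)$, raising $b$ to the power $p'$ produces exactly $h^{lp/(p-1)}\|\u^{n}\|^{p/(p-1)}_{l+1,p}$ times the middle norm to the power $(p-2)p/(p-1)$. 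The constant is $C_{\alpha}=\frac{\nu}{\rho}\frac{1}{p'}\alpha^{-p'}(C_{1}C_{e})^{p'}$, which is the claimed bound.

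There is no real obstacle. The only point to watch is the choice of H\"older exponents, which must be matched to the $\L^{p}$ norm of $\nabla E^{n}_{\u}$ so that the Young step pairs exactly with the $p$-th power dissipation. I also need to note that the approximation estimate is used in its $\mathbf{W}^{1,p}$ form, hence the factor $h^{l}$ rather than $h^{l+1}$.
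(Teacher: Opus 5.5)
Your proposal is correct and matches the paper's proof step for step: the pointwise bound \eqref{eq-lipschitz}, H\"older with exponents $p$, $p$ and $\frac{p}{p-2}$, the gradient part of \eqref{eq-error approximation} (which carries $h^{l}$), and a single scaled $(p,p')$--Young inequality. The only difference is cosmetic: you insert the approximation estimate before applying Young, whereas the paper applies it afterwards.
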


\begin{proof}
By \eqref{eq-lipschitz},
\[
\left|\mathcal{G}^{n}(E^{n}_{\u})\right|
\le C_{1}\frac{\nu}{\rho}\int_{\mathbb{D}}
\left|\nabla\eta^{n}_{\u}\right|
\left(\left|\nabla\u^{n}\right|+\left|\nabla\mathcal{P}_{h}\u^{n}\right|\right)^{p-2}
\left|\nabla E^{n}_{\u}\right|dx .
\]
We apply H\"older's inequality with the three exponents $p$, $p$ and $\frac{p}{p-2}$, whose reciprocals sum to one:
\[
\left|\mathcal{G}^{n}(E^{n}_{\u})\right|
\le C_{1}\frac{\nu}{\rho}
\left\|\nabla E^{n}_{\u}\right\|_{\L^{p}}
\left\|\nabla\eta^{n}_{\u}\right\|_{\L^{p}}
\left\|\,|\nabla\u^{n}|+|\nabla\mathcal{P}_{h}\u^{n}|\,\right\|^{p-2}_{\L^{p}} .
\]
The last factor is bounded uniformly in $h$, because $\|\nabla\mathcal{P}_{h}\u^{n}\|_{\L^{p}}\le\|\nabla\u^{n}\|_{\L^{p}}+\|\nabla\eta^{n}_{\u}\|_{\L^{p}}$ together with $\u\in L^{\infty}(0,T;\mathbf{W}^{s+1,p})$. Young's inequality with the conjugate pair $\left(p,p'\right)$, in the scaled form $ab\le\frac{\alpha^{p}a^{p}}{p}+\frac{p-1}{p}\alpha^{-p'}b^{p'}$, now gives
\[
\left|\mathcal{G}^{n}(E^{n}_{\u})\right|
\le\frac{\nu}{\rho}\frac{\alpha^{p}}{p}\left\|\nabla E^{n}_{\u}\right\|^{p}_{\L^{p}}
+C_{\alpha}\left\|\nabla\eta^{n}_{\u}\right\|^{p'}_{\L^{p}}
\left\|\,|\nabla\u^{n}|+|\nabla\mathcal{P}_{h}\u^{n}|\,\right\|^{(p-2)p'}_{\L^{p}} .
\]
It remains to insert the gradient part of \eqref{eq-error approximation}, namely $\|\nabla\eta^{n}_{\u}\|_{\L^{p}}\le C_{e}h^{l}\|\u^{n}\|_{l+1,p}$, and to observe that raising $h^{l}$ to the power $p'=\frac{p}{p-1}$ produces $h^{\frac{lp}{p-1}}$.
\end{proof}

\subsection{The electromagnetic coupling}\label{subsec-coupling}

\begin{lemma}[Coupling terms]\label{lem-coupling}
Under Assumption \ref{Assumption} there exists $c_{1}\in(0,1)$, depending only on $C_{r}$ and the physical parameters, such that
\begin{align*}
\left|\mathcal{K}^{n}(E^{n}_{\u})+\frac{1}{\mu\rho}\mathcal{I}^{n}(E^{n}_{\B})\right|
&\le\frac{c_{1}}{\mu^{2}\sigma\rho}\left\|\nabla\times E^{n}_{\B}\right\|^{2}_{\L^{2}}
+\frac{c_{1}}{\mu^{2}\sigma\rho}\left\|\nabla\times E^{n-1}_{\B}\right\|^{2}_{\L^{2}}\\
&\quad+C\left\{\left\|E^{n}_{\u}\right\|^{2}_{\L^{2}}+\left\|E^{n-1}_{\B}\right\|^{2}_{\L^{2}}\right\}
+C\left\{\Delta t\left\|\partial_{t}\B\right\|^{2}_{L^{2}(I_{n};\L^{2})}+h^{2l}\right\}.
\end{align*}
\end{lemma}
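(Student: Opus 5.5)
We expand both defects around the projections, using $\mathfrak{B}^{m}=\mathcal{F}_{h}\B^{m}+E^{m}_{\B}$ and $\mathbf{U}^{n}=\mathcal{P}_{h}\u^{n}+E^{n}_{\u}$, and isolate the terms that cancel between $\mathcal{K}^{n}$ and $\frac{1}{\mu\rho}\mathcal{I}^{n}$. The key observation is the triple-product identity already used in Lemma \ref{lem-uniqueness}: $(\mathbf{a}\times(\nabla\times\mathbf{b}),\mathbf{c})=(\mathbf{c}\times\mathbf{a},\nabla\times\mathbf{b})$. In $\mathcal{K}^{n}(E^{n}_{\u})$ the term $-\frac{1}{\mu\rho}(\mathfrak{B}^{n-1}\times(\nabla\times E^{n}_{\B}),E^{n}_{\u})$ appears. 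In $\frac{1}{\mu\rho}\mathcal{I}^{n}(E^{n}_{\B})$ the term $\frac{1}{\mu\rho}(E^{n}_{\u}\times\mathfrak{B}^{n-1},\nabla\times E^{n}_{\B})$ appears. These two cancel exactly, as in the discrete energy identity. This removes the only product of $E_{\u}$ with $\nabla\times E_{\B}$ that carries the uncontrolled factor $\mathfrak{B}^{n-1}$.

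The remaining terms in $\mathcal{K}^{n}$ are $\B^{n}\times(\nabla\times\B^{n})-\mathfrak{B}^{n-1}\times\nabla\times\mathcal{F}_{h}\B^{n}$, tested against $E^{n}_{\u}$. We split this difference as follows:
\[
(\B^{n}-\B^{n-1})\times\nabla\times\B^{n}+\eta^{n-1}_{\B}\times\nabla\times\B^{n}-E^{n-1}_{\B}\times\nabla\times\B^{n}+\mathfrak{B}^{n-1}\times\nabla\times\eta^{n}_{\B}.
\]
\begin{itemize}
\item The first three pieces are bounded by $\|\nabla\times\B^{n}\|_{\L^{\infty}}$ (or $\L^{3}$ with $\L^{6}$ on the other factor, via Assumption \ref{Assumption}) times $\|E^{n}_{\u}\|_{\L^{2}}$. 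After $(2,2)$--Young they give $C\|E^{n}_{\u}\|^{2}_{\L^{2}}+C\|E^{n-1}_{\B}\|^{2}_{\L^{2}}+C\Delta t\|\partial_{t}\B\|^{2}_{L^{2}(I_{n};\L^{2})}+Ch^{2l}$, using $\|\B^{n}-\B^{n-1}\|_{\L^{2}}\le(\Delta t)^{1/2}\|\partial_{t}\B\|_{L^{2}(I_{n};\L^{2})}$ and \eqref{eq-error approximation-B}.
\item For the last piece we write $\mathfrak{B}^{n-1}=\mathcal{F}_{h}\B^{n-1}+E^{n-1}_{\B}$. The part with $\mathcal{F}_{h}\B^{n-1}$ uses $\|\mathcal{F}_{h}\B\|_{\L^{\infty}}\le C_{r}$ from \eqref{eq-Cr-bounds} and gives $Ch^{2l}+\|E^{n}_{\u}\|^{2}$. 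The part with $E^{n-1}_{\B}$ uses $\|\nabla\times\eta^{n}_{\B}\|_{\L^{3}}\le C_{r}$ and H\"older $(6,3,2)$, and requires $\|E^{n-1}_{\B}\|_{\L^{6}}$.
\end{itemize}

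Symmetrically, $\frac{1}{\mu\rho}\mathcal{I}^{n}$ leaves $(\mathbf{U}^{n}\times\mathfrak{B}^{n-1}-\u^{n}\times\B^{n},\nabla\times E^{n}_{\B})$ minus the cancelled piece. This equals
\[
\bigl(\mathcal{P}_{h}\u^{n}\times\mathfrak{B}^{n-1}-\u^{n}\times\B^{n},\nabla\times E^{n}_{\B}\bigr).
\]
We split it into $-\eta^{n}_{\u}\times\mathfrak{B}^{n-1}$, $\u^{n}\times(\B^{n-1}-\B^{n})$, $-\u^{n}\times\eta^{n-1}_{\B}$ and $\u^{n}\times E^{n-1}_{\B}$, each paired with $\nabla\times E^{n}_{\B}$.
\begin{itemize}
\item With $\|\u^{n}\|_{\L^{\infty}}$ bounded (from $sp>3$), $\|\eta_{\u}\|_{\L^{\infty}}\le C_{r}$ and $\|\mathcal{F}_{h}\B\|_{\L^{\infty}}\le C_{r}$, each piece is bounded by an $\L^{2}$ quantity times $\|\nabla\times E^{n}_{\B}\|_{\L^{2}}$.
\item $(2,2)$--Young with small weight then gives $\frac{c_{1}}{2\mu^{2}\sigma\rho}\|\nabla\times E^{n}_{\B}\|^{2}_{\L^{2}}$ plus $C\|E^{n-1}_{\B}\|^{2}_{\L^{2}}+C\Delta t\|\partial_{t}\B\|^{2}_{L^{2}(I_{n};\L^{2})}+Ch^{2l}$.
\item The piece $\eta^{n}_{\u}\times E^{n-1}_{\B}$ is handled with $\|\eta_{\u}\|_{\L^{\infty}}\le C_{r}$; since $\|\eta^{n}_{\u}\|_{\L^{\infty}}\to0$, one could even make its constant small.
\end{itemize}

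The main obstacle is the two terms containing $E^{n-1}_{\B}$ paired with a non-$L^{\infty}$ partner, which need $\|E^{n-1}_{\B}\|_{\L^{6}}$ or $\|E^{n-1}_{\B}\|_{\L^{3}}$. This is precisely where $\|\nabla\times E^{n-1}_{\B}\|^{2}_{\L^{2}}$ enters the statement. Since $E^{n-1}_{\B}$ is discretely divergence free by \eqref{eq-Fh-def} and the scheme, we would decompose $E^{n-1}_{\B}=(E^{n-1}_{\B}-\mathcal{L}E^{n-1}_{\B})+\mathcal{L}E^{n-1}_{\B}$ with the Hodge map. The continuous part satisfies the embedding $\mathcal{K}\hookrightarrow\L^{3+\delta_{1}}$ (Lemma \ref{Embedding lemma}) and is bounded by $C(\|E^{n-1}_{\B}\|_{\L^{2}}+\|\nabla\times E^{n-1}_{\B}\|_{\L^{2}})$. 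The discrete remainder is controlled by \eqref{ine-zbh}, $h^{1/2+s}\|\nabla\times E^{n-1}_{\B}\|$, combined with an inverse estimate. Young's inequality then splits the product into $\frac{c_{1}}{2\mu^{2}\sigma\rho}\|\nabla\times E^{n-1}_{\B}\|^{2}_{\L^{2}}$ plus Gronwall terms in $\|E^{n}_{\u}\|^{2}$ and $\|E^{n-1}_{\B}\|^{2}$. The weights are chosen so that the total coefficient $c_{1}<1$ depends only on $C_{r}$ and the physical constants. This $c_{1}$ is later absorbed after summation in $n$, because the $n-1$ curl term telescopes against the dissipation of the previous step.
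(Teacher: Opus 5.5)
Your cancellation and four-term splitting are exactly those of the paper. It also adds and subtracts $\mathfrak{B}^{n-1}\times(\nabla\times\B^{n})$ and $\u^{n}\times\mathfrak{B}^{n-1}$, removes the $E^{n}_{\u}$--$\nabla\times E^{n}_{\B}$ interaction by the triple product, and bounds the remaining pieces with $\|\nabla\times\B^{n}\|_{\L^{\infty}}$, $\|\u^{n}\|_{\L^{\infty}}$ and \eqref{eq-Cr-bounds}. You depart from it only at the step you call the main obstacle, and there your argument does not close.

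The paper treats $\bigl(E^{n-1}_{\B}\times(\nabla\times\eta^{n}_{\B}),E^{n}_{\u}\bigr)$ by H\"older with exponents $(6,3,2)$. It combines this with an asserted discrete embedding $\|E^{n-1}_{\B}\|_{\L^{6}}\le C\|\nabla\times E^{n-1}_{\B}\|_{\L^{2}}$ for discretely divergence-free fields. Your Hodge-map route does not give $\L^{6}$. Lemmas \ref{Embedding lemma} and \ref{Z=curl} only give $\|\mathcal{L}E^{n-1}_{\B}\|_{0,3+\delta_{1}}\le C\|\nabla\times E^{n-1}_{\B}\|_{\L^{2}}$, with $\delta_{1}(\mathbb{D})>0$ possibly small. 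Now \eqref{eq-Cr-bounds} controls $\nabla\times\eta^{n}_{\B}$ only in $\L^{3}$, and $E^{n}_{\u}$ may enter only through $\L^{2}$. The exponents then satisfy $\frac{1}{3+\delta_{1}}+\frac13+\frac12>1$ whenever $\delta_{1}<3$, so H\"older does not bound the product.

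Your ``$\L^{3}$'' alternative fails for the same reason: it would need $\nabla\times\eta^{n}_{\B}\in\L^{6}$, which is not available. Moving $E^{n}_{\u}$ into a higher $\L^{q}$ through $\nabla E^{n}_{\u}$ is no escape for $p>2$. It forces the $(p,p')$--Young inequality and leaves sub-quadratic Gronwall terms, which the error analysis must avoid. Separately, the inverse estimate \eqref{eq-inverse estimate} cannot be applied to $E^{n-1}_{\B}-\mathcal{L}E^{n-1}_{\B}$ as it stands, since $\mathcal{L}E^{n-1}_{\B}\notin\mathcal{Y}_{h}$. You must first insert an interpolant of $\mathcal{L}E^{n-1}_{\B}$.

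Your decomposition does close when the conforming part lies in $\mathbf{H}^{1}$, that is, when the regularity behind \eqref{ine-zbh} holds with $s\ge\frac12$ (for instance on convex $\mathbb{D}$). In that case three things hold:
\begin{itemize}
\item $\|\mathcal{L}E^{n-1}_{\B}\|_{\L^{6}}\le C\|\nabla\times E^{n-1}_{\B}\|_{\L^{2}}$;
\item the remainder is $O(h)\|\nabla\times E^{n-1}_{\B}\|_{\L^{2}}$ in $\L^{2}$;
\item an interpolant followed by an $h^{-1}$ inverse estimate yields exactly the discrete $\L^{6}$ bound the paper invokes.
\end{itemize}
On a general Lipschitz polyhedron, where only $\L^{3+\delta_{1}}$ is available, you would instead need $\|\nabla\times\eta^{n}_{\B}\|_{\L^{r}}\le C$ with $r=\frac{2(3+\delta_{1})}{1+\delta_{1}}>3$. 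That bound is not provided by \eqref{eq-Cr-bounds}.
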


\begin{proof}
The two defects are combined before being estimated, so that the leading-order interaction cancels. Adding and subtracting the intermediate quantities $\mathfrak{B}^{n-1}\times(\nabla\times\B^{n})$ and $\u^{n}\times\mathfrak{B}^{n-1}$ and using the scalar triple product identity $(\mathbf{a}\times\mathbf{b})\cdot\mathbf{c}=(\mathbf{c}\times\mathbf{a})\cdot\mathbf{b}$, exactly as in the proof of Lemma \ref{lem-uniqueness}, one obtains
\begin{align*}
\mathcal{K}^{n}(E^{n}_{\u})+\frac{1}{\mu\rho}\mathcal{I}^{n}(E^{n}_{\B})
&=\frac{1}{\mu\rho}\Big[\left(\left(\B^{n}-\mathfrak{B}^{n-1}\right)\times(\nabla\times\B^{n}),E^{n}_{\u}\right)
+\left(\u^{n}\times\left(\mathfrak{B}^{n-1}-\B^{n}\right),\nabla\times E^{n}_{\B}\right)\\
&\qquad\quad+\left(\mathfrak{B}^{n-1}\times(\nabla\times\eta^{n}_{\B}),E^{n}_{\u}\right)
-\left(\eta^{n}_{\u}\times\mathfrak{B}^{n-1},\nabla\times E^{n}_{\B}\right)\Big] .
\end{align*}
The four contributions are estimated in turn. Writing
\[
\B^{n}-\mathfrak{B}^{n-1}=\left(\B^{n}-\B^{n-1}\right)+\eta^{n-1}_{\B}-E^{n-1}_{\B},
\]
and invoking Assumption \ref{Assumption} together with \eqref{eq-error approximation-B} gives
\begin{equation}\label{eq-Bdiff}
\left\|\B^{n}-\mathfrak{B}^{n-1}\right\|_{\L^{2}}
\le(\Delta t)^{1/2}\left\|\partial_{t}\B\right\|_{L^{2}(I_{n};\L^{2})}+C_{e}h^{l}+\left\|E^{n-1}_{\B}\right\|_{\L^{2}} .
\end{equation}
The first term is bounded by $C\|\nabla\times\B^{n}\|_{\L^{\infty}}\|\B^{n}-\mathfrak{B}^{n-1}\|_{\L^{2}}\|E^{n}_{\u}\|_{\L^{2}}$, and Young's inequality with the pair $(2,2)$ together with \eqref{eq-Bdiff} contributes $C\|E^{n}_{\u}\|^{2}_{\L^{2}}$, $C\|E^{n-1}_{\B}\|^{2}_{\L^{2}}$, $C\Delta t\|\partial_{t}\B\|^{2}_{L^{2}(I_{n};\L^{2})}$ and $Ch^{2l}$. The second term is bounded by $C\|\u^{n}\|_{\L^{\infty}}\|\B^{n}-\mathfrak{B}^{n-1}\|_{\L^{2}}\|\nabla\times E^{n}_{\B}\|_{\L^{2}}$; here the quadratic magnetic dissipation is available, so Young's inequality with the pair $(2,2)$ absorbs a fraction $\frac{c_{1}}{2\mu^{2}\sigma\rho}\|\nabla\times E^{n}_{\B}\|^{2}_{\L^{2}}$ and leaves the same remainders. For the third term we write $\mathfrak{B}^{n-1}=\mathcal{F}_{h}\B^{n-1}+E^{n-1}_{\B}$ and estimate
\[
\left|\left(\mathcal{F}_{h}\B^{n-1}\times(\nabla\times\eta^{n}_{\B}),E^{n}_{\u}\right)\right|
\le\left\|\mathcal{F}_{h}\B^{n-1}\right\|_{\L^{\infty}}\left\|\nabla\times\eta^{n}_{\B}\right\|_{\L^{2}}\left\|E^{n}_{\u}\right\|_{\L^{2}}
\le C_{r}C_{e}h^{l}\left\|E^{n}_{\u}\right\|_{\L^{2}},
\]
while the remaining piece is handled by H\"older's inequality with exponents $(6,3,2)$, the embedding $\|E^{n-1}_{\B}\|_{\L^{6}}\le C\|\nabla\times E^{n-1}_{\B}\|_{\L^{2}}$ valid for discretely divergence-free fields, and \eqref{eq-Cr-bounds}:
\[
\left|\left(E^{n-1}_{\B}\times(\nabla\times\eta^{n}_{\B}),E^{n}_{\u}\right)\right|
\le C\left\|E^{n-1}_{\B}\right\|_{\L^{6}}\left\|\nabla\times\eta^{n}_{\B}\right\|_{\L^{3}}\left\|E^{n}_{\u}\right\|_{\L^{2}}
\le CC_{r}\left\|\nabla\times E^{n-1}_{\B}\right\|_{\L^{2}}\left\|E^{n}_{\u}\right\|_{\L^{2}},
\]
which after Young's inequality with the pair $(2,2)$ yields $\frac{c_{1}}{\mu^{2}\sigma\rho}\|\nabla\times E^{n-1}_{\B}\|^{2}_{\L^{2}}+C\|E^{n}_{\u}\|^{2}_{\L^{2}}$. The fourth term is estimated in the same way, using $\|\eta^{n}_{\u}\|_{\L^{\infty}}\le C_{r}$ and $\|\eta^{n}_{\u}\|_{\L^{2}}\le C_{e}h^{l+1}$ from \eqref{eq-Cr-bounds} and \eqref{eq-error approximation}, and absorbing into $\|\nabla\times E^{n}_{\B}\|^{2}_{\L^{2}}$. Collecting the four estimates and using $h^{2(l+1)}\le h^{2l}$ gives the assertion.
\end{proof}
\noindent
Observe that the coupling terms never require the $\left(p,p'\right)$--Young inequality: those paired with $\nabla\times E_{\B}$ are absorbed into the quadratic magnetic dissipation, and those paired with $E_{\u}$ are absorbed into the Gronwall term. This is why the electromagnetic coupling, despite being the most intricate part of the system, costs nothing in either rate.

\subsection{The main error estimate}\label{subsec-main-error}

\begin{theorem}[A priori error estimate]\label{Main theorem of error}
Let Assumptions \ref{Assumption} and \ref{Assumption-divfree} hold, let $p\ge2$, and let $l=\min\{k,s\}$. Let $(\u^{n},\B^{n})$ denote the solution of \eqref{1st weak form}--\eqref{2nd weeak equation} at $t=t_{n}$ and let $(\mathbf{U}^{n},\mathfrak{B}^{n})$ be the fully discrete solution of \eqref{eq-FEM-1}--\eqref{eq-FEM-2}, which exists and is unique for every $\Delta t>0$ and $h>0$ by Theorem \ref{thm-existence} and Lemma \ref{lem-uniqueness}. Assume that the initial approximations satisfy
\[
\|\u^{0}-\mathbf{U}^{0}\|_{\L^{2}}+\|\B^{0}-\mathfrak{B}^{0}\|_{\L^{2}}\le C^{*}h^{l}.
\]
Then there exists $\Delta t_{0}>0$, required only for the application of the discrete Gronwall lemma and not for the solvability of the discrete system, such that for all $\Delta t\le\Delta t_{0}$ and all $1\le m\le N$,
\begin{align}
\left\|\u^{m}-\mathbf{U}^{m}\right\|^{2}_{\L^{2}}+(\Delta t)\sum_{n=1}^{m}\frac{\nu}{\rho}\left\|\nabla(\u^{n}-\mathbf{U}^{n})\right\|^{p}_{\L^{p}}
&\le C^{*}\left\{(\Delta t)^{2}+h^{\frac{lp}{p-1}}\right\},\label{eq-main-u}\\
\frac{1}{\mu\rho}\left\|\B^{m}-\mathfrak{B}^{m}\right\|^{2}_{\L^{2}}+(\Delta t)\sum_{n=1}^{m}\frac{1}{\mu^{2}\rho\sigma}\left\|\nabla\times(\B^{n}-\mathfrak{B}^{n})\right\|^{2}_{\L^{2}}
&\le C^{*}\left\{(\Delta t)^{2}+h^{\frac{lp}{p-1}}\right\},\label{eq-main-B}
\end{align}
where $C^{*}>0$ is independent of $\Delta t$ and $h$.
\end{theorem}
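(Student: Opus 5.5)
The argument inserts the four defect estimates of Lemmas \ref{lem-consistency}, \ref{lem-convective}, \ref{lem-plaplace-defect} and \ref{lem-coupling} into the master inequality \eqref{eq-error-master}, absorbs the small multiples of the dissipation into the left-hand side, multiplies by $2\Delta t$, sums over $n=1,\dots,m$, closes with the discrete Gronwall lemma, and finally returns from the discrete errors $E^{n}_{\u},E^{n}_{\B}$ to the total errors via the splitting \eqref{eq-error-splitting} and the projection estimates \eqref{eq-error approximation}--\eqref{eq-error approximation-B}.

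\textbf{Step 1 (absorption).} In Lemma \ref{lem-plaplace-defect} we fix $\alpha$ so that $\alpha^{p}/p=\tfrac12\,2^{2-p}$. Half of the viscous dissipation $\frac{\nu}{\rho}2^{2-p}\|\nabla E^{n}_{\u}\|^{p}_{\L^{p}}$ then survives. The last factor in that lemma is uniformly bounded by Assumption \ref{Assumption}, so the remainder is $Ch^{lp/(p-1)}$. From Lemma \ref{lem-coupling} we keep $c_{1}\le\tfrac14$. The term $\frac{c_{1}}{\mu^{2}\sigma\rho}\|\nabla\times E^{n}_{\B}\|^{2}$ is absorbed directly. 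The lagged term $\frac{c_{1}}{\mu^{2}\sigma\rho}\|\nabla\times E^{n-1}_{\B}\|^{2}$ is absorbed after summation by an index shift, at the price of the initial quantity $\|\nabla\times E^{0}_{\B}\|^{2}$. That quantity is zero if $\mathfrak{B}^{0}=\mathcal{F}_{h}\B^{0}$, which is the scheme's choice. All remaining right-hand side terms are multiples of $\|E^{n}_{\u}\|^{2}$, $\|E^{n-1}_{\u}\|^{2}$, $\|E^{n}_{\B}\|^{2}$ or $\|E^{n-1}_{\B}\|^{2}$, plus data.

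\textbf{Step 2 (summation).} Multiplying by $2\Delta t$ and summing, the data terms give the following:
\begin{itemize}
\item the consistency contributions give $C(\Delta t)^{2}\|\partial_{tt}(\u,\B)\|^{2}_{L^{2}(0,T;\L^{2})}$;
\item the projection part of the consistency terms gives $Ch^{2l}\|\partial_{t}(\u,\B)\|^{2}_{L^{2}(0,T;\mathbf{W}^{l+1,p}\times\mathbf{W}^{l,2})}$, where the $1/\Delta t$ cancels against the factor $\Delta t$;
\item the convective and coupling terms give $C(\Delta t)^{2}\|\partial_{t}(\u,\B)\|^{2}_{L^{2}(0,T;\L^{2})}+CTh^{2l}$;
\item the $p$-Laplace defect gives $CTh^{lp/(p-1)}$.
\end{itemize}
Since $p'\le2$, for $h\le1$ we have $h^{2l}\le h^{lp/(p-1)}$. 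The initial discrete errors are $O(h^{l})$ in $\L^{2}$ by the hypothesis and \eqref{eq-error approximation}--\eqref{eq-error approximation-B}, hence $O(h^{2l})$ squared. The outcome is
\[
\|E^{m}_{\u}\|^{2}+\tfrac{1}{\mu\rho}\|E^{m}_{\B}\|^{2}+\Delta t\sum_{n=1}^{m}\Bigl(\tfrac{\nu}{\rho}2^{1-p}\|\nabla E^{n}_{\u}\|^{p}_{\L^{p}}+\tfrac{1}{2\mu^{2}\sigma\rho}\|\nabla\times E^{n}_{\B}\|^{2}\Bigr)\le C\Delta t\sum_{n=0}^{m}\bigl(\|E^{n}_{\u}\|^{2}+\|E^{n}_{\B}\|^{2}\bigr)+C\bigl((\Delta t)^{2}+h^{\frac{lp}{p-1}}\bigr).
\]

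\textbf{Step 3 (Gronwall).} The implicit term $C\Delta t\|E^{m}_{\u}\|^{2}$ on the right is moved to the left, which requires $C\Delta t\le\tfrac12$; this is the origin of $\Delta t_{0}$. The discrete Gronwall lemma then bounds all terms on the left by $C^{*}\{(\Delta t)^{2}+h^{lp/(p-1)}\}$.

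\textbf{Step 4 (back to total errors).} We use the triangle inequality. For the velocity, $\|\eta^{m}_{\u}\|^{2}_{\L^{2}}\le Ch^{2(l+1)}$ and $\Delta t\sum\|\nabla\eta^{n}_{\u}\|^{p}_{\L^{p}}\le Ch^{lp}$, and both are dominated. For the gradient part we use $|a+b|^{p}\le2^{p-1}(|a|^{p}+|b|^{p})$. For the magnetic field, $\|\eta^{m}_{\B}\|^{2}+\Delta t\sum\|\nabla\times\eta^{n}_{\B}\|^{2}\le Ch^{2l}$, which is again dominated. This yields \eqref{eq-main-u}--\eqref{eq-main-B}.

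\textbf{Main obstacle.} I expect the delicate point to be the lagged curl term $\|\nabla\times E^{n-1}_{\B}\|^{2}$ from Lemma \ref{lem-coupling}, together with checking that $c_{1}$ can be made small enough. This needs the Young splittings in that lemma to be carried out with explicit small weights, and $\mathfrak{B}^{0}=\mathcal{F}_{h}\B^{0}$ so that no uncontrolled $\|\nabla\times E^{0}_{\B}\|$ appears. If that initial term were nonzero I would instead require $\|\nabla\times(\B^{0}-\mathfrak{B}^{0})\|\le Ch^{l}$, which still costs only $\Delta t\,h^{2l}$.

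A secondary point is bookkeeping the constant in front of $\|E^{n}_{\u}\|^{2}$. The fractions $\tfrac18+\tfrac38$ plus the coupling constant are \emph{not} absorbed by any left-hand side $\L^{2}$ term. They must be treated as Gronwall terms at the implicit level, and this is exactly what forces the restriction $\Delta t\le\Delta t_{0}$.
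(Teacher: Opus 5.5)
Your proposal is correct and follows the paper's proof essentially step for step. You absorb the dissipative remainders by fixing $\alpha$ and a small $c_{1}$, multiply by $2\Delta t$ and sum, close with the discrete Gronwall lemma under $\Delta t\le\Delta t_{0}\sim 1/(2c)$, and return to the total errors via the projection estimates.

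Your explicit index-shift treatment of the lagged term $\|\nabla\times E^{n-1}_{\B}\|^{2}_{\L^{2}}$ clarifies something the paper glosses over. The paper's per-step inequality \eqref{eq-error-gronwall} silently drops that term, and its condition $c_{1}<\frac12$ is exactly what the after-summation absorption requires. You are also right that this absorption needs either $E^{0}_{\B}=0$, i.e.\ $\mathfrak{B}^{0}=\mathcal{F}_{h}\B^{0}$, or an $\mathbf{H}(\operatorname{\mathbf{curl}})$ bound on the initial error; the $\L^{2}$ hypothesis stated in the theorem does not suffice on its own.
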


\begin{proof}
{Step 1: assembling the estimates.} We insert Lemmas \ref{lem-consistency}, \ref{lem-convective}, \ref{lem-plaplace-defect} and \ref{lem-coupling} into the master inequality \eqref{eq-error-master}. Choosing $\alpha>0$ so small that $\frac{\alpha^{p}}{p}\le\frac{2^{1-p}}{2}$, and $c_{1}<\frac12$, the dissipative terms produced on the right-hand side are absorbed by the corresponding terms on the left. Multiplying by $2$ and rearranging, we arrive at
\begin{equation}\label{eq-error-gronwall}
d_{t}\|E^{n}_{\u}\|^{2}_{\L^{2}}+\frac{1}{\mu\rho}d_{t}\|E^{n}_{\B}\|^{2}_{\L^{2}}
+\kappa_{1}\|\nabla E^{n}_{\u}\|^{p}_{\L^{p}}+\kappa_{2}\|\nabla\times E^{n}_{\B}\|^{2}_{\L^{2}}
\le c\left\{\|E^{n}_{\u}\|^{2}_{\L^{2}}+\|E^{n-1}_{\u}\|^{2}_{\L^{2}}+\|E^{n-1}_{\B}\|^{2}_{\L^{2}}\right\}+\zeta_{n},
\end{equation}
with $\kappa_{1},\kappa_{2}>0$ and with the accumulated remainder
\begin{align}\label{eq-zeta_n}
\nonumber
\zeta_{n}&:=C\Delta t\left\{\|\partial_{tt}\u\|^{2}_{L^{2}(I_{n};\L^{2})}+\|\partial_{tt}\B\|^{2}_{L^{2}(I_{n};\L^{2})}+\|\partial_{t}\u\|^{2}_{L^{2}(I_{n};\L^{2})}+\|\partial_{t}\B\|^{2}_{L^{2}(I_{n};\L^{2})}\right\}\\
&\quad+\frac{C}{\Delta t}\left\{h^{2(l+1)}\|\partial_{t}\u\|^{2}_{L^{2}(I_{n};\mathbf{W}^{l+1,p})}+h^{2l}\|\partial_{t}\B\|^{2}_{L^{2}(I_{n};\mathbf{W}^{l,2})}\right\}
+Ch^{2l}+Ch^{\frac{lp}{p-1}} .
\end{align}
Every term of \eqref{eq-error-gronwall} on the right is quadratic in the errors; in particular no term of the form $\|E\|^{p'}_{\L^{2}}$ with $p'<2$ occurs. This is what makes the discrete Gronwall lemma applicable, and it is a direct consequence of having estimated the convective and coupling terms through Lemma \ref{lem-trilinear-L2} rather than through the $\left(p,p'\right)$--Young inequality.\\
\noindent
{Step 2: summation of the remainder.} We multiply \eqref{eq-zeta_n} by $\Delta t$ and sum over $n=1,\dots,m$. For the terms carrying an explicit factor $\Delta t$ we use
\[
\Delta t\sum_{n=1}^{m}\Delta t\,\|\varphi\|^{2}_{L^{2}(I_{n};\cdot)}\le(\Delta t)^{2}\|\varphi\|^{2}_{L^{2}(0,T;\cdot)},
\]
whereas for the projection consistency terms, which carry a factor $(\Delta t)^{-1}$, the same additivity of the $L^{2}$ norm in time gives
\[
\Delta t\sum_{n=1}^{m}\frac{1}{\Delta t}\|\varphi\|^{2}_{L^{2}(I_{n};\cdot)}\le\|\varphi\|^{2}_{L^{2}(0,T;\cdot)} .
\]
Finally $\Delta t\sum_{n=1}^{m}h^{\beta}\le Th^{\beta}$ for any $\beta>0$. Hence, using Assumption \ref{Assumption},
\[
\Delta t\sum_{n=1}^{m}\zeta_{n}\le C^{*}\left\{(\Delta t)^{2}+h^{2(l+1)}+h^{2l}+h^{\frac{lp}{p-1}}\right\}.
\]
Since $p\ge2$ we have $p'=\frac{p}{p-1}\in(1,2]$, so that $\frac{lp}{p-1}=lp'\le2l\le2(l+1)$ and, for $h\le1$, all remaining powers of $h$ are dominated by $h^{\frac{lp}{p-1}}$. Consequently
\begin{equation}\label{eq-zeta-summed}
\Delta t\sum_{n=1}^{m}\zeta_{n}\le C^{*}\left\{(\Delta t)^{2}+h^{\frac{lp}{p-1}}\right\}.
\end{equation}
\noindent
{Step 3: Gronwall.} Summing \eqref{eq-error-gronwall} from $n=1$ to $m$, multiplying by $\Delta t$ and using \eqref{eq-zeta-summed},
\[
\begin{aligned}
&\|E^{m}_{\u}\|^{2}_{\L^{2}}+\frac{1}{\mu\rho}\|E^{m}_{\B}\|^{2}_{\L^{2}}
+\Delta t\sum_{n=1}^{m}\left\{\kappa_{1}\|\nabla E^{n}_{\u}\|^{p}_{\L^{p}}+\kappa_{2}\|\nabla\times E^{n}_{\B}\|^{2}_{\L^{2}}\right\}\\
&\qquad\le c\,\Delta t\sum_{n=1}^{m}\left\{\|E^{n}_{\u}\|^{2}_{\L^{2}}+\|E^{n}_{\B}\|^{2}_{\L^{2}}\right\}+C^{*}\left\{(\Delta t)^{2}+h^{\frac{lp}{p-1}}\right\},
\end{aligned}
\]
where the initial contribution has been absorbed using the hypothesis on $(\mathbf{U}^{0},\mathfrak{B}^{0})$ and $2l\ge\frac{lp}{p-1}$. For $\Delta t\le\Delta t_{0}:=\frac{1}{2c}$ the discrete Gronwall lemma \cite[Lemma 4.9]{ding2022convergence} applies and yields
\[
\|E^{m}_{\u}\|^{2}_{\L^{2}}+\frac{1}{\mu\rho}\|E^{m}_{\B}\|^{2}_{\L^{2}}
+\Delta t\sum_{n=1}^{m}\left\{\kappa_{1}\|\nabla E^{n}_{\u}\|^{p}_{\L^{p}}+\kappa_{2}\|\nabla\times E^{n}_{\B}\|^{2}_{\L^{2}}\right\}
\le C^{*}\left\{(\Delta t)^{2}+h^{\frac{lp}{p-1}}\right\}.
\]
\noindent
{Step 4: from the discrete error to the total error.} The triangle inequality gives $\u^{n}-\mathbf{U}^{n}=\eta^{n}_{\u}-E^{n}_{\u}$, and by \eqref{eq-error approximation}, $\|\eta^{n}_{\u}\|^{2}_{\L^{2}}\le C h^{2(l+1)}$ and $\Delta t\sum_{n}\|\nabla\eta^{n}_{\u}\|^{p}_{\L^{p}}\le CTh^{lp}$. Since $lp\ge\frac{lp}{p-1}$ and $2(l+1)\ge\frac{lp}{p-1}$ for $p\ge2$, both projection contributions are dominated by $h^{\frac{lp}{p-1}}$, which gives \eqref{eq-main-u}. The same argument with \eqref{eq-error approximation-B} gives \eqref{eq-main-B}.
\end{proof}

\subsection{Discussion of the convergence rates}\label{subsec-err-discussion}

The estimate of Theorem \ref{Main theorem of error} is deliberately asymmetric: the temporal exponent is the classical $2$, whereas the spatial exponent involves the conjugate exponent $p'$. The following remarks explain why, and delimit what can and cannot be improved.

\begin{remark}\label{rem-exponents}
The right-hand side of \eqref{eq-main-u}--\eqref{eq-main-B} aggregates several powers of $\Delta t$ and $h$, and since $\Delta t,h\to0$ the binding contribution is the one carrying the smallest exponent.

All temporal contributions come from terms handled by Lemma \ref{lem-trilinear-L2} or the quadratic magnetic dissipation, hence by $(2,2)$--Young: each $(\Delta t)^{1/2}$ is squared to $\Delta t$, and the outer summation supplies a second factor, giving $(\Delta t)^{2}$. This order is optimal, since the backward Euler quotient and the lagged convection and coupling are all first-order consistent.

The spatial contributions are $h^{2(l+1)}$ and $h^{2l}$ from the projection remainders, and $h^{lp'}$ from the $p$-Laplace defect of Lemma \ref{lem-plaplace-defect}. Since $p'\le2$,
\[
lp'\le2l\le2(l+1),
\]
so $h^{lp'}=h^{\frac{lp}{p-1}}$ dominates. For $p=2$ one has $p'=2$ and Theorem \ref{Main theorem of error} reduces to the classical bound $C^{*}\{(\Delta t)^{2}+h^{2l}\}$, in agreement with the known results for Newtonian MHD systems \cite{ding2022convergence,Prohl2008}.
\end{remark}

\begin{remark}[The natural energy norm, and the quasi-norm benchmark]\label{rem-quasinorm}
Extracting the $p$-th root in \eqref{eq-main-u} and using $p'/p=\frac{1}{p-1}$ gives
\[
\left((\Delta t)\sum_{n=1}^{m}\left\|\nabla(\u^{n}-\mathbf{U}^{n})\right\|^{p}_{\L^{p}}\right)^{1/p}
\le C\left((\Delta t)^{\frac{2}{p}}+h^{\frac{l}{p-1}}\right).
\]
For $p=2$ this is the optimal rate $O(\Delta t+h^{l})$, whereas for $p>2$ the spatial rate in the natural energy norm degrades by the factor $p-1$ relative to the approximation-theoretic optimum $O(h^{l})$. This loss is not a defect of the argument above but the well-documented sub-optimality of $\mathbf{L}^{p}$-based error norms for the $p$-Laplacian. It is removed by measuring the error in the quasi-norm of Barrett and Liu \cite{BarrettLiu1994}, or equivalently in the natural distance $F(\nabla\u)=|\nabla\u|^{\frac{p-2}{2}}\nabla\u$ of \cite{DieningEbmeyerRuzicka2007,BerselliRuzicka2022}, for which the optimal order $\|F(\nabla\u)-F(\nabla\mathbf{U})\|^{2}_{\L^{2}}=O(h^{2l})$ is attained. The quasi-norm is also the natural framework in which the dissipation becomes quadratic with respect to a weighted measure, which as explained in Remark \ref{rem-divfree} is exactly what would be needed to dispense with Assumption \ref{Assumption-divfree}. We do not pursue this here.
\end{remark}
}

\section{Numerical Experiments and Verification of the Theoretical Results}
\label{sec-numerics}
In this section we report a series of numerical experiments that illustrate the qualitative behaviour of the scheme and verify the theoretical results of Sections \ref{sec-wellposed}--\ref{ lemma-error bounds}. All computations are carried out on the unit cube $\mathbb{D}=(0,1)^{3}$ with a uniform tetrahedral mesh, using continuous $\mathbb{P}_{2}$ elements for the velocity, continuous $\mathbb{P}_{1}$ elements for the pressure, and second-order N\'ed\'elec edge elements for the magnetic field. This choice is inf-sup stable for the velocity--pressure pair and $\mathbf{H}(\operatorname{\mathbf{curl}})$-conforming for the magnetic field, in agreement with the hypotheses of Sections \ref{sec-fem}--\ref{ lemma-error bounds}. In each experiment the source terms $\mathbf{g},\mathbf{f}$ and the initial data are prescribed so that a chosen smooth field solves the continuous $p$-MHD system exactly, and the discrete errors are measured against that field.

The four experiments are complementary. Example \ref{ex-temporal} illustrates the qualitative dynamics of the coupled velocity and magnetic fields; Example \ref{ex-stability} verifies the unconditional stability of Lemma \ref{bounds}; Example \ref{ex-time-rate} isolates and measures the relative temporal convergence rate; and Example \ref{ex-spacetime} confirms the full space-time convergence predicted by Theorem \ref{Main theorem of error}.

\begin{Example}[Temporal Profiles of the Velocity and Magnetic Fields]\label{ex-temporal}
This first experiment illustrates the temporal dynamics captured by the scheme. We track the computed velocity and magnetic fields on the plane $z=0.25$ over $t\in[0,1]$, for the parameters
\[
\nu=1,\qquad \sigma=1,\qquad \mu=1,
\]
with $p=3$, so that the fluid is shear-thickening. The exact velocity is prescribed as the curl of a scalar potential, which makes it pointwise divergence-free,
\[
\mathbf{u}=\nabla\times\left(0,0,\psi\right),
\qquad
\psi(x,y,z,t)=\sin^{2}(\pi x)\sin^{2}(\pi y)\sin^{2}(\pi z)\sin(\omega t+\pi x),
\]
that is,
\[
\mathbf{u}(x,y,z,t)=
\begin{pmatrix}
\pi \sin^{2}(\pi x)\,
\sin(2\pi y)\,
\sin^{2}(\pi z)\,
\sin(\omega t+\pi x)
\\[1.2ex]
\begin{aligned}
&-\pi \sin(2\pi x)\,\sin^{2}(\pi y)\,\sin^{2}(\pi z)\,\sin(\omega t+\pi x)\\
&\qquad-\pi \sin^{2}(\pi x)\,\sin^{2}(\pi y)\,\sin^{2}(\pi z)\,\cos(\omega t+\pi x)
\end{aligned}
\\[1.2ex]
0
\end{pmatrix},
\]
with $\omega=7$; the double zeros of the $\sin^{2}$ factors make $\u$ vanish on all six faces of $\mathbb{D}$. (The two terms of the second component come from the product rule applied to $-\partial_{x}\psi$ and form a single component.) The magnetic field is likewise taken as a curl, hence divergence-free,
$$
\mathbf{B}(x,y,z,t)=
\begin{pmatrix}
\pi \sin^{2}(\pi x)\sin(2\pi y)\sin^{2}(\pi z)
\sin\!\left(2\pi\bigl(x\cos(10t)+y\sin(10t)\bigr)\right)
\\
\qquad
+2\pi\sin(10t)\sin^{2}(\pi x)\sin^{2}(\pi y)\sin^{2}(\pi z)
\cos\!\left(2\pi\bigl(x\cos(10t)+y\sin(10t)\bigr)\right)
\\[1.5ex]
-\pi\sin(2\pi x)\sin^{2}(\pi y)\sin^{2}(\pi z)
\sin\!\left(2\pi\bigl(x\cos(10t)+y\sin(10t)\bigr)\right)
\\
\qquad
-2\pi\cos(10t)\sin^{2}(\pi x)\sin^{2}(\pi y)\sin^{2}(\pi z)
\cos\!\left(2\pi\bigl(x\cos(10t)+y\sin(10t)\bigr)\right)
\\[1.5ex]
0
\end{pmatrix}.
$$
The corresponding source terms are chosen so that the above functions satisfy the continuous $p$-MHD system exactly. Figures~\ref{fig:solutions_different_times} and \ref{fig:B_solutions_different_times} show the evolution of the computed velocity and magnetic field components, respectively, at the fixed $z=0.25$ over the time interval $[0,1]$. The computed fields reproduce the expected qualitative structure of the exact solution and its temporal evolution, illustrating the behaviour of the proposed finite element scheme for the coupled $p$-MHD system; a quantitative assessment of the convergence rates is provided in the convergence experiments below.
\begin{figure}[!htbp]
    \centering

    \begin{subfigure}[b]{0.3\textwidth}
        \centering
        \includegraphics[width=\textwidth]{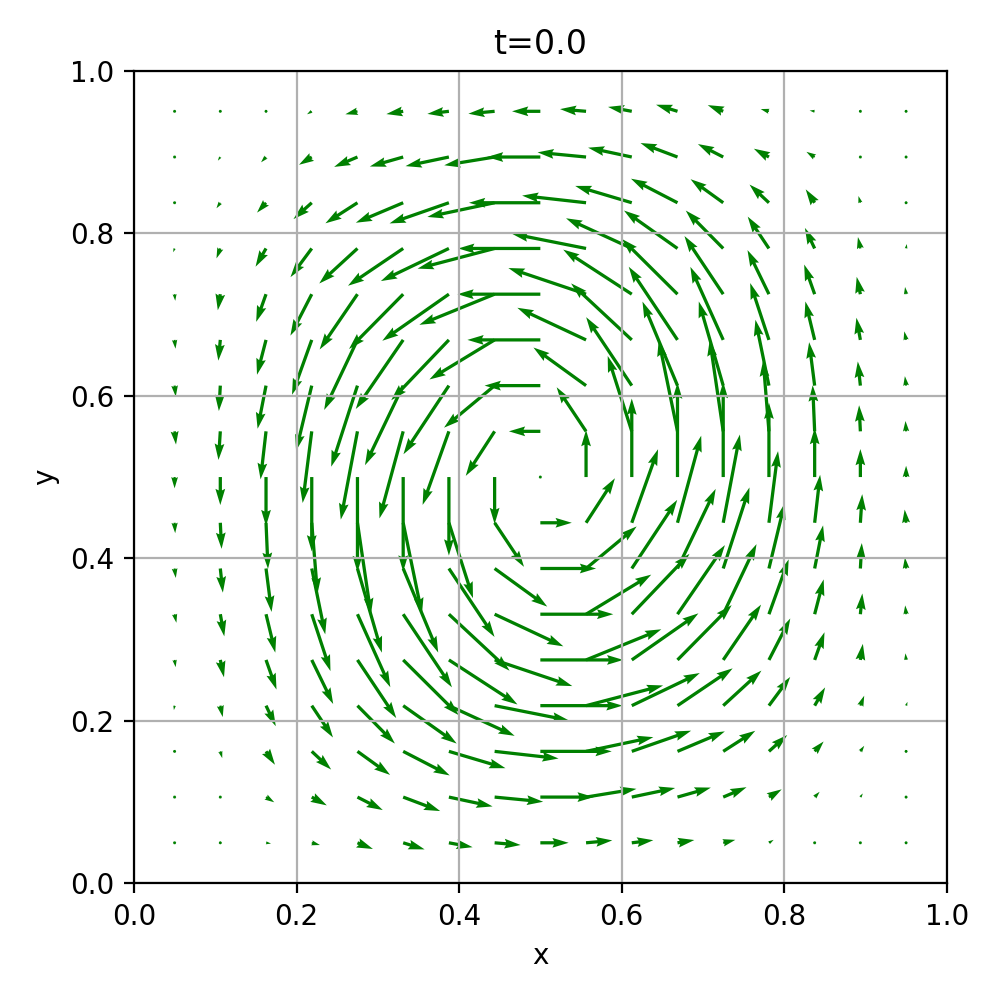}
        \caption{$t=0.0$}
        \label{fig:tu00}
    \end{subfigure}
    \hfill
    \begin{subfigure}[b]{0.3\textwidth}
        \centering
        \includegraphics[width=\textwidth]{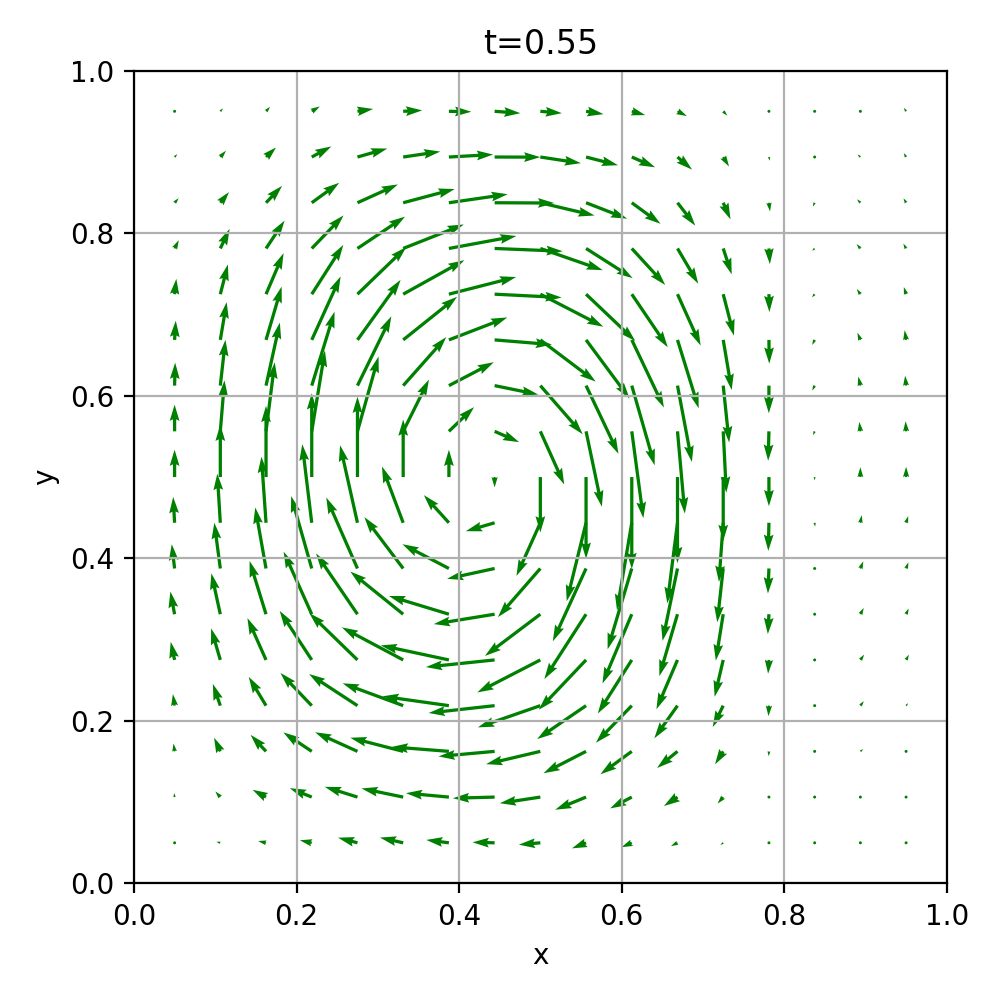}
        \caption{$t=0.55$}
        \label{fig:t055}
    \end{subfigure}
    \hfill
    \begin{subfigure}[b]{0.3\textwidth}
        \centering
        \includegraphics[width=\textwidth]{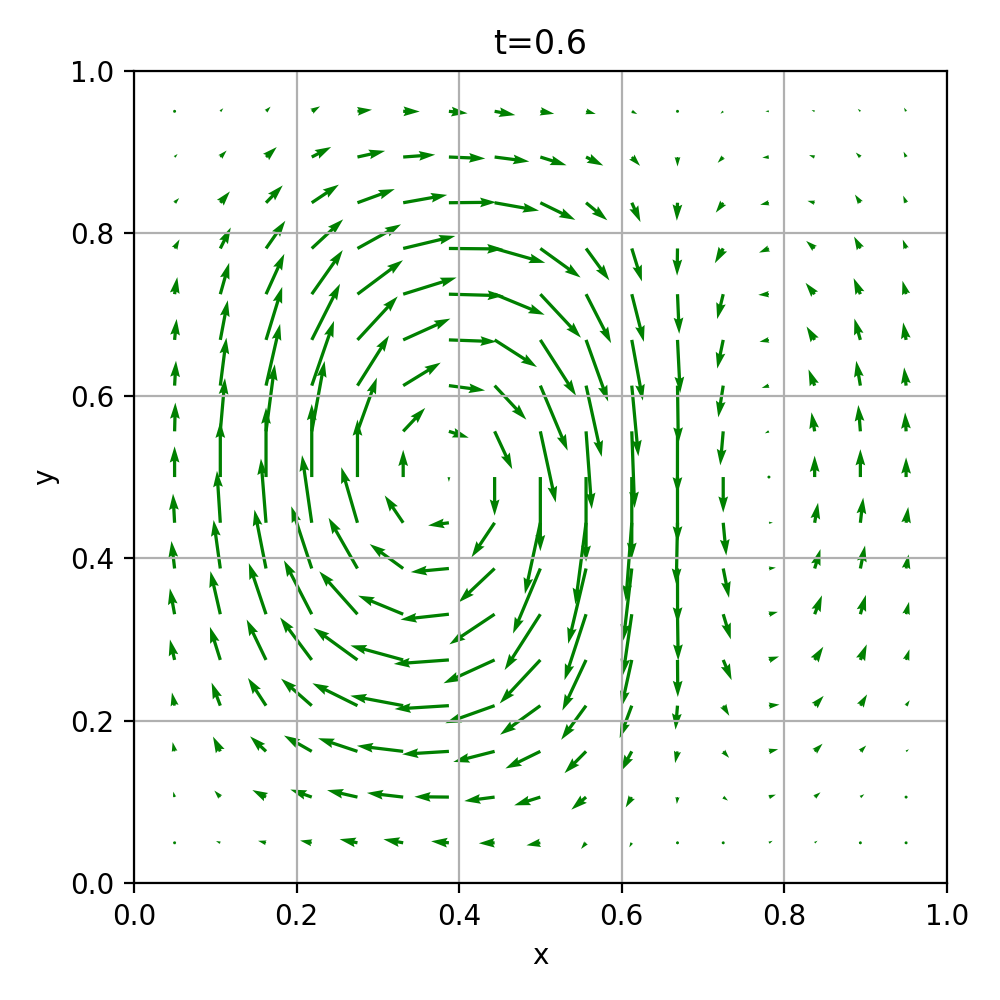}
        \caption{$t=0.6$}
        \label{fig:t06}
    \end{subfigure}

    \vspace{0.4cm}

    \begin{subfigure}[b]{0.3\textwidth}
        \centering
        \includegraphics[width=\textwidth]{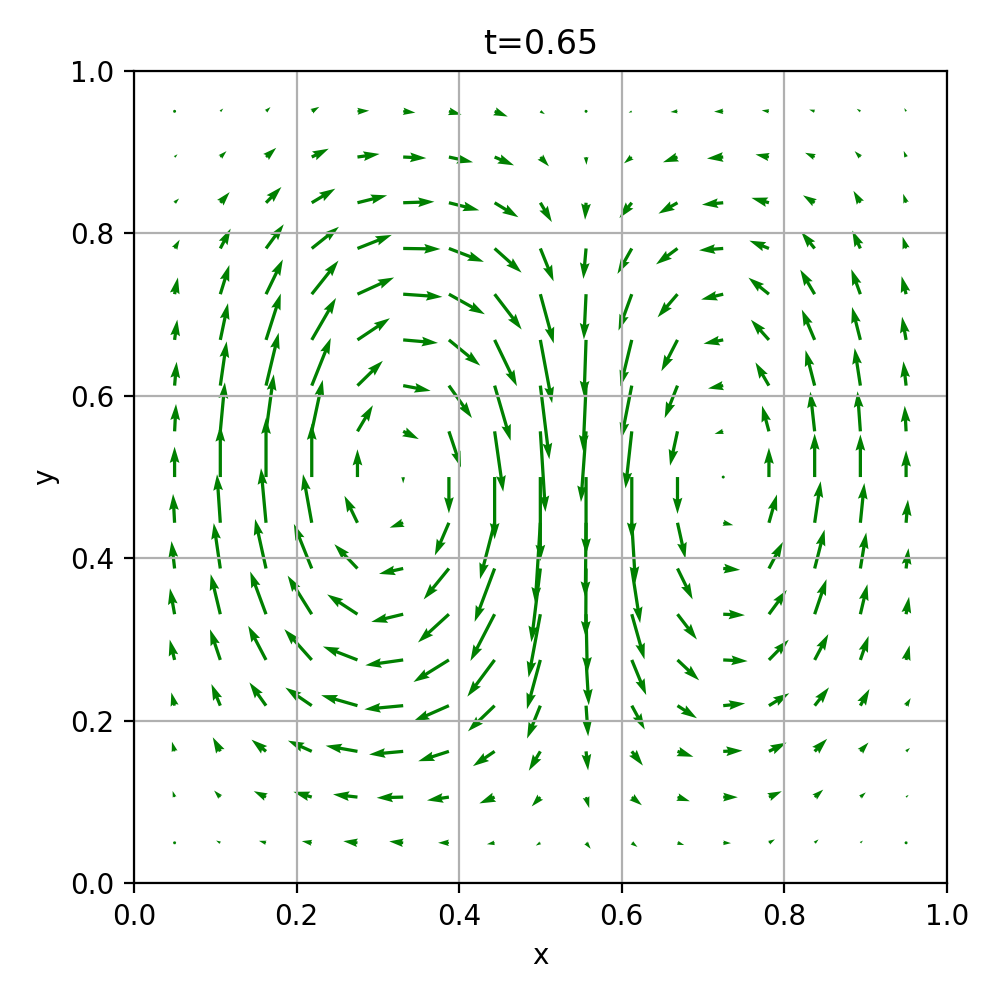}
        \caption{$t=0.65$}
        \label{fig:t065}
    \end{subfigure}
    \hfill
    \begin{subfigure}[b]{0.3\textwidth}
        \centering
        \includegraphics[width=\textwidth]{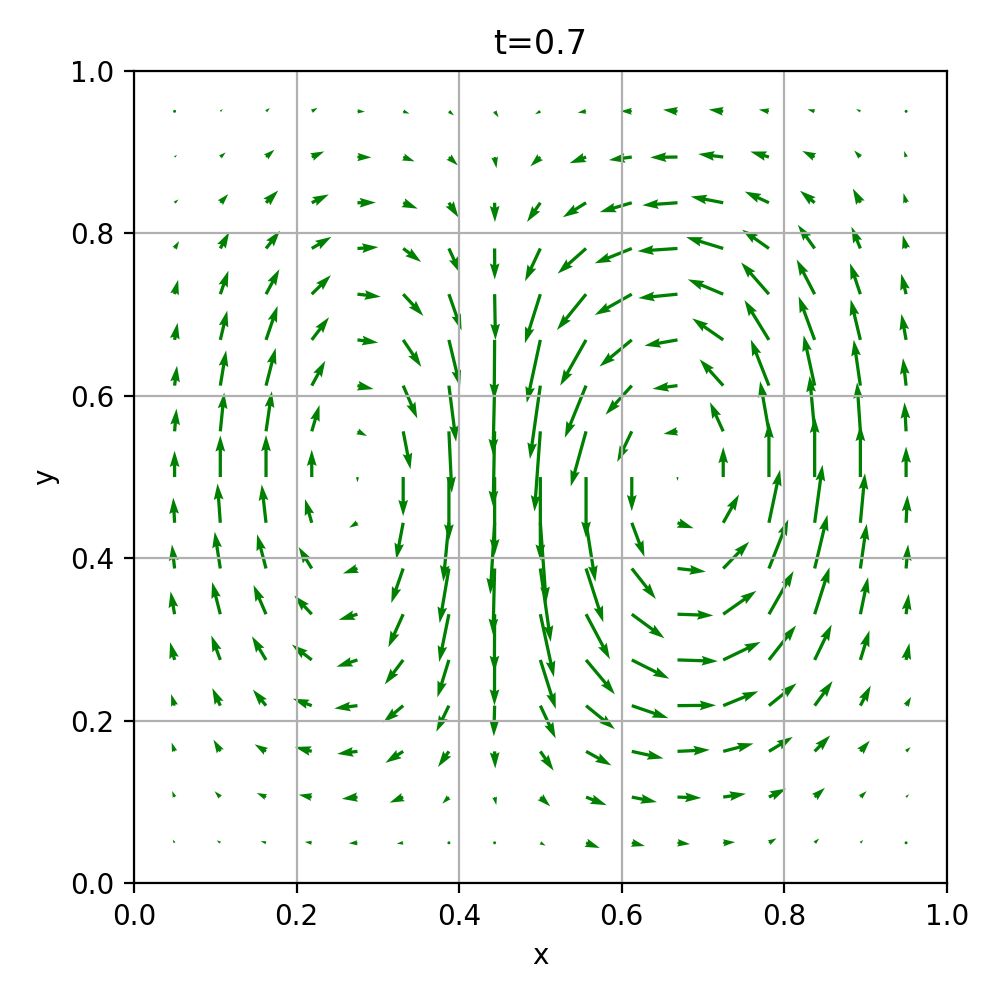}
        \caption{$t=0.7$}
        \label{fig:t07}
    \end{subfigure}
    \hfill
    \begin{subfigure}[b]{0.3\textwidth}
        \centering
        \includegraphics[width=\textwidth]{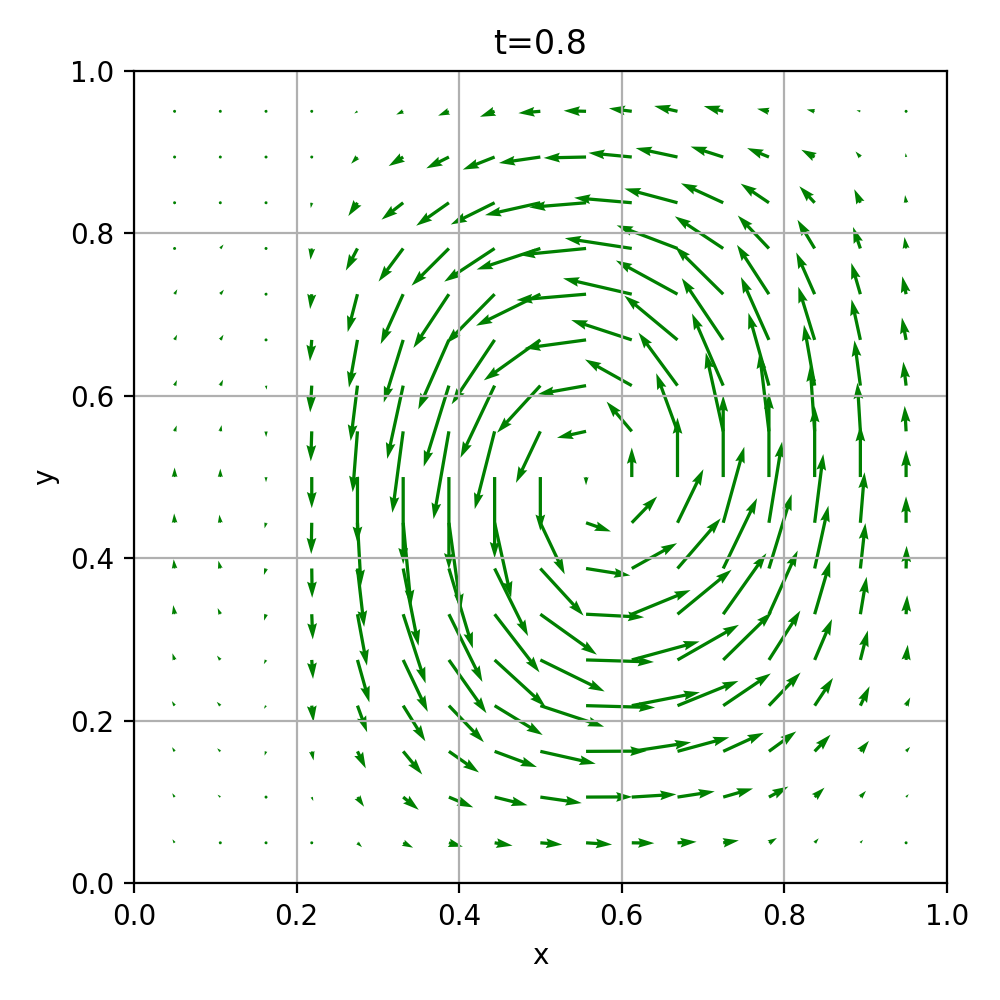}
        \caption{$t=0.8$}
        \label{fig:t08}
    \end{subfigure}
  \vspace{0.4cm}

    \begin{subfigure}[b]{0.3\textwidth}
        \centering
        \includegraphics[width=\textwidth]{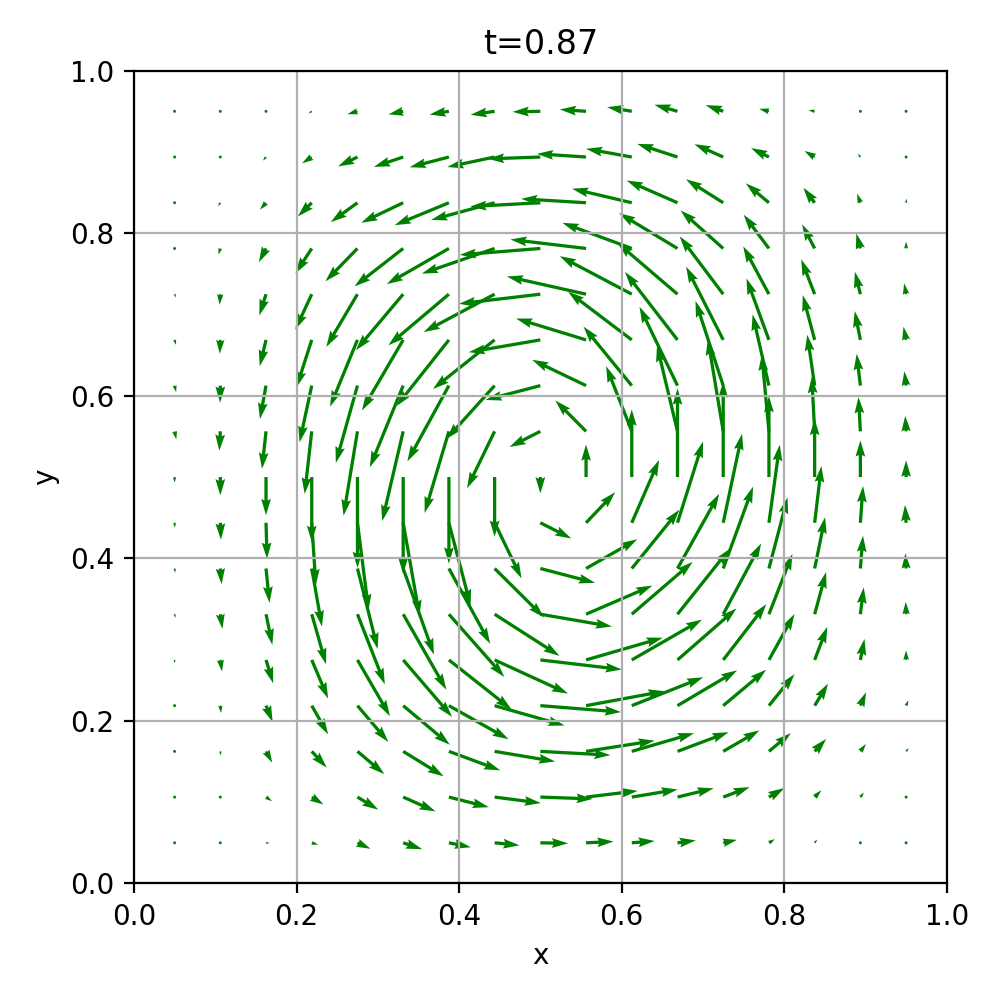}
        \caption{$t=0.87$}
        \label{fig:t087}
    \end{subfigure}
    \hfill
      \begin{subfigure}[b]{0.3\textwidth}
        \centering
        \includegraphics[width=\textwidth]{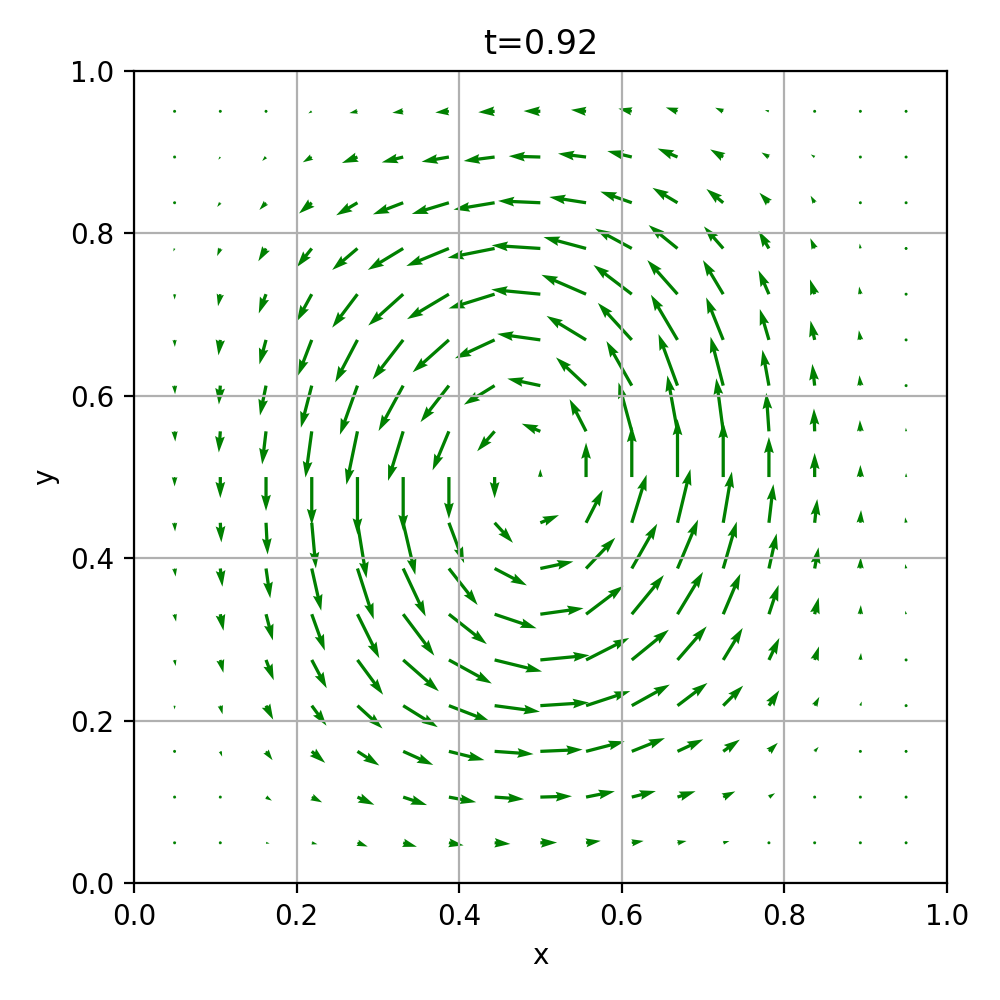}
        \caption{$t=0.92$}
        \label{fig:t092}
    \end{subfigure}
    \hfill
    \begin{subfigure}[b]{0.3\textwidth}
        \centering
        \includegraphics[width=\textwidth]{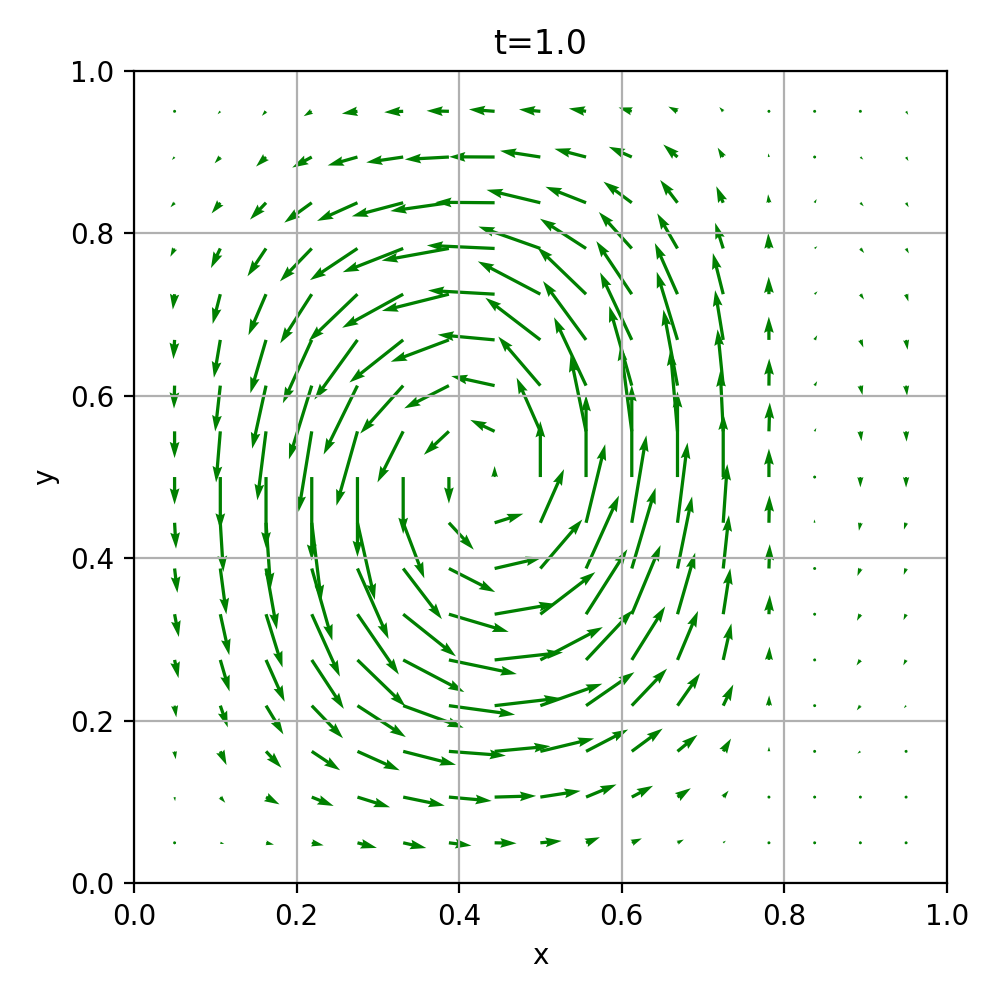}
        \caption{$t=1.0$}
        \label{fig:t10}
    \end{subfigure}
    \caption{Temporal evolution of the computed velocity field (shown in green) at the plane $z=0.25$ for different time instances.}
    \label{fig:solutions_different_times}
\end{figure}

\begin{figure}[!htbp]
    \centering

    \begin{subfigure}[b]{0.3\textwidth}
        \centering
        \includegraphics[width=\textwidth]{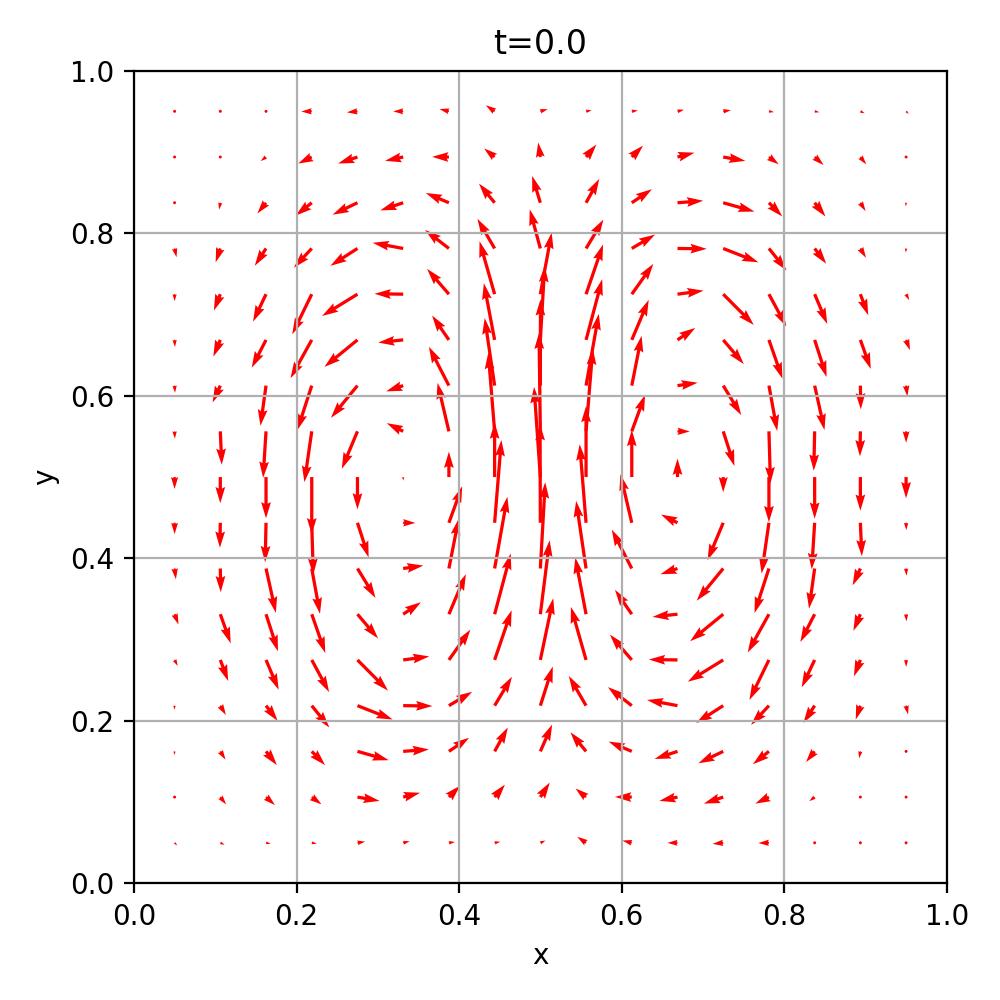}
        \caption{$t=0.0$}
        \label{fig:Bt00}
    \end{subfigure}
    \hfill
    \begin{subfigure}[b]{0.3\textwidth}
        \centering
        \includegraphics[width=\textwidth]{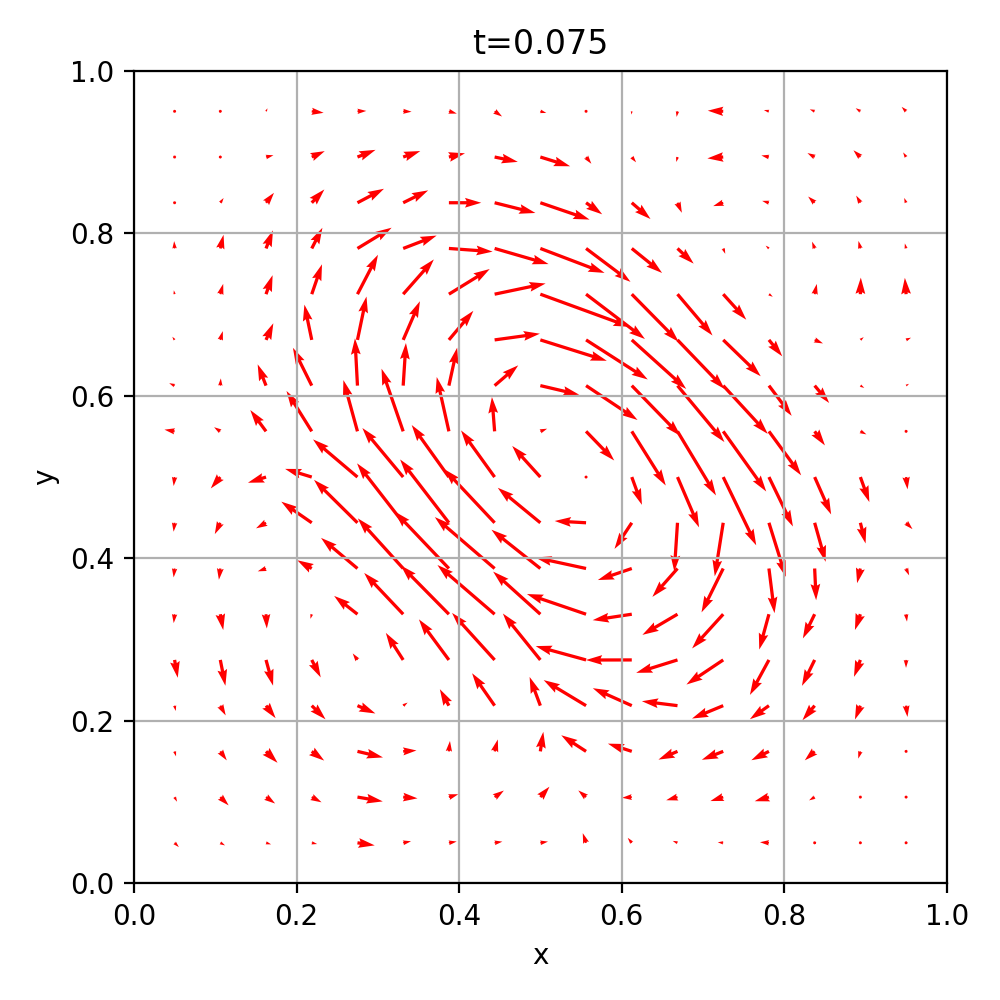}
        \caption{$t=0.075$}
        \label{fig:Bt0075}
    \end{subfigure}
    \hfill
    \begin{subfigure}[b]{0.3\textwidth}
        \centering
        \includegraphics[width=\textwidth]{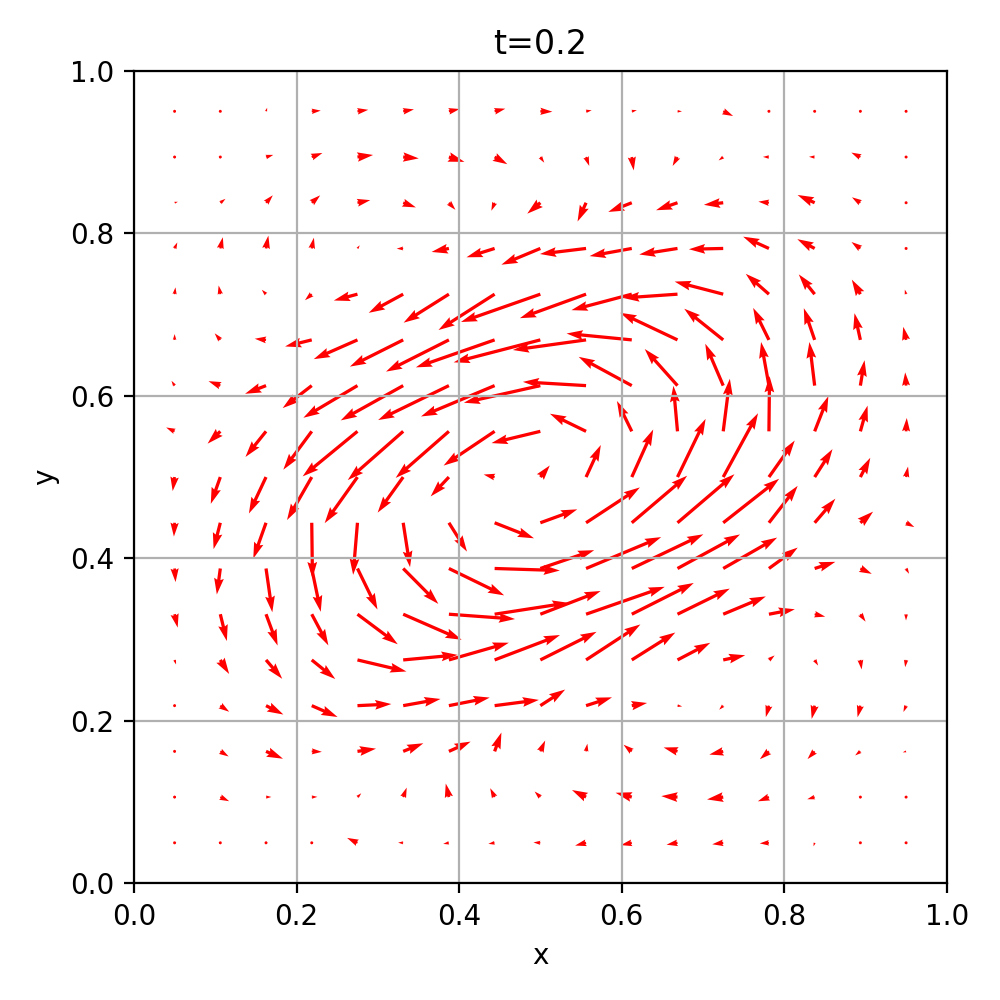}
        \caption{$t=0.2$}
        \label{fig:Bt02}
    \end{subfigure}

    \vspace{0.4cm}

    \begin{subfigure}[b]{0.3\textwidth}
        \centering
        \includegraphics[width=\textwidth]{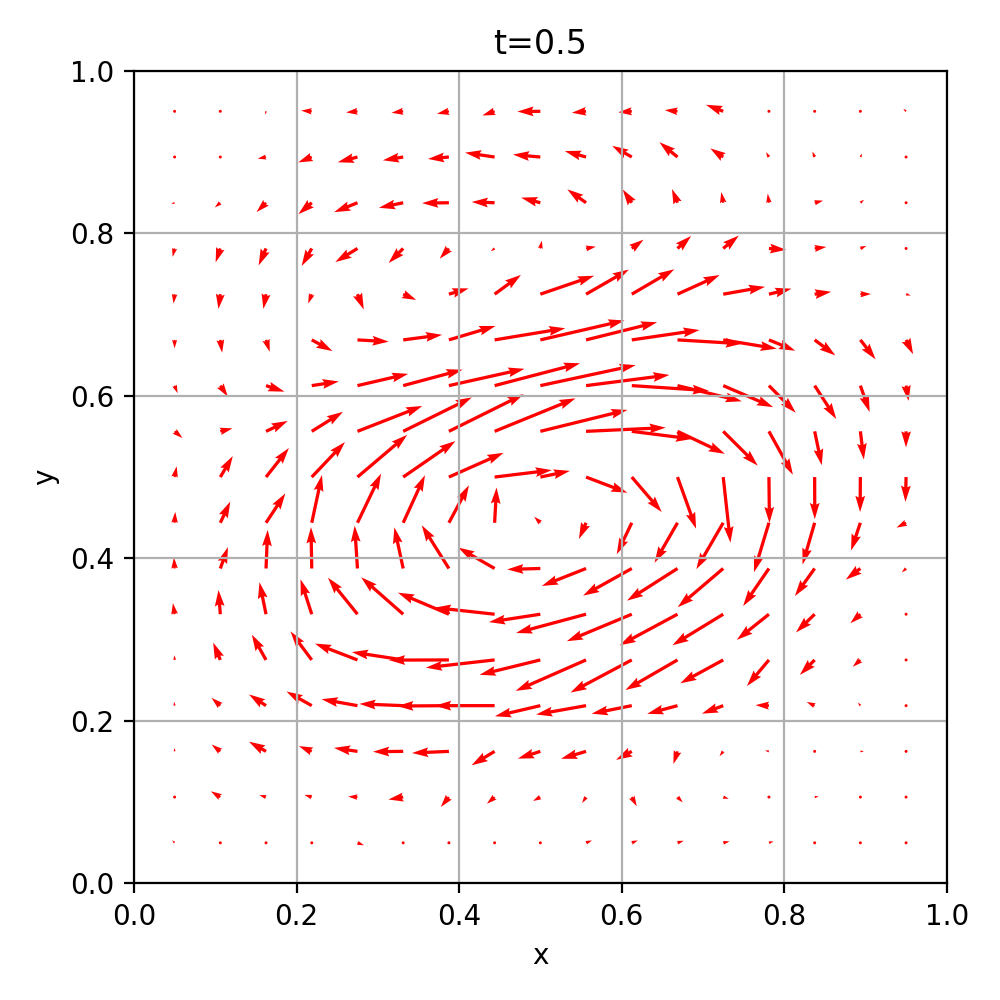}
        \caption{$t=0.5$}
        \label{fig:Bt05}
    \end{subfigure}
    \hfill
    \begin{subfigure}[b]{0.3\textwidth}
        \centering
        \includegraphics[width=\textwidth]{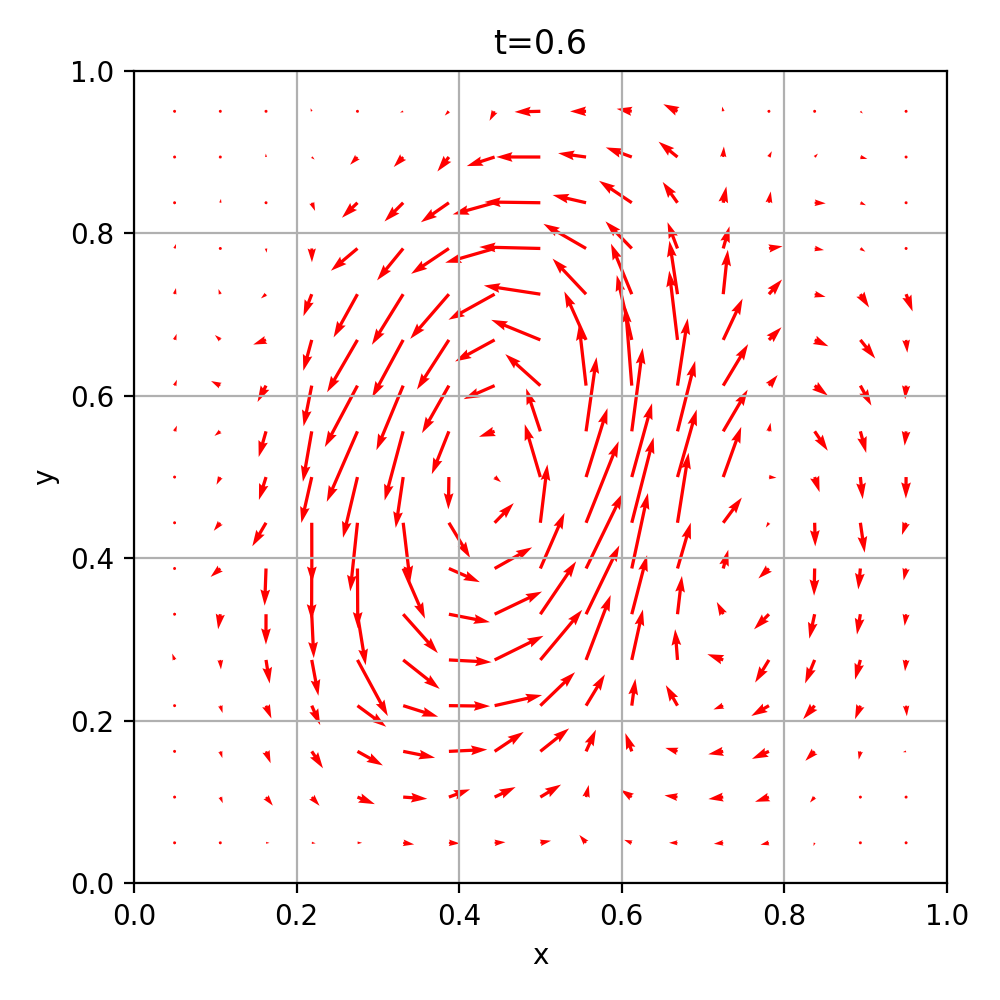}
        \caption{$t=0.6$}
        \label{fig:Bt06}
    \end{subfigure}
    \hfill
    \begin{subfigure}[b]{0.3\textwidth}
        \centering
        \includegraphics[width=\textwidth]{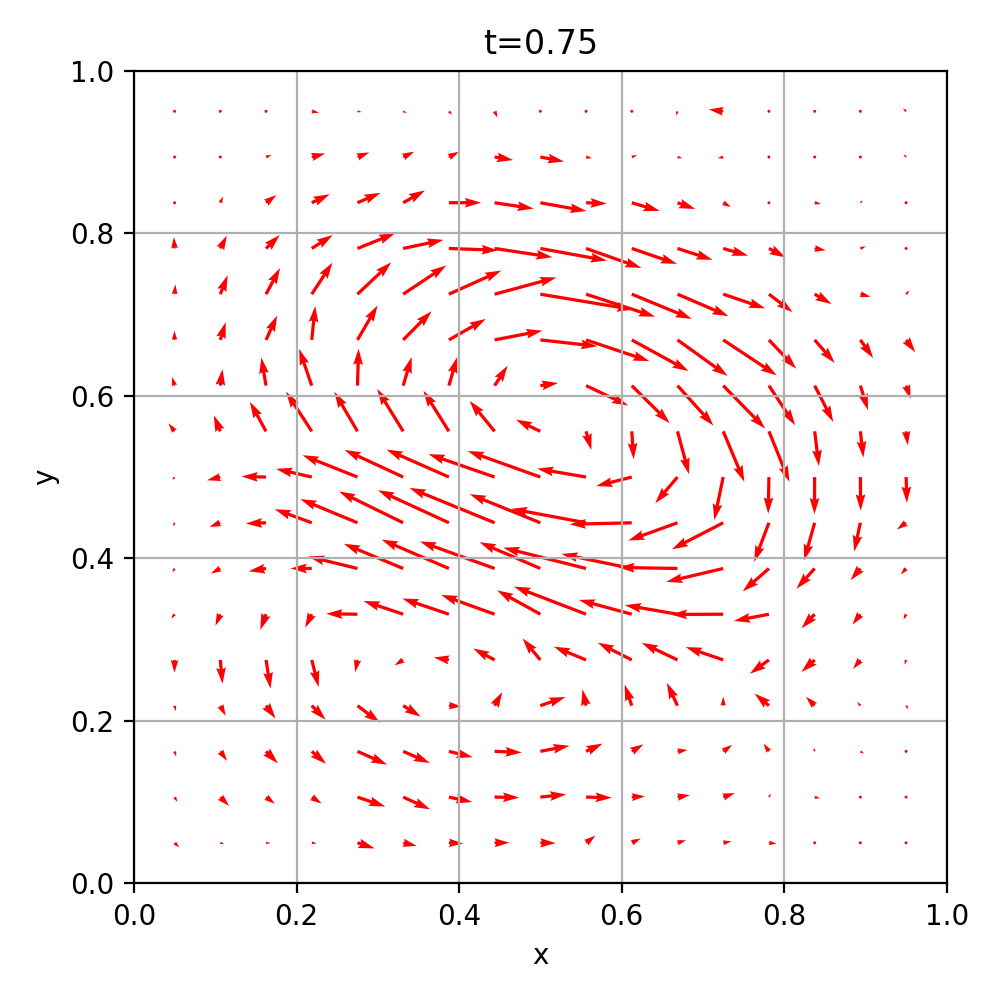}
        \caption{$t=0.75$}
        \label{fig:Bt075}
    \end{subfigure}
    \begin{subfigure}[b]{0.3\textwidth}
        \centering
        \includegraphics[width=\textwidth]{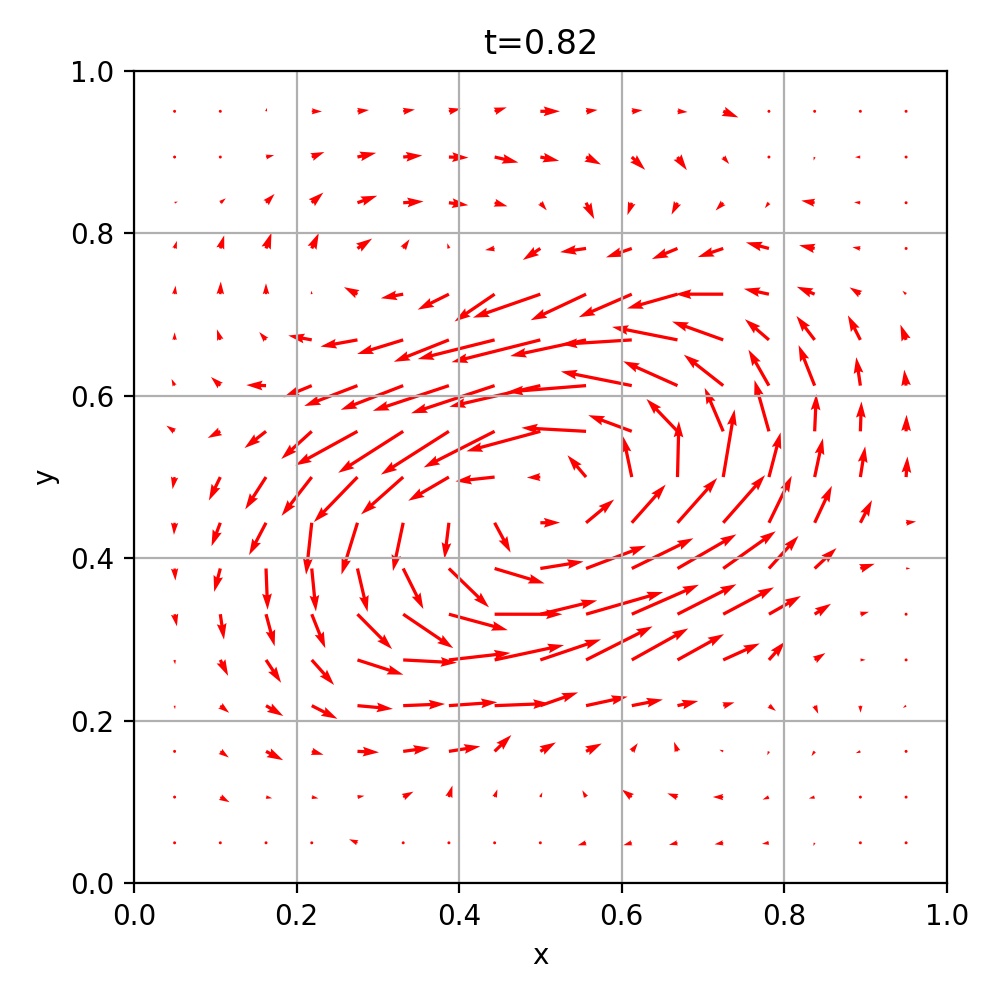}
        \caption{$t=0.82$}
        \label{fig:Bt082}
    \end{subfigure}
    \hfill
    \begin{subfigure}[b]{0.3\textwidth}
        \centering
        \includegraphics[width=\textwidth]{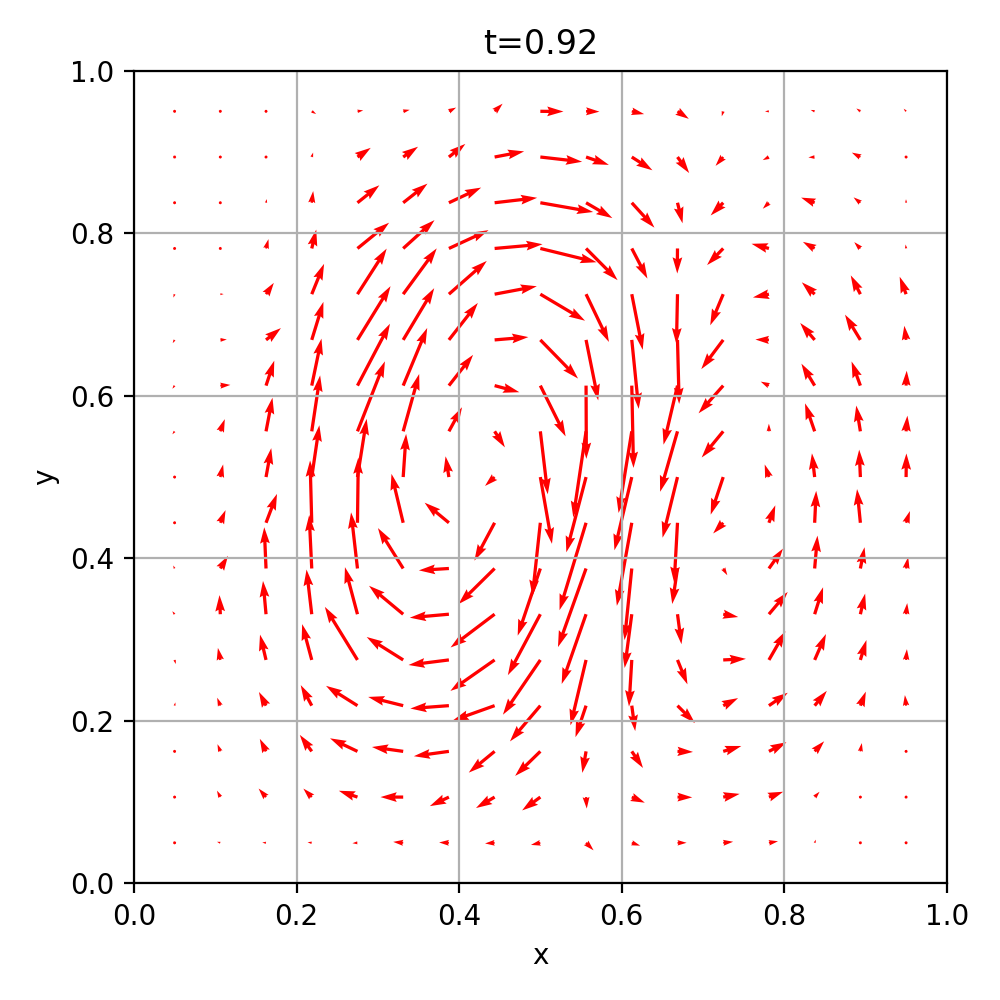}
        \caption{$t=0.92$}
        \label{fig:Bt092}
    \end{subfigure}
    \hfill
    \begin{subfigure}[b]{0.3\textwidth}
        \centering
        \includegraphics[width=\textwidth]{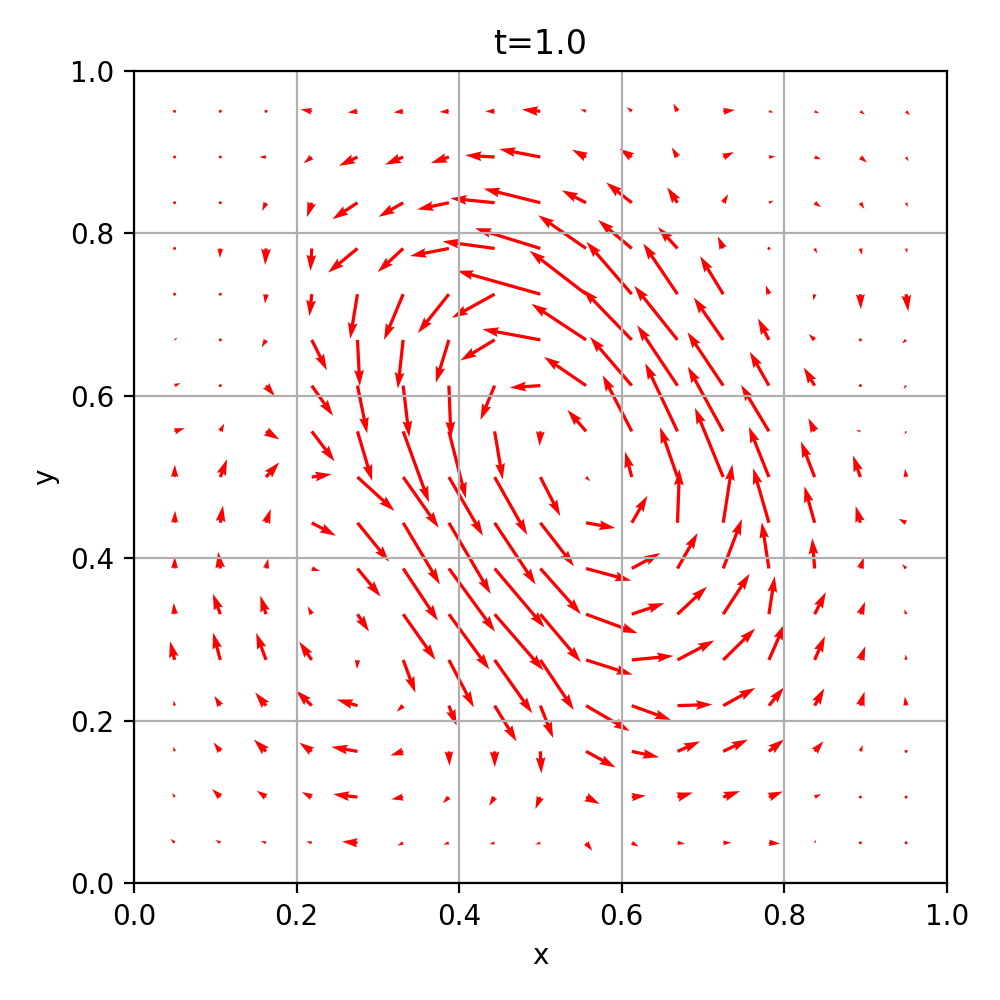}
        \caption{$t=1.0$}
        \label{fig:Bt10}
    \end{subfigure}

    \caption{Temporal evolution of the computed magnetic field (shown in red) at the plane $z=0.25$ for different time instances.}
    \label{fig:B_solutions_different_times}
\end{figure}
\end{Example}

\medskip

\begin{Example}[Verification of the Stability Estimates]\label{ex-stability}
This experiment verifies the unconditional stability of Lemma \ref{bounds} in a strongly nonlinear regime. We take the parameters
\[
\nu=1,\qquad \sigma=1,\qquad \mu=1,
\]
on the time interval $[0,1]$. The exact solution is chosen as
\[
\u=\nabla\times\left(0,0,\psi\right),
\qquad
\psi(x,y,z,t)=g(x)\,g(y)\,g(z)\,e^{-t},
\qquad
g(s)=s^{2}(s-1)^{2},
\]
that is,
\[
\u=\Bigl(2x^{2}(x-1)^{2}y(y-1)(2y-1)z^{2}(z-1)^{2}e^{-t},\
-2x(x-1)(2x-1)y^{2}(y-1)^{2}z^{2}(z-1)^{2}e^{-t},\ 0\Bigr),
\]
which is divergence-free and vanishes on $\partial\mathbb{D}$, with pressure
$$p=(2x-1)(2y-1)(1+t)$$
and magnetic field
\[
\B=
\left(
\sin(t)\sin(\pi x)\cos(\pi y),
\,
-\sin(t)\cos(\pi x)\sin(\pi y),
\,
0
\right).
\]
The corresponding source terms are chosen so that the above functions satisfy the continuous $p$-MHD system exactly.

We fix $h=0.433$, integrate up to $T=1$, and take the strongly shear-thickening exponent $p=3$. We monitor the individual norms that enter the discrete energy
which appears in the stability estimate of Lemma~\ref{bounds}.

Figure~\ref{fig:Stability norm vs Time} shows their evolution. All norms remain bounded and in fact decay monotonically, in agreement with the estimate \eqref{eq-bounds-1}: the decay reflects the factor $e^{-t}$ carried by the prescribed solution, while \eqref{eq-bounds-1} guarantees only boundedness and does not preclude it. The time step is not tied to the mesh size, so the experiment is consistent with the unconditional stability of Lemma \ref{bounds}, even at $p=3$.

\begin{figure}[!htbp]
    \centering
    \includegraphics[width=0.5\textwidth]{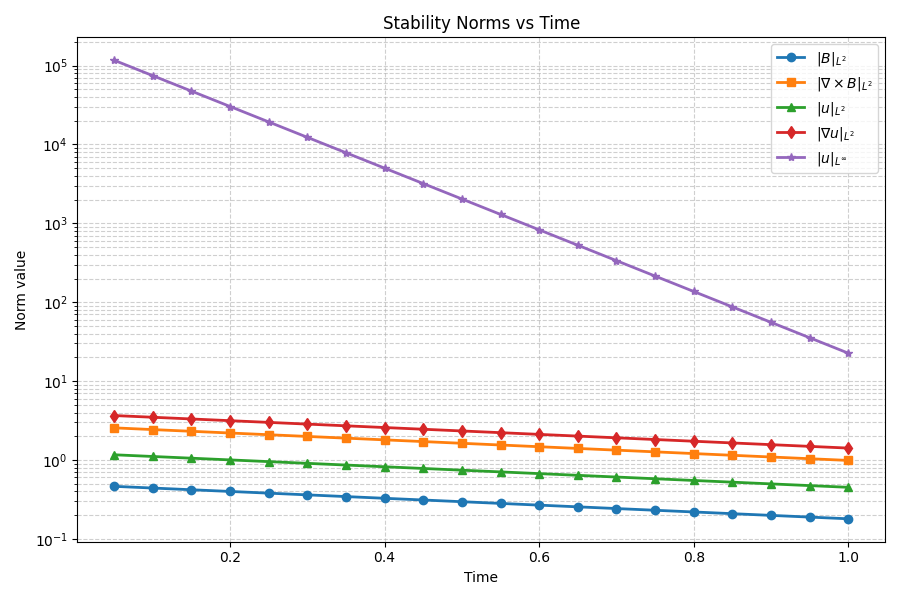}
    
    \caption{Evolution of the norms entering the discrete energy of Lemma~\ref{bounds} as functions of time, for $p=10$. All quantities remain bounded and decay monotonically, consistently with the unconditional stability estimate \eqref{eq-bounds-1}.}
    \label{fig:Stability norm vs Time}
\end{figure}

\end{Example}

\medskip

\begin{Example}[Verification of the Relative Temporal Convergence Rate]\label{ex-time-rate}
This experiment isolates the temporal error. Keeping the spatial mesh fixed and fine, we refine only the time step $\Delta t$ and measure the resulting convergence rate. We take the parameters
\[
\nu=1,\qquad \sigma=1,\qquad \mu=1,
\]
on the time interval $[0,T]$ with final time $T=1.6$. The exact velocity, pressure and magnetic field are
{
\[
\mathbf{u}
=
2e^{-t}
\begin{pmatrix}
\sin^{2}(\pi x)\,\sin(\pi y)\cos(\pi y)\,\sin^{2}(\pi z)
\\[1.2ex]
-\sin(\pi x)\cos(\pi x)\,\sin^{2}(\pi y)\,\sin^{2}(\pi z)
\\[1.2ex]
0
\end{pmatrix}, 
\]
with pressure
\[
p=e^{-t}\sin(\pi x)\sin(\pi y)\sin(\pi z),
\]
and magnetic field
\[
\B=
\left(
e^{-t}\sin(\pi x)\cos(\pi y)\cos(\pi z),
\,
-e^{-t}\cos(\pi x)\sin(\pi y)\cos(\pi z),
\,0
\right).
\]}

with source terms chosen so that these fields solve the continuous $p$-MHD system exactly. To expose the temporal error alone, the spatial mesh is fixed at the fine value $h=\sqrt{3}/16$, and $\Delta t$ is then successively halved. Table~\ref{tab:relative_time} reports the resulting relative errors at $T=1.6$ and the observed rates for the magnetic field, velocity and pressure. The rates approach one as $\Delta t\to0$, confirming the first-order temporal accuracy of the backward Euler discretisation predicted by Theorem~\ref{Main theorem of error}.

\begin{table}[!htbp]
\centering
\renewcommand{\arraystretch}{1.5}
\caption{Example~\ref{ex-time-rate}: relative errors at $T=1.6$ and observed temporal convergence rates under refinement of $\Delta t$ with the spatial mesh fixed at $h=\sqrt{3}/16$.}
\label{tab:relative_time}
\resizebox{\textwidth}{!}{
\begin{tabular}{|c|cc|cc|cc|cc|cc|}
\hline
$\Delta t$
& \multicolumn{2}{c|}{$\|\B-\B_h\|_{\L^2}/\|\B\|_{\L^2}$}
& \multicolumn{2}{c|}{$\|\nabla\times(\B-\B_h)\|_{\L^2}/\|\nabla\times \B\|_{\L^2}$}
& \multicolumn{2}{c|}{$\|\u-\u_h\|_{\L^2}/\|\u\|_{\L^2}$}
& \multicolumn{2}{c|}{$\|\u-\u_h\|_{\mathbf   H^1}/\|\u\|_{\mathbf H^1}$}
& \multicolumn{2}{c|}{$\|p-p_h\|_{L^2}/\|p\|_{L^2}$} \\
\cline{2-11}
& Error & Rate
& Error & Rate
& Error & Rate
& Error & Rate
& Error & Rate \\
\hline
0.80000
& $2.594418\times10^{-2}$ & --
& $3.701015\times10^{-2}$ & --
& $6.132137\times10^{-3}$ & --
& $1.444854\times10^{-2}$ & --
& $3.908815\times10^{-2}$ & -- \\

0.40000
& $1.129266\times10^{-2}$ & 1.2000
& $1.603131\times10^{-2}$ & 1.2070
& $2.639902\times10^{-3}$ & 1.2159
& $9.390869\times10^{-3}$ & 0.9216
& $1.664807\times10^{-2}$ & 1.2314 \\

0.20000
& $5.417687\times10^{-3}$ & 1.0596
& $7.538947\times10^{-3}$ & 1.0885
& $1.243325\times10^{-3}$ & 1.0863
& $8.160341\times10^{-3}$ & 0.7026
& $9.095220\times10^{-3}$ & 0.9722 \\
\hline
\end{tabular}
}
\end{table}
\end{Example}
%











\medskip

\begin{Example}[Verification of the Space--Time Convergence Rates]\label{ex-spacetime}
This experiment tests the full error estimate of Theorem~\ref{Main theorem of error} by refining the mesh size $h$ and the time step $\Delta t$ simultaneously. We take the parameters
\[
\nu=1,\qquad \sigma=1,\qquad \mu=1,
\]
on the time interval $[0,1]$, starting from the coarse mesh $h_0=0.866$. The exact fields are built from the scalar potential
\[
\psi(x,y,z,t)=e^{-t}g(x)g(y)g(z),
\qquad
g(s)=s^{3}(1-s)^{3},
\]
whose triple zeros ensure that the fields and their first derivatives vanish on $\partial\mathbb{D}$. The exact velocity field is
\[
\mathbf{u}(x,y,z,t)=
\begin{pmatrix}
e^{-t}g(x)g'(y)g(z)\\[2mm]
-e^{-t}g'(x)g(y)g(z)\\[2mm]
0
\end{pmatrix}.
\]
\noindent
The exact magnetic field is
\[
\mathbf{B}(x,y,z,t)=
\begin{pmatrix}
e^{-t}g(x)g'(y)g(z)\\[2mm]
-e^{-t}g'(x)g(y)g(z)\\[2mm]
0
\end{pmatrix}.
\]
\noindent
The pressure is chosen as
\[
p(x,y,z,t)=0.
\]

with forcing terms chosen so that these fields solve the continuous $p$-MHD system exactly. The mesh and time step are refined together along the sequence
\[
(h,\Delta t)=(0.866,\,0.25),\ (0.433,\,0.0625),\ (0.2165,\,0.015625),
\]
that is, with $\Delta t\sim h^{2}$. The errors at the final time and the resulting rates are collected in Tables~\ref{tab:space-time-B}--\ref{tab:space-time-pressure}: the magnetic field and its curl in Table~\ref{tab:space-time-B}, the velocity in $\mathbf{L}^{2}$ and in the natural energy quantity $E(\u)$ in Table~\ref{tab:space-time-u}, and the pressure together with $E(\u)$ at $p=5$ in Table~\ref{tab:space-time-pressure}. Table~\ref{Dofs} lists the corresponding degrees of freedom, which grow rapidly under refinement and dominate the computational cost.

The observed rates are approximately second order for every variable and norm. Two points are needed to read this correctly. First, since $\Delta t\sim h^{2}$, the first-order temporal error of Theorem~\ref{Main theorem of error} contributes $O(\Delta t)=O(h^{2})$; the experiment is thus consistent with the theory but cannot separate the temporal from the spatial order, for which a refinement in $\Delta t$ alone (as in Example~\ref{ex-time-rate}) is required. Second, with $\mathbb{P}_{2}$ velocity elements the spatial rate $O(h^{2})=O(h^{l})$ is the optimal approximation order and therefore exceeds the guaranteed rate $O(h^{l/(p-1)})$ of Remark~\ref{rem-quasinorm} by the factor $p-1$. This is expected: the guaranteed rate in the natural $\mathbf{L}^{p}$-based norm is known not to be sharp for the $p$-Laplacian, and the computed rates instead attain the optimal quasi-norm benchmark. The scheme therefore meets or exceeds the proven bounds throughout.

\vspace{0.1cm}
\begin{table}[!htbp]
\renewcommand{\arraystretch}{1.2} \begin{tabular}{|c|c|cc|cc|} \hline $h$ & $\Delta t$ & \multicolumn{2}{c|}{$\B$ in $\L^2$} & \multicolumn{2}{c|}{$E( \B)$} \\ \cline{3-6} & & Error & Rate & Error & Rate \\ \hline 0.866000 & 0.250000 & $3.377968\times10^{-3}$ & -- & $1.001575\times10^{-2}$ & -- \\ 0.433000 & 0.062500 & $8.920750\times10^{-4}$ & 1.9209 & $2.633440\times10^{-3}$ & 1.9272 \\ 0.216500 & 0.015625 & $2.263063\times10^{-4}$ & 1.9789 & $6.643903\times10^{-4}$ & 1.9868 \\ \hline \end{tabular}
\centering

\vspace{0.1cm}
\caption{ Errors and convergence rates for the magnetic field $\B$ and  $E(B)=\left(\Delta t\sum_{n=1}^{m}\frac{1}{\mu^{2}\rho\sigma}\left\|\nabla\times(\B^{n}-\mathfrak{B}^{n})\right\|^{2}_{\L^{2}}\right)^{\frac{1}{p}}$ for $p=2$ under simultaneous refinement of $h$ and $\Delta t$.}
\label{tab:space-time-B}
\end{table}

\begin{table}[!htbp]
\centering
\setlength{\tabcolsep}{4pt}
\renewcommand{\arraystretch}{1.2}
\begin{tabular}{|c|c|cc|cc|}
\hline
$h$ & $\Delta t$ &
\multicolumn{2}{c|}{$\mathbf{u}$ in $L^2$} &
\multicolumn{2}{c|}{$E(\u)$} \\
\cline{3-6}
& & Error & Rate & Error & Rate \\
\hline
0.866000 & 0.250000 &
$5.837016\times10^{-4}$ & -- &
$7.252391\times10^{-3}$ & -- \\
0.433000 & 0.062500 &
$9.660032\times10^{-5}$ & 2.5951 &
$1.816466\times10^{-3}$ & 1.9973 \\
0.216500 & 0.015625 &
$1.981077\times10^{-5}$ & 2.2857 &
$4.512861\times10^{-4}$ & 2.0090 \\
\hline
\end{tabular}

\vspace{0.1cm}
\caption{Errors and convergence rates for the velocity field $\u$ in $\L^2$ and $E(\u)=\left(\Delta t\sum_{n=1}^{m}\frac{\nu}{\rho}\|\nabla(\u^{n}-\mathbf{U}^{n})\|^{p}_{\L^p}\right)^{\frac{1}{p}}$ for $p=2$ under simultaneous refinement of $h$ and $\Delta t$.}
\label{tab:space-time-u}
\end{table}
\vspace{0.1cm}
\begin{table}[!htbp]
\renewcommand{\arraystretch}{1.2} \begin{tabular}{|c|c|cc|cc|} \hline $h$ & $\Delta t$ & \multicolumn{2}{c|}{$E(\u)$ for $p=5$} & \multicolumn{2}{c|}{$p$ in $L^2$} \\ \cline{3-6} & & Error & Rate & Error & Rate \\ \hline 0.866000 & 0.250000 & $ 2.418142e-05 $ & -- & $2.497127\times10^{-2}$ & -- \\ 0.433000 & 0.062500 & $1.293298e-05$ & 0.9028 & $5.820674\times10^{-3}$ & 2.1010 \\ 0.216500 & 0.015625 & $ 4.382566e-06 $ & 0.9612& $1.409400\times10^{-3}$ & 2.0461 \\ \hline \end{tabular}
\centering

\vspace{0.1cm}
\caption{Errors and convergence rates for the pressure $p$ and $E(\u)$ for $p=5$.}
\label{tab:space-time-pressure}
\end{table}

\begin{table}[!htbp]

\centering
\renewcommand{\arraystretch}{1.2}
\begin{tabular}{|c|c|c|}
\hline
$h$ & $\Delta t$ & Degrees of Freedom \\ \hline
0.866000 & 0.250000 & 838  \\ 
0.433000 & 0.062500 & 5248 \\ 
0.216500 & 0.015625 &  36892\\ 
\hline
\end{tabular}
\vspace{0.1cm}

\caption{ mesh size, time step and total degrees of freedom (DOFs) along the refinement sequence.}
\label{Dofs}
\centering

\end{table}

\end{Example}

\section{Concluding Remarks and Future Directions}\label{sec-conclusion}

\subsection{Concluding remarks}\label{subsec-concluding}

In this work we constructed and analysed a fully discrete mixed finite element scheme for the three-dimensional incompressible magnetohydrodynamics system with nonlinear $p$-Laplacian viscosity in the shear-thickening regime $p\ge2$. The scheme couples a backward Euler discretisation in time with an inf-sup stable velocity--pressure pair and N\'ed\'elec edge elements for the magnetic field, and it treats the convective and electromagnetic couplings semi-implicitly so that the only nonlinearity to be solved at each step is the monotone $p$-Laplace operator.

The analysis establishes four properties, all holding for every $\Delta t>0$ and $h>0$. First, each time step is well posed (Theorem \ref{thm-existence}, Lemma \ref{lem-uniqueness}); the unconditional uniqueness follows from the semi-implicit design, which confines the nonlinearity to the monotone diffusion term, whereas an implicit treatment of the convection or Lorentz coupling would require a smallness condition. Second, the scheme is unconditionally stable in the natural energy norm (Lemma \ref{bounds}). Third, the discrete solutions converge to a weak solution as $h,\Delta t\to0$ (Theorem \ref{Convergence of the Fully Discrete Scheme}): strong compactness of the velocity is obtained from a time-translation estimate, since the $p$-Laplace term precludes a direct time-derivative bound, while that of the magnetic field follows from a direct bound in a negative-order norm; the nonlinear limit is then identified by Minty's monotonicity argument. Fourth, under additional regularity we derived unconditional a priori error estimates (Theorem \ref{Main theorem of error}) for both fields, namely
\[
\|\u^{m}-\mathbf{U}^{m}\|^{2}_{\L^{2}}
+\Delta t\sum_{n=1}^{m}\|\nabla(\u^{n}-\mathbf{U}^{n})\|^{p}_{\L^{p}}
\le C^{*}\left\{(\Delta t)^{2}+h^{\frac{lp}{p-1}}\right\},
\]
for the velocity and, simultaneously,
\[
\|\B^{m}-\mathfrak{B}^{m}\|^{2}_{\L^{2}}
+\Delta t\sum_{n=1}^{m}\|\nabla\times(\B^{n}-\mathfrak{B}^{n})\|^{2}_{\L^{2}}
\le C^{*}\left\{(\Delta t)^{2}+h^{\frac{lp}{p-1}}\right\}
\]
for the magnetic field in its natural $\mathbf{H}(\operatorname{\mathbf{curl}})$ energy norm. Both are first order in time, which is optimal for backward Euler, and of order $h^{lp/(p-1)}$ in space, reducing to the classical rate $h^{2l}$ at $p=2$; the spatial order reflects the known sub-optimality of $\mathbf{L}^{p}$-based error norms for the $p$-Laplacian (Remark \ref{rem-quasinorm}). Although the induction equation is linear in $\B$, the magnetic rate is limited by the same conjugate exponent $p'=p/(p-1)$ as the velocity, since the two error equations are coupled and closed by a single Gronwall argument.

The experiments of Section \ref{sec-numerics} are consistent with this theory and, where it is not sharp, exceed it: the observed spatial rates attain the optimal $O(h^{l})$ of the quasi-norm benchmark rather than the guaranteed $O(h^{l/(p-1)})$, and the $p=3  $ stability test confirms boundedness with no restriction on the time step. All source codes are publicly available at
\begin{center}
\url{https://github.com/evanasarkar987-ux/MHD-p-Laplace}.
\end{center}

\subsection{Future directions}\label{subsec-future}

Several questions are left open by the present analysis.

\medskip
\noindent\emph{(i) Optimal-order estimates in the quasi-norm.} The gap between the guaranteed spatial rate $O(h^{l/(p-1)})$ and the optimal rate $O(h^{l})$ observed numerically is a feature of the $\mathbf{L}^{p}$-based norm, not of the scheme. Measuring the error instead in the quasi-norm of Barrett and Liu \cite{BarrettLiu1994}, or in the natural distance $F(\nabla\u)=|\nabla\u|^{(p-2)/2}\nabla\u$ of Diening et al.\ \cite{DieningEbmeyerRuzicka2007,BerselliRuzicka2022}, should recover the optimal order and close this gap. We expect this to bring the guaranteed convergence rates into agreement with those observed in the numerical experiments.

\medskip
\noindent\emph{(ii) Higher-order and adaptive time stepping.} The scheme is first order in time. Second-order time discretisations, such as BDF2 or a Crank--Nicolson treatment of the linear terms combined with a second-order extrapolation of the semi-implicit couplings, would improve the temporal rate to $(\Delta t)^{2}$ in the energy norm. The trade-off is that the clean unconditional uniqueness of Lemma \ref{lem-uniqueness} rests on the fully implicit, monotone treatment of the diffusion term, and preserving it under a higher-order or extrapolated scheme requires care. Adaptive time-step selection driven by a computable estimator is a related direction.

\medskip
\noindent\emph{(iii) The shear-thinning regime $1<p<2$.} The present analysis is confined to $p\ge2$, where the $p$-Laplacian is monotone with the strong-monotonicity constant $2^{2-p}$ used throughout. The shear-thinning case $1<p<2$ is genuinely different: the operator is singular rather than degenerate, the monotonicity and the relevant Sobolev embeddings take a different form, and both the stability and the convergence arguments would have to be reworked. This regime is physically important for many non-Newtonian conducting fluids.


\appendix
\section{Auxiliary Lemmas}

\renewcommand{\thetheorem}{A.\arabic{theorem}}
\setcounter{theorem}{0}

\vspace{0.1cm}
\noindent
To facilitate the convergence of the solution to the fully discrete scheme, we first review the following embedding results (refer to \cite[Proposition 3.7]{GiraultRaviart1986} or \cite{AmroucheBernardiDaugeGirault1998}).

 \begin{lemma}\label{Embedding lemma}
  Let $\mathbb{D}$ be a Lipschitz polyhedron, then there exists a parameter $\delta_{1}=\delta_{1}(\mathbb{D})>0$ such that the embedding $\mathcal{D}(\mathbb{D})=\mathcal{Y}_{0}\cap \mathbf{H}(\operatorname{div};\mathbb{D}) \hookrightarrow \L^{\left(3+\delta_{1}\right)}(\mathbb{D})$ is compact and the space $\mathcal{D}(\mathbb{D})$ is equipped with the following norm:
       $$\|\v\|_{\mathcal{D}(\mathbb{D})}=\left(\|\operatorname{\mathbf{curl}}\v\|_{\mathbf{L^2(\mathbb{D})}}^{2}+\|\operatorname{div}\v\|^2_{\L^2(\mathbb{D})}\right)^\frac{1}{2} \quad \forall \v\in \mathcal{D}(\mathbb{D}). $$
    \end{lemma}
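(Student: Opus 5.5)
The plan is to deduce the compact embedding from a fractional Sobolev regularity result for the space $\mathcal{D}(\mathbb{D})=\mathcal{Y}_{0}\cap\mathbf{H}(\operatorname{div};\mathbb{D})$, followed by the standard Sobolev and Rellich embeddings. The key intermediate claim is that there exists $s_{0}=s_{0}(\mathbb{D})>0$ such that
\[
\mathcal{D}(\mathbb{D})\hookrightarrow \mathbf{H}^{\frac12+s_{0}}(\mathbb{D})\quad\text{continuously},
\qquad
\|\v\|_{\frac12+s_{0},2}\le C\left(\|\v\|_{\L^{2}}+\|\nabla\times\v\|_{\L^{2}}+\|\operatorname{div}\v\|_{L^{2}}\right).
\]
This is the result of Amrouche--Bernardi--Dauge--Girault for Lipschitz polyhedra; for general Lipschitz domains only $\mathbf{H}^{1/2}$ holds.

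\textbf{Step 1: regular decomposition.} Given $\v\in\mathcal{D}(\mathbb{D})$, let $\varphi\in H^{1}_{0}(\mathbb{D})$ solve the Dirichlet problem $\Delta\varphi=\operatorname{div}\v$. Then $\mathbf{z}:=\v-\nabla\varphi$ is divergence free, still satisfies $\mathbf{z}\times\mathbf{n}=0$ (because $\varphi$ is constant on $\partial\mathbb{D}$), and has $\nabla\times\mathbf{z}=\nabla\times\v$.

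\textbf{Step 2: regularity of the two pieces.}
\begin{itemize}
\item For $\mathbf{z}$, write it as the curl of a vector potential, or extend it to a divergence-free, curl-controlled field on a ball. This gives $\mathbf{z}\in\mathbf{H}^{1/2+s_{0}}$ via the trace regularity of the tangential trace on the polyhedral faces.
\item For $\nabla\varphi$, use the regularity shift for the Dirichlet Laplacian on a polyhedron: $\Delta\varphi\in L^{2}$ implies $\varphi\in H^{3/2+s_{0}}(\mathbb{D})$, hence $\nabla\varphi\in\mathbf{H}^{1/2+s_{0}}$.
\end{itemize}
Here $s_{0}>0$ is determined by the largest edge opening angle and the corner singular exponents (Dauge's corner--edge analysis). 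I expect this step to be the main obstacle: the gain beyond $1/2$ rests entirely on this singular-function analysis for polyhedral domains. I would take it as given from \cite{AmroucheBernardiDaugeGirault1998} rather than reprove it.

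\textbf{Step 3: compactness and the exponent $\delta_{1}$.} Fix $0<s_{1}<s_{0}$. By Rellich's theorem on the bounded Lipschitz domain, the embedding $\mathbf{H}^{1/2+s_{0}}\hookrightarrow\mathbf{H}^{1/2+s_{1}}$ is compact. The fractional Sobolev embedding then gives $\mathbf{H}^{1/2+s_{1}}\hookrightarrow\L^{q}$ with
\[
\frac1q=\frac12-\frac{1/2+s_{1}}{3}=\frac{1-s_{1}}{3},
\qquad\text{i.e.}\qquad q=\frac{3}{1-s_{1}}>3.
\]
Set $\delta_{1}:=q-3$. The composition of a compact map with a continuous one is compact, which yields the compact embedding $\mathcal{D}(\mathbb{D})\hookrightarrow\L^{3+\delta_{1}}(\mathbb{D})$.

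\textbf{Step 4: the stated norm is a norm.} It remains to show that $\left(\|\operatorname{\mathbf{curl}}\v\|^{2}_{\L^{2}}+\|\operatorname{div}\v\|^{2}_{\L^{2}}\right)^{1/2}$ controls $\|\v\|_{\L^{2}}$ (Friedrichs inequality). Argue by contradiction. If it failed, there would be a sequence with $\|\v_{k}\|_{\L^{2}}=1$ and vanishing curl and divergence. The compactness just proved (into $\L^{3+\delta_{1}}\subset\L^{2}$) gives a limit $\v\neq 0$ with $\nabla\times\v=0$, $\operatorname{div}\v=0$ and $\v\times\mathbf{n}=0$. Such harmonic fields with vanishing tangential trace form a space of dimension equal to the number of connected components of $\partial\mathbb{D}$ minus one. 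Since $\partial\mathbb{D}$ is connected, this space is trivial, a contradiction. Hence the stated quantity is equivalent to the graph norm on $\mathcal{D}(\mathbb{D})$.
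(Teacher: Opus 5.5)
Your proposal is correct and follows essentially the paper's route. The paper gives no argument beyond citing \cite{AmroucheBernardiDaugeGirault1998}, i.e.\ the same $\mathbf{H}^{1/2+s}$-regularity of $\mathcal{Y}_{0}\cap\mathbf{H}(\operatorname{div};\mathbb{D})$ on Lipschitz polyhedra that you take as given; your Rellich--Sobolev step (yielding $q=3/(1-s_{1})>3$) and your Friedrichs argument via the connectedness of $\partial\mathbb{D}$ correctly fill in what the citation leaves implicit. One caveat, which does not affect the argument since you defer this step to the reference: your heuristic for the divergence-free part $\mathbf{z}$ in Step 2 is not the actual mechanism, because the gain beyond $\mathbf{H}^{1/2}$ there also comes from polyhedral Dirichlet regularity (subtract an $\mathbf{H}^{1}_{0}$ vector potential $\boldsymbol{\psi}$ with $\nabla\times\boldsymbol{\psi}=\nabla\times\mathbf{z}$, then write $\mathbf{z}-\boldsymbol{\psi}=\nabla\chi$ with $\chi\in H^{1}_{0}(\mathbb{D})$ and $\Delta\chi=-\operatorname{div}\boldsymbol{\psi}\in L^{2}(\mathbb{D})$).
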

    \noindent
    We also need to recall the Aubin-Lions' compactness result for Bochner spaces (see \cite[Lemma 2.8]{GerbeauLeBrisLelievre2006}), which is crucial for our subsequent convergence analysis of the solution to the numerical scheme.
    \begin{lemma}\label{Aubin Lions}
        Let $\mathbb B$ be a Banach space, $\mathbb{B}_{0}$ and $\mathbb{B}_{1}$ be two reflexive Banach Spaces. Assume $\mathbb{B}_{0} \hookrightarrow\hookrightarrow \mathbb{B}$ with compact injection, $\mathbb B\subset \mathbb{B}_{1}$ with continuous injection(embedding). Then the space
        $$
        \left\{w|w\in L^{p_{0}}(0,T;\mathbb{B}_{0}),\frac{\partial w}{\partial t}\in L^{p_{1}}(0,T;\mathbb{B}_{1})\right\}\hookrightarrow\hookrightarrow L^{p_{0}}(0,T;\mathbb{B})
        $$
        with $1<p_{0}<+\infty$, $1<p_{1}<+\infty$.
        \end{lemma}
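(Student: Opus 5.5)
The plan is the classical Lions argument: an interpolation inequality of Ehrling type reduces everything to strong convergence in the weakest space $\mathbb{B}_{1}$, and that convergence is obtained pointwise in time from the bound on $\partial_{t}w$.

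Denote by $W$ the space in the statement, normed by $\|w\|_{L^{p_{0}}(0,T;\mathbb{B}_{0})}+\|\partial_{t}w\|_{L^{p_{1}}(0,T;\mathbb{B}_{1})}$. It suffices to show that every bounded sequence $\{w_{n}\}\subset W$ has a subsequence converging strongly in $L^{p_{0}}(0,T;\mathbb{B})$.

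\textbf{Step 1 (reduction to the limit zero).} Since $\mathbb{B}_{0},\mathbb{B}_{1}$ are reflexive and $1<p_{0},p_{1}<\infty$, the spaces $L^{p_{0}}(0,T;\mathbb{B}_{0})$ and $L^{p_{1}}(0,T;\mathbb{B}_{1})$ are reflexive. We may therefore extract a subsequence with $w_{n}\rightharpoonup w$ weakly in $L^{p_{0}}(0,T;\mathbb{B}_{0})$ and $\partial_{t}w_{n}\rightharpoonup\partial_{t}w$ weakly in $L^{p_{1}}(0,T;\mathbb{B}_{1})$; the limits are compatible, as one checks against scalar test functions in time. Setting $v_{n}=w_{n}-w$, it remains to show $v_{n}\to0$ in $L^{p_{0}}(0,T;\mathbb{B})$.

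\textbf{Step 2 (Ehrling's lemma).} I will first prove by contradiction that for every $\eta>0$ there is $C_{\eta}$ with
\[
\|x\|_{\mathbb{B}}\le\eta\|x\|_{\mathbb{B}_{0}}+C_{\eta}\|x\|_{\mathbb{B}_{1}}\qquad\forall x\in\mathbb{B}_{0}.
\]
If this failed, normalising $\|x_{k}\|_{\mathbb{B}_{0}}=1$ would give $\|x_{k}\|_{\mathbb{B}}>\eta+k\|x_{k}\|_{\mathbb{B}_{1}}$. Compactness of $\mathbb{B}_{0}\hookrightarrow\mathbb{B}$ yields a subsequence with $x_{k}\to x$ in $\mathbb{B}$. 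Since $\|x_{k}\|_{\mathbb{B}}$ is bounded, $\|x_{k}\|_{\mathbb{B}_{1}}\to0$, and continuity of $\mathbb{B}\hookrightarrow\mathbb{B}_{1}$ then forces $x=0$. This contradicts $\|x\|_{\mathbb{B}}\ge\eta$.

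Applying the inequality pointwise in time and integrating gives
\[
\|v_{n}\|_{L^{p_{0}}(\mathbb{B})}\le\eta\,C+C_{\eta}\|v_{n}\|_{L^{p_{0}}(\mathbb{B}_{1})},
\]
where $C$ bounds $\|v_{n}\|_{L^{p_{0}}(0,T;\mathbb{B}_{0})}$. Since $\eta$ is arbitrary, it suffices to prove $v_{n}\to0$ in $L^{p_{0}}(0,T;\mathbb{B}_{1})$.

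\textbf{Step 3 (pointwise convergence in $\mathbb{B}_{1}$; the main step).} Because $W^{1,p_{1}}(0,T;\mathbb{B}_{1})\hookrightarrow C([0,T];\mathbb{B}_{1})$, the $v_{n}$ are uniformly bounded in $C([0,T];\mathbb{B}_{1})$. By dominated convergence, it therefore suffices to show $v_{n}(t)\to0$ in $\mathbb{B}_{1}$ for each fixed $t$.

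Fix $t$ and a small $a>0$ (taking the interval $(t-a,t)$ instead if $t$ is near $T$). Writing $v_{n}(t)=v_{n}(s)-\int_{t}^{s}\partial_{t}v_{n}\,d\tau$ and averaging over $s\in(t,t+a)$ gives
\[
v_{n}(t)=\frac1a\int_{t}^{t+a}v_{n}(s)\,ds-\frac1a\int_{t}^{t+a}(t+a-\tau)\,\partial_{t}v_{n}(\tau)\,d\tau .
\]
By H\"older's inequality, the second term is bounded in $\mathbb{B}_{1}$ by $a^{1-1/p_{1}}\sup_{n}\|\partial_{t}v_{n}\|_{L^{p_{1}}(\mathbb{B}_{1})}$. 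This is small uniformly in $n$ once $a$ is chosen small.

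For this fixed $a$, the first term converges to $0$ weakly in $\mathbb{B}_{0}$. Indeed, integration over $(t,t+a)$ against a fixed functional is a continuous linear functional on $L^{p_{0}}(0,T;\mathbb{B}_{0})$. The composite embedding $\mathbb{B}_{0}\hookrightarrow\mathbb{B}_{1}$ is compact, so this weak convergence upgrades to strong convergence in $\mathbb{B}_{1}$.

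Letting $n\to\infty$ and then $a\to0$ gives $v_{n}(t)\to0$ in $\mathbb{B}_{1}$, which completes the argument. I expect this step to be the main obstacle: the averaging must be arranged so that the smallness in $a$ holds uniformly in $n$, and the weak-to-strong upgrade must be justified through the compact composite embedding.
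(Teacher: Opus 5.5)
Your proof is correct. It is the classical Lions--Temam argument: Ehrling's inequality reduces the problem to convergence in $L^{p_0}(0,T;\mathbb{B}_1)$, a uniform $C([0,T];\mathbb{B}_1)$ bound together with dominated convergence reduces it further to pointwise convergence, and the averaging identity plus the compact composite embedding $\mathbb{B}_0\hookrightarrow\mathbb{B}_1$ gives that pointwise convergence. The paper does not prove this lemma at all; it only refers to Gerbeau--Le Bris--Leli\`evre. So your argument supplies the standard proof behind that citation. It also uses the reflexivity of $\mathbb{B}_0,\mathbb{B}_1$ and the condition $1<p_0,p_1<\infty$ exactly where they are needed, namely in the weak extraction of Step 1.

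One small point of precision. When $p_0<p_1$, the $v_n$ are not known a priori to lie in $L^{p_1}(0,T;\mathbb{B}_1)$. So instead of quoting $W^{1,p_1}(0,T;\mathbb{B}_1)\hookrightarrow C([0,T];\mathbb{B}_1)$, average the identity $v_n(t)=v_n(s)+\int_s^t\partial_t v_n\,d\tau$ over $s\in(0,T)$. This gives directly
\[
\sup_{t\in[0,T]}\|v_n(t)\|_{\mathbb{B}_1}\le T^{-1/p_0}\|v_n\|_{L^{p_0}(0,T;\mathbb{B}_1)}+T^{1-1/p_1}\|\partial_t v_n\|_{L^{p_1}(0,T;\mathbb{B}_1)},
\]
which is the uniform bound your dominated-convergence step requires.
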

\noindent
 The following compactness property for discrete divergence-free vector fields generalizes the result in \cite[Theorem 4.9]{Hiptmair2002}. 
   \begin{lemma}\label{Lemma 1}
                
         Let $\left\{\C_{h}\right\} \subset \mathcal{Y}_{0h}$ be a sequence of fields, which is uniformly bounded in $\mathcal{Y}$. Then there exists a subsequence $\left\{\mathbf{w}_{h^{\prime}}\right\}_{h^{\prime}}$ converging weakly in $\mathcal{Y}$, and strongly in $\L^{2}(\mathbb{D})$ to a solenoidal function $\C \in \mathcal{K}$.
    \end{lemma}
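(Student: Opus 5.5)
The plan is to follow the standard discrete-compactness argument for Nédélec fields, in the spirit of \cite[Theorem 4.9]{Hiptmair2002}, using the Hodge mapping $\mathcal{L}$ and its approximation property \eqref{ine-zbh}.

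First, since $\{\C_h\}$ is bounded in $\mathcal{Y}$ and $\mathcal{Y}$ is a Hilbert space, we can extract a subsequence $\C_{h'}\rightharpoonup\C$ weakly in $\mathcal{Y}$. Next we pass to the lifted fields $\mathcal{L}(\C_{h'})\in\mathcal{K}$. By construction $\nabla\times\mathcal{L}(\C_{h'})=\nabla\times\C_{h'}$ and $\operatorname{div}\mathcal{L}(\C_{h'})=0$. Moreover, \eqref{ine-zbh} gives
\[
\|\mathcal{L}(\C_{h'})\|_{\L^2}\le\|\C_{h'}\|_{\L^2}+ch^{\frac12+s}\|\nabla\times\C_{h'}\|_{\L^2}.
\]
Hence $\{\mathcal{L}(\C_{h'})\}$ is bounded in $\mathcal{D}(\mathbb{D})$, that is, in the $\mathbf{H}(\operatorname{curl})\cap\mathbf{H}(\operatorname{div})$ norm with the appropriate boundary condition. (Here one must check that the boundary trace condition of $\mathcal{Y}_{0h}$ and of $\mathcal{K}$ are compatible with the definition of $\mathcal{D}(\mathbb{D})$ in Lemma \ref{Embedding lemma}; I take this compatibility as given by the Hodge construction.)

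By Lemma \ref{Embedding lemma}, the embedding $\mathcal{D}(\mathbb{D})\hookrightarrow\L^{3+\delta_1}(\mathbb{D})$ is compact, and hence so is $\mathcal{D}(\mathbb{D})\hookrightarrow\L^2(\mathbb{D})$ on a bounded domain. After a further subsequence we obtain $\mathcal{L}(\C_{h'})\to\mathbf{z}$ strongly in $\L^2$. Combining this with \eqref{ine-zbh} once more,
\[
\|\C_{h'}-\mathbf{z}\|_{\L^2}\le ch'^{\frac12+s}\|\nabla\times\C_{h'}\|_{\L^2}+\|\mathcal{L}(\C_{h'})-\mathbf{z}\|_{\L^2}\to0.
\]
Thus $\C_{h'}\to\mathbf{z}$ strongly in $\L^2$, and uniqueness of weak limits in $\L^2$ gives $\mathbf{z}=\C$.

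It remains to show that $\C\in\mathcal{K}$. Weak closedness in $\mathcal{Y}$ gives $\C\in\mathcal{Y}$. Each $\mathcal{L}(\C_{h'})$ satisfies $(\mathcal{L}(\C_{h'}),\nabla\phi)=0$ for all $\phi\in H^1(\mathbb{D})$, which expresses both the divergence-free condition and the normal-trace condition in weak form. This identity passes to the strong $\L^2$ limit, so $\operatorname{div}\C=0$ and $\C\cdot\mathbf{n}=0$, i.e.\ $\C\in\mathbf{H}_0(\operatorname{div})$ with zero divergence. Together with $\C\in\mathcal{Y}$ this gives $\C\in\mathcal{K}$.

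The main obstacle is the second step: ensuring that $\mathcal{L}$ maps $\mathcal{Y}_{0h}$ into a space to which Lemma \ref{Embedding lemma} applies, with the correct tangential or normal boundary conditions, and that the $\L^2$ bound on $\mathcal{L}(\C_h)$ is uniform. If the boundary conditions do not match, I would first try to use the Friedrichs-type inequality for $\mathcal{K}$ directly, namely $\|\mathbf{b}\|_{\L^2}\le C\|\nabla\times\mathbf{b}\|_{\L^2}$, which is valid since $\mathbb{D}$ is simply connected with connected boundary. I would then invoke compactness of $\mathcal{K}\hookrightarrow\L^2$, i.e.\ Weber's compactness theorem, in place of Lemma \ref{Embedding lemma}.
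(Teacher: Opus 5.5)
Your argument is correct and is essentially the standard proof behind the result the paper cites instead of proving (Hiptmair's discrete compactness theorem): weak compactness in $\mathcal{Y}$, the Hodge-map estimate \eqref{ine-zbh} to transfer to $\mathcal{L}(\C_h)\in\mathcal{K}$, compactness of the continuous solenoidal space in $\L^2$, and passing $(\mathcal{L}(\C_{h'}),\nabla\phi)=0$ to the limit. The one adjustment is to use your fallback outright. $\mathcal{L}(\C_h)\in\mathcal{K}$ has vanishing normal trace, not vanishing tangential trace, so Lemma \ref{Embedding lemma} (stated for $\mathcal{Y}_0\cap\mathbf{H}(\operatorname{div};\mathbb{D})$) does not literally apply. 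The right tool is the compact embedding $\mathcal{Y}\cap\mathbf{H}_0(\operatorname{div};\mathbb{D})\hookrightarrow\L^2$ (Weber), and no Friedrichs inequality is needed since you already have the uniform $\L^2$ bound.
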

    \noindent
    With the help of Hodge mapping and Lemma \ref{Embedding lemma}, we have the following estimate.
 \begin{lemma}\label{Z=curl}
            Under the conditions of Lemma \ref{Embedding lemma}, for some constant $C>0$, there holds
$$\|\mathcal{L}(\B)\|_{0,3+\delta_{1}} \leq C \|\nabla \times \B\|_{\mathbf{L}^2}.$$
        \end{lemma}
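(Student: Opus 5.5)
We need to prove Lemma \ref{Z=curl}: $\|\mathcal{L}(\B)\|_{0,3+\delta_1}\le C\|\nabla\times\B\|_{\L^2}$.

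The plan is to chain the embedding of Lemma \ref{Embedding lemma} with the defining properties of the Hodge mapping $\mathcal{L}$. By construction, $\mathcal{L}(\B)\in\mathcal{K}$, so:
\begin{itemize}
\item $\mathcal{L}(\B)\in\mathcal{Y}\cap\mathbf{H}_0(\operatorname{div};\mathbb{D})$;
\item $\operatorname{div}\mathcal{L}(\B)=0$ a.e. in $\mathbb{D}$;
\item $\nabla\times\mathcal{L}(\B)=\nabla\times\B$.
\end{itemize}
First I would check that $\mathcal{L}(\B)$ actually lies in the space $\mathcal{D}(\mathbb{D})$ to which Lemma \ref{Embedding lemma} applies. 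That lemma is stated for $\mathcal{Y}_0\cap\mathbf{H}(\operatorname{div};\mathbb{D})$, i.e.\ vanishing tangential trace, whereas $\mathcal{K}$ carries the normal condition $\mathbf{b}\cdot\mathbf{n}=0$. So I would invoke the companion version of the Amrouche--Bernardi--Dauge--Girault embedding for $\mathcal{Y}\cap\mathbf{H}_0(\operatorname{div};\mathbb{D})$, which is what the reference \cite{AmroucheBernardiDaugeGirault1998} provides on Lipschitz polyhedra. Both boundary-condition variants give an $\L^{3+\delta_1}$ embedding, possibly after shrinking $\delta_1$ so that one value serves both.

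Second, I would use the fact that on this space the graph norm $\|\mathbf{v}\|_{\L^2}+\|\nabla\times\mathbf{v}\|_{\L^2}+\|\operatorname{div}\mathbf{v}\|_{L^2}$ is equivalent to the seminorm $\|\nabla\times\mathbf{v}\|_{\L^2}+\|\operatorname{div}\mathbf{v}\|_{L^2}$. This is the Friedrichs (Gaffney) inequality
\[
\|\mathbf{v}\|_{\L^2}\le C\left(\|\nabla\times\mathbf{v}\|_{\L^2}+\|\operatorname{div}\mathbf{v}\|_{L^2}\right).
\]
It holds because $\mathbb{D}$ is simply connected with connected boundary, so there are no nontrivial harmonic fields satisfying these boundary conditions. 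This is exactly the norm written in Lemma \ref{Embedding lemma}.

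Combining the two steps gives
\[
\|\mathcal{L}(\B)\|_{0,3+\delta_1}\le C\|\mathcal{L}(\B)\|_{\mathcal{D}(\mathbb{D})}=C\left(\|\nabla\times\mathcal{L}(\B)\|^2_{\L^2}+\|\operatorname{div}\mathcal{L}(\B)\|^2_{L^2}\right)^{1/2}.
\]
Since $\operatorname{div}\mathcal{L}(\B)=0$, the divergence term drops out. Since $\nabla\times\mathcal{L}(\B)=\nabla\times\B$, the right-hand side equals $C\|\nabla\times\B\|_{\L^2}$, which is the claim.

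The main obstacle is the first step: matching the boundary conditions of $\mathcal{K}$ with those of the embedding space. If the tangential version is the only one available, I would either define $\mathcal{L}$ with values in the tangential space, or use the duality of the two settings via $\mathbf{v}\mapsto\nabla\times$ potentials, so that the embedding and the Friedrichs inequality apply in the normal-trace setting as well.
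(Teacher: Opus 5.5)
Your argument is correct and is essentially the paper's own. The paper gives no separate proof beyond invoking the Hodge mapping and Lemma \ref{Embedding lemma}, and in Step 2 of the convergence proof it uses exactly your chain: $\|\mathcal{L}(\B)\|_{\mathcal{D}(\mathbb{D})}=\|\nabla\times\mathcal{L}(\B)\|_{\L^2}=\|\nabla\times\B\|_{\L^2}$ (since $\operatorname{div}\mathcal{L}(\B)=0$), followed by the embedding into $\L^{3+\delta_1}$. Two points you add are left implicit in the paper: the normal-versus-tangential trace mismatch between $\mathcal{K}$ and $\mathcal{D}(\mathbb{D})$, which is resolved by the $\mathbf{H}(\operatorname{curl})\cap\mathbf{H}_0(\operatorname{div})$ version of the Amrouche--Bernardi--Dauge--Girault embedding, and the Friedrichs inequality that makes the curl--div seminorm a norm.
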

        \noindent
        From the Fortin criterion, the following discrete inf-sup condition is established (see \cite[Chapter 2]{Hiptmair2002}).
        \begin{lemma}[inf-sup condition]
 For any $q_{h} \in Q_{h}$, there exists $\v_{h} \in \mathcal{X}_{h}$ that satisfying the discrete inf-sup condition
$$\inf \limits_{0 \neq q_{h} \in Q_{h}} \sup\limits_{0 \neq v_{h} \in X_{h}} \frac{\left(\operatorname{div}\v_{h},q_{h}\right)}{\left\|\v_{h}\right\|_{1,2}\left\|q_{h}\right\|_{\mathbf{L}^2}}\geq \beta_{1}$$
where, $\beta_{1}$ is a generic positive constant depending on the domain $\mathbb{D}.$
        \end{lemma}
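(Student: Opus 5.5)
This is a discrete inf--sup condition, and I would prove it with the Fortin criterion. The first step is to recall the continuous inf--sup condition on the Lipschitz polyhedron $\mathbb{D}$. By the Bogovskii/Ne\v{c}as result, for every $q\in L^2_0(\mathbb{D})$ there is $\v\in\mathbf{H}^1_0(\mathbb{D})$ with
\[
\operatorname{div}\v=q \qquad\text{and}\qquad \|\v\|_{1,2}\le C_{\mathbb{D}}\|q\|_{L^2}.
\]
Equivalently, $\sup_{\v}(\operatorname{div}\v,q)/\|\v\|_{1,2}\ge \beta_0\|q\|_{L^2}$ with $\beta_0=C_{\mathbb{D}}^{-1}$. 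This applies in particular to every $q_h\in Q_h\subset Q=L^2_0(\mathbb{D})$.

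The second step is to construct a Fortin operator $\Pi_h:\mathbf{H}^1_0(\mathbb{D})\to\mathcal{X}_h$ with two properties. The first is the commuting property
\[
(\operatorname{div}(\v-\Pi_h\v),q_h)=0\quad\text{for all } q_h\in Q_h .
\]
The second is uniform $H^1$-stability, $\|\Pi_h\v\|_{1,2}\le C_\Pi\|\v\|_{1,2}$ with $C_\Pi$ independent of $h$. For the stable pairs allowed by the standing assumption that $(\mathcal{X}_h,Q_h)$ satisfies the Babu\v{s}ka--Brezzi condition (for instance Taylor--Hood or Scott--Vogelius on suitable meshes), I would build $\Pi_h$ as in \cite[Chapter 2]{Hiptmair2002} in two stages. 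First take a Scott--Zhang quasi-interpolant $\Pi_1\v$; it is $H^1$-stable and satisfies the local estimate $\sum_K h_K^{-2}\|\v-\Pi_1\v\|^2_{L^2(K)}\le C\|\v\|_{1,2}^2$. Then add a local correction $\Pi_2(\v-\Pi_1\v)$ built from edge/face or element bubble functions, chosen so that the moments $\int_K\operatorname{div}(\cdot)\,q_h$ are matched. A scaling argument on the reference element, valid because $\tau_h$ is regular, gives $\|\Pi_2\mathbf{w}\|_{1,2}\le C(\sum_K h_K^{-2}\|\mathbf{w}\|^2_{L^2(K)})^{1/2}$, and combining these bounds yields $C_\Pi$ uniform in $h$.

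The third step combines the two. Given $q_h$, take $\v$ from the first step and set $\v_h=\Pi_h\v$. Then
\[
(\operatorname{div}\v_h,q_h)=(\operatorname{div}\v,q_h)=\|q_h\|^2_{L^2}
\qquad\text{and}\qquad
\|\v_h\|_{1,2}\le C_\Pi C_{\mathbb{D}}\|q_h\|_{L^2},
\]
so the claim holds with $\beta_1=(C_\Pi C_{\mathbb{D}})^{-1}$. This constant depends only on $\mathbb{D}$ and on the shape regularity of the mesh family.

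The main obstacle is the uniform stability of the Fortin operator, in particular the local correction step. For Taylor--Hood, I expect to have to verify the macroelement condition: each interior element must have at least one interior vertex, which is needed for $P_k$--$P_{k-1}$ in 3D. Quasi-uniformity of $\tau_h$ is what makes the local scaling constants uniform.
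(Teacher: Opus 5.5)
Your argument (the continuous Necas/Bogovskii inf--sup condition combined with a uniformly $H^1$-stable Fortin operator, giving $\beta_1=(C_\Pi C_{\mathbb{D}})^{-1}$) is exactly the Fortin criterion the paper invokes, and the paper offers nothing beyond the pointer to \cite[Chapter 2]{Hiptmair2002}, so the proposal is correct and follows the same route. One caution on your sketch of $\Pi_h$: the Scott--Zhang-plus-local-bubble correction works for bubble-enriched pairs or suitable discontinuous-pressure pairs, but not for Taylor--Hood (the $\mathbb{P}_2$--$\mathbb{P}_1$ pair used in Section~\ref{sec-numerics}), because there the pressure is continuous, the relevant local moments are $\int_K \mathbf{w}\cdot\nabla q_h$, and $\mathbb{P}_k$ velocities in 3D contain no interior bubbles of the needed degree, so its stability comes from Verf\"urth's trick or Stenberg's macroelement argument rather than from a local bubble correction.
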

   \noindent Now we recall the following inverse estimate from \cite[Theorem 3.2.6]{Ciarlet1978}. On a quasi-uniform mesh there holds
        \begin{equation}\label{eq-inverse estimate}
\left\|\v_{h}\right\|_{m,q}\le C_{inv}h^{l-m+3(1/q-1/p)}\left\|\v_{h}\right\|_{l,p}, \end{equation}\quad 
\noindent for all $\v_{h}\in \mathcal{X}_{h}$, where $C_{inv}$ is a generic constant independent of mesh size $h$, $l$ and $m$ are two real numbers with $0\le l\le m\le 1$, $p$ and $q$ are two real numbers with $1 \le p \le q \le \infty.$\\
  The following result (see \cite{Deimling1985}) will be used in the existence proof .
 
 \begin{lemma}\label{lemma-fixed point}
     Let $f\in C(\mathbb{R}^n)$ be such that $\frac{(f(x),x)}{|x|}\rightarrow \infty$ as $|x|\rightarrow \infty$. Then $f(\mathbb{R}^n)=\mathbb{R}^n.$ 
\end{lemma}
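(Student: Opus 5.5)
Fix an arbitrary target $y\in\mathbb{R}^n$; we must produce $x\in\mathbb{R}^n$ with $f(x)=y$. The plan is to reduce surjectivity to a zero-finding problem for the shifted map $g(x):=f(x)-y$, and then to obtain the zero from Brouwer's fixed point theorem via the classical ``acute angle'' argument. Throughout, $(\cdot,\cdot)$ and $|\cdot|$ denote the Euclidean inner product and norm on $\mathbb{R}^n$.

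\emph{Step 1 (coercivity survives the shift).} By Cauchy--Schwarz,
\[
\frac{(g(x),x)}{|x|}\ \ge\ \frac{(f(x),x)}{|x|}-|y| .
\]
The hypothesis therefore gives a radius $R>0$ such that $(g(x),x)>0$ for every $x$ with $|x|=R$. This is the only place where the growth assumption on $f$ enters.

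\emph{Step 2 (acute angle lemma).} Next show that a continuous $g$ with $(g(x),x)>0$ on the sphere $\{|x|=R\}$ vanishes somewhere in the closed ball $\overline{B}_R$. Suppose, to the contrary, that $g(x)\neq0$ on $\overline{B}_R$. Then
\[
\Phi(x):=-R\,\frac{g(x)}{|g(x)|}
\]
is a continuous map of $\overline{B}_R$ into itself, indeed into the sphere $|x|=R$. By Brouwer's theorem there is a fixed point $x_0=\Phi(x_0)$, which necessarily satisfies $|x_0|=R$. Taking the inner product of $x_0=\Phi(x_0)$ with $x_0$ gives
\[
R^2=(x_0,x_0)=-\frac{R}{|g(x_0)|}\,(g(x_0),x_0)<0 ,
\]
by Step 1. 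This contradiction shows that $g$ has a zero $x^\ast\in\overline{B}_R$, i.e.\ $f(x^\ast)=y$. Since $y$ was arbitrary, $f(\mathbb{R}^n)=\mathbb{R}^n$.

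\emph{Main obstacle.} The only nontrivial ingredient is the topological step, Brouwer's fixed point theorem, which we take as known. The one point needing care is that $\Phi$ is well defined and continuous, which is guaranteed precisely by the contradiction hypothesis $g\neq0$ on $\overline{B}_R$. As used in Theorem~\ref{thm-existence}, the lemma is applied to the finite-dimensional space $\mathcal X_{0h}\times\mathcal Y_h$, identified with $\mathbb{R}^n$ through a basis. Since the Euclidean inner product and the $\mathbf{L}^2$ inner product on this space are equivalent, the same argument applies verbatim with $(\cdot,\cdot)$ the $\mathbf{L}^2$ inner product.
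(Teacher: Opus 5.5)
Your proof is correct. The Cauchy--Schwarz shift in Step~1 gives $(f(x)-y,x)>0$ on a large sphere. The acute-angle argument in Step~2 is airtight: the contradiction hypothesis makes $\Phi$ continuous, any fixed point lies on $|x|=R$, and pairing with $x_0$ yields $R^2<0$.

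The paper does not prove the lemma itself; it quotes it from \cite{Deimling1985}. There it is obtained from Brouwer degree rather than from the fixed point theorem. For fixed $y$, the same Cauchy--Schwarz bound shows that the homotopy $h(t,x)=tx+(1-t)f(x)-y$ satisfies $(h(t,x),x)\ge|x|\bigl(t|x|+(1-t)(f(x),x)/|x|-|y|\bigr)>0$ on $|x|=r$ for $r$ large. Homotopy invariance then gives $d(f-y,B_r(0),0)=d(\mathrm{id}-y,B_r(0),0)=1$, hence a zero of $f-y$.

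Both arguments rest on the same coercivity estimate on a sphere and differ only in the topological input. Yours is the classical acute-angle lemma from Galerkin existence theory. It needs nothing beyond Brouwer's fixed point theorem, so it is more elementary and self-contained. The degree argument presupposes degree theory but proves more: $f-y$ has degree one on large balls. That conclusion is stable under perturbation and is the form that generalises to Leray--Schauder degree in infinite dimensions.

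One minor correction concerns your closing remark. Equivalence of the Euclidean and $\mathbf{L}^2$ inner products is not what justifies the transfer, because the sign condition $(f(x),x)>0$ is in general not preserved under a change of inner product. The correct reason is that your argument uses only an inner product and Brouwer's theorem on a closed ball. It therefore runs verbatim in $\mathcal{X}_{0h}\times\mathcal{Y}_h$ equipped with the (weighted) $\mathbf{L}^2$ inner product in which the coercivity of Theorem~\ref{thm-existence} is checked. Equivalently, it runs in $\mathbb{R}^n$ after choosing an orthonormal basis for that inner product.
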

\noindent
The following bounds will be useful for error analysis.
   \begin{lemma}\label{eq-error-estimate bounds}
       Let $\u,\B$ be the unique solution of the \eqref{1st weak form} and \eqref{2nd weeak equation}, then the following estimates are established
       \begin{align}
           \|\u-\mathcal{P}_{h}\u\|_{\L^{\infty}}+\|\nabla(\u-\mathcal{P}_{h}\u)\|_{\L^{3}}+\|\nabla\times(\B-\mathcal{F}_{h}\B)\|_{\L^3}\le C_r.
       \end{align}
   \end{lemma}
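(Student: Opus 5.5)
The plan is to split each projection error into an interpolation error and a discrete remainder. The interpolation error is bounded using the regularity in Assumption~\ref{Assumption}. The discrete remainder is bounded by the inverse estimate \eqref{eq-inverse estimate}, combined with the approximation properties \eqref{eq-error approximation}--\eqref{eq-error approximation-B}. Every bound will be pointwise in $t$ with constants depending only on the $L^\infty(0,T;\cdot)$ norms of Assumption~\ref{Assumption}, so the resulting $C_r$ is uniform in time.

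\emph{Velocity part.} Let $I_h$ be a Scott--Zhang-type interpolant into $\mathcal{X}_h$ that is stable in $\mathbf{W}^{1,\infty}$. Write
\[
\u-\mathcal{P}_h\u=(\u-I_h\u)+(I_h\u-\mathcal{P}_h\u).
\]
Since $sp>3$, the embedding $\mathbf{W}^{s+1,p}\hookrightarrow\mathbf{W}^{1,\infty}$ holds, so $\|\u-I_h\u\|_{\L^\infty}+\|\nabla(\u-I_h\u)\|_{\L^\infty}\le C\|\u\|_{s+1,p}$. For the discrete remainder $\v_h:=I_h\u-\mathcal{P}_h\u$, apply \eqref{eq-inverse estimate} with $q=\infty$ and then \eqref{eq-error approximation} with the triangle inequality:
\[
\|\v_h\|_{\L^\infty}\le C h^{-3/p}\|\v_h\|_{\L^p}\le C h^{1+l-3/p}\|\u\|_{1+l,p}.
\]
This is bounded because $l\ge\tfrac12$ and $sp>3$ give $1+l-3/p>0$. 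For the gradient there are two cases.
\begin{itemize}
\item If $p\ge3$, Hölder's inequality on the bounded domain gives $\|\nabla(\u-\mathcal{P}_h\u)\|_{\L^3}\le C\|\nabla(\u-\mathcal{P}_h\u)\|_{\L^p}\le Ch^l$.
\item If $2\le p<3$, apply the inverse estimate from $\L^p$ to $\L^3$ to $\nabla\v_h$. This costs $h^{3(1/3-1/p)}$, so $\|\nabla\v_h\|_{\L^3}\le Ch^{\,l+1-3/p}\|\u\|_{1+l,p}$, again with a nonnegative exponent. The interpolation part is already controlled in $\L^\infty$.
\end{itemize}

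\emph{Magnetic part.} Let $\Pi_h$ be the Nédélec interpolant and $\Pi_h^{RT}$ the Raviart--Thomas interpolant, linked by the commuting diagram $\nabla\times\Pi_h\B=\Pi_h^{RT}(\nabla\times\B)$. Since $\nabla\times\B\in\mathbf{W}^{s,2}$ with $s\ge\tfrac12$, we have $\mathbf{H}^{1/2}\hookrightarrow\L^3$. Together with the $\L^3$-stability of $\Pi_h^{RT}$ on this class, this gives
\[
\|\nabla\times(\B-\Pi_h\B)\|_{\L^3}\le C\|\nabla\times\B\|_{s,2}.
\]
For the discrete field $\mathbf{c}_h:=\Pi_h\B-\mathcal{F}_h\B$, the inverse estimate from $\L^2$ to $\L^3$ (a loss of $h^{-1/2}$) and \eqref{eq-error approximation-B} give
\[
\|\nabla\times\mathbf{c}_h\|_{\L^3}\le Ch^{-1/2}\|\nabla\times\mathbf{c}_h\|_{\L^2}\le Ch^{\,l-1/2},
\]
which is bounded since $l\ge\tfrac12$. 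The inverse estimate applies to $\nabla\times\mathbf{c}_h$ because it is piecewise polynomial.

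\emph{Expected obstacle.} The delicate step is the velocity projection. $\mathcal{P}_h$ is defined through the $H^1$-type identity \eqref{eq-Ph-def}, yet the argument above uses its $\mathbf{W}^{1,p}$ approximation property \eqref{eq-error approximation}, which implicitly requires $\mathbf{W}^{1,p}$-stability of a discrete Stokes projection on quasi-uniform meshes. I will take \eqref{eq-error approximation} as given, as the paper does. If a self-contained route were needed, I would first try to cite the $W^{1,\infty}$-stability of the Stokes projection on quasi-uniform meshes (Girault--Nochetto--Scott type results). Interpolating between that bound and $H^1$-stability would yield the required $\mathbf{W}^{1,p}$ bound.
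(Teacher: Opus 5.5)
Your proposal is correct and is essentially the argument the paper relies on. The paper proves this lemma only by citing \cite[Lemma~5.4]{ding2022convergence} for $p=2$ and asserting that the same argument carries over once the $\mathbf{H}^1$ projection estimates are replaced by the $\mathbf{W}^{1,p}$ estimates \eqref{eq-error approximation}, together with quasi-uniformity and Assumption~\ref{Assumption}; your interpolant-plus-inverse-estimate splitting, with the exponent checks $1+l-3/p\ge0$, $l+1-3/p\ge0$ and $l-\tfrac12\ge0$, is that argument made explicit. The one difference is that you obtain the $\mathbf{L}^{\infty}$ bound from the inverse estimate and the $\mathbf{L}^{p}$ rate of $\mathcal{P}_h$, not from the $\mathbf{L}^{\infty}$-stability of $\mathcal{P}_h$ that the paper lists as an ingredient, so your version avoids that harder stability result while still resting, as you note, on the assumed estimate \eqref{eq-error approximation}.
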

   \begin{proof}
      For $p=2$ this is \cite[Lemma 5.4]{ding2022convergence}. For $p>2$ the same argument applies verbatim once the $\mathbf{H}^{1}$-projection estimates used there are replaced by the $\mathbf{W}^{1,p}$ estimates \eqref{eq-error approximation}; the ingredients required are the quasi-uniformity of $\mathcal{T}_{h}$, the $\mathbf{L}^{\infty}$-stability of $\mathcal{P}_{h}$ on quasi-uniform meshes, and the regularity of Assumption \ref{Assumption}.
   \end{proof}

\section*{Conflict of Interest}
The authors declare that they have no conflict of interest.

\section*{Funding}
The second author gratefully acknowledges the support of the DST--INSPIRE Faculty Research Grant (Registration No.\ IFA21-MA158). This work was partially supported by the Australian Government through the Australian Research Council’s
Discovery Projects funding scheme (grant number DP220100937).

\section*{Data Availability}
The datasets generated and analysed during the current study are available from the corresponding author on reasonable request.

\vspace{0.4cm}
During the preparation of this work, the authors used ChatGPT for language correction and editing, and Codex for reviewing and correcting programming code. The authors reviewed and edited the outputs as needed and take full responsibility for the content of the published article.
\bibliography{sample}

\begin{thebibliography}{40}
\newcommand{\enquote}[1]{``#1''}
\providecommand{\natexlab}[1]{#1}
\providecommand{\url}[1]{\texttt{#1}}
\providecommand{\urlprefix}{URL }
\expandafter\ifx\csname urlstyle\endcsname\relax
  \providecommand{\doi}[1]{\discretionary{}{}{}https://doi.org/#1}\else
  \providecommand{\doi}[1]{\discretionary{}{}{}\urlstyle{rm}\url{https://doi.org/#1}}\fi

\bibitem[{Amrouche et~al.(1998)Amrouche, Bernardi, Dauge, and Girault}]{AmroucheBernardiDaugeGirault1998}
Amrouche, C., Bernardi, C., Dauge, M., and Girault, V., \enquote{Vector Potentials in Three-Dimensional Non-Smooth Domains,} \emph{Mathematical Methods in the Applied Sciences}, Vol.~21, 1998, pp. 823--864.

\bibitem[{Banas and Prohl(2010)}]{BanasProhl2010}
Banas, L., and Prohl, A., \enquote{Convergent finite element discretization of the multi-fluid nonstationary incompressible magnetohydrodynamics equations,} \emph{Math. Comp.}, Vol.~79, 2010, pp. 1957--1999.

\bibitem[{Barrett and Liu(1994)}]{BarrettLiu1994}
Barrett, J.~W., and Liu, W.~B., \enquote{Finite element approximation of the parabolic $p$-Laplacian,} \emph{SIAM Journal on Numerical Analysis}, Vol.~31, No.~2, 1994, pp. 413--428.

\bibitem[{Berselli et~al.(2015)Berselli, Diening, and Ruzicka}]{BerselliDieningRuzicka2015}
Berselli, L.~C., Diening, L., and Ruzicka, M., \enquote{Optimal error estimate for semi-implicit space-time discretization for the equations describing incompressible generalized Newtonian fluids,} \emph{IMA Journal of Numerical Analysis}, Vol.~35, No.~2, 2015, pp. 680--697.

\bibitem[{Berselli and Ruzika(2022)}]{BerselliRuzicka2022}
Berselli, L.~C., and Ruzika, M., \enquote{Space‐time discretization for nonlinear parabolic systems with $p$‐structure,} \emph{IMA Journal of Numerical Analysis}, Vol.~42, No.~1, 2022, pp. 260--299.

\bibitem[{Breit et~al.(2021)Breit, Diening, Storn, and Wichmann}]{BreitDieningStornWichmann2021}
Breit, D., Diening, L., Storn, J., and Wichmann, J., \enquote{The parabolic $p$‐Laplacian with fractional differentiability,} \emph{IMA Journal of Numerical Analysis}, Vol.~41, No.~3, 2021, pp. 2110--2138.

\bibitem[{Cai et~al.(2021)Cai, Li, and Li}]{CaiLiLi2021}
Cai, W., Li, B., and Li, Y., \enquote{Error analysis of a fully discrete finite element method for variable density incompressible flows in two dimensions,} \emph{ESAIM Math. Model. Numer. Anal.}, Vol.~55, 2021, pp. S103--S147.

\bibitem[{Cao and Wu(2011)}]{CaoWu2011}
Cao, C., and Wu, J., \enquote{Global Regularity for the Two-Dimensional MHD Equations with Mixed Partial Dissipation and Magnetic Diffusion,} \emph{Advances in Mathematics}, Vol. 226, 2011, pp. 1803--1822.

\bibitem[{Chen et~al.(2008)Chen, Miao, and Zhang}]{ChenMiaoZhang2008}
Chen, Q., Miao, C., and Zhang, Z., \enquote{On the Regularity Criterion of Weak Solutions for the 3D Viscous Magnetohydrodynamics Equations,} \emph{Communications in Mathematical Physics}, Vol. 284, 2008, pp. 919--930.

\bibitem[{Ciarlet(1978)}]{Ciarlet1978}
Ciarlet, P.~G., \emph{The Finite Element Method for Elliptic Problems}, Studies in Mathematics and its Applications, Vol.~4, North-Holland Publishing Co., Amsterdam--New York--Oxford, 1978.

\bibitem[{Costabel and Dauge(1998)}]{CostabelDauge1998}
Costabel, M., and Dauge, M., \enquote{Singularities of Maxwell's equations on polyhedral domains,} \emph{Analysis, Numerics and Applications of Differential and Integral Equations (Stuttgart, 1996)}, Pitman Research Notes in Mathematics Series, Vol. 379, Longman, Harlow, 1998, pp. 69--76.

\bibitem[{Costabel and Dauge(2000)}]{CostabelDauge2000}
Costabel, M., and Dauge, M., \enquote{Singularities of electromagnetic fields in polyhedral domains,} \emph{Archive for Rational Mechanics and Analysis}, Vol. 151, 2000, pp. 221--276.

\bibitem[{Davidson(2001)}]{Davidson2001}
Davidson, P.~A., \emph{An Introduction to Magnetohydrodynamics}, Cambridge Texts in Applied Mathematics, Cambridge University Press, Cambridge, 2001.

\bibitem[{Deimling(1985)}]{Deimling1985}
Deimling, K., \emph{Nonlinear Functional Analysis}, Springer-Verlag, Berlin, 1985.

\bibitem[{Diening et~al.(2007)Diening, Ebmeyer, and Ruzika}]{DieningEbmeyerRuzicka2007}
Diening, L., Ebmeyer, C., and Ruzika, M., \enquote{Optimal convergence for the implicit space--time discretization of parabolic systems with $p$-structure,} \emph{SIAM Journal on Numerical Analysis}, Vol.~45, No.~2, 2007, pp. 457--472.

\bibitem[{Ding and Li(2025)}]{DING2025116470}
Ding, Q., and Li, M., \enquote{Convergence analysis of finite element method for incompressible magnetohydrodynamics system with variable density,} \emph{Journal of Computational and Applied Mathematics}, Vol. 462, 2025, p. 116470.

\bibitem[{Ding et~al.(2022)Ding, Long, and Mao}]{ding2022convergence}
Ding, Q., Long, X., and Mao, S., \enquote{Convergence analysis of a fully discrete finite element method for thermally coupled incompressible MHD problems with temperature-dependent coefficients,} \emph{ESAIM: Mathematical Modelling and Numerical Analysis}, Vol.~56, No.~3, 2022, pp. 969--1005.

\bibitem[{Droniou et~al.(2022)Droniou, Goldys, and Le}]{MR4410739}
Droniou, J., Goldys, B., and Le, K.-N., \enquote{Design and convergence analysis of numerical methods for stochastic evolution equations with {L}eray-{L}ions operator,} \emph{IMA J. Numer. Anal.}, Vol.~42, No.~2, 2022, pp. 1143--1179.

\bibitem[{Eckstein and Ruzicka(2018)}]{EcksteinRuzicka2018}
Eckstein, S., and Ruzicka, M., \enquote{On the full space--time discretization of the generalized Navier--Stokes equations: the Dirichlet case,} \emph{SIAM Journal on Numerical Analysis}, Vol.~56, No.~4, 2018, pp. 2234--2261.

\bibitem[{Emmrich and Wróblewska-Kamińska(2013)}]{EmmrichWroblewska2013}
Emmrich, E., and Wróblewska-Kamińska, A., \enquote{Convergence of a full discretization of quasi-linear parabolic equations in isotropic and anisotropic Orlicz spaces,} \emph{SIAM Journal on Numerical Analysis}, Vol.~51, No.~3, 2013, pp. 1163--1184.

\bibitem[{Gerbeau et~al.(2006)Gerbeau, Bris, and Leli{\`e}vre}]{GerbeauLeBrisLelievre2006}
Gerbeau, J.-F., Bris, C.~L., and Leli{\`e}vre, T., \emph{Mathematical Methods for the Magnetohydrodynamics of Liquid Metals}, Numerical Mathematics and Scientific Computation, Oxford University Press, Oxford, 2006.

\bibitem[{Girault and Raviart(1986)}]{GiraultRaviart1986}
Girault, V., and Raviart, P.-A., \emph{Finite Element Methods for Navier--Stokes Equations: Theory and Algorithms}, Springer Series in Computational Mathematics, Vol.~5, Springer-Verlag, Berlin, 1986.

\bibitem[{Guermond and Quartapelle(2000)}]{GuermondQuartapelle2000}
Guermond, J.-L., and Quartapelle, L., \enquote{A projection {FEM} for variable density incompressible flows,} \emph{J. Comput. Phys.}, Vol. 165, 2000, pp. 167--188.

\bibitem[{He and Wang(2008)}]{HeWang2008}
He, C., and Wang, Y., \enquote{Remark on the Regularity for Weak Solutions to the Magnetohydrodynamic Equations,} \emph{Mathematical Methods in the Applied Sciences}, Vol.~31, 2008, pp. 1667--1684.

\bibitem[{Hiptmair(2002)}]{Hiptmair2002}
Hiptmair, R., \enquote{Finite Elements in Computational Electromagnetism,} \emph{Acta Numerica}, Vol.~11, 2002, pp. 237--339.

\bibitem[{Li et~al.(2022)Li, Qiu, and Yang}]{LiQiuYang2022}
Li, B., Qiu, W., and Yang, Z., \enquote{A convergent post-processed discontinuous {Galerkin} method for incompressible flow with variable density,} \emph{J. Sci. Comput.}, Vol.~91, No.~2, 2022, p.~28.

\bibitem[{Li and An(2021)}]{LiAn2021}
Li, Y., and An, R., \enquote{Temporal error analysis of Euler semi-implicit scheme for the magnetohydrodynamics equations with variable density,} \emph{Appl. Numer. Math.}, Vol. 166, 2021, pp. 146--167.

\bibitem[{Lions(1978)}]{Lions1978}
Lions, J.-L., \enquote{On some questions in boundary value problems of mathematical physics,} \emph{Contemporary Developments in Continuum Mechanics and Partial Differential Equations}, North-Holland Mathematics Studies, Vol.~30, North-Holland, Amsterdam-New York, 1978, pp. 284--346.

\bibitem[{Liu and Walkington(2007)}]{LiuWalkington2007}
Liu, C., and Walkington, N.~J., \enquote{Convergence of numerical approximations of the incompressible Navier-Stokes equations with variable density and viscosity,} \emph{SIAM Journal on Numerical Analysis}, Vol.~45, No.~3, 2007, pp. 1287--1304.

\bibitem[{Meir(1995)}]{Meir1995}
Meir, A.~J., \enquote{Thermally coupled, stationary, incompressible MHD flow; existence, uniqueness, and finite element approximation,} \emph{Numerical Methods for Partial Differential Equations}, Vol.~11, 1995, pp. 311--337.

\bibitem[{Monk(2003)}]{Monk2003}
Monk, P., \emph{Finite Element Methods for Maxwell's Equations}, Numerical Mathematics and Scientific Computation, Oxford University Press, New York, 2003.

\bibitem[{Moreau(1990)}]{Moreau1990}
Moreau, R., \emph{Magnetohydrodynamics}, Fluid Mechanics and its Applications, Vol.~3, Kluwer Academic Publishers Group, Dordrecht, 1990.
\newblock Translated from the French by A. F. Wright.

\bibitem[{Prohl(2008)}]{Prohl2008}
Prohl, A., \enquote{Convergent finite element discretizations of the nonstationary incompressible magnetohydrodynamics system,} \emph{M2AN Mathematical Modelling and Numerical Analysis}, Vol.~42, No.~6, 2008, pp. 1065--1087.

\bibitem[{Qiu(2020)}]{Qiu2020}
Qiu, H., \enquote{Error analysis of Euler semi-implicit scheme for the nonstationary magneto-hydrodynamics problem with temperature dependent parameters,} \emph{Journal of Scientific Computing}, Vol.~85, 2020, pp. 1--26.

\bibitem[{Ravindran(2019)}]{Ravindran2019}
Ravindran, S.~S., \enquote{Partitioned time-stepping scheme for an MHD system with temperature-dependent coefficients,} \emph{IMA Journal of Numerical Analysis}, Vol.~39, 2019, pp. 1860--1887.

\bibitem[{Sermange and Temam(1983)}]{SermangeTemam1983}
Sermange, M., and Temam, R., \enquote{Some Mathematical Questions Related to the MHD Equations,} \emph{Communications on Pure and Applied Mathematics}, Vol.~36, 1983, pp. 635--664.

\bibitem[{Tabata and Tagami(2005)}]{TabataTagami2005}
Tabata, M., and Tagami, D., \enquote{Error estimates of finite element methods for nonstationary thermal convection problems with temperature-dependent coefficients,} \emph{Numerische Mathematik}, Vol. 100, 2005, pp. 351--372.

\bibitem[{Walkington(2005)}]{Walkington2005}
Walkington, N.~J., \enquote{Convergence of the discontinuous Galerkin method for discontinuous solutions,} \emph{SIAM Journal on Numerical Analysis}, Vol.~42, No.~5, 2005, pp. 1801--1817.

\bibitem[{Wu(2004)}]{Wu2004}
Wu, J., \enquote{Regularity Results for Weak Solutions of the 3D MHD Equations,} \emph{Discrete and Continuous Dynamical Systems - Series A}, Vol.~10, 2004, pp. 543--556.

\bibitem[{Zhou(2006)}]{Zhou2006}
Zhou, Y., \enquote{Regularity Criteria for the 3D MHD Equations in Terms of Pressure,} \emph{International Journal of Non-Linear Mechanics}, Vol.~41, 2006, pp. 1174--1180.

\end{thebibliography}

\end{document}